\definecolor{yxc}{RGB}{255,0,0}
\newtheorem{thm}{Theorem}
\newtheorem{lemma}{Lemma}
\newtheorem{Definition}{Definition}
\newtheorem{rmk}{Remark}
\newtheorem{cor}{Corollary}
\newtheorem{assump}{Assumption}
\newcommand{\be}{\begin{equation}}
	\newcommand{\ee}{\end{equation}}
\newcommand{\bea}{\begin{eqnarray}}
	\newcommand{\eea}{\end{eqnarray}}
\newcommand{\beas}{\begin{eqnarray*}}
	\newcommand{\eeas}{\end{eqnarray*}}
\newcommand{\bbR}{\mathbb{R}}
\newcommand{\Var}{{\sf Var}}
\newcommand{\F}{{\mathrm{F}}}
\newcommand{\argmin}{\mathop{\rm arg\min}}
\newcommand{\bbP}{\mathbb{P}}
\newcommand{\bbE}{\mathbb{E}}
\newcommand*{\rom}[1]{\expandafter\@slowromancap\romannumeral #1@}
\title{Deflated HeteroPCA: Overcoming the curse of ill-conditioning in heteroskedastic PCA}
\author{Yuchen Zhou\thanks{Department of Statistics, University of Illinois Urbana-Champaign, Champaign, IL 61820, USA.} 	\and
	Yuxin Chen\thanks{Department of Statistics and Data Science, Wharton School, University of Pennsylvania, Philadelphia, PA 19104, USA.} 
	}
\date{March 2023; \quad Revised: October 2024}
\begin{document}
	\maketitle
	\begin{abstract}

This paper is concerned with estimating the column subspace of a low-rank matrix $\bm{X}^\star \in \mathbb{R}^{n_1\times n_2}$ from contaminated data.  
How to obtain optimal statistical accuracy while accommodating the widest range of signal-to-noise ratios (SNRs) 
becomes particularly challenging in the presence of heteroskedastic noise and unbalanced dimensionality (i.e., $n_2\gg n_1$). 
While the state-of-the-art algorithm  {\sf HeteroPCA} emerges as a powerful solution for solving this problem, 
it suffers from ``the curse of ill-conditioning,'' namely, its performance degrades as the condition number of $\bm{X}^\star$ grows. 
In order to overcome this critical issue without compromising the range of allowable SNRs,   
we propose a novel algorithm, called {\sf Deflated-HeteroPCA}, that achieves near-optimal and condition-number-free theoretical guarantees in terms of both $\ell_2$ and $\ell_{2,\infty}$ statistical accuracy. 
The proposed algorithm divides the spectrum of $\bm{X}^\star$ into well-conditioned and mutually well-separated subblocks, 
and applies {\sf HeteroPCA} to conquer each subblock successively. 
		 Further, an application of our algorithm and theory to two canonical examples --- the factor model and tensor PCA --- leads to 
		 remarkable improvement for each application. 
	\end{abstract}

\noindent \textbf{Keywords:} principal component analysis (PCA), heteroskedastic noise, the curse of ill-conditioning, 
			 factor models, tensor PCA

\tableofcontents

\section{Introduction}

In a diverse array of science and engineering applications, we are asked to identify a low-dimensional subspace that best captures the information underlying a large collection of high-dimensional data points, 
a classical problem that goes by the names of principal component analysis (PCA), subspace estimation, subspace tracking, among others \citep{johnstone2018pca,balzano2018streaming,chen2021spectral}. 
A simple yet useful mathematical model is of the following form: 
imagine we have an unknown large-dimensional matrix $\bm{X}^{\star}\in \bbR^{n_1 \times n_2}$ whose columns are high-dimensional vectors embedded in a $r$-dimensional subspace (so that $\bm{X}^{\star}$ has rank $r\ll \min\{n_1,n_2\}$), 
and we seek to estimate the {\em column space} of $\bm{X}^{\star}$ from noisy observations:  
    \begin{align}\label{eq:model}
    	\bm{Y} = \bm{X}^{\star} + \bm{E} \in \bbR^{n_1 \times n_2}, 
    \end{align}
where $\bm{E}$ stands for the noise matrix that contaminates the data. 
Despite decades-long research, there remain substantial challenges to handle heteroskedastic noise in high dimension,  as we shall elaborate on below.

\subsection{Challenges:   unbalanced dimensionality and heteroskedasticity} 
How to achieve statistically efficient PCA in high dimension 
is an active research topic that has received much recent interest \citep{lounici2014high,johnstone2018pca,cai2021subspace,zhu2019high,zhang2022heteroskedastic,agterberg2022entrywise}. 
In this paper, we pay particular attention to the case where $n_1$ and $n_2$ are  both enormous but highly \emph{unbalanced} in the sense that $n_1 \ll n_2$, 
a scenario that arises frequently in, say, covariance estimation (when there are many noisy samples available) and tensor estimation (when one has to matrice the tensor before estimation).  
Such unbalanced dimensionality gives rise to unique challenges not present in the complement case:  
as the signal-to-noise ratio (SNR) keeps decreasing, one might soon enter a regime where 
consistent estimation of $\bm{X}^{\star}$ is no longer infeasible but its column subspace --- which is much smaller dimensional than the full matrix --- remains estimatable. 
This regime is often considerably more challenging than the case with $n_2=O(n_1)$, 
given that the majority of low-rank matrix estimation algorithms that directly attempt to estimate $\bm{X}^{\star}$ become completely off.

One natural strategy that comes into mind is thus to estimate the column subspace of $\bm{X}^{\star}$ by calculating the left singular subspace of the observed matrix $\bm{Y}$ \citep{cai2018rate,abbe2020entrywise,chen2021spectral}, 
which we shall refer to as the {\em vanilla SVD-based approach} throughout.  
In the case with $n_1\ll n_2$, this simple scheme has only been shown to achieve the desired statistical performance when the noise matrix $\bm{E}$ is composed of i.i.d.~entries, 
but falls short of effectiveness when handling \emph{heteroskedastic} noise (i.e., the scenario where the variances of the entries of $\bm{E}$ are location-varying) \citep{zhang2022heteroskedastic,cai2021subspace}. 
This issue presents a hurdle to transferring this scheme from theory to practice, due to the ubiquity of heteroskedastic data in applications like social networks, recommendation systems, medical imaging, etc.

To mitigate this issue,  at least two strategies have been proposed that attempt estimation by looking at the empirical covariance matrix (or gram matrix) $\bm{Y}\bm{Y}^{\top}$.  
Recognizing that large heteroskedastic noise might lead to significant bias in the diagonal of $\bm{Y}\bm{Y}^{\top}$ that distorts estimation, 
one natural remedy is to zero out (or sometimes rescale) the diagonal entries of $\bm{Y}\bm{Y}^{\top}$ before computing its eigendecomposition \citep{koltchinskii2000random,lounici2014high,florescu2016spectral,loh2012high,montanari2018spectral,elsener2019sparse,cai2021subspace,ndaoud2021improved}. 
A more refined iterative procedure called $\mathsf{HeteroPCA}$ was subsequently proposed by \citet{zhang2022heteroskedastic}, 
which starts with the solution of diagonal-deleted PCA (cf.~\eqref{eq:diagonal-deletion}) and alternates between:
\begin{itemize}
	\item imputing the diagonal entries of $\bm{X}^{\star}\bm{X}^{\star\top}$; 
	\item computing the rank-$r$ eigenspace of $\bm{Y}\bm{Y}^\top $ with its diagonal replaced by the imputed values. 
\end{itemize}
\noindent 
See Section~\ref{sec:algorithm} for precise descriptions. 
In both theory and numerical experiments, 
this iterative paradigm yields enhanced performance compared to diagonal-deleted PCA \citep{zhang2022heteroskedastic,yan2024inference}.

\subsection{The curse of ill-conditioning} 

\begin{figure}
	\centering
	\begin{minipage}[t]{\linewidth}
		\centering
		\subfigure[Noiseless case]{
			\includegraphics[width=0.48\linewidth,height=5cm]{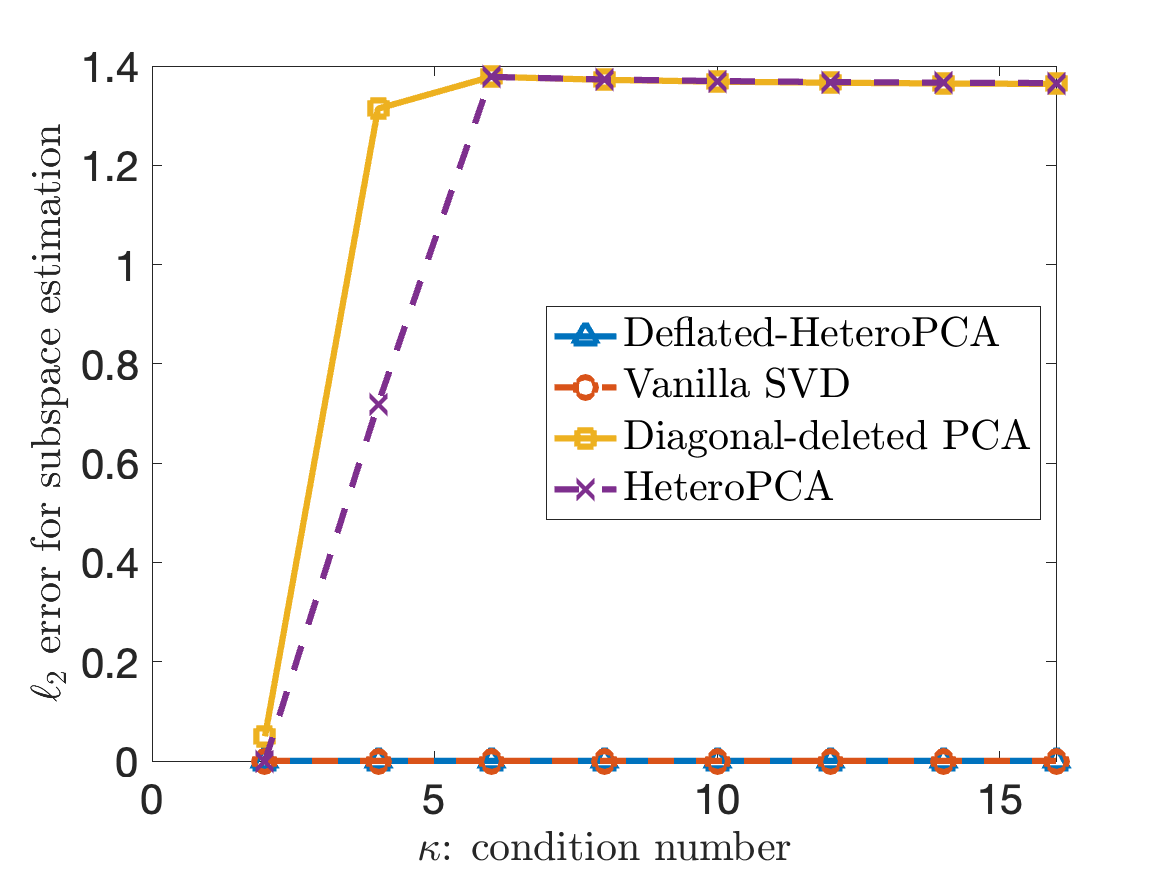}}
		\subfigure[Noisy case]{
			\includegraphics[width=0.48\linewidth,height=5cm]{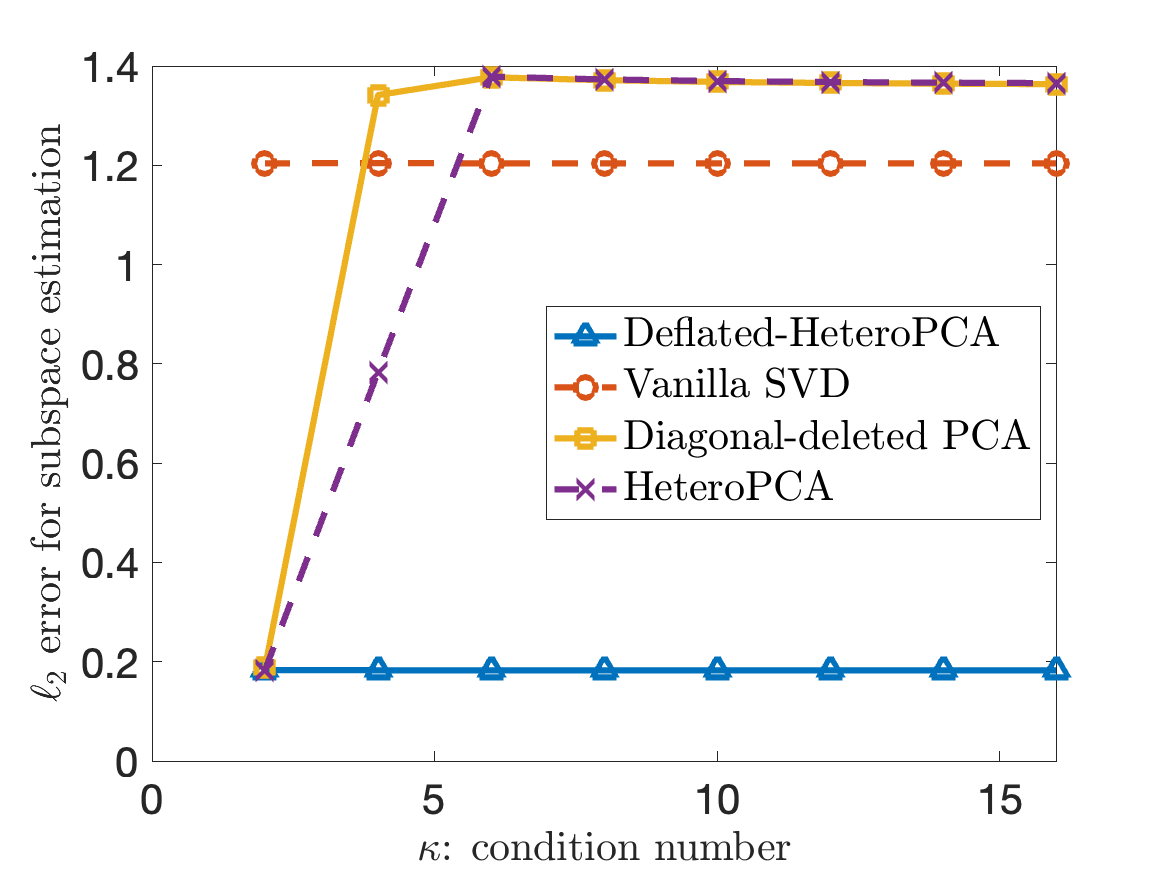}}
	\end{minipage}
	\caption{Subspace estimation error vs.~condition number $\kappa$ of $\bm{\Sigma}^{\star}$. 
	Here, we set $r = 2, n_1 = 200$ and $n_2 = 40,000$. The truth $\bm{X}^\star = \bm{U}^\star\bm{\Sigma}^\star\bm{V}^{\star\top}  $ 
	has rank 2 with $\bm{U}^\star \in \mathcal{R}^{n_1\times 2}$ and $\bm{V}^\star \in \mathcal{R}^{n_2\times 2}$ generated randomly. 
	Plot (a) represents the noiseless case ($\bm{E}=\bm{0} $).  
	In Plot (b), we choose the two singular values of $\bm{X}^{\star}$ as $\sigma_1^\star = \kappa\sigma_2^\star$ and $\sigma_2^\star = 200$,  generate $\{\omega_i\}_{1\leq i\leq n_1}$ independently from $\mathsf{Unif}([0, 2])$, and draw  the entries of  $\bm{E}=[E_{i,j}]_{1\leq i\leq n_1, 1\leq j \leq n_2}$ independently such that $E_{i,j}\sim \mathcal{N}(0, \omega_i^2)$. 
	We compare multiple subspace estimators here, where $\mathsf{HeteroPCA}$ is run with 100 iterations. 
	For each estimator $\widehat{\bm{U}}$, we compute the spectral-norm-based error $\|\widehat{\bm{U}}\bm{R}_{\widehat{\bm{U}}} - \bm{U}^{\star}\| $ as $\kappa$ varies, where $\bm{R}_{\widehat{\bm{U}}} = \argmin_{\bm{R} \in \mathcal{O}^{r, r}}\|\widehat{\bm{U}}\bm{R} - \bm{U}^\star\|_{\F}$; 
	the  results are averaged over 50 independent runs.}\label{fig:toy_example}
\end{figure}

Nevertheless, one drawback stands out when running either diagonal-deleted PCA or $\mathsf{HeteroPCA}$ in practice;  
that is, both algorithms become ineffective as the condition number of $\bm{X}^{\star}$ (when restricted to its non-zero singular values) grows. 
Let us illustrate this point more clearly via numerical experiments. 
\begin{itemize}
	\item {\bf (Numerical example)}  
		Consider the case where the unknown signal $\bm{X}^{\star}$ has rank $r=2$ and obeys $\bm{X}^\star
		= \bm{U}^\star\bm{\Sigma}^\star\bm{V}^{\star\top} ,$
		where the columns of $\bm{U}^\star\in \mathbb{R}^{n_1\times 2}$ (resp.~$\bm{V}^\star\in \mathbb{R}^{n_2\times 2}$) are the two left (resp.~right) singular vectors of $\bm{X}^{\star}$, and $\bm{\Sigma}^{\star}\in \mathbb{R}^{2\times 2}$ is a diagonal matrix composed of the two singular values $\sigma_1^{\star}\geq \sigma_2^{\star} >0$ of $\bm{X}^{\star}$.  
		Denote by $\kappa = \sigma_1^{\star} / \sigma_2^{\star}$ the condition number of $\bm{\Sigma}^{\star}$. 
		We conduct a series of experiments based on randomly generated $\bm{X}^{\star}$ with $n_2\gg n_1$, as detailed in the caption of Figure~\ref{fig:toy_example}.  
		As illustrated in Figure~\ref{fig:toy_example}, 
		when $\kappa$ is not too large, 
		both diagonal-deleted PCA and {\sf HeteroPCA} fail to return reliable estimates of the subspace $\bm{U}^\star$, 
		even in the noiseless case (i.e., $\bm{E} = 0$).

\end{itemize}
In summary, both diagonal-deleted PCA and {\sf HeteroPCA} 
suffer from the ``curse of ill-conditioning'', namely, they might lead to grossly incorrect subspace estimates 
as the largest signal component strengthens with all other signal components unchanged.  
This observation is somewhat counter-intuitive; after all, altering the signal this way 
only serves to increase the SNR and hence simplify the task from the information-theoretic perspective. 
In this sense, the aforementioned curse of ill-conditioning seems  to be algorithm-specific, 
although the two algorithms it concerns happen to be the state-of-the-art methods. 
All this naturally leads to the following question: 
{
\setlist{rightmargin=\leftmargin}
\begin{itemize}
	\item[] {\em Can we overcome the above curse of ill-conditioning  without compromising the advantages of both diagonal-deleted PCA and {\sf HeteroPCA}? }
\end{itemize}
}

\subsection{This paper}

As it turns out, we can answer the above question in the affirmative, 
which forms the main contribution of this paper.  
Our main findings are summarized as follows. 
\begin{itemize}
	\item {\em Algorithm design.}  
		In an attempt to address the above question, we propose a new algorithm --- dubbed as {\sf Deflated-HeteroPCA} --- on the basis of {\sf HeteroPCA}. In a nutshell, the proposed algorithm divides the spectrum of $\bm{X}^{\star}$ into well-conditioned yet mutually well-separated subblocks, 
		and successively applies {\sf HeteroPCA} to conquer each subblock. 
		This approach counters the adverse influence of ill conditioning via successive ``deflation'' (a term borrowed from \citet{dobriban2019deterministic}), which  gradually ``deflates'' the undesirable bias effect resulting from the diagonal deletion operation.

	\item {\em Statistical guarantees.} 
		We develop sharp theoretical guarantees, in terms of both $\ell_2$ (spectral-norm-based) and $\ell_{2, \infty}$ estimation errors,  
		for the proposed algorithm. Encouragingly, all of these statistical guarantees are 
		condition-number-free, and match the minimax lower bounds established in \cite{zhang2022heteroskedastic} and \cite{cai2021subspace} (up to some logarithmic factors).  
		To the best of our knowledge, these provide the first near-optimal results in the heteroskedastic PCA setting herein that (i) do not degrade as the condition number of the truth increases, and (ii) accommodate the widest range of SNRs. 

	\item {\em Consequences in two canonical examples.} 
		To illustrate the utility of our algorithm and theory, we develop concrete consequences of our results for two canonical examples: (a) the factor model, and (b) tensor PCA. We demonstrate that (i) {\sf Deflated-HeteroPCA} achieves rate-optimal and condition-number-free estimation under the factor model, and (ii) {\sf Deflated-HeteroPCA} followed by the HOOI algorithm improves upon the state-of-the-art performance guarantees for tensor PCA. 
		Numerical experiments are carried out to corroborate the effectiveness of the propose algorithm.

\end{itemize}

\paragraph{Paper organization.} The rest of the paper is organized as follows. We formulate the problem precisely in Section~\ref{sec:setting}, and present the proposed algorithm in Section~\ref{sec:algorithm}. The theoretical guarantees of our algorithm, along with their implications, are presented in Section~\ref{sec:theory}. We develop concrete consequences of our results in two applications in Section \ref{sec:example}. Additional numerical experiments are reported in Section~\ref{sec:simulation}, and a discussion of further related works is provided in Section \ref{sec:related_work}. The technical proofs are collected in the Appendix.

\subsection{Notation}\label{subsection:notation}
Throughout this paper, we denote $[n] := \{1, \dots, n\}$ for any positive integer $n$. 
We let  bold capital letters (e.g., $\bm{X}$) and  bold lowercase letters (e.g., $\bm{x}$) denote matrices and vectors, respectively. For any matrix $\bm{A} \in \bbR^{n_1 \times n_2}$,  $\lambda_{i}(\bm{A})$ and $\sigma_{i}(\bm{A})$ are used to represent the $i$-th largest eigenvalue (in magnitude) and the $i$-th largest singular value of $\bm{A}$, respectively. 
Let $\|\cdot\|_{\F}$ indicate the Frobenious norm and $\|\cdot\|$ the spectral norm. We denote by $\bm{A}_{i,:}$ and $\bm{A}_{:,j}$ the $i$-th column and the $j$-th row of $\bm{A}$, respectively. 
We also let $\bm{A}_{:, i:j}$ denote the submatrix of $\bm{A}$ containing those columns with indices falling in $[i,j]$.  Let $\|\bm{A}\|_{2,\infty} := \max_{i }\|\bm{A}_{i,:}\|_2$ denote the $\ell_{2,\infty}$ norm of $\bm{A}$. We use $\mathcal{O}^{n, r}:=\{\bm{U} \in \bbR^{n \times r}: \bm{U}^\top\bm{U} = \bm{I}_r\}$ to represent the set containing all $n \times r$ matrices with orthonormal columns. For any $\bm{U} \in \mathcal{O}^{n, r}$, we define the projection matrix $\mathcal{P}_{\bm{U}} = \bm{U}\bm{U}^\top$. Let $\bm{U}_{\perp} \in \mathcal{O}^{n, n-r}$ denote the orthogonal complement of $\bm{U}$. We use $\mathcal{P}_{\sf diag}(\cdot)$ to represent the projection operator that keeps all diagonal entries and sets to zero all non-diagonal entries; meanwhile, we define $\mathcal{P}_{\sf off\text{-}diag}(\bm{M}) := \bm{M} - \mathcal{P}_{\sf diag}(\bm{M})$ for any $\bm{M} \in \bbR^{n \times n}$. For any vector $\bm{a} = (a_1, \dots, a_n)$, we denote by ${\sf diag}(\bm{a}) \in \mathbb{R}^{n\times n}$ the diagonal matrix whose $(i,i)$-th entry is $a_{i}$. For any full-rank matrix $\bm{H} \in \bbR^{r \times r}$ with singular value decomposition (SVD) $\bm{U}\bm{\Sigma}\bm{V}^\top$, we define the sign matrix
\begin{align}\label{def:sign_matrix}
	{\sf sgn}(\bm{H}) := \bm{U}\bm{V}^\top.
\end{align}
We let $C, c, C_0, c_0, \dots$ denote numerical constants whose values may change from line to line.  
The boldface calligraphic letters (e.g., $\bm{\mathcal{X}}$) are used  to represent tensors. For any tensor $\bm{\mathcal{G}} \in \bbR^{r_1 \times r_2 \times r_3}$ and any matrix $\bm{V}_1 \in \bbR^{n_1 \times r_1}$, we define the multi-linear product $\times_1$ as follows: 
\begin{align*}
	\bm{\mathcal{G}} \times_1 \bm{V}_1 = \left(\sum_{j=1}^{r_1}G_{j, i_2, i_3}V_{i_1, j}\right)_{i_1 \in [n_1], i_2 \in [r_2], i_3 \in [r_3]}.
\end{align*} 
We can define $\times_2$ and $\times_3$ analogously. For any tensor $\bm{\mathcal{X}} \in \bbR^{n_1 \times n_2 \times n_3}$, let $\mathcal{M}_j(\bm{\mathcal{X}}) \in \bbR^{n_j \times (n_1n_2n_3/n_j)}$ denote the $j$-th matricization of $\bm{\mathcal{X}}$ such that for any $(i_1, i_2, i_3) \in [n_1] \times [n_2] \times [n_3]$,
\begin{align*}
	\left[\mathcal{M}_1\left(\bm{\mathcal{X}}\right)\right]_{i_1, i_2 + n_2\left(i_3 - 1\right)} = \left[\mathcal{M}_2\left(\bm{\mathcal{X}}\right)\right]_{i_2, i_3 + n_3\left(i_1 - 1\right)} = \left[\mathcal{M}_3\left(\bm{\mathcal{X}}\right)\right]_{i_3, i_1 + n_1\left(i_2 - 1\right)} = X_{i_1, i_2, i_3}.
\end{align*} 
The Frobenious norm of a tensor $\bm{\mathcal{X}} \in \bbR^{n_1 \times n_2 \times n_3}$ is defined as
\begin{align*}
	\left\|\bm{\mathcal{X}}\right\|_{\F} = \bigg(\sum_{i=1}^{n_1}\sum_{j=1}^{n_2}\sum_{k=1}^{n_3}X_{i,j,k}^2\bigg)^{1/2}.
\end{align*}

The notation $f(n_1, n_2) \lesssim g(n_1, n_2)$ or $f(n_1, n_2) = O(g(n_1, n_2))$ means that $|f(n_1, n_2)| \leq Cg(n_1, n_2)$ holds for some numerical constant $C > 0$; we let $f(n_1, n_2) \gtrsim g(n_1, n_2)$ indicate that $f(n_1, n_2) \geq C|g(n_1, n_2)|$ for some numerical constant $C > 0$; $f(n_1, n_2) \asymp g(n_1, n_2)$ means that both $f(n_1, n_2) \lesssim g(n_1, n_2)$ and $f(n_1, n_2) \gtrsim g(n_1, n_2)$ hold; we use the notation $f(n_1, n_2) \ll g(n_1, n_2)$ to represent that $f(n_1, n_2) \leq cg(n_1, n_2)$  holds for some sufficiently small constant $c > 0$, and we say $f(n_1, n_2) \gg g(n_1, n_2)$ if $g(n_1, n_2) \ll f(n_1, n_2)$. In addition, we use $f(n_1, n_2) = o(g(n_1, n_2))$ to indicate that $f(n_1, n_2)/g(n_1, n_2) \to 0$ as $\min\{n_1, n_2\} \to \infty$. For any $a, b \in \bbR$, we define $a \wedge b := \min\{a, b\}$ and $a \vee b := \max\{a, b\}$.

	\section{Problem formulation}\label{sec:setting}

\paragraph{Models and assumptions.} 
Let us present a more precise description of the problem to be studied here. 
Imagine that we have access to the following noisy data matrix:
\begin{equation}
	\bm{Y} = \bm{X}^{\star} + \bm{E} \in \mathbb{R}^{n_1\times n_2},
	\label{eq:model-setup}
\end{equation}
where $\bm{E}=[E_{i,j}]_{1\leq i\leq n_1, 1\leq j\leq n_2}$ is a zero-mean noise matrix composed of independent entries, 
and $\bm{X}^{\star}=[X^{\star}_{i,j}]_{1\leq i\leq n_1, 1\leq j\leq n_2}$ is a rank-$r$ matrix to be estimated.  
The SVD of the signal matrix $\bm{X}^\star $ is given by
\begin{align}\label{eq:svd}
	\bm{X}^{\star} = \bm{U}^{\star}\bm{\Sigma}^{\star}\bm{V}^{\star\top} = \sum_{i=1}^{r}\sigma_i^{\star}\bm{u}_i^{\star}\bm{v}_i^{\star\top} \in \bbR^{n_1 \times n_2}. 
\end{align}
Here, $\sigma_1^{\star} \geq \dots \geq \sigma_r^{\star} > 0$ denote the singular values of $\bm{X}^\star$, 
 $\bm{u}_i^{\star}$ (resp.~$\bm{v}^{\star}_i$) represents the left (resp.~right) singular vector associated with $\sigma_i^{\star}$, 
 and we introduce the matrices $\bm{\Sigma}^\star = {\sf diag}(\sigma_1^\star, \dots, \sigma_r^\star)$, 
$\bm{U}^\star = [\bm{u}_1^\star, \dots, \bm{u}_r^\star] \in \mathcal{O}^{n_1, r}$ and $\bm{V}^\star = [\bm{v}_1^\star, \dots, \bm{v}_r^\star] \in \mathcal{O}^{n_2, r}$. 
Clearly, $\bm{U}^\star $ and $\bm{V}^\star $ represent the column and row subspaces of $\bm{X}^\star$, respectively. 

Moreover, we introduce additional definitions and assumptions to be used throughout. 
\begin{itemize}
	\item To begin with, let us introduce the following incoherence condition that appears frequently in the low-rank matrix estimation literature \citep{candes2009exact,keshavan2010matrix,chen2021spectral}.
\begin{Definition}[Incoherence]\label{assump:incoherence}
	The incoherence parameters $\mu_1$ and $\mu_2$ of $\bm{X}^{\star}$ are defined as: 
	\begin{align}\label{def:incoherence}
		\mu_1 := \frac{n_1}{r}\max_{1 \leq i \leq n_1}\left\|\bm{U}_{i,:}^{\star}\right\|_2^2 \qquad \text{and} \qquad \mu_2 := \frac{n_2}{r}\max_{1 \leq j \leq n_2}\left\|\bm{V}_{j,:}^{\star}\right\|_2^2.
	\end{align}
\end{Definition}
It is self-evident that $1\leq \mu_1 \leq n_1/r$ and $1\leq \mu_2 \leq n_2/r$. 
		In words, if the incoherence parameter $\mu_1$ (resp.~$\mu_2$) is small, then the energy of of $\bm{U}^\star$ (resp.~$\bm{V}^\star$) would be more or less dispersed across all rows of $\bm{U}^\star$ (resp.~$\bm{V}^\star$). 
		Throughout this paper, for simplicity we denote
\begin{align}\label{def:n_mu}
	\mu = \max\{\mu_1, \mu_2\}  \qquad \text{and} \qquad n := \max\left\{n_1, n_2\right\}.
\end{align}

	\item Turning to the zero-mean noise matrix $\bm{E}$, we first introduce the following parameters: 
\begin{align}
	&\omega_{i,j}^2 := \Var[E_{i,j}], ~~ \omega_{\sf max}^2 := \max_{i,j}\Var\left[E_{i,j}\right], ~~
	\omega_{\sf row}^2 := \max_{i}\sum_{j=1}^{n_2}\Var\left[E_{i,j}\right],  ~~
	\omega_{\sf col}^2 := \max_{j}\sum_{i=1}^{n_1}\Var\left[E_{i,j}\right],
	\label{eq:defn-omega-set}
\end{align}
	where $\omega_{i,j}, \omega_{\sf max}, 	\omega_{\sf row}, \omega_{\sf col} \geq 0$. 
		Here, we allow the variances $\{\omega_{i,j}^2\}$ to be location-varying, 
		in order to account for {\em heteroskedasticity} of noise. 
		Moreover, we impose the following assumptions throughout: 
\begin{assump}[Noise]\label{assump:hetero}
	Suppose the noise components satisfy the following properties: 
	\begin{itemize}
		\item[1.] The $E_{i,j}$'s are statistically independent and obey $\bbE[E_{i,j}] = 0$ for all $(i,j) \in [n_1] \times [n_2]$;
		\item[2.] $\bbP(|E_{i,j}| > B) \leq n^{-12}$, 
			where the quantity $B$ satisfies $$B \leq C_{\sf b}\frac{\min\big\{\left(\omega_{\sf row}\omega_{\sf col}\right)^{1/2}, \omega_{\sf row}\big\}}{\sqrt{\log n}}$$
				for some numerical constant $C_{\sf b} > 0$.

	\end{itemize}
	
\end{assump}
\begin{rmk}
	Assumption \ref{assump:hetero}  imposes a mild condition on the tails of noise. 
	For instance, if $\omega_{i,j} \asymp \omega_{\sf max}$ for all $i,j$, then $B$ is allowed to be as large as $\min\{(n_1n_2)^{1/4}, \sqrt{n_2}\}\,\omega_{{\sf max}}$ (up to some logarithmic factor),  which can be substantially larger than the typical noise level $\omega_{{\sf max}}$. 
	In comparisons to prior works, (i) this assumption is similar to --- in fact slightly weaker than --- \citet[Assumption 2]{cai2021subspace} (in that the assumption therein requires noise distributions to be symmetric); (ii) given that Assumption~\ref{assump:hetero} is satisfied if $\{E_{i,j}\}$ are $C\omega_{\sf max}$-sub-Gaussian and $\omega_{\sf max} \lesssim \min\{(\omega_{\sf row}\omega_{\sf col})^{1/2}, \omega_{\sf row}\}/\log n$, it is less stringent than the one assumed in \citet[Theorem 4]{zhang2022heteroskedastic}.
\end{rmk}

\end{itemize}

\paragraph{Goal.} 
We seek to estimate the column subspace $\bm{U}^\star$ (up to global rotation) on the basis of $\bm{Y}$. 
Our goal is to design an estimator that satisfies the following two desirable properties simultaneously:
\begin{itemize}
	\item[1)] it allows for faithful estimation of the column subspace despite the presence of heteroskedasticity and unbalanced dimensionality; 
		  we hope to accomplish this for the widest possible range of SNRs; 

	\item[2)] it achieves the desirable statistical guarantees that do not degrade when the condition number $\kappa = \sigma_1^\star/\sigma_r^\star$ increases.
\end{itemize}

\section{Algorithms}
\label{sec:algorithm}

In this section, we proceed to describe the proposed algorithm in attempt to achieve the goal set forth in Section~\ref{sec:setting}, 
following a brief overview of previous algorithms.

\paragraph{Review: SVD, diagonal-deleted PCA and {\sf HeteroPCA}.}
Before continuing, we briefly review three popular methods that are commonly studied in the literature. 
\begin{itemize}
	\item {\em The vanilla SVD-based approach.} 
		This approach computes the leading $r$ singular vectors of $\bm{Y}$, or equivalently, the top-$r$ eigenspace of the Gram matrix $\bm{Y}\bm{Y}^\top$, namely, 
		\begin{equation}
			\text{(vanilla SVD)}\qquad  \widehat{\bm{U}}_{\mathsf{svd}} ~\leftarrow~ \mathsf{eigs}_r\big(\bm{Y}\bm{Y}^\top \big),
			\label{eq:vanilla-SVD-alg}
		\end{equation}
		where $\mathsf{eigs}_r(\cdot)$ stands for the leading rank-$r$ eigen-subspace of a matrix. 
		While this approach works well when $n_2 = O(n_1)$, it suffers from some fundamental limitations in the case with $n_2\gg n_1$ and heteroskedastic noise. 
		To illustrate this point, direct calculation reveals that
\begin{align}\label{eq:expectation_gram_matrix}
	\bbE\left[\bm{Y}\bm{Y}^\top\right] = \bm{X}^\star\bm{X}^{\star\top} + {\sf diag}\Bigg(\bigg[\sum_{j=1}^{n_2}\bbE\left[E_{i,j}^2\right]\bigg]_{1 \leq i \leq n_1}\Bigg).
\end{align}
		When $n_2\gg n_1$ and when the noise components are highly heteroskedastic, 
		the set of diagonal entries $\big\{\sum_{j=1}^{n_2}\bbE\left[E_{i,j}^2\right] \big\}_{1\leq i\leq n_1}$ might vary drastically, 
		thereby resulting in a large deviation between the top-$r$ eigenspace of $\bbE[\bm{Y}\bm{Y}^\top]$  and that of  $\bm{X}^{\star}\bm{X}^{\star\top}$ (which is the desirable $\bm{U}^{\star}$). 

	\item 	{\em Diagonal-deleted PCA.} 
		In an effort to rectify the above limitation of the vanilla SVD-based approach, 
		prior works have put forward a solution called ``diagonal-deleted PCA,'' 
		which suppresses the influence of the diagonal entries of  $\bm{Y}\bm{Y}^\top$ by suppressing them \citep{koltchinskii2000random,florescu2016spectral,cai2021subspace,ndaoud2021improved,ndaoud2022sharp,abbe2022lp};  that is, this approach outputs
\begin{align}
	\text{(diagonal-deleted PCA)}\qquad
	\widehat{\bm{U}}_{\mathsf{del}} ~\leftarrow~ \mathsf{eigs}_r\big(\bm{Y}\bm{Y}^\top - \mathcal{P}_{\sf diag}(\bm{Y}\bm{Y}^\top)\big),
	\label{eq:diagonal-deletion}
\end{align}
where $\mathcal{P}_{\sf diag}$ denotes Euclidean projection onto the set of diagonal matrices.
		When the diagonal entries of $\bm{X}^\star\bm{X}^{\star\top}$ are sufficiently small, we have
\begin{align*}
	\bbE\left[\mathcal{P}_{\sf off\text{-}diag}\left(\bm{Y}\bm{Y}^\top\right)\right]  = \bm{X}^\star\bm{X}^{\star\top} - \mathcal{P}_{\sf diag}\left(\bm{X}^\star\bm{X}^{\star\top}\right) \approx \bm{X}^\star\bm{X}^{\star\top} = \bm{U}^\star\bm{\Sigma}^{\star2}\bm{U}^{\star\top}, 
\end{align*}
		which forms the rationale of this approach. 

	\item {\em The {\sf HeteroPCA} algorithm.}  
		The above diagonal-deleted approach can be further improved.  
		Employing \eqref{eq:diagonal-deletion} as an initialization, \citet{zhang2022heteroskedastic} put forward the {\sf HeteroPCA} algorithm 
		that combines the spectral method with successively refined diagonal estimates; 
		more precisely, {\sf HeteroPCA} initializes $\bm{G}$ as $\mathcal{P}_{\sf off\text{-}diag}(\bm{Y}\bm{Y}^\top)$, 
		and alternates between the following two steps until convergence: 
		\begin{align*}
			(\mathsf{HeteroPCA})
			\qquad  \text{repeat}\quad
			&\,\text{(i)} ~~ \bm{U}\bm{\Lambda}\bm{U}^{\top} \,\leftarrow\, \text{rank-}r\text{ eigendecomposition of }(\bm{G}); \\
			&\text{(ii)} ~~ \bm{G} \,\leftarrow\, \mathcal{P}_{\mathsf{off}\text{-}\mathsf{diag}}\big(\bm{Y}\bm{Y}^{\top}\big) 
			+ \mathcal{P}_{\mathsf{diag}}\big(\bm{U}\bm{\Lambda}\bm{U}^{\top}\big). 
		\end{align*}
		See Algorithm \ref{algorithm:heteroPCA} for a complete description of this procedure, with the input matrix (or initialization) chosen to be $\bm{G}_{\mathsf{in}}=\bm{Y}\bm{Y}^{\top} - \mathcal{P}_{\sf diag}(\bm{Y}\bm{Y}^\top)$. 
		The key lies in employing the improved diagonal estimates to help alleviate the bias induced by diagonal deletion.

\end{itemize}

\begin{algorithm}[t]
	\caption{{\sf HeteroPCA($\bm{G}_{\mathsf{in}}$, $r$, $t_{\sf max}$)} ~\citep{zhang2022heteroskedastic}} \label{algorithm:heteroPCA}
	\DontPrintSemicolon
	\textbf{input:} symmetric matrix $\bm{G}_{\mathsf{in}}$, rank $r$, number of iterations $t_{\sf max}$.\\
	\textbf{initialization:} $\bm{G}^{0} = \bm{G}_{\mathsf{in}}$.\\
	\For{$t = 0, 1, \dots, t_{\sf max}$} 
	         {
	         	$\bm{U}^{t}\bm{\Lambda}^{t}\bm{U}^{t\top}$ $\,\leftarrow\,$  rank-$r$ leading eigendecompostion of $\bm{G}^t$.\\
	         	$\bm{G}^{t+1} = \mathcal{P}_{\sf off\text{-}diag}\left(\bm{G}^{t}\right) + \mathcal{P}_{\sf diag}\left(\bm{U}^{t}\bm{\Lambda}^{t}\bm{U}^{t\top}\right)$.\\
	        }
    \textbf{output:} matrix estimate $\bm{G} = \bm{G}^{t_{\sf max}}$ and subspace estimate $\bm{U} = \bm{U}^{t_{\sf max}}$.    
\end{algorithm}

\noindent 
When the condition number $\sigma_1^{\star}/\sigma_r^{\star}$ is large, however, the magnitude of the diagonal entries of $\bm{X}^\star\bm{X}^{\star\top}$ can be substantially larger than, say, the square of the least singular value  of $\bm{X}^{\star}$ (i.e., $\sigma_r^{\star2}$). 
If this is the case, then diagonal-deleted PCA might eraze a significant fraction of the useful signal, 
resulting in loss of effectiveness. 
This issue carries over to {\sf HeteroPCA}, 
as its initialization --- which is based on diagonal-deleted PCA --- might already be highly unreliable.

\paragraph{The proposed algorithm: {\sf Deflated-HeteroPCA}.}
We now describe how to alleviate the above curse of ill-conditioning. 
One lesson that we have learned from past {\sf HeteroPCA} theory \citep{zhang2022heteroskedastic,yan2024inference} is that: 
this procedure works well if (i) the condition number of the truth is well-controlled and (ii) the least singular value is not buried by noise. 
Motivated by this fact, we propose to divide the set of eigenvalues of interest into ``well-conditioned'' subblocks that are sufficiently separated from each other, and include more subblocks one by one. More precisely, the main ideas of the proposed algorithm are as follows: 
\begin{itemize}
	\item[1)] Sequentlly identify a collection of ranks $r_0 = 0 < r_1 < r_2 < \cdots < r_{k_{\sf max}} = r$, 
		which partitions the set of eigenvalues (or singular values) of interest into disjoint subblocks. 
		These points are chosen to ensure that (i)   $\sigma_{r_{k-1}+1}^\star/\sigma_{r_{k}}^\star$ is sufficiently small for each $k$, 
		and (ii) there is a sufficient gap between $\sigma_{r_k}^\star$ and $\sigma_{r_{k}+1}^\star$. 
		Given that we do not know the true signular values {\em a priori}, 
		we shall make careful use of the singular values of our running estimates instead. 

	\item[2)] In the $k$-th round, we invoke {\sf HeteroPCA} with the rank $r_k$ and the initialization $\bm{G}_{k-1}$ to impute the diagonal entries and obtain an improved estimate  $\bm{G}_k$ of the Gram matrix of interest. Here, the first iteration employs the diagonal-deleted version $\bm{G}_0 = \mathcal{P}_{\sf off\text{-}diag}(\bm{Y}\bm{Y}^\top)$. 
\end{itemize}
It then boils down to how to select the aforementioned ranks $\{r_k\}$ in a data-driven manner. 
Towards this end, we look at the following set of ranks in the $k$-th round:\footnote{The threshold $4$ in \eqref{def:R_k} can be replaced with any numerical constant $C_{\sf gap} \geq 4$.}
\begin{align}\label{def:R_k}
	\mathcal{R}_k := \bigg\{r': r_{k-1} < r' \leq r,~ \frac{ \sigma_{r_{k-1}+1}\left(\bm{G}_{k-1}\right) }{\sigma_{r'}\left(\bm{G}_{k-1}\right) } \leq 4 ~\text{ and }~ \sigma_{r'}\left(\bm{G}_{k-1}\right) - \sigma_{r'+1}\left(\bm{G}_{k-1}\right) \geq \frac{1}{r}\sigma_{r'}\left(\bm{G}_{k-1}\right)\bigg\},
\end{align}
and select $r_k$ as follows:
\begin{align}\label{def:r_1}
	r_k = \begin{cases}
		\max\mathcal{R}_k, &\text{ if } \mathcal{R}_k \neq \emptyset,\\
		r, & \text{ otherwise}.
	\end{cases}
\end{align}
Here, we remind the readers that $\sigma_i(\bm{G}_{k-1}) $ is the $i$-th singular value of $\bm{G}_{k-1}$. 
Evidently, the first condition in \eqref{def:R_k} is imposed to ensure well-conditioning of each subblock, 
whereas the second condition in \eqref{def:R_k} aims to guarantee a sufficient spectral separation between adjacent subblocks.

In a nutshell, the proposed algorithm counters the bias effect initially incurred by diagonal deletion via successive ``deflation'', 
a term that we borrow from \cite{dobriban2019deterministic} (although the problem considered therein is drastically different). 
More concretely, we first estimate the first subblock (which contains the largest eigenvalues of interest) by means of the diagonal deletion idea; 
once we finish estimating the eigen-subspace associated with this subblock, 
we can readily compensate for the contribution of this subblock in the diagonal of interest.  
This strategy is then repeated subblock by subblock in order to successively reduce --- or ``deflate'' --- the original bias in the diagonal. 
For this reason, we refer to the proposed algorithm as {\sf Deflated-HeteroPCA}, 
whose complete details are  summarized in Algorithm \ref{algorithm:sequential_heteroPCA}. The computation cost of {\sf Deflated-HeteroPCA} (Algorithm \ref{algorithm:sequential_heteroPCA}) is $\widetilde{O}(n_1^2n_2 + n_1^2r\sum_{k = 1}^{k_{\sf max}}t_k)$. Here, $\widetilde{O}(b)$ 
is equivalent to $O(b)$ except that it hides the logarithmic factors. 

The computational cost of the initialization step is $O(n_1^2n_2)$. For other steps, the main computation cost is attributed to the top-$r_k$ eigendecomposition, which amounts to $\widetilde{O}(n_1^2r)$. Numerically, by setting all $t_k$'s equal to $10$, the algorithm performs well and the computational cost simplifies to $\widetilde{O}(n_1^2n_2 + n_1^2rk_{\sf max}) = \widetilde{O}(n_1^2n_2 + n_1^2r^2)$ (recall that the number of blocks $k_{\sf max}$ is at most $r$). As a comparison, the computation cost of {\sf HeteroPCA} is $\widetilde{O}(n_1^2n_2 + n_1^2rt)$, where $t$ is the number of iterations. As a result, it can be seen that {\sf Deflated-HeteroPCA} does not incur a higher computational burden than {\sf HeteroPCA} when $r=O(\sqrt{n_2})$.

\begin{algorithm}[t]
	\caption{\sf Deflated-HeteroPCA} \label{algorithm:sequential_heteroPCA}
		\DontPrintSemicolon
	\textbf{input:} data matrix $\bm{Y}$ (cf.~\eqref{eq:model-setup}), rank $r$, maximum number of iterations $t_i$, $i =1, 2, ...$\\
	\textbf{initialization:} $k = 0, r_0 = 0, \bm{G}_0 = \mathcal{P}_{\sf off\text{-}diag}\left(\bm{Y}\bm{Y}^\top\right)$.\\
	\While{$r_k < r$}{
		$k = k+1$.\\
		select $r_k$ via Eqn.~\eqref{def:r_1}.\\
		$\left(\bm{G}_k, \bm{U}_k\right) = ${\sf HeteroPCA}$\left(\bm{G}_{k-1}, r_k, t_k\right)$.		
    }
    \textbf{output:} subspace estimate $\bm{U} = \bm{U}_k$.
\end{algorithm}

     \section{Main theory}\label{sec:theory}

In this section, we demonstrate the desirable statistical performance for the proposed algorithm, 
which enjoys substantially improved dependency on the condition number.
Before continuing, we find it helpful to introduce the following rotation matrix for any $\bm{U}\in \mathcal{O}^{n_1, r}$:
\begin{align}\label{def:R}
	\bm{R}_{\bm{U}} = \argmin_{\bm{R} \in \mathcal{O}^{r, r}}\left\|\bm{U}\bm{R} - \bm{U}^\star\right\|_{\F},
\end{align}
the one that best aligns $\bm{U}$ with $\bm{U}^\star$ in the Euclidean sense;  
after all, it is in general infeasible to resolve the ambiguity brought by global rotation.    
As is well known in the literature (e.g., \citet[Section~D.2.1]{ma2020implicit}), 
\begin{align}
	\bm{R}_{\bm{U}} = \mathsf{sgn}\left(\bm{U}^\top\bm{U}^\star\right), 
\end{align}
where ${\sf sgn}(\cdot)$ is defined in \eqref{def:sign_matrix}.

\subsection{Spectral-norm-based statistical guarantees}

Let us begin with statistical guarantees based on the spectral norm accuracy. 
The following theorem asserts that the proposed {\sf Deflated-HeteroPCA} algorithm enjoys appealing theoretical guarantees in terms of the spectral norm error $\|\bm{U}\bm{R}_{\bm{U}} - \bm{U}^\star\|$, no matter how large the condition number of $\bm{\Sigma}^\star$ is. The proof of this theorem is deferred to Section~\ref{proof:thm_spectral}.
\begin{thm}\label{thm:spectral}
	Suppose that Assumption \ref{assump:hetero} holds. 
	Assume that 
	\begin{subequations}
	\begin{align}
		\sigma_r^\star &\geq C_0r\left(\omega_{\sf col} + \sqrt{\omega_{\sf col}\omega_{\sf row}}\right)\sqrt{\log n} \label{assump:snr2}\\
		\mu &\leq c_0\frac{n_1}{r^3} \label{incoherence} \\
		0 &< \mu r\omega_{\sf max}^2 \le \omega_{\sf col}^2
	\end{align}
	\end{subequations}
	for some sufficiently large (resp.~small) constant $C_0 > 0$ (resp.~$c_0 > 0$).
	If the numbers of iterations obey
	\begin{subequations}
		\label{ineq:iter-12}
		\begin{align}
			t_k & > \log\left(C\frac{\sigma_{r_{k-1}+1}^{\star2}}{\sigma_{r_{k}+1}^{\star2}}\right), \qquad 1 \leq k < k_{\sf max} \label{ineq:iter1}\\
			t_{k_{\sf max}} &> \log\left(C\frac{\sigma_{r_{k_{\sf max}-1}+1}^{\star2}}{\omega_{\sf max}^2}\right)\label{ineq:iter2}
		\end{align}
	\end{subequations}
	for some large enough constant $C>0$, 
	then with probability exceeding $1 - O(n^{-10})$, the output returned by Algorithm~\ref{algorithm:sequential_heteroPCA} satisfies
	\begin{align}\label{ineq:svd_spectral2}
		\left\|\bm{U}\bm{R}_{\bm{U}} - \bm{U}^\star\right\| \lesssim \frac{\omega_{\sf col}\sqrt{\log n}}{\sigma_r^{\star}} + \frac{\omega_{\sf col}\omega_{\sf row}\log n}{\sigma_r^{\star2}}.
	\end{align}
	Here,  $r_0 = 0$, $r_1, \dots, r_{k_{\sf max}}$ are the ranks selected in Algorithm \ref{algorithm:sequential_heteroPCA} and $k_{\sf max}$ satisfies $r_{k_{\sf max}} = r$.
\end{thm}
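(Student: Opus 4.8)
The plan is to run an induction over the blocks $k=1,\dots,k_{\sf max}$ produced by the selection rule \eqref{def:R_k}--\eqref{def:r_1}, carried out on a single high-probability event and propagating at each stage a \emph{joint} control of the spectral-norm and the $\ell_{2,\infty}$ (row-wise) accuracy of the running estimates; the row-wise control is indispensable because the quality of the diagonal imputed by {\sf HeteroPCA} is dictated by the row-wise accuracy of the current eigenspace. First I would fix the good event, on which the relevant noise functionals are controlled: after a standard truncation at level $B$ (cf.\ Assumption~\ref{assump:hetero}) and an application of matrix Bernstein and concentration-of-quadratic-forms bounds, with probability $1-O(n^{-10})$ one has $\|\bm{U}^{\star\top}_\perp\bm{E}\bm{V}^\star\|\lesssim\omega_{\sf col}\sqrt{\log n}$, $\|\bm{U}^{\star\top}_\perp(\bm{E}\bm{E}^\top-\bbE[\bm{E}\bm{E}^\top])\bm{U}^\star\|\lesssim\omega_{\sf col}\omega_{\sf row}\log n$, together with their partial-subspace and $\ell_{2,\infty}$ analogues and bounds on the diagonal entries of $\bm{X}^\star\bm{E}^\top$, $\bm{E}\bm{X}^{\star\top}$ and $\bm{E}\bm{E}^\top-\bbE[\bm{E}\bm{E}^\top]$. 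The inductive hypothesis after round $k$ records three facts: (a) $r_1<\dots<r_k$ genuinely split the top of the spectrum into sub-blocks whose aspect ratio is $O(1)$ and at whose boundaries the squared singular values have a relative gap $\gtrsim 1/r$; (b) $\bm{G}_k=\bm{X}^\star\bm{X}^{\star\top}-\mathcal{P}_{\sf diag}\!\big(\sum_{i>r_k}\sigma_i^{\star2}\bm{u}_i^\star\bm{u}_i^{\star\top}\big)+\mathcal{P}_{\sf off\text{-}diag}(\bm{Y}\bm{Y}^\top-\bm{X}^\star\bm{X}^{\star\top})+\bm{\Xi}_k$, where the residual imputation error $\bm{\Xi}_k$ is controlled, in both spectral and row-wise norms, by the block-$k$ noise/bias level; and (c) $\bm{U}_k$ approximates $\bm{U}^\star_{:,1:r_k}$ in both $\ell_2$ and $\ell_{2,\infty}$ with an error that is $o(1)$. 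The quantitative content of (b) is the deflation effect: the ``un-imputed'' diagonal bias shrinks from $\lesssim\frac{\mu r}{n_1}\sigma_1^{\star2}$ before round $1$ to $\lesssim\frac{\mu r}{n_1}\sigma_{r_k+1}^{\star2}$ after round $k$.

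For the inductive step I would first check that the rule \eqref{def:R_k}--\eqref{def:r_1} selects the intended $r_k$. By part (b), $\bm{G}_{k-1}$ equals $\bm{X}^\star\bm{X}^{\star\top}$ up to a perturbation of spectral norm $\lesssim\frac{\mu r}{n_1}\sigma_{r_{k-1}+1}^{\star2}+\|\mathcal{P}_{\sf off\text{-}diag}(\bm{Y}\bm{Y}^\top-\bm{X}^\star\bm{X}^{\star\top})\|$, which by \eqref{assump:snr2}, \eqref{incoherence} and $0<\mu r\omega_{\sf max}^2\le\omega_{\sf col}^2$ is a small constant multiple of $\sigma_{r'}^{\star2}$ for every $r'$ inside the block being formed; Weyl's inequality then transfers the aspect-ratio and the relative-gap conditions of \eqref{def:R_k} from the $\sigma_{r'}(\bm{G}_{k-1})$'s to the $\sigma_{r'}^{\star2}$'s, which pins down $r_k$ and yields part (a). I would then analyze the single call {\sf HeteroPCA}$(\bm{G}_{k-1},r_k,t_k)$: inside this call the matrix to be recovered has top-$r_k$ eigenvalues within a constant factor of one another, incoherent leading eigenvectors, and a combined bias-plus-noise perturbation of size $o(\sigma_{r_k}^{\star2})$, so --- building on the convergence analysis of {\sf HeteroPCA} in \citet{zhang2022heteroskedastic,yan2024inference} --- each inner iteration contracts the excess (diagonal) error by a constant factor. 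After $t_k$ iterations --- with $t_k$ chosen as in \eqref{ineq:iter1}, or \eqref{ineq:iter2} when $k=k_{\sf max}$ (where $\sigma_{r_{k_{\sf max}}+1}^\star=0$ and the floor is instead set by $\mu r\omega_{\sf max}^2\le\omega_{\sf col}^2$) --- the excess error reaches the \emph{deflated} floor $\frac{\mu r}{n_1}\sigma_{r_k+1}^{\star2}$ plus the noise floor, which establishes (b) for $\bm{G}_k$; for the subspace bound (c) I would avoid a crude Davis--Kahan estimate (which would reintroduce $\|\bm{X}^\star\|$ and hence the condition number) and instead use a one-sided bound exploiting that the orthogonal complement of the top-$r_k$ left singular subspace meets $\bm{X}^\star$ only through its tail components --- so the $\bm{X}^\star\bm{E}^\top$ contribution to the rotation is governed by $\sigma_{r_k+1}^\star$ rather than $\sigma_1^\star$ --- together with a leave-one-out decoupling of the rows of the inner iterates for the $\ell_{2,\infty}$ part.

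When $r_{k_{\sf max}}=r$, the hypothesis leaves no un-imputed bias: $\bm{G}_{k_{\sf max}}=\bm{X}^\star\bm{X}^{\star\top}+\mathcal{P}_{\sf off\text{-}diag}(\bm{X}^\star\bm{E}^\top+\bm{E}\bm{X}^{\star\top}+\bm{E}\bm{E}^\top-\bbE[\bm{E}\bm{E}^\top])+\bm{\Xi}_{k_{\sf max}}$ with $\|\bm{\Xi}_{k_{\sf max}}\|$ at or below the noise floor once $t_{k_{\sf max}}$ satisfies \eqref{ineq:iter2}. Writing $\bm{U}=\bm{U}_{k_{\sf max}}$ for the rank-$r$ eigenspace of (essentially) $\bm{G}_{k_{\sf max}}$ and carrying out a first-order eigenspace expansion, the leading term is $\bm{U}^\star_\perp\bm{U}^{\star\top}_\perp(\text{perturbation})\bm{U}^\star(\bm{\Sigma}^{\star2})^{-1}$; the $\bm{X}^\star\bm{E}^\top$ piece vanishes identically because $\bm{U}^{\star\top}_\perp\bm{X}^\star=\bm 0$ (the full column space is now captured), the $\bm{E}\bm{X}^{\star\top}$ piece reduces to $\bm{U}^{\star\top}_\perp\bm{E}\bm{V}^\star(\bm{\Sigma}^\star)^{-1}$ of norm $\lesssim\omega_{\sf col}\sqrt{\log n}/\sigma_r^\star$, and the $\bm{E}\bm{E}^\top-\bbE[\bm{E}\bm{E}^\top]$ piece has norm $\lesssim\omega_{\sf col}\omega_{\sf row}\log n/\sigma_r^{\star2}$; the remaining pieces --- the higher-order eigenvector corrections, the residual $\bm{\Xi}_{k_{\sf max}}$, and (most delicately) the diagonal noise cross-term $\mathcal{P}_{\sf diag}(\bm{X}^\star\bm{E}^\top+\bm{E}\bm{X}^{\star\top})$ coming from the diagonal deletion --- are shown to be $o(\cdot)$ of the above under \eqref{assump:snr2}, \eqref{incoherence} and $\mu r\omega_{\sf max}^2\le\omega_{\sf col}^2$ by again invoking the structure $\bm{U}^{\star\top}_\perp\bm{X}^\star=\bm 0$ and the cancellations between the deleted diagonal and the diagonal reinstated by {\sf HeteroPCA}'s imputation. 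Combining the two surviving pieces gives exactly \eqref{ineq:svd_spectral2}, and the bound is condition-number-free because $(\bm{\Sigma}^\star)^{-1}$, not $\kappa(\bm{\Sigma}^\star)^{-1}$, multiplies the noise.

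I expect the main obstacle to be the per-round analysis of {\sf HeteroPCA} carried out \emph{jointly} in the $\ell_2$ and $\ell_{2,\infty}$ metrics and \emph{propagated faithfully across the $O(r)$ blocks}: one must establish geometric contraction of the inner iterates to the \emph{correctly deflated} floor $\frac{\mu r}{n_1}\sigma_{r_k+1}^{\star2}$ (rather than the undeflated $\frac{\mu r}{n_1}\sigma_1^{\star2}$), which forces one to keep every iterate incoherent via a delicate leave-one-out argument so that the imputed diagonal stays accurate, and then show that the diagonal-imputation errors $\bm{\Xi}_k$ generated in the early (large-singular-value) blocks do not re-enter the final subspace bound with condition-number dependence. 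This last point --- which requires propagating structure-aware (leave-one-out-based) bounds on quantities such as $\bm{U}^{\star\top}_\perp\mathcal{P}_{\sf diag}(\cdot)\bm{U}^\star$ rather than crude spectral-norm bounds, in the spirit of the entrywise analysis of {\sf HeteroPCA} in \citet{yan2024inference} --- is the technical heart of the proof. A secondary difficulty is the degenerate case $\mathcal{R}_k=\emptyset$ in \eqref{def:r_1}, where the algorithm sets $r_k=r$ and well-conditioning of the final block is not guaranteed; this must be reconciled with the (necessarily larger) iteration budget allotted by \eqref{ineq:iter2}.
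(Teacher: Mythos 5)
There is a genuine gap, and it sits exactly at the point your plan identifies as delicate but does not resolve. Your induction keeps $\bm{X}^\star\bm{X}^{\star\top}$ as the reference signal and places the linear-in-noise cross term $\mathcal{P}_{\sf off\text{-}diag}(\bm{X}^\star\bm{E}^\top+\bm{E}\bm{X}^{\star\top})$ inside the perturbation (it is part of your $\mathcal{P}_{\sf off\text{-}diag}(\bm{Y}\bm{Y}^\top-\bm{X}^\star\bm{X}^{\star\top})$ term in hypothesis (b)). That term has spectral norm of order $\sigma_1^\star\|\bm{E}\bm{V}^\star\|\asymp\sigma_1^\star\,\omega_{\sf col}\sqrt{\log n}$, i.e.\ it scales with the \emph{top} singular value. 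Consequently, the step "Weyl's inequality then transfers the aspect-ratio and relative-gap conditions of \eqref{def:R_k} from $\sigma_{r'}(\bm{G}_{k-1})$ to $\sigma_{r'}^{\star2}$" fails for later blocks once $\kappa$ is large: for a block with $\sigma_{r_k}^{\star}\ll\sigma_1^\star$, the uniform Weyl perturbation $\sigma_1^\star\omega_{\sf col}\sqrt{\log n}$ is not $o(\sigma_{r_k}^{\star2})$ under \eqref{assump:snr2}, so neither the rank selection nor the claimed contraction floor $\tfrac{\mu r}{n_1}\sigma_{r_k+1}^{\star2}$ can be justified this way, and the condition number re-enters exactly where you are trying to remove it. Your proposed fix (a "one-sided" bound using $\bm{U}^{\star\top}_\perp\bm{X}^\star=\bm{0}$) only helps at the very last round where the full column space is captured; in intermediate rounds the relevant complements do meet $\bm{X}^\star$ through tail blocks whose scale is still dwarfed by the cross term, and it does not help at all for the eigenvalue/gap-transfer step, which is an assertion about spectra, not about subspace angles.

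The paper avoids this by a different decomposition, which is the one missing idea: write $\bm{Y}\bm{Y}^\top=\big(\bm{U}^\star\bm{\Sigma}^\star+\bm{E}\bm{V}^\star\big)\big(\bm{U}^\star\bm{\Sigma}^\star+\bm{E}\bm{V}^\star\big)^\top+\big(\bm{E}\bm{E}^\top-\bm{E}\bm{V}^\star\bm{V}^{\star\top}\bm{E}^\top\big)$, so the cross term is absorbed into the signal factor before squaring. The effective signal is a rank-$r$ matrix with eigenvalues $\widetilde{\sigma}_i^2$ satisfying $|\widetilde{\sigma}_i-\sigma_i^\star|\le\|\bm{E}\bm{V}^\star\|$ (a perturbation proportional to the \emph{local} scale $\sigma_i^\star$ after squaring, so all gap/aspect-ratio structure survives for every block), its column space $\widetilde{\bm{U}}$ is shown to remain incoherent, and the noise handed to the analysis is purely quadratic in $\bm{E}$, of size $\big(\mu r\omega_{\sf max}^2+\omega_{\sf col}(\omega_{\sf row}+\omega_{\sf col})\big)\log n$, independent of $\sigma_1^\star$. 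A \emph{deterministic} theorem about {\sf Deflated-HeteroPCA} (induction on rounds tracking only $\|\bm{G}_k^t-\overline{\bm{M}}\|$, using Weyl, the $\sin\Theta$ theorem within each well-conditioned block, and the diagonal-extraction bound $\|\mathcal{P}_{\sf diag}(\bm{P}_{\overline{\bm{U}}}\bm{A})\|\le\sqrt{\overline{\mu}r/n_1}\,\|\bm{A}\|$) then gives $\|\bm{U}\bm{U}^\top-\widetilde{\bm{U}}\widetilde{\bm{U}}^\top\|\lesssim\|\mathcal{P}_{\sf off\text{-}diag}(\bm{Z})\|/\widetilde{\sigma}_r^2+e^{-t_{k_{\sf max}}}$, and a final $\sin\Theta$ comparison of $\widetilde{\bm{U}}$ with $\bm{U}^\star$ yields \eqref{ineq:svd_spectral2}. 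Note also that for this spectral-norm theorem no leave-one-out argument and no row-wise control of the iterates are needed (only the incoherence of the fixed $\widetilde{\bm{U}}$); the heavy $\ell_{2,\infty}$ machinery you invoke is reserved in the paper for Theorem~\ref{thm:two_to_infty}, and even there it is based on a spectral representation expansion rather than leave-one-out. Unless you replace your decomposition with one in which the perturbation is condition-number-free (or supply a genuinely new argument controlling the cross term blockwise at scale $\sigma_{r_k}^{\star}$), the plan as written does not go through.
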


We find it helpful to compare our theoretical guarantees with prior theory for this problem. 
To begin with, the prior theory \cite{zhang2022heteroskedastic} only covers the well-conditioned case; 
when $\kappa $ is a bounded constant (as assumed therein), 
our statistical error bound \eqref{ineq:svd_spectral2} matches the one in \citet[Theorem 4]{zhang2022heteroskedastic} (up to some logarithmic factors).\footnote{\cite{zhang2022heteroskedastic} establishes estimation guarantees for the $\sin\Theta$ distance $\|\sin\Theta(\widehat{\bm{U}}, \bm{U}^\star)\|$, which is (nearly) equivalent to the metric $\min_{\bm{R} \in \mathcal{O}^{r \times r}}\|\widehat{\bm{U}}\bm{R} - \bm{U}^\star\|$ (or more precisely, $\|\sin\Theta(\widehat{\bm{U}}, \bm{U}^\star)\| \asymp \min_{\bm{R} \in \mathcal{O}^{r \times r}}\|\widehat{\bm{U}}\bm{R} - \bm{U}^\star\|$). See \cite[Lemma 2.6]{chen2021spectral} for details.}
In addition, when it comes to the case where $\omega_{i,j} \asymp \omega_{\sf max}$ for all $(i,j)\in [n_1]\times [n_2]$, our error bound \eqref{ineq:svd_spectral2} simplifies to
	\begin{align*}
		\left\|\bm{U}\bm{R}_{\bm{U}} - \bm{U}^\star\right\| \lesssim \frac{\sqrt{n_1\log n}\,\omega_{\sf max}}{\sigma_r^{\star}} + \frac{\sqrt{n_1n_2\log^2 n}\,\omega_{\sf max}^2}{\sigma_r^{\star2}},
	\end{align*} 
	which matches the minimax lower bounds  \citet[Theorem 2]{cai2021subspace} and \citet[Theorem 4]{cai2018rate} (ignoring logarithmic factors). 
	It is noteworthy that when $\omega_{i,j} \asymp \omega_{\sf max}$ for all $(i,j)\in [n_1]\times [n_2]$ and $r = O(1)$, 
	the signal-to-noise ratio condition \eqref{assump:snr2} simplifies to 
	\begin{align}
		\sigma_r^\star \gtrsim \left[\left(n_1n_2\right)^{1/4} + n_1^{1/2}\right]\omega_{\sf max}\sqrt{\log n}
	\end{align}
	which is necessary to ensure ---  up to logarithmic factor --- the existence of a consistent estimator (which means the existence of an estimator $\widehat{\bm{U}}$ obeying	$\|\widehat{\bm{U}}\bm{R}_{\widehat{\bm{U}}} - \bm{U}^\star\| = o(1)$) (see \citet[Theorem 2]{cai2021subspace}).

\subsection{Fine-grained $\ell_{2,\infty}$-norm-based statistical guarantees}
Moving beyond the spectral norm bounds, 
we proceed to the fine-grained  $\ell_{2,\infty}$-norm-based error bounds for column subspace estimation, 
which further capture how well the estimation error is spread out across the rows  \citep{ma2020implicit,chen2020noisy,chen2019inference,chen2021bridging,agterberg2022entrywise,zhang2022leave,cai2022nonconvex}. 
As has been shown in the literature, 
such $\ell_{2,\infty}$-based subspace estimation guarantees 
play a crucial role in deriving performance bounds for the subsequent tasks like entrywise covariance estimation, entrywise tensor estimation,  
exact recovery in a variety of clustering and mixture models \citep{cai2021subspace,yan2024inference,abbe2020entrywise,cai2021subspace,abbe2022lp}. 

Before formally presenting our $\ell_{2,\infty}$-norm-based result, we first introduce the following assumption on the noise matrix $\bm{E}$.
\begin{assump}\label{assump:hetero2}
	Suppose that the noise components satisfy Condition 1 in Assumption \ref{assump:hetero}. In addition, we assume that
	\begin{equation}
		\bbP\left(\left|E_{i,j}\right| > B\right) \leq n^{-12} ,
	\end{equation}
	where $B$ satisfies, for some  universal constant $C_{\sf b} > 0$, that
	$$B \leq C_{\sf b}\omega_{\sf max}\frac{\min\big\{\left(n_1n_2\right)^{1/4}, \sqrt{n_2}\big\}}{\log n}.$$
\end{assump}
\begin{rmk}
	Our assumptions on the noise are very mild and they hold across a diverse array of distributions, including
	\begin{itemize}
		\item uniform distributions;
		\item $C\omega_{\sf max}$-sub-Gaussian random variables;
		\item centered Poisson random variables with parameter $\lambda_{\sf max} = \omega_{\sf max}^2 \gtrsim \frac{\log^4 n}{\min\{(n_1n_2)^{1/2}, n_2\}}$;
		\item centered Bernoulli random variables with $p_{i,j} \in [\frac{\log^2 n}{C_{\sf b}^2\min\{(n_1n_2)^{1/2}, n_2\}}, 1 - \frac{\log^2 n}{C_{\sf b}^2\min\{(n_1n_2)^{1/2}, n_2\}}]$. 
	\end{itemize}
	In addition, it is worth noting that the constant $12$ can be replaced by any other constant $c > 2$ to enusre a high-probability result. Here, we choose $12$ simply to guarantee that the final estimation error bound holds with probability exceeding $1 - O(n^{-10})$. With the logarithmic factors neglected, the only difference between Assumption \ref{assump:hetero2} and \citet[Assumption 2]{cai2021subspace} is that no symmetric distribution requirement is needed in Assumption \ref{assump:hetero2}. 
\end{rmk}
Built upon Assumption \ref{assump:hetero2}, we derive the following $\ell_{2,\infty}$-based theoretical guarantees for {\sf Deflated\text{-}HeteroPCA}, 
with the proof postponed to Section \ref{proof:thm_two_to_infty}.
\begin{thm}\label{thm:two_to_infty}
	Suppose that Assumption \ref{assump:hetero2} holds and the signal-to-noise ratio satisfies
	\begin{subequations}
		\label{assump:snr_two_to_infty-all}
	\begin{align}
		\frac{\sigma_r^\star}{\omega_{\sf max}} &\geq C_0r\left[\left(n_1n_2\right)^{1/4} + n_1^{1/2}\right]\log n \label{assump:snr_two_to_infty}\\
		\mu &\leq c_0\frac{n_1}{r^3} \label{assump:snr_two_to_infty_234}
	\end{align}
	\end{subequations}
	for some large (resp.~small) enough constant $C_0>0$ (resp.~$c_0>0$). 
	If the numbers of iterations satisfy \eqref{ineq:iter-12}, then with probability exceeding $1 - O(n^{-10})$, 
	then the estimate returned by Algorithm~\ref{algorithm:sequential_heteroPCA} satisfies
	\begin{subequations}
		\label{eq:two-to-inf-and-L2}
		\begin{align}
			\left\|\bm{U}\bm{R}_{\bm{U}} - \bm{U}^\star\right\| _{2,\infty} &\lesssim \sqrt{\frac{\mu r}{n_1}}\zeta_{\sf op},\label{ineq:two_to_infty_guarantee}\\
			\left\|\bm{U}\bm{R}_{\bm{U}} - \bm{U}^\star\right\| &\lesssim \zeta_{\sf op},\label{ineq:spectral_guarantee}
		\end{align}
	\end{subequations}
	where
	\begin{align}\label{def:zate_op}
		\zeta_{\sf op} = \frac{\sqrt{n_1n_2}\,\omega_{\sf max}^2\log^2 n}{\sigma_r^{\star2}} + \frac{\sqrt{n_1}\,\omega_{\sf max}\log n}{\sigma_{r}^\star}.
	\end{align}
\end{thm}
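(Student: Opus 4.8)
The plan is to reduce the analysis of {\sf Deflated-HeteroPCA} to a block-by-block induction, where in the $k$-th round we treat {\sf HeteroPCA} (Algorithm~\ref{algorithm:heteroPCA}) as being applied to a ``residual'' Gram matrix whose spectral content is dominated by the $k$-th well-conditioned subblock. The first step is to establish the structural facts about the data-driven ranks $\{r_k\}$ selected via \eqref{def:r_1}: using Weyl's inequality together with the spectral-norm control on $\bm{G}_0 - \bm{X}^\star\bm{X}^{\star\top}$ (which I would get from the standard bound $\|\mathcal{P}_{\sf off\text{-}diag}(\bm{Y}\bm{Y}^\top) - \mathcal{P}_{\sf off\text{-}diag}(\bm{X}^\star\bm{X}^{\star\top})\|$ under Assumption~\ref{assump:hetero2}), I would show that the selected $r_k$'s align — up to the tolerances allowed in \eqref{def:R_k} — with an intrinsic partition of the true singular values $\{\sigma_i^{\star2}\}$ into geometrically-separated, constant-conditioned clusters. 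This is essentially the ``deflation bookkeeping'': each subblock has condition number $O(1)$, consecutive subblocks are separated by a constant multiplicative factor, and the number of blocks $k_{\sf max}$ is $O(\log\kappa)$ (in particular $\le r$).

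The second and main step is the per-round analysis. Inductively assume that after round $k-1$ the estimate $\bm{U}_{k-1}$ of the leading-$r_{k-1}$ subspace satisfies the target $\ell_2$ and $\ell_{2,\infty}$ bounds (with $\sigma_r^\star$ replaced by $\sigma_{r_{k-1}}^\star$ in the denominators), and that $\bm{G}_{k-1}$ has correctly imputed diagonal on the span of the first $k-1$ blocks so that $\mathcal{P}_{\sf diag}(\bm{G}_{k-1} - \bm{X}^\star\bm{X}^{\star\top})$ is controlled. Then I would invoke the existing (well-conditioned) {\sf HeteroPCA} theory — the $\ell_2$ and $\ell_{2,\infty}$ guarantees of \citet{zhang2022heteroskedastic} and especially the leave-one-out/entrywise analysis in \citet{yan2024inference} — applied with rank $r_k$ and initialization $\bm{G}_{k-1}$: because $\bm{G}_{k-1}$'s diagonal bias on the first $k-1$ blocks has already been ``deflated,'' the residual bias that the diagonal-deletion operator must absorb is only of order $\sigma_{r_{k-1}+1}^{\star2}$, which by the clustering in Step~1 is comparable to $\sigma_{r_k}^{\star2}$ — i.e., the effective signal of the current block — so the curse of ill-conditioning does not bite. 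The iteration-count conditions \eqref{ineq:iter-12} are exactly what is needed for {\sf HeteroPCA} to drive its internal contraction far enough that the remaining error is statistical rather than optimization error; for the last block, \eqref{ineq:iter2} pushes the residual down to the $\omega_{\sf max}^2$ floor. Concatenating the blockwise subspace estimates and tracking how the rotation matrices $\bm{R}_{\bm{U}_k} = \mathsf{sgn}(\bm{U}_k^\top\bm{U}^\star)$ compose across rounds then yields \eqref{ineq:two_to_infty_guarantee}–\eqref{ineq:spectral_guarantee} with the final denominator $\sigma_r^{\star2}$ (since $\sigma_{r_{k_{\sf max}}}^\star = \sigma_r^\star$).

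I expect the main obstacle to be the entrywise ($\ell_{2,\infty}$) control propagated through the deflation, for two reasons. First, the residual matrix fed into round $k$ is not the clean model $\bm{X}^\star\bm{X}^{\star\top}+\text{noise}$ but involves the estimated diagonal $\mathcal{P}_{\sf diag}(\bm{U}_{k-1}\bm{\Lambda}_{k-1}\bm{U}_{k-1}^\top)$, so the noise is no longer independent of the signal; closing this requires a leave-one-out decoupling argument at the level of rows, carefully arguing that each row's contribution to the imputed diagonal is only weakly coupled to that row's noise, and that the incoherence hypothesis \eqref{assump:snr_two_to_infty_234} ($\mu \le c_0 n_1/r^3$, the extra $r^3$ slack) is what makes the accumulated cross-terms over $k_{\sf max}=O(\log\kappa)$ rounds negligible. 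Second, I must ensure the rank-selection rule is itself stable under the $\ell_2$ perturbations — i.e., that with high probability $\mathcal{R}_k$ is nonempty at every round except possibly the last and that $r_{k_{\sf max}}=r$ is actually reached — which is a delicate interplay between the gap condition in \eqref{def:R_k}, the SNR assumption \eqref{assump:snr_two_to_infty}, and the running estimation error; handling the boundary case where some true singular values are nearly tied (so the gap test narrowly fails) is the fiddly part. Everything else — the spectral-norm bookkeeping, Weyl/Davis–Kahan applications, and the union bounds over $O(n^{-10})$-probability events — is routine given the cited {\sf HeteroPCA} machinery.
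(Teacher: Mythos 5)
There is a genuine gap in your Step 2, and it sits exactly where the theorem's novelty lies. You propose to run the per-round analysis by invoking the existing well-conditioned {\sf HeteroPCA} guarantees of \citet{zhang2022heteroskedastic} and the leave-one-out $\ell_{2,\infty}$ analysis of \citet{yan2024inference} with rank $r_k$ and initialization $\bm{G}_{k-1}$. But for $k\ge 2$ the relevant top-$r_k$ spectrum spans several deflation blocks, so its condition number is $\sigma_1^\star/\sigma_{r_k}^\star$, which can be arbitrarily large; deflation repairs the \emph{diagonal bias}, not the bounded-condition-number hypothesis those theorems need throughout their contraction and spectral-gap arguments, so they cannot be applied off the shelf. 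More importantly, even granting a per-round contraction, the leave-one-out route you sketch for the entrywise control is precisely the machinery that is known to produce $\kappa$-dependent bounds (a $\kappa^2$ loss in \eqref{rate:heteroPCA2} and $\kappa$-laden SNR conditions such as \eqref{snr:diagonal_deletion}); you acknowledge the coupling created by the imputed diagonal but offer no mechanism by which the resulting $\ell_{2,\infty}$ bound becomes condition-number-free, which is the entire content of \eqref{ineq:two_to_infty_guarantee}. A smaller but symptomatic inaccuracy: the algorithm's output is not a concatenation of blockwise subspace estimates with composed rotations; it is the top-$r$ eigenspace of the final imputed Gram matrix, so the induction has to be carried on the Gram-matrix error, not on per-block subspaces.

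The paper closes this gap by a different device that your plan does not contain. Since {\sf Deflated-HeteroPCA} never touches the off-diagonal entries, every iterate $\bm{G}_k^t$ agrees off the diagonal with a single fixed ``oracle'' matrix $\bm{M}^{\sf oracle}=\big(\bm{U}^\star\bm{\Sigma}^\star+\bm{E}\bm{V}^\star\big)\big(\bm{U}^\star\bm{\Sigma}^\star+\bm{E}\bm{V}^\star\big)^\top+\mathcal{P}_{\sf off\text{-}diag}\big(\bm{E}\bm{E}^\top-\bm{E}\bm{V}^\star\bm{V}^{\star\top}\bm{E}^\top\big)$, so $\bm{G}_k^t-\bm{M}^{\sf oracle}$ is purely diagonal. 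A blockwise induction shows this diagonal error contracts to size $\sqrt{\mu r/n_1}\,\|\bm{Z}\|$ plus $\sqrt{\mu r/n_1}\,\widetilde{\sigma}_{r_k+1}^2$, whence plain Davis--Kahan already places the output within $\sqrt{\mu r/n_1}\,\zeta_{\sf op}$ of the oracle subspace in \emph{spectral} norm --- no entrywise analysis of the iterates is needed at all. The genuinely fine-grained $\ell_{2,\infty}$ work is then done once, for the algorithm-independent matrix $\bm{M}^{\sf oracle}$, via the spectral representation formula of \citet{xia2021normal} (Lemma \ref{lm:representation}): the eigenspace difference is expanded into an infinite sum of polynomials in the perturbation, and each term is controlled in $\ell_{2,\infty}$ through the inductive moment bounds of Lemmas \ref{lm:error_moment1} and \ref{lm:error_moment2} on $\big[\mathcal{P}_{\sf off\text{-}diag}(\bm{E}\bm{E}^\top)\big]^k\bm{E}\bm{V}^\star$ and $\big[\mathcal{P}_{\sf off\text{-}diag}(\bm{E}\bm{E}^\top)\big]^k\bm{U}^\star$. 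This is explicitly \emph{not} a leave-one-out analysis of the algorithm, and it is what removes $\kappa$ from both the error bound and the SNR requirement \eqref{assump:snr_two_to_infty}; without an idea of this kind (or a new condition-number-free leave-one-out argument, which you would have to construct rather than cite), your outline does not yield the stated theorem.
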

Encouragingly, 
both the $\ell_{2,\infty}$-based and spectral-norm-based estimation guarantees in \eqref{eq:two-to-inf-and-L2} match the minimax lower bounds 
previously established in \citet[Theorem 2]{cai2021subspace} (up to logarithmic factors), 
thus confirming the near minimax optimality of our results. 
It can also been seen from \citet[Theorem 2]{cai2021subspace} that 
the signal-to-noise ratio requirement \eqref{assump:snr_two_to_infty} is, in general, essential (ignoring logarithmic factors) 
in order to enable the plausibility of consistent estimation.

\paragraph{Comparison with prior results.} 
In order to demonstrate the utility of our algorithm and the accompanying theory, 
we compare our results with past works in the sequel. 
To ease presentation, the discussion below focuses attention on the case where $\mu, r = O(1)$. 
\begin{itemize}

	\item {\em Requirement on the condition number $\kappa$.} 
		In order to obtain a consistent estimator\footnote{Here, a column subspace estimator $\widehat{\bm{U}}$ 
		is said to be consistent if $\min_{\bm{R} \in \mathcal{O}^{r, r}}\|\widehat{\bm{U}}\bm{R} - \bm{U}^\star\| = o(1)$.}, 
		 all prior theory for both diagonal-deleted PCA (see \citet[Theorem 1]{cai2021subspace}) and {\sf HeteroPCA} (see \citet[Theorem 4]{zhang2022heteroskedastic}, \citet[Theorem 5]{yan2024inference} and \citet[Assumption 4]{agterberg2022entrywise}) assumes the condition number $\kappa$ to obey 
		 \begin{equation}
			 \text{(prior requirement on }\kappa\text{)} 
			 \qquad \kappa \lesssim n_1^{1/4}, 
		\end{equation}
		in order to control the bias incurred during the diagonal deletion step. 
		This, however, falls short of accommodating a wider range of condition numbers. 
		In contrast, our result in Theorem \ref{thm:two_to_infty} does not impose any assumptions on the condition number.

	\item {\em Statistical error bounds.} 
	We now compare our statistical error bounds with the ones obtained in \cite{cai2021subspace,agterberg2022entrywise,yan2024inference}. 
	For notational convenience, define
	\begin{equation}
		\mathcal{E}_{\sf noise} := \frac{\sqrt{n_1n_2}\,\omega_{\sf max}^2\log n}{\sigma_r^{\star2}} + \frac{\kappa\omega_{\sf max}\,\sqrt{n_1\log n}}{\sigma_{r}^\star},
	\end{equation}
	which makes it more convenient for us to describe the previous results. 
	\begin{itemize}
		\item

		Under the signal-to-noise ratio condition
	\begin{align}\label{snr:diagonal_deletion}
		\frac{\sigma_r^\star}{\omega_{\sf max}} \gtrsim \left(\kappa\left(n_1n_2\right)^{1/4} + \kappa^3n_1^{1/2}\right)\sqrt{\log n}, 
	\end{align}
	\citet[Theorem 1]{cai2021subspace} asserts that the estimate $\widehat{\bm{U}}_{\mathsf{del}}$ returned by  diagonal-deleted PCA obeys, with high probability,
	\begin{align}\label{rate:diagonal_deletion}
		\min_{\bm{R} \in \mathcal{O}^{r, r}}\big\|\widehat{\bm{U}}_{\mathsf{del}}\bm{R} - \bm{U}^\star\big\| _{2,\infty} \lesssim \kappa^2\sqrt{\frac{1}{n_1}}\big(\mathcal{E}_{\sf noise} + \mathcal{E}_{\sf diag\text{-}del}\big) ,
	\end{align}
	where $\mathcal{E}_{\sf diag\text{-}del}$ is an additional error term due to the bias resulting from diagonal deletion. 

	\item 
	Focusing on the case where $n_2 \gtrsim n_1$, \citet[Theorem 2]{agterberg2022entrywise} establishes an $\ell_{2,\infty}$ error bound for the {\sf HeteroPCA} estimate $\widehat{\bm{U}}_{\sf hpca}$ as follows: 
	\begin{align}\label{rate:heteroPCA1}
		\min_{\bm{R} \in \mathcal{O}^{r, r}}\big\|\widehat{\bm{U}}_{\sf hpca}\bm{R} - \bm{U}^\star\big\| _{2,\infty} \lesssim \sqrt{\frac{1}{n_1}}\mathcal{E}_{\sf noise},
	\end{align}
	albeit under a much more stringent SNR requirement:  
	\begin{align}\label{snr:heteroPCA1}
		\sigma_r^\star \gg \kappa\omega_{\sf max}\sqrt{n_2 \log n}.
	\end{align}

	\item 
		\citet[Theorem 5]{yan2024inference} further shows that under the same SNR condition \eqref{snr:diagonal_deletion},  {\sf HeteroPCA} yields an estimator $\widehat{\bm{U}}_{\sf hpca}$ with the following high-probability $\ell_{2, \infty}$ error bound:
	\begin{align}\label{rate:heteroPCA2}
		\min_{\bm{R} \in \mathcal{O}^{r, r}}\big\|\widehat{\bm{U}}_{\sf hpca}\bm{R} - \bm{U}^\star\big\| _{2,\infty} \lesssim \kappa^2\sqrt{\frac{1}{n_1}}\mathcal{E}_{\sf noise}.
	\end{align}
	
	\end{itemize}

	Let us compare our bounds with the above results. 
Recognizing that $\mathcal{E}_{\sf noise}$ is at least as large as $ \zeta_{\sf op}$ if we ignore logarithmic factors, 
our $\ell_{2,\infty}$ error bound \eqref{ineq:two_to_infty_guarantee} improves the theoretical guarantees \eqref{rate:diagonal_deletion} and \eqref{rate:heteroPCA2} by at least a factor of $\kappa^2$. Additionally,  our bound \eqref{ineq:two_to_infty_guarantee} outperforms the bound \eqref{rate:heteroPCA1} in terms of the dependency on $\kappa$ (ignoring logarithmic factors).

	\item 
	{\em SNR requirement.} 
	Let us also briefly make comparisons regarding the SNR required for consistent estimation. 
	To begin with, we make note that the vanilla SVD-based approach (cf.~\eqref{eq:vanilla-SVD-alg}) 
	requires the SNR to exceed \citep{cai2021subspace,zhang2022heteroskedastic}
	\begin{equation}
		\frac{ \sigma_r^\star }{ \omega_{\sf max} } \gtrsim \sqrt{n_1} + \sqrt{n_2},
	\end{equation}
	which can be substantially more stringent than the one required in \eqref{assump:snr_two_to_infty} if $n_2 \gg n_1$.
   In addition, compared with the SNR requirement imposed in the existing theory for diagonal-deleted PCA and {\sf HeteroPCA}, our condition \eqref{assump:snr_two_to_infty} is weaker than the one used in \cite{cai2021subspace} and \cite{yan2024inference} (see \eqref{snr:diagonal_deletion}) by at least a factor of $\kappa$, 
   while at the same time being weaker than the condition \eqref{snr:heteroPCA1} assumed in \cite{agterberg2022entrywise} by a factor of $\kappa(n_2/n_1)^{1/4}$ when $n_2 \gg n_1$.

\end{itemize}

\paragraph{High-level proof strategy.}

While the proofs of our main theorems are deferred to the Appendix, 
we highlight some novelty and technical challenges in our proof. 
In an attempt to obtain fine-grained $\ell_{2,\infty}$ control while remaining condition-number-free, 
we develop a new proof strategy that differs drastically from the state-of-the-art techniques based on leave-one-out decoupling arguments \citep{yan2024inference,cai2021subspace}. 
Inspired by a spectral representation lemma derived in the recent work \cite{xia2021normal} (see also Lemma \ref{lm:representation}), 
we proceed by decomposing the difference between the subspaces into an infinite sum of polynomials of the error matrix. 
With this decomposition at hand, one major part of our proof hinges upon establishing sharp $\ell_{2,\infty}$ bounds on each of the polynomials of the error matrix. 
The key challenge for this part lies in how to deal with the complicated and accumulated dependence brought by the power of the error matrix, 
for which we resort to careful induction analyses. 
We will then single out several sequences of critical quantities and develop intricate arguments to control these quantities in a recursive and inductive manner.

    \section{Consequences for specific models}\label{sec:example}
To better illustrate the effectiveness of the proposed algorithm, 
we develop concrete consequences of our theory in Section~\ref{sec:theory} for two specific models. 
In each case, we shall begin by describing the model, followed by concrete algorithms and theory tailored to the specific model.

\subsection{Factor models and spiked covariance models}\label{sec:example_pca}
\paragraph{Model.} 
A frequently studied model employed to capture low-dimensional structure in high-dimensional sample data is  the factor model, 
which finds applications numerous contexts including finance and econometrics \citep{lawley1962factor,fan2020statistical,fan2021robust}, functional magnetic resonance imaging \citep{chen2015reduced}, and signal processing \citep{zhao1986detection,kritchman2008determining,kritchman2009non},  to name just a few. For concreteness, suppose that we observe a collection of $n$ independent sample vectors in $\mathbb{R}^d$ generated as follows:
\begin{subequations}
\label{eq:factor-model}
\begin{align}
	\bm{y}_j &= \bm{B}^\star\bm{f}_j + \bm{\varepsilon}_j \in \bbR^d, 
\end{align}
where $\bm{B}^\star \in \bbR^{d\times r}$ represents the factor loading matrix with $r\ll d$, $\{\bm{f}_j\}$ stands for the latent factor vectors, 
and 
$\{\bm{\varepsilon}_j\}$ denotes the noise vectors.  We assume that 
\begin{align}
	\bm{B}^\star = \bm{U}^\star\bm{\Lambda}^{\star1/2} \in \bbR^{d\times r} 
	\qquad \text{and}\qquad
	\bm{f}_j \stackrel{\rm{i.i.d.}}{\sim} \mathcal{N}\left(\bm{0}, \bm{I}_r\right),~~ 1 \leq j \leq n, 
\end{align}
\end{subequations}
with $\bm{U}^{\star}\in \mathcal{O}^{d,r}$ and $\bm{\Lambda}^\star = {\sf diag}(\lambda_1^\star, \dots, \lambda_r^\star)$ being a diagonal matrix containing all eigenvalues of $\bm{B}^\star\bm{B}^{\star\top}$. 
 Equivalently, one can express it as the following spiked covariance model: 
\begin{align}
	\bm{y}_j &= \bm{x}_j + \bm{\varepsilon}_j,  \qquad \text{with } \bm{x}_j \overset{\text{i.i.d.}}{\sim} \mathcal{N}\left(\bm{0}, \bm{U}^\star\bm{\Lambda}^\star\bm{U}^{\star\top}\right),  \quad 1 \leq j \leq n.\label{model:PCA}
\end{align}
The noise vectors are allowed to be heteroskedastic, and it is assumed that
\begin{itemize}
	\item the $\varepsilon_{i,j}$'s are statistically independent, zero-mean, and $\omega$-sub-Gaussian,
\end{itemize}
where $\omega>0$ is an upper bound on the sub-Gaussian norm of any noise entry. 
We also assume that
\begin{equation}
	\left\|\bm{U}^\star\right\|_{2,\infty} \leq \sqrt{\frac{\mu_{\sf pc} r}{d}}.
	\label{eq:incoherence-Ustar-pc}
\end{equation}
Our goal is to estimate the subspace $\bm{U}^\star$ based on the observed vectors $\{\bm{y}_i\}_{1\leq i\leq n}$.

\paragraph{Algorithm and theoretical guarantees.} 
Taking the data matrix as $\bm{Y} = [\bm{y}_1 \ \dots \ \bm{y}_n] \in \bbR^{d \times n}$, 
we can readily invoke Algorithm~\ref{algorithm:sequential_heteroPCA} to estimate the subsapce $\bm{U}^\star$. 
The performance guarantees are stated below, whose proof is deferred to Section~\ref{proof:corollary_pca}.

\begin{cor}\label{cor:pca}
	Consider the factor model in \eqref{eq:factor-model}. Assume that 
	\begin{subequations}
	\begin{align}
		\frac{\lambda_r^\star}{ \omega^2 } &\geq C_1 r^2 \bigg[ \bigg( \frac{d}{n} \bigg)^{1/2} + \frac{d}{n} \bigg]\log^2 (n+d), \label{eq:assumption-PCA-SNR}\\
		\mu_{\sf pc} \vee \log(n+d) &\leq c_1\frac{d}{r^3} ,\label{eq:assumption-PCA-inc}\\
		r \vee \log(n+d) &\leq c_1 n 
	\end{align}
	\end{subequations}
	for some sufficiently large (resp.~small) constant $C_1>0$ (resp.~$c_1>0$). 
	Suppose that the numbers of iterations obey, for some large enough constant $C>0$, 
	\begin{subequations}
		\begin{align}
			t_k &\geq \log_2\left(C\frac{\lambda_{r_{k-1} + 1}^{\star}}{\lambda_{r_k + 1}^{\star}}\right), \quad \forall 1 \leq k \leq k_{\sf max} - 1,\label{ineq:iter_pca_1}\\
			t_{k_{\sf max}} &\geq \log\left(C\frac{n\lambda_{r_{k_{\sf max}-1} + 1}^{\star}}{\omega^2}\right),\label{ineq:iter_pca_2}
		\end{align}
	\end{subequations}
	where $k_{\sf max}$ satisfies $r_{k_{\sf max}} = r$. 
	Then with probability exceeding $1 - O\big((n+d)^{-10}\big)$, the output $\bm{U}$ returned by Algorithm~\ref{algorithm:sequential_heteroPCA} satisfies
	\begin{subequations}
		\begin{align}
			\left\|\bm{U}\bm{R}_{\bm{U}} - \bm{U}^\star\right\| _{2,\infty} &\lesssim \sqrt{\frac{\left(\mu_{\sf pc} + \log(n+d)\right)r}{d}}\left(\frac{\sqrt{d/n}\,\omega^2\log^2(n+d)}{\lambda_r^{\star}} + \frac{\sqrt{d/n}\,\omega\log(n+d)}{\sqrt{\lambda_{r}^\star}}\right),\label{ineq:pca_two_to_infty}\\
			\left\|\bm{U}\bm{R}_{\bm{U}} - \bm{U}^\star\right\| &\lesssim \frac{\sqrt{d/n}\,\omega^2\log^2 (n+d)}{\lambda_r^{\star}} + \frac{\sqrt{d/n}\,\omega\log(n+d)}{\sqrt{\lambda_{r}^\star}}\label{ineq:pca_spectral}.
		\end{align}
	\end{subequations}
\end{cor}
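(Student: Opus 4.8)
The plan is to recast the factor/spiked‑covariance model as an instance of the general model \eqref{eq:model-setup} and then invoke Theorem~\ref{thm:two_to_infty}. Collect the factors into $\bm F=[\bm f_1\ \cdots\ \bm f_n]\in\bbR^{r\times n}$ and the noise vectors into $\bm\Xi=[\bm\varepsilon_1\ \cdots\ \bm\varepsilon_n]\in\bbR^{d\times n}$, so that $\bm Y=\bm X+\bm\Xi$ with $\bm X:=\bm U^\star\bm\Lambda^{\star1/2}\bm F$. We condition on $\bm F$ throughout: then $\bm X$ is a fixed rank‑$r$ matrix playing the role of the signal ``$\bm X^\star$'' of Section~\ref{sec:setting}, while $\bm\Xi$—which is independent of $\bm F$ and hence retains its full distribution after conditioning—plays the role of the noise, with ambient dimensions $(n_1,n_2)=(d,n)$. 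Writing the thin SVD $\bm\Lambda^{\star1/2}\bm F=\bm P\bm D\bm W^\top$ with $\bm P\in\mathcal{O}^{r,r}$, the left singular subspace of $\bm X$ is spanned by $\bm U^\star\bm P$ and therefore coincides with that of $\bm U^\star$; since right multiplication by an orthogonal matrix preserves row norms, any $\ell_2$‑ or $\ell_{2,\infty}$‑bound that Theorem~\ref{thm:two_to_infty} produces for estimating the left singular subspace of $\bm X$ is, after absorbing $\bm P$ into the alignment rotation, exactly a bound for estimating $\bm U^\star$. It then remains to (i) verify the hypotheses of Theorem~\ref{thm:two_to_infty} with high probability over $\bm F$, and (ii) translate $\zeta_{\sf op}$ and the incoherence factor into the stated quantities.

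For step (i), the crux is condition‑number‑free control of all singular values of $\bm X$ together with the incoherence of both singular subspaces. Since $\bm F$ has i.i.d.\ standard Gaussian entries and $n\gg r+\log(n+d)$ (guaranteed by $r\vee\log(n+d)\le c_1 n$), standard Gaussian random‑matrix bounds give $\sigma_1(\bm F),\sigma_r(\bm F)\asymp\sqrt n$ with probability $1-O((n+d)^{-11})$; the operator‑monotone sandwich $\sigma_r(\bm F)^2\bm\Lambda^\star\preceq\bm\Lambda^{\star1/2}\bm F\bm F^\top\bm\Lambda^{\star1/2}\preceq\sigma_1(\bm F)^2\bm\Lambda^\star$ then yields, for every $i\in[r]$,
\begin{align*}
	\sigma_i(\bm X)^2=\lambda_i\big(\bm\Lambda^{\star1/2}\bm F\bm F^\top\bm\Lambda^{\star1/2}\big)\asymp n\lambda_i^\star ,
\end{align*}
which is precisely what keeps all subsequent conditions free of $\lambda_1^\star/\lambda_r^\star$. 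For incoherence: the left singular subspace $\bm U^\star\bm P$ has the same row norms as $\bm U^\star$, so $\mu_1\le\mu_{\sf pc}$ by \eqref{eq:incoherence-Ustar-pc}; the right singular subspace of $\bm X$ is the row space of $\bm F$, whose orthonormalized $j$‑th row is $\bm f_j^\top$ times a matrix of spectral norm $1/\sigma_r(\bm F)$, so a union bound over $j\in[n]$ on $\|\bm f_j\|_2^2\lesssim r+\log(n+d)$ gives $\mu_2\lesssim 1+\log(n+d)/r$. Hence $\mu=\max\{\mu_1,\mu_2\}\lesssim\mu_{\sf pc}+\log(n+d)$, and \eqref{eq:assumption-PCA-inc} (together with $\mu_{\sf pc}\ge1$) forces $\mu\le c_0 d/r^3$, i.e.\ \eqref{assump:snr_two_to_infty_234}. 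The entries of $\bm\Xi$ are independent across both indices and $\omega$‑sub‑Gaussian, so Assumption~\ref{assump:hetero2} holds with $\omega_{\sf max}\asymp\omega$ (this is one of the cases listed in the remark following Assumption~\ref{assump:hetero2}, with $r\vee\log(n+d)\le c_1 n$ ensuring the tail requirement on $B$). Substituting $\sigma_r(\bm X)\asymp\sqrt{n\lambda_r^\star}$ and $\omega_{\sf max}\asymp\omega$ into \eqref{assump:snr_two_to_infty} and simplifying—the cross term $(d/n)^{3/4}$ is dominated by $(d/n)^{1/2}+d/n$—shows \eqref{eq:assumption-PCA-SNR} implies the SNR hypothesis, and \eqref{ineq:iter_pca_1}–\eqref{ineq:iter_pca_2} imply \eqref{ineq:iter-12} since $\sigma_i(\bm X)^2\asymp n\lambda_i^\star$ and $\omega_{\sf max}\asymp\omega$.

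For step (ii), Theorem~\ref{thm:two_to_infty} gives, on the above high‑probability event over $\bm F$ and with conditional probability $1-O((n+d)^{-10})$, that $\|\bm U\bm R_{\bm U}-\bm U^\star\|\lesssim\zeta_{\sf op}$ and $\|\bm U\bm R_{\bm U}-\bm U^\star\|_{2,\infty}\lesssim\sqrt{\mu r/d}\,\zeta_{\sf op}$, where
\begin{align*}
	\zeta_{\sf op}=\frac{\sqrt{dn}\,\omega^2\log^2(n+d)}{\sigma_r(\bm X)^2}+\frac{\sqrt d\,\omega\log(n+d)}{\sigma_r(\bm X)}\asymp\frac{\sqrt{d/n}\,\omega^2\log^2(n+d)}{\lambda_r^\star}+\frac{\sqrt{d/n}\,\omega\log(n+d)}{\sqrt{\lambda_r^\star}},
\end{align*}
and $\sqrt{\mu r/d}\lesssim\sqrt{(\mu_{\sf pc}+\log(n+d))r/d}$; a union bound over the random‑matrix failure event and the failure event of Theorem~\ref{thm:two_to_infty} yields \eqref{ineq:pca_two_to_infty}–\eqref{ineq:pca_spectral} with probability $1-O((n+d)^{-10})$. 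The main obstacle is step (i): one must ensure that passing from the population signal $\bm U^\star\bm\Lambda^\star\bm U^{\star\top}$ to the realized sample signal $\bm X$ introduces no dependence on $\lambda_1^\star/\lambda_r^\star$—a naive Weyl perturbation of $\bm X\bm X^\top\approx n\bm U^\star\bm\Lambda^\star\bm U^{\star\top}$ would corrupt the per‑mode singular‑value estimates by a factor of the condition number, which is exactly what the multiplicative sandwich above circumvents—while simultaneously controlling the incoherence of the random right singular subspace.
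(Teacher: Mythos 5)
Your proposal is correct and follows essentially the same route as the paper: verify the noise assumption with $\omega_{\sf max}\asymp\omega$, show $\sigma_i(\bm{X})\asymp\sqrt{n\lambda_i^\star}$ for every $i$ (the paper uses the min–max principle where you use the equivalent Loewner-order sandwich), bound $\mu\lesssim\mu_{\sf pc}+\log(n+d)$, and then invoke Theorem~\ref{thm:two_to_infty} with $(n_1,n_2)=(d,n)$. The only differences are cosmetic — you derive the right-subspace incoherence from $\chi^2$ concentration where the paper cites \citet[Corollary 2]{cai2021subspace}, and you make the conditioning on $\bm{F}$ and the identification of column spaces explicit.
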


Let us briefly discuss the implications of our results. 
	Consider, for example, the case where $\bbE[\varepsilon_{i,j}^2] \asymp \sigma^2$ for all $(i, j) \in [d] \times [n]$. 
		The spectral norm bound \eqref{ineq:pca_spectral} matches the minimax limit (see \citet[Theorems 1 and 4]{zhang2022heteroskedastic}) modulo some logarithmic factor. In addition, recognizing that 
	\begin{align*}
		d\left\|\bm{U}\bm{R}_{\bm{U}} - \bm{U}^\star\right\| _{2,\infty}^2 \geq \left\|\bm{U}\bm{R}_{\bm{U}} - \bm{U}^\star\right\| _{\F}^2 \geq \left\|\bm{U}\bm{R}_{\bm{U}} - \bm{U}^\star\right\|^2,
	\end{align*}
	we see that the $\ell_{2, \infty}$ bound \eqref{ineq:pca_two_to_infty} is also near-optimal when $\mu_{\sf pc},r \asymp 1$. Again, our result does not rely on the condition number $\kappa_{\sf pc} = \lambda_1^\star/\lambda_r^\star$. 
		Moreover, \citet[Theorem 1]{zhang2022heteroskedastic} assumes that $\kappa_{\sf pc}$ is bounded by a numerical constant, 
		while \cite[Corollary 2]{cai2021subspace} requires $\kappa_{\sf pc} \lesssim \sqrt{\frac{d}{\mu r}}$;
		these form another aspect in which Corollary \ref{cor:pca} improves upon the prior literature.

\subsection{Tensor PCA}
\paragraph{Model.}
Another canonical example in which column subspace estimation plays a key role is tensor PCA (or low-rank tensor estimation),  
a problem that has been studied extensively in recent literature \citep{richard2014statistical,zhang2018tensor,cai2021subspace,cai2022nonconvex,han2022optimal,zhou2022optimal,han2022tensor}. 
To be presice, assume that we observe a noisy tensor as follows: 
\begin{subequations}
	\label{model:tensor_SVD-all}
\begin{align}\label{model:tensor_SVD}
	\bm{\mathcal{Y}} = \bm{\mathcal{X}}^\star + \bm{\mathcal{E}} \in \bbR^{n_1 \times n_2 \times n_3}, 
\end{align}
where $\bm{\mathcal{X}}^\star$ is an unknown low-rank tensor to be estimated, and $\bm{\mathcal{E}}$ represents the noise tensor. 
We assume that  $\bm{\mathcal{X}}^\star$ has low-Tucker-rank in the sense that  \citep{zhang2022heteroskedastic,han2022tensor,xia2022inference}
\begin{equation}
	\bm{\mathcal{X}}^\star = \bm{\mathcal{S}}^\star \times_1 \bm{U}_1^\star \times_2 \bm{U}_2^\star \times_3 \bm{U}_3^\star,
\end{equation}
\end{subequations}
 where the core tensor $\bm{\mathcal{S}}^\star$ lies in $\bbR^{r_1 \times r_2 \times r_3}$ (with small $r_1,r_2,r_3$), 
 and the tensor ``principal components'' $\bm{U}_i^\star \in \mathcal{O}^{n_i, r_i}$ ($1\leq i\leq 3$) satisfy the incoherence condition
 \begin{equation}
 	\|\bm{U}_i^\star\|_{2,\infty} \leq \sqrt{\frac{\mu r_i}{n_i}}, \qquad 1\leq i\leq 3.
	 \label{eq:incoherence-tensor-PCA}
 \end{equation}
Moreover, the noise tensor $\bm{\mathcal{E}}=[E_{i,j,k}]_{(i,j,k)\in [n_1]\times [n_2]\times [n_3]}$ is composed of independent entries such that
\begin{itemize}
	\item the $E_{i,j,k}$'s are statistically independent, zero-mean, and $\omega$-sub-Gaussian,
\end{itemize}
where $\omega>0$ is an upper bound on the sub-Gaussian norm of each noise entry. 
The aim is to compute a faithful estimate of the true tensor $\bm{\mathcal{X}}^\star$ as well as the principal components $\bm{U}_1^{\star}, \bm{U}_2^{\star}$ and $\bm{U}_3^{\star}$.

\paragraph{Additional notation.} 
Before presenting the algorithm and our theoretical results, we introduce several useful notation. For any $1 \leq i \leq 3$ and $1 \leq j \leq r_i$, we denote by $\sigma_{i,j}^{\star}$ the $j$-th largest singular value of the $i$-th matricization of $\mathcal{X}$ --- denoted by $\mathcal{M}_i(\mathcal{X})$. Define 
\begin{align*}
	\sigma_{\sf min}^\star := \min\left\{\sigma_{1, r_1}^{\star}, \sigma_{2, r_2}^{\star}, \sigma_{3, r_3}^{\star}\right\}, 
\end{align*}
and the condition number of the true tensor is then defined as
\begin{align*}
	\kappa := \frac{ \max\left\{\sigma_{1, 1}^{\star}, \sigma_{2, 1}^{\star}, \sigma_{3, 1}^{\star}\right\} }{ \sigma_{\sf min}^\star }.
\end{align*}
For any $1 \leq i \leq 3$, we also let $r_{i,1}, r_{i,2}, \dots, r_{i,k_{\sf max}^i}$ denote the ranks selected in Algorithm~\ref{algorithm:sequential_heteroPCA} 
if we apply this algorithm with the input matrix $\bm{Y} = \mathcal{M}_i\left(\bm{\mathcal{Y}}\right)$, the rank $r_i$, and the numbers of iterations $t_{i,1}, \dots, t_{i,k_{\sf max}^i}$. As usual, we choose $k_{\sf max}^i$ such that $r_{k_{\sf max}^i} = r_i$. In addition, for notational convenience we let 
\begin{align*}
	n = \max_{1 \leq i \leq 3}n_i \qquad \text{and} \qquad r = \max_{1 \leq i \leq 3}r_i,
\end{align*}
and define
$$\bm{U}_{4}^\star = \bm{U}_{1}^\star \qquad \text{ and } \qquad \bm{U}_{5}^\star = \bm{U}_{2}^\star.$$

\paragraph{Algorithm and statistical guarantees.} 
In order to apply {\sf Deflated\text{-}HeteroPCA}, let us look at the matrix $\mathcal{M}_i ( \bm{\mathcal{X}}^\star ) \in \mathbb{R}^{n_i \times (n_1n_2n_3)/n_i}$, 
the $i$-th matricization of  $\bm{\mathcal{X}}^\star$.  
Recognizing that $\bm{U}_i^\star$ is also the left singular space of $\mathcal{M}_i ( \bm{\mathcal{X}}^\star )$ since $$\mathcal{M}_i\left(\bm{\mathcal{X}}^\star\right) = \bm{U}_i^\star\mathcal{M}_i\left(\bm{\mathcal{S}}^\star\right)\left(\bm{U}_{i+2}^\star \otimes \bm{U}_{i+1}^\star\right),$$ 
we propose to apply the {\sf Deflated-HeteroPCA} algorithm to compute an initial subspace estimate $\widehat{\bm{U}}_i^{0}$ for $\bm{U}_i^\star$. 
Armed with these initial estimates, we invoke the high-order orthogonal iteration (HOOI) algorithm \citep{de2000best,zhang2018tensor} to iteratively refine the estimates. More specifically, in the $t$-th iteration, we calculate 
$$\widehat{\bm{U}}_i^t = \text{ the first } r \text{ left singular vectors of } \mathcal{M}_i\big(\bm{\mathcal{Y}} \times_{i+1} \widehat{\bm{U}}_{i+1}^{t-1} \times_{i+2} \widehat{\bm{U}}_{i+2}^{t-1}\big), \quad 1 \leq i \leq 3,$$
where $i+1$ and $i+2$ are calculated modulo 3. 
Once the above iterative procedure converges, we employ the resulting subspace estimates $\widehat{\bm{U}}_1, \widehat{\bm{U}}_2, \widehat{\bm{U}}_3$ to construct the following estimator for the true tensor:
\begin{align*}
	\widehat{\bm{\mathcal{X}}} = \bm{\mathcal{Y}} \times_1 \mathcal{P}_{\widehat{\bm{U}}_1} \times_2 \mathcal{P}_{\widehat{\bm{U}}_2} \times_3 \mathcal{P}_{\widehat{\bm{U}}_3},
\end{align*}
where we recall the notation $\mathcal{P}_{\bm{U}} = \bm{U}\bm{U}^{\top}$.

The whole procedure is summarized in Algorithm~\ref{algo:hooi}, where {\sf Deflated\text{-}HeteroPCA}$(\bm{Y}, r, t_1, \dots, t_{\sf max})$ is the output of Algorithm~\ref{algorithm:sequential_heteroPCA} with the input matrix $\bm{Y}$, the rank $r$, and the numbers of iterations $t_1, \dots, t_{\sf max}$. 
The computational cost for the  initialization step ({\sf Deflated-HeteroPCA}) is $\widetilde{O}(n^4 + n^2r\sum_{i = 1}^3\sum_{j = 1}^{k_{\sf max}^i}t_{i,j})$. For each orthogonal iteration, the computational cost is $\widetilde{O}(n^3r^2 + nr^3)$. Therefore, the total computational complexity for Algorithm~\ref{algo:hooi} amounts to $\widetilde{O}(n^4 + n^2r\sum_{i = 1}^3\sum_{j = 1}^{k_{\sf max}^i}t_{i,j} + (n^3r^2 + nr^3)t_{\sf max})$. Numerically, the algorithm achieves great performance with all $t_{i,j}$'s and $t_{\sf max}$ set to 10, 
in which case the computational cost simplifies to $\widetilde{O}(n^4 + n^3r^2)$.
Our main theory for {\sf Deflated\text{-}HeteroPCA} readily leads to the following statistical guarantees for Algorithm~\ref{algo:hooi}. 
\begin{algorithm}[t]
	\textbf{input:} $\mathcal{Y}$, ranks $r_1, r_2, r_3$, number of iterations $\left\{t_{i,j}\right\}_{1 \leq i \leq 3, 1 \leq j \leq k_{\sf max}^i}$ and $t_{\sf max}$.\\
	\textbf{initialization:} call Algorithm~\ref{algorithm:sequential_heteroPCA} to compute
	\begin{align*}
		\widehat{\bm{U}}_1^{0}&={\sf Deflated\text{-}HeteroPCA}\left(\mathcal{M}_1(\bm{\mathcal{Y}}), r_1, t_{1,1}, t_{1,2}, \dots, t_{1,k_{\sf max}^1}\right);\\
		\widehat{\bm{U}}_2^{0}&={\sf Deflated\text{-}HeteroPCA}\left(\mathcal{M}_2(\bm{\mathcal{Y}}), r_2, t_{2,1}, t_{2,2}, \dots, t_{2,k_{\sf max}^2}\right);\\
		\widehat{\bm{U}}_3^{0}&={\sf Deflated\text{-}HeteroPCA}\left(\mathcal{M}_3(\bm{\mathcal{Y}}), r_3, t_{3,1}, t_{3,2}, \dots, t_{3,k_{\sf max}^3}\right).
	\end{align*}
    \While {$t < t_{\sf max}$}{
    	$\widehat{\bm{U}}_1^t=\text{ leading }r_1 \text{ left singular vectors of } \mathcal{M}_1\big(\bm{\mathcal{Y}} \times_2 \widehat{\bm{U}}_2^{t-1} \times_3 \widehat{\bm{U}}_3^{t-1}\big)$.\\
    	$\widehat{\bm{U}}_2^t=\text{ leading }r_2 \text{ left singular vectors of } \mathcal{M}_1\big(\bm{\mathcal{Y}} \times_3 \widehat{\bm{U}}_3^{t-1} \times_1 \widehat{\bm{U}}_1^{t-1}\big)$.\\
    	$\widehat{\bm{U}}_3^t=\text{ leading }r_3 \text{ left singular vectors of } \mathcal{M}_3\big(\bm{\mathcal{Y}} \times_1 \widehat{\bm{U}}_1^{t-1} \times_2 \widehat{\bm{U}}_2^{t-1}\big)$.        
    }
compute $\widehat{\bm{\mathcal{X}}} = \bm{\mathcal{Y}} \times_1 \widehat{\bm{U}}_1^{t_{\sf max}}\widehat{\bm{U}}_1^{t_{\sf max}\top} \times_2 \widehat{\bm{U}}_2^{t_{\sf max}}\widehat{\bm{U}}_2^{t_{\sf max}\top} \times_3 \widehat{\bm{U}}_3^{t_{\sf max}}\widehat{\bm{U}}_3^{t_{\sf max}\top}$.\\
	\textbf{output:} subspace estimates $\widehat{\bm{U}}_1 = \widehat{\bm{U}}_1^{t_{\sf max}},\ \widehat{\bm{U}}_2 = \widehat{\bm{U}}_2^{t_{\sf max}},\ \widehat{\bm{U}}_3 = \widehat{\bm{U}}_3^{t_{\sf max}}$, and tensor estimate $\widehat{\bm{\mathcal{X}}}$.
	\caption{High-order orthogonal iteration (HOOI) \citep{de2000best,zhang2018tensor}}
	\label{algo:hooi}
\end{algorithm}

\begin{cor}\label{cor:tensor_svd}
	Consider the tensor PCA model in \eqref{model:tensor_SVD-all}. Suppose that $n_1 \asymp n_2 \asymp n_3 \asymp n$,  and
	\begin{subequations}
		\label{condition:tensor_snr}
	\begin{align}
		\frac{\sigma_{\sf min}^\star}{ \omega } & \geq C_2rn^{3/4}\log n \label{eq:condition:tensor-snr-123}\\
		\mu &\leq c_2\sqrt{\frac{n}{r^4}} \label{eq:condition:tensor-inc-123}
	\end{align}
	\end{subequations}
	for some sufficiently large (resp.~small) constant $C_2>0$ (resp.~$c_2>0$). 
	For any $1 \leq i \leq 3$, if one  chooses
	\begin{subequations}
		\begin{align}
			t_{i, 1} &\geq \log_2\bigg(C\frac{\sigma_{i, r_{i,k-1} + 1}^{\star2}}{\sigma_{i, r_{i,k} + 1}^{\star2}}\bigg), \quad 1 \leq k \leq k_{\sf max}^i - 1,\label{ineq:iter_tensor2}\\
			t_{i, k_{\sf max}^i} &\geq \log\bigg(C\frac{\sigma_{r_{i,k_{\sf max}^i-1} + 1}^{\star2}}{\omega^2}\bigg),\label{ineq:iter_tensor3}
		\end{align}
	\end{subequations}
	then with probability exceeding $1 - O\left(n^{-10}\right)$, the initial estimator $\widehat{\bm{U}}_i^{0}$ satisfies
	\begin{subequations}
		\begin{align}
			\big\|\widehat{\bm{U}}_i^{0}\bm{R}_{\widehat{\bm{U}}_i^{0}} - \bm{U}_i^\star\big\| _{2,\infty} &\lesssim \frac{\mu r}{\sqrt{n}}\left(\frac{n^{3/2}\omega^2\log^2 n}{\sigma_{\sf min}^{\star2}} + \frac{\sqrt{n}\,\omega\log n}{\sigma_{\sf min}^{\star}}\right),\label{ineq:tensor_initial_1}\\
			\big\|\widehat{\bm{U}}_i^{0}\bm{R}_{\widehat{\bm{U}}_i^{0}} - \bm{U}^\star\big\| &\lesssim \frac{n^{3/2}\omega^2\log^2 n}{\sigma_{\sf min}^{\star2}} + \frac{\sqrt{n}\,\omega\log n}{\sigma_{\sf min}^{\star}}\label{ineq:tensor_initial_2}.
		\end{align}
	\end{subequations}
	In addition, if the number of iterations in HOOI obeys $t_{\sf max} \geq C(\log(\frac{n}{\sigma_{\sf min}}) \vee 1)$ for some large enough constant $C>0$,
	then with probability exceeding $1 - O(n^{-10})$ one has 
	\begin{subequations}
		\label{ineq:tensor_all}
	\begin{align}\label{ineq:tensor_subspace}
		\big\|\widehat{\bm{U}}_i\bm{R}_{\widehat{\bm{U}}_i} - \bm{U}_i^\star\big\| &\lesssim \frac{\sqrt{n_i}\,\omega}{\sigma_{\sf min}^\star}, \qquad 1 \leq i \leq 3  \\
		\big\|\widehat{\bm{\mathcal{X}}} - \bm{\mathcal{X}}^\star\big\|_{\rm F}^2 &\lesssim \left(n_1r_1 + n_2r_2 + n_3r_3\right)\omega^2. 
		\label{ineq:tensor}
	\end{align}
	\end{subequations}
\end{cor}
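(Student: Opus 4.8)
The plan is to combine the initialization guarantee for {\sf Deflated-HeteroPCA} (i.e., the bounds \eqref{ineq:tensor_initial_1}--\eqref{ineq:tensor_initial_2}) with an existing contraction-type analysis for HOOI, so that the only real work is (i) establishing the initialization bounds from Theorem~\ref{thm:two_to_infty}, and (ii) checking that the initialization is good enough to seed the HOOI iterations. For step (i), I would apply Theorem~\ref{thm:two_to_infty} separately to each matricization $\bm{Y} = \mathcal{M}_i(\bm{\mathcal{Y}})$, which has dimensions $n_i \times (n_1 n_2 n_3 / n_i) \asymp n \times n^2$. Here the left singular space is exactly $\bm{U}_i^\star$, the effective ``$n_1$'' is $n_i \asymp n$ and the effective ``$n_2$'' is $n^2$, so $(n_1 n_2)^{1/4} \asymp n^{3/4}$ and $n_1^{1/2} \asymp n^{1/2}$, making the SNR hypothesis \eqref{assump:snr_two_to_infty} read $\sigma_{i,r_i}^\star/\omega \gtrsim r (n^{3/4} + n^{1/2})\log n \asymp r n^{3/4}\log n$, which is implied by \eqref{eq:condition:tensor-snr-123} since $\sigma_{i,r_i}^\star \geq \sigma_{\sf min}^\star$. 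I also need to verify the incoherence hypothesis \eqref{assump:snr_two_to_infty_234}: the incoherence parameter of $\mathcal{M}_i(\bm{\mathcal{X}}^\star)$ in its row space is governed by the incoherence of $\bm{U}_{i+1}^\star \otimes \bm{U}_{i+2}^\star$, which is at most $\mu^2$, and $\mu_1$ is at most $\mu$; so \eqref{assump:snr_two_to_infty_234} becomes roughly $\mu^2 \lesssim n/r^3$, which follows from \eqref{eq:condition:tensor-inc-123}. Also the noise $\mathcal{M}_i(\bm{\mathcal{E}})$ is $\omega$-sub-Gaussian with independent entries, so Assumption~\ref{assump:hetero2} holds with $\omega_{\sf max} = \omega$ (the tail condition being automatic for sub-Gaussian variables). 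Plugging $\omega_{\sf max}=\omega$, $n_1 \asymp n$, $n_2 \asymp n^2$ into $\zeta_{\sf op}$ of \eqref{def:zate_op} gives $\zeta_{\sf op} \asymp n^{3/2}\omega^2 \log^2 n / \sigma_{i,r_i}^{\star 2} + \sqrt{n}\,\omega\log n/\sigma_{i,r_i}^\star$, and bounding $\sigma_{i,r_i}^\star \geq \sigma_{\sf min}^\star$ yields exactly \eqref{ineq:tensor_initial_1}--\eqref{ineq:tensor_initial_2}; the iteration-count hypotheses \eqref{ineq:iter_tensor2}--\eqref{ineq:iter_tensor3} are precisely \eqref{ineq:iter-12} specialized to this matricization.

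For step (ii) -- the HOOI refinement -- I would invoke the by-now-standard analysis of HOOI (e.g.\ \citet{zhang2018tensor,han2022optimal,tong2022scaling}), which shows that if the initial subspaces $\widehat{\bm{U}}_i^0$ satisfy $\max_i \|\sin\Theta(\widehat{\bm{U}}_i^0, \bm{U}_i^\star)\| \leq c$ for a sufficiently small constant $c$, then after $t_{\sf max} \gtrsim \log(n/\sigma_{\sf min}^\star) \vee 1$ iterations one obtains the ``oracle'' rate $\|\sin\Theta(\widehat{\bm{U}}_i, \bm{U}_i^\star)\| \lesssim \sqrt{n_i}\,\omega/\sigma_{\sf min}^\star$ with high probability. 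The point is that the required warm-start accuracy $c$ is a constant, while the initial error from step (i) is $O(\zeta_{\sf op})$, and under \eqref{eq:condition:tensor-snr-123} we have $\zeta_{\sf op} = o(1)$ (indeed $\zeta_{\sf op} \lesssim 1/(r\log n) \ll 1$ after a short calculation), so the warm-start condition is comfortably met. The tensor estimation error \eqref{ineq:tensor} then follows from a deterministic projection-error bound: writing $\widehat{\bm{\mathcal{X}}} - \bm{\mathcal{X}}^\star = (\mathcal{P}_{\widehat{\bm{U}}_1}\otimes\mathcal{P}_{\widehat{\bm{U}}_2}\otimes\mathcal{P}_{\widehat{\bm{U}}_3})\bm{\mathcal{Y}} - \bm{\mathcal{X}}^\star$ and splitting into the ``signal leakage'' term controlled by $\max_i\|\sin\Theta(\widehat{\bm{U}}_i,\bm{U}_i^\star)\|\cdot\sigma_{1,1}^\star$ and the ``projected noise'' term $(\mathcal{P}_{\widehat{\bm{U}}_1}\otimes\mathcal{P}_{\widehat{\bm{U}}_2}\otimes\mathcal{P}_{\widehat{\bm{U}}_3})\bm{\mathcal{E}}$, which has Frobenius norm $\lesssim \sqrt{n_1 r_1 + n_2 r_2 + n_3 r_3}\,\omega$ with high probability (this is a standard covering/chaining bound over fixed-rank projections, or can be derived via the leave-one-out decoupling already developed in the tensor PCA literature).

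The main obstacle I anticipate is \emph{not} the HOOI analysis -- that is essentially off-the-shelf -- but rather making the reduction in step (i) fully rigorous, in particular verifying that the incoherence parameters of the matricizations behave as claimed and that applying {\sf Deflated-HeteroPCA} to $\mathcal{M}_i(\bm{\mathcal{Y}})$ genuinely recovers $\bm{U}_i^\star$ under the hypotheses of Theorem~\ref{thm:two_to_infty}. Specifically, one must confirm that the $\ell_{2,\infty}$ incoherence of the row space (built from Kronecker products of the $\bm{U}_j^\star$) enters Theorem~\ref{thm:two_to_infty} only through $\mu_2 \leq \mu^2$ -- not, say, through some larger quantity -- and that the $r$ appearing in the hypotheses \eqref{incoherence}/\eqref{assump:snr_two_to_infty_234} can be taken as $\max_i r_i$ rather than $\prod_{j\neq i} r_j$; this is what drives the clean dependence on $r$ in \eqref{eq:condition:tensor-snr-123}--\eqref{eq:condition:tensor-inc-123}. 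Once these bookkeeping points are settled, the corollary follows by chaining the three pieces: (a) Theorem~\ref{thm:two_to_infty} $\Rightarrow$ \eqref{ineq:tensor_initial_1}--\eqref{ineq:tensor_initial_2}, (b) warm-start verification $\zeta_{\sf op} = o(1)$, and (c) the HOOI contraction plus the projection-error bound $\Rightarrow$ \eqref{ineq:tensor_subspace}--\eqref{ineq:tensor}, taking a union bound over the $O(1)$ events involved.
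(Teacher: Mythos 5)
Your proposal is correct and follows essentially the same route as the paper: reduce each matricization to Theorem~\ref{thm:two_to_infty} (checking the SNR via $(n\cdot n^2)^{1/4}\asymp n^{3/4}$ and the row-space incoherence via the Kronecker structure, which is exactly why \eqref{eq:condition:tensor-inc-123} takes the form $\mu\lesssim\sqrt{n/r^4}$), then hand off to the Zhang--Xia HOOI analysis. The only point the paper treats more explicitly than you do is that transferring that HOOI argument to heteroskedastic sub-Gaussian noise requires re-establishing the two spectral-norm noise bounds $\sup_{\|\bm{V}_j\|\le 1}\|\bm{E}_i(\bm{V}_{i+2}\otimes\bm{V}_{i+1})\|\lesssim\sqrt{nr}$ and $\|\bm{E}_i(\bm{U}_{i+2}^\star\otimes\bm{U}_{i+1}^\star)\|\lesssim\sqrt{n}$ (done via an existing lemma plus an epsilon-net), which is the same covering-type bound you gesture at.
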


	The bounds in \eqref{ineq:tensor_all} are rate-optimal, since they match the minimax lower bounds established for the i.i.d.~Gaussian noise case in \citet[Theorem 3]{zhang2018tensor}. 
	This confirms that the proposed {\sf Deflated-HeteroPCA} algorithm serves as an effective paradigm to initialize the HOOI algorithm. 
 	It is also noteworthy that  when $r = O(1)$, the SNR condition \eqref{condition:tensor_snr} is essential (ignoring logarithmic factor) to ensure that consistent estimation is computable within polynomial time; 
	see \citet[Theorem 4]{zhang2018tensor}.

	It is then helpful to compare our results with the prior works \citet{zhang2018tensor} and \citet{han2022optimal}. 
	Firstly, \citet[Theorem 1]{zhang2018tensor} assumes that the noise tensor has i.i.d.~Gaussian entries, 
	which is clearly much more stringent than our result. 
	Secondly, while \citet[Theorem 4.1]{han2022optimal} allows the noise to be heteroskedastic, 
	it requires the condition number of the tensor to be bounded (see the analysis for their main theorems); 
	in comparison, our theory in Corollary \ref{cor:tensor_svd} suggests that Algorithm~\ref{algo:hooi} succeeds no matter how large the condition number $\kappa$ is.

    \section{Numerical experiments}\label{sec:simulation}

In this section, we conduct additional numerical experiments to verify the practical applicability of our algorithm. All results in this section are averaged over 50 Monte Carlo runs.

\begin{figure}[t]
	\centering
	\begin{minipage}[t]{\linewidth}
		\centering
		\subfigure[$\kappa=5, n_2 = 1,000$, $\ell_2$ error]{
			\includegraphics[width=0.32\linewidth]{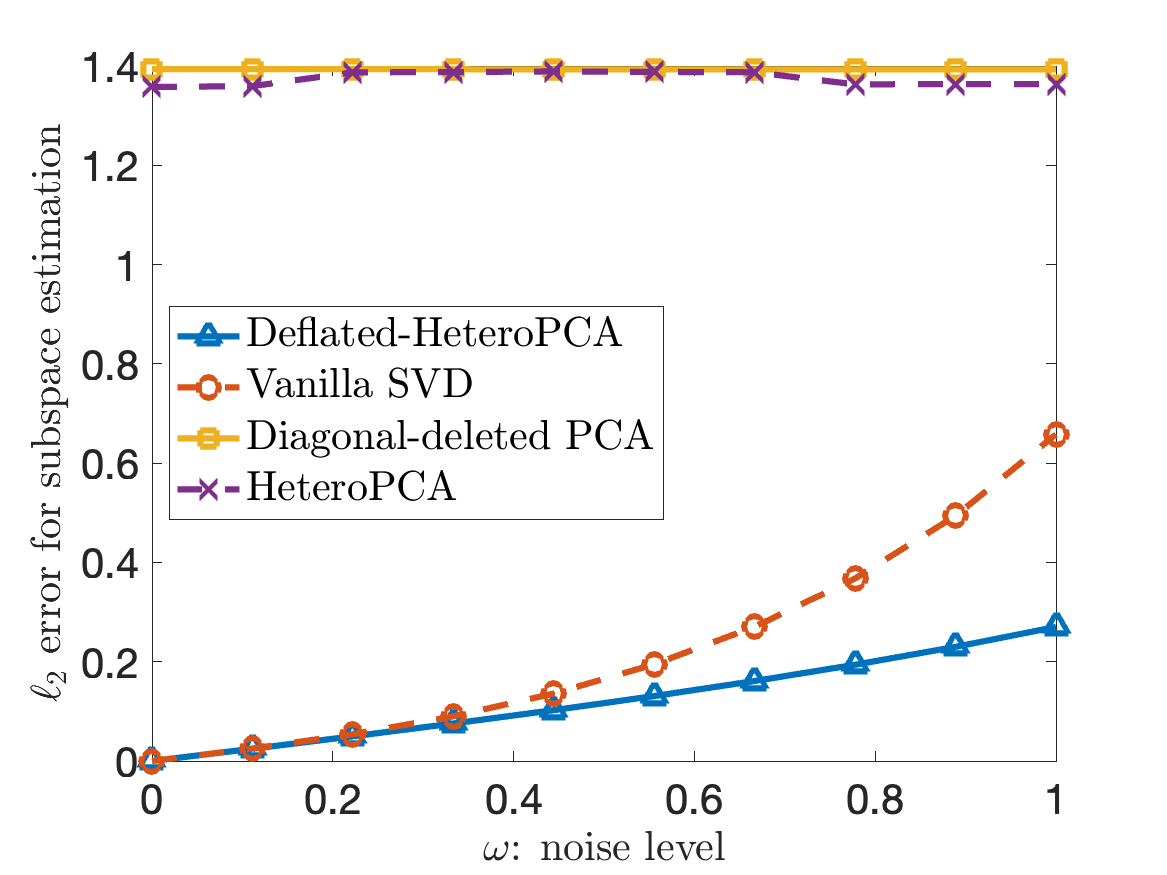}}
		\subfigure[$\kappa=5, n_2 = 1,000$, $\ell_{2,\infty}$ error]{
			\includegraphics[width=0.32\linewidth]{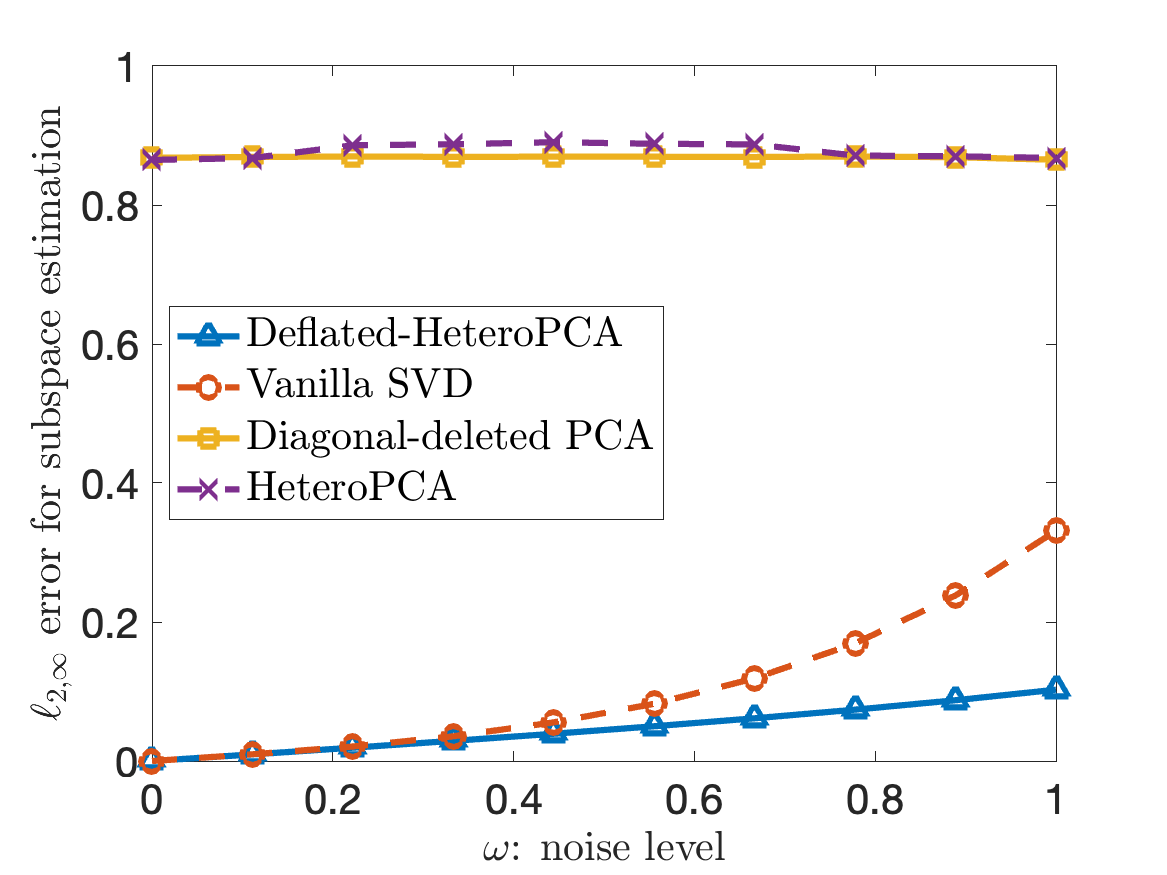}}
		\subfigure[$\omega = 1, n_2 = 1,000$, $\ell_2$ error]{
			\includegraphics[width=0.32\linewidth]{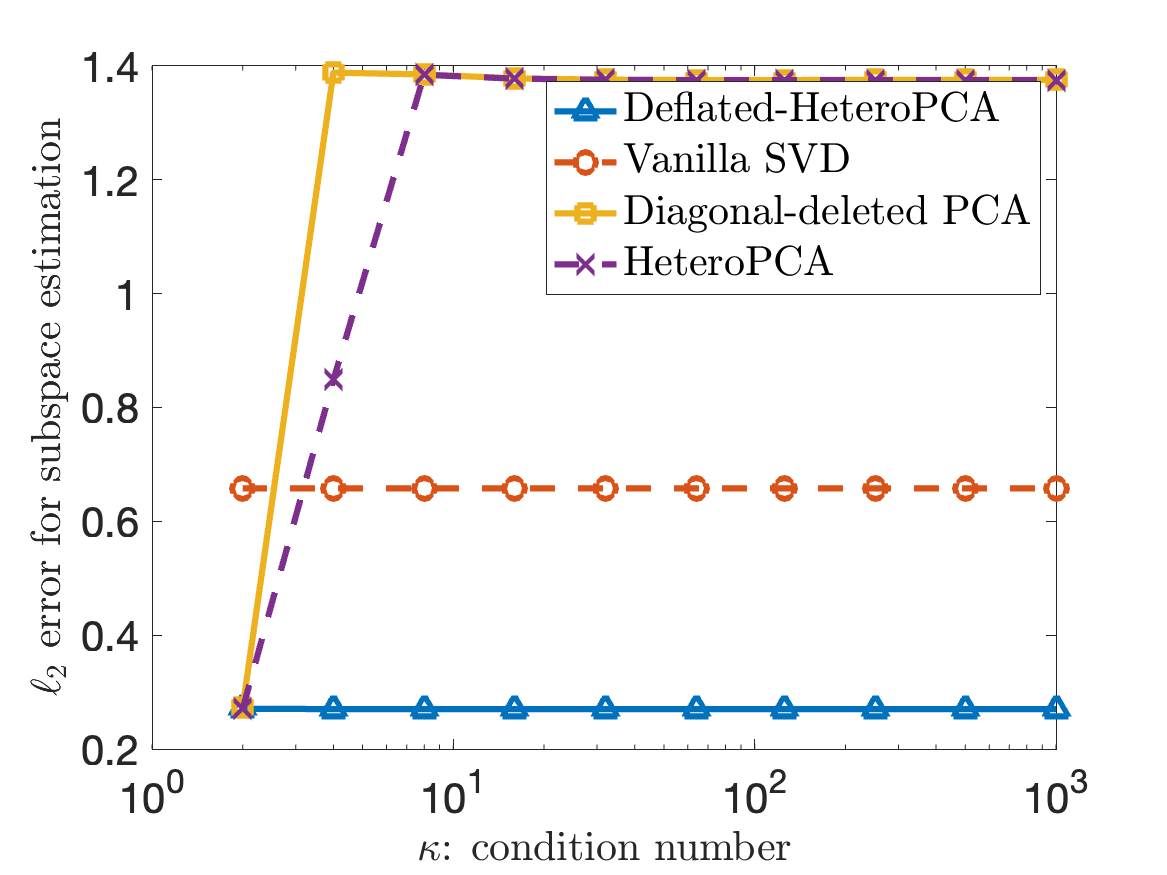}}
		\subfigure[$\omega = 1, n_2 = 1,000$, $\ell_{2,\infty}$ error]{
			\includegraphics[width=0.32\linewidth]{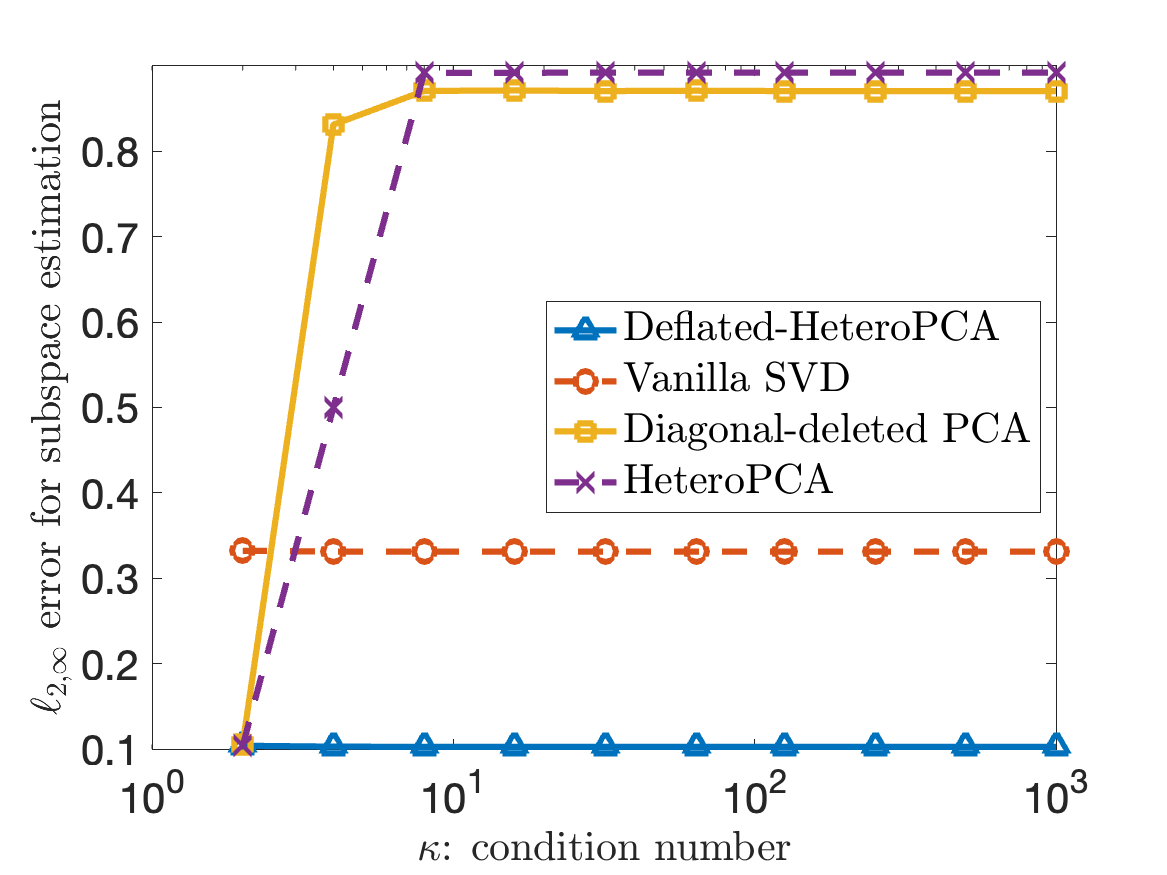}}
		\subfigure[$\omega = 1, \kappa = 5$, $\ell_{2}$ error]{
			\includegraphics[width=0.32\linewidth]{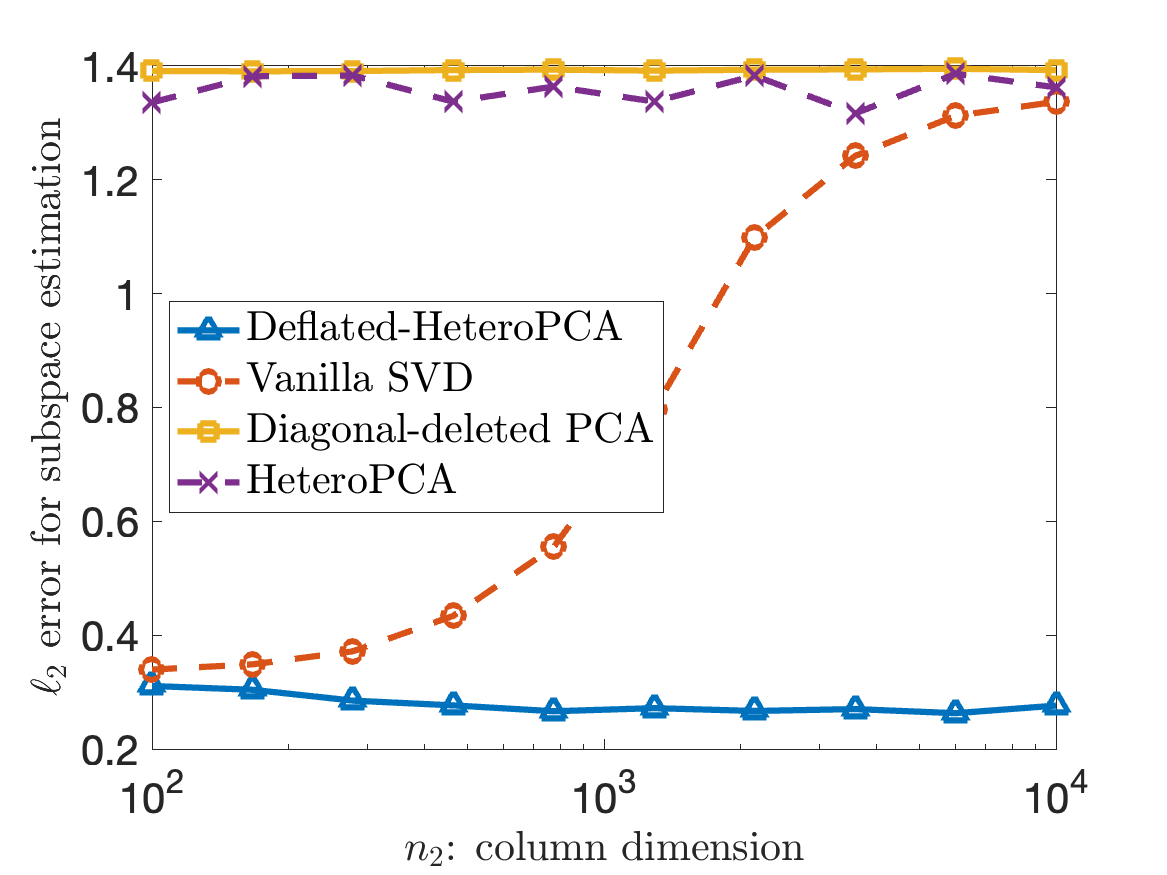}}
		\subfigure[$\omega = 1, \kappa = 5$, $\ell_{2,\infty}$ error]{
			\includegraphics[width=0.32\linewidth]{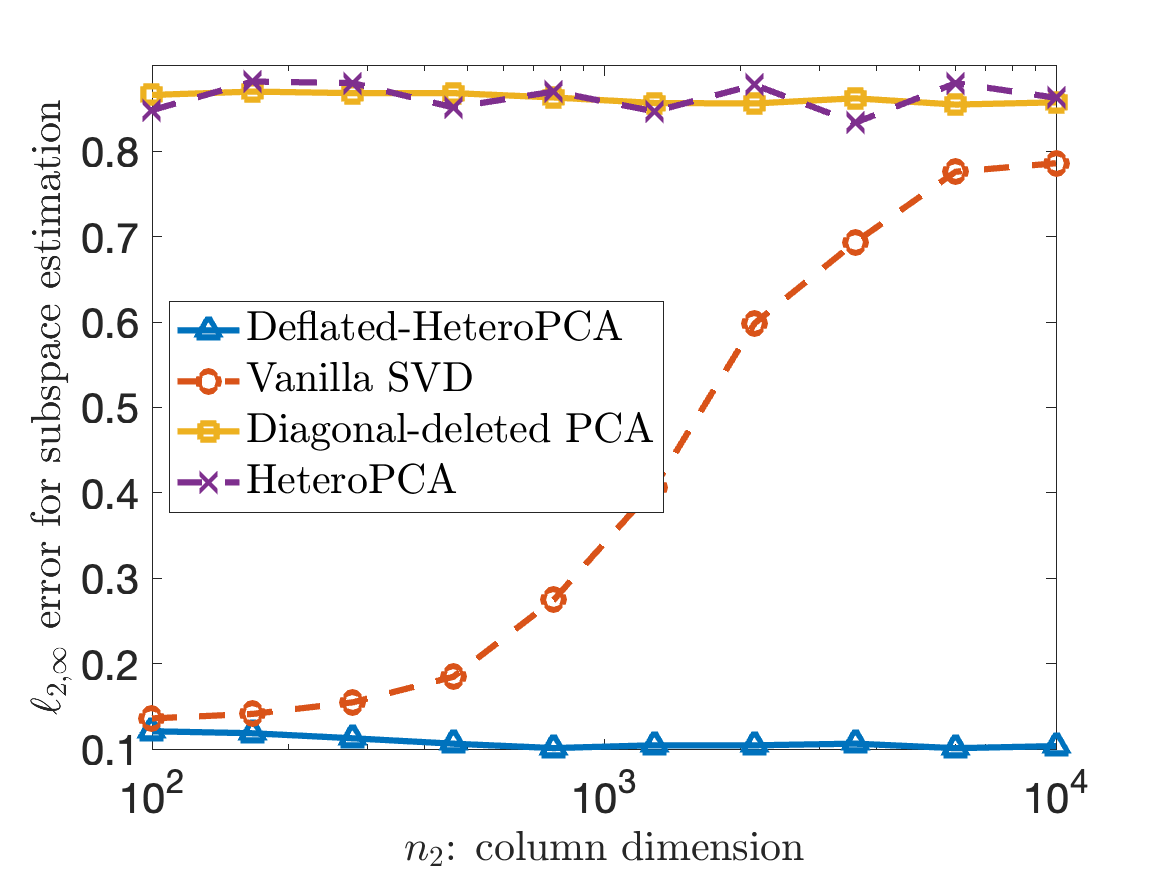}}
	\end{minipage}
	\caption{Estimation errors of $\bm{U}$ for {\sf Deflated-HeteroPCA}, {\sf Diagonal-deleted PCA}, {\sf HeteroPCA} and {\sf Vanilla SVD} for $r = 3$. Plot (a) (resp. (b)) reports the $\ell_2$ (resp.~$\ell_{2,\infty}$) error vs.~the noise level $\omega$ (where $n_1 = 100, n_2 = 1,000, \kappa = 5$). 
	Plot (c) (resp.~(d)) shows the $\ell_2$ (resp.~$\ell_{2,\infty}$) error vs.~the column dimension $\kappa$ (where $n_1 = 100, n_2 = 1,000, \omega = 1$). 
	Plot (e) (resp.~(f)) displays the $\ell_2$ (resp.~$\ell_{2,\infty}$) error vs.~the condition number $n_2$ (where $n_1 = 100, \kappa = 5, \omega = 1$).}\label{fig:simulation_r_3} 
\end{figure}

\paragraph{Low-rank subspace estimation from noisy observation.}
To begin with, 
we consider the problem of estimating the column subspace of $\bm{X}^\star$ from the noisy data \eqref{eq:model-setup}. We randomly generate $\bm{U}^\star \in \mathcal{O}^{n_1, r}$ and $\bm{V}^\star \in \mathcal{O}^{n_2, r}$, and $\bm{X}^\star =  \bm{U}^\star\bm{\Sigma}^\star\bm{V}^{\star\top}$, where $\bm{\Sigma}^\star = {\sf diag}(\sigma_1^\star, \dots, \sigma_r^\star)$. For each $i \in [n_1]$, we independently and uniformly draw $\omega_i \in [0, \omega]$, whereas the $E_{i,j}$'s are independently drawn from $\mathcal{N}(0,\omega_i^2)$. 
We fix $n_1 = 100$, set $\sigma_r^\star = (n_1n_2)^{1/4} + n_1^{1/2}$, and consider the following two settings: (i) $r = 3$, $\sigma_1^\star = \kappa\sigma_3^\star$ and $\sigma_2^\star = \sigma_3^\star$; (ii) $r = 5$, $\sigma_1^\star = \kappa\sigma_5^\star$, $\sigma_2^\star = \sigma_3^\star = \kappa^{1/2}\sigma_5^\star$ and $\sigma_4^\star = \sigma_5^\star$. We report the spectral-norm-based error$\|\bm{U}\bm{R}_{\bm{U}} - \bm{U}^\star\|$  and the $\ell_{2,\infty}$ error $\|\bm{U}\bm{R}_{\bm{U}} - \bm{U}^\star\|_{2,\infty}$ for each of the following four algorithms: (a) {\sf Deflated-HeteroPCA} in Algorithm~\ref{algorithm:sequential_heteroPCA},  where the numbers of iterations are chosen to be $t_i = 10$; (b) the diagonal-deleted PCA procedure as in \eqref{eq:diagonal-deletion}; (c) {\sf HeteroPCA} in Algorithm~\ref{algorithm:heteroPCA}, where the number of iterations is taken to be 100; (d) the vanilla SVD-based approach described in \eqref{eq:vanilla-SVD-alg}. The results for $r = 3$ and $r = 5$ are reported in Figures~\ref{fig:simulation_r_3} and \ref{fig:simulation_r_5}, respectively. 
As can be seen from the plots, the proposed {\sf Deflated-HeteroPCA} algorithm significantly outperforms the other three methods, 
and it is the only algorithm whose performance is unaffected by the condition number $\kappa$.

\begin{figure}[t]
	\centering
	\begin{minipage}[t]{\linewidth}
		\centering
		\subfigure[$\kappa=5, n_2 = 1,000$, $\ell_2$ error]{
			\includegraphics[width=0.32\linewidth]{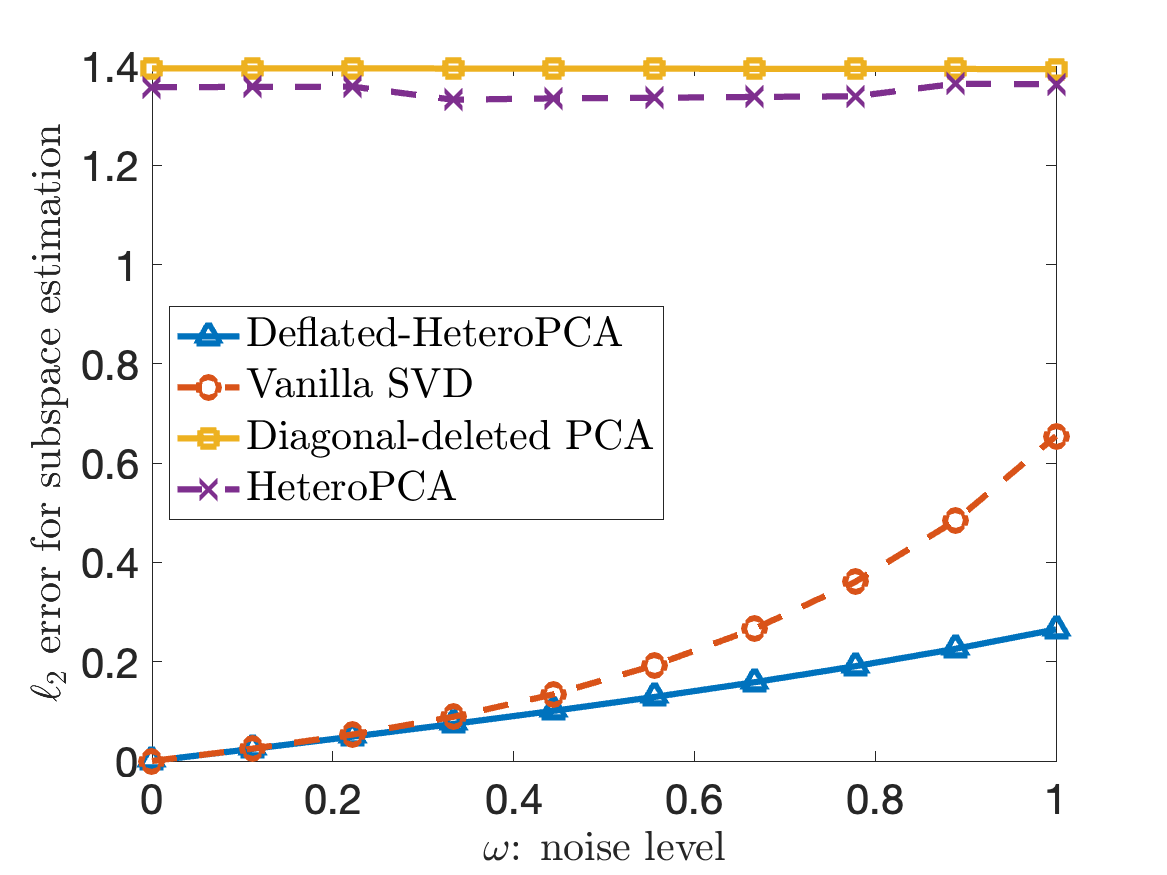}}
		\subfigure[$\kappa=5, n_2 = 1,000$, $\ell_{2,\infty}$ error]{
			\includegraphics[width=0.32\linewidth]{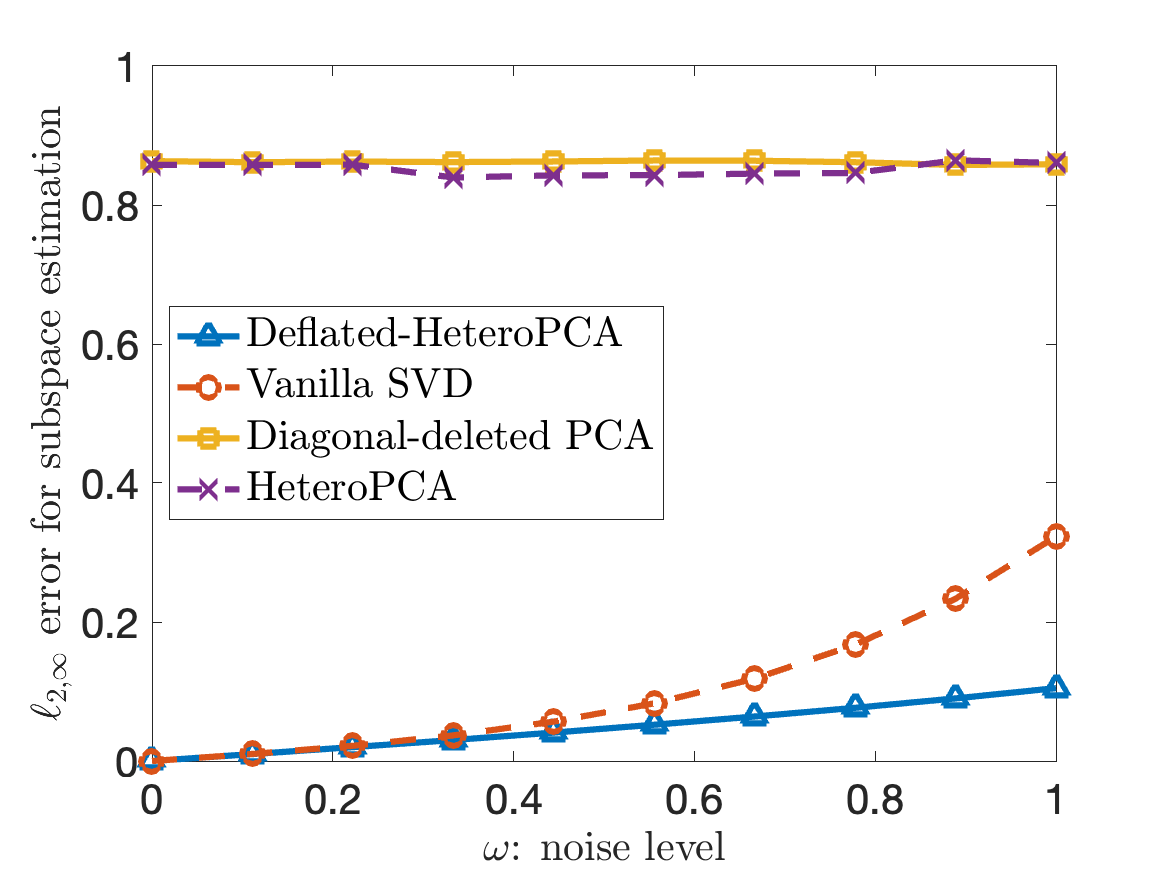}}
		\subfigure[$\omega = 1, n_2 = 1,000$, $\ell_2$ error]{
			\includegraphics[width=0.32\linewidth]{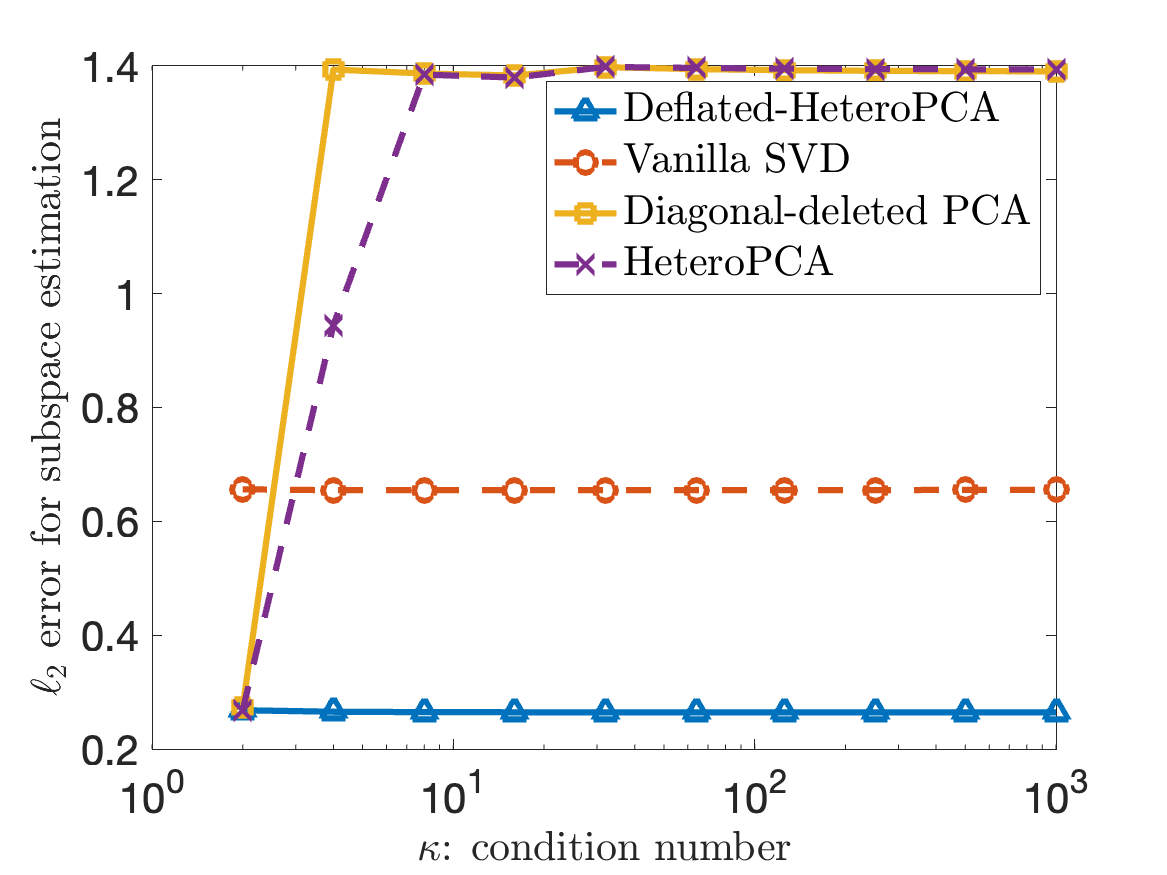}}
		\subfigure[$\omega = 1, n_2 = 1,000$, $\ell_{2,\infty}$ error]{
			\includegraphics[width=0.32\linewidth]{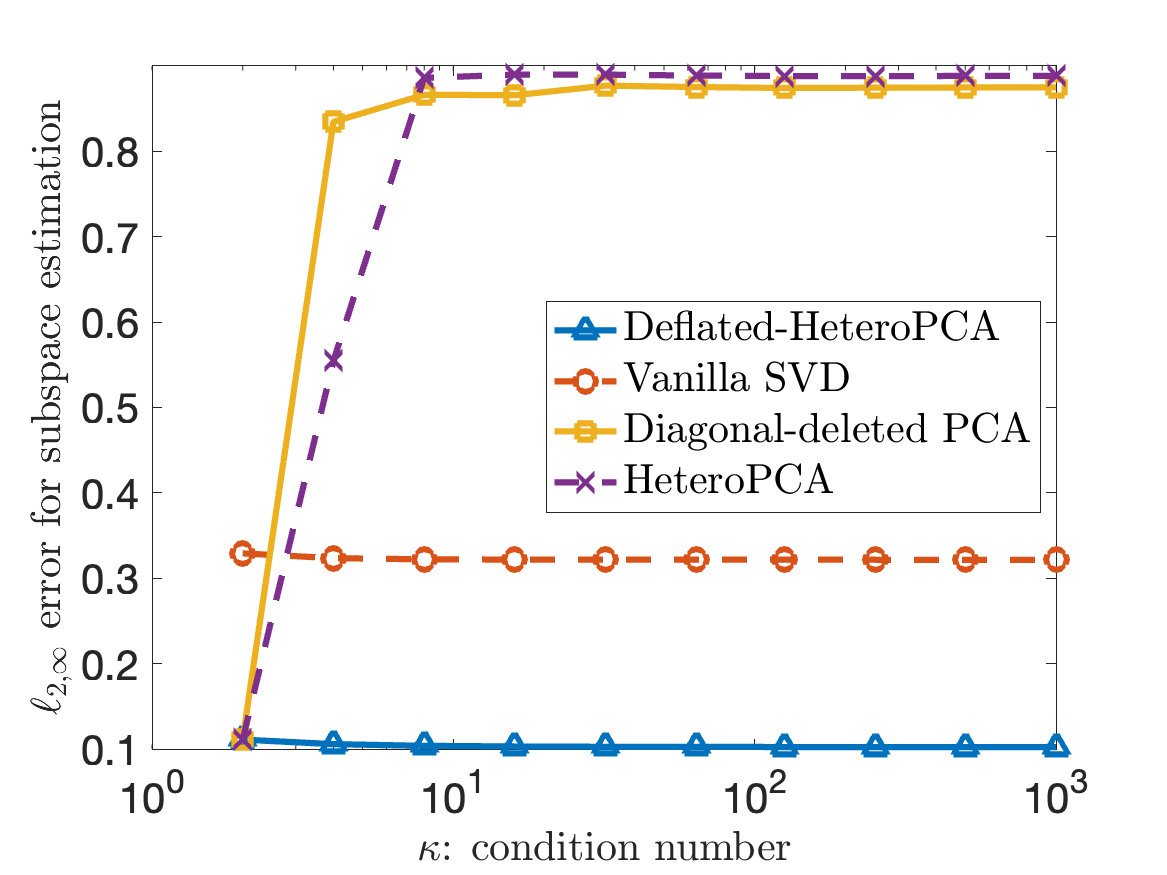}}
		\subfigure[$\omega = 1, \kappa = 5$, $\ell_{2}$ error]{
			\includegraphics[width=0.32\linewidth]{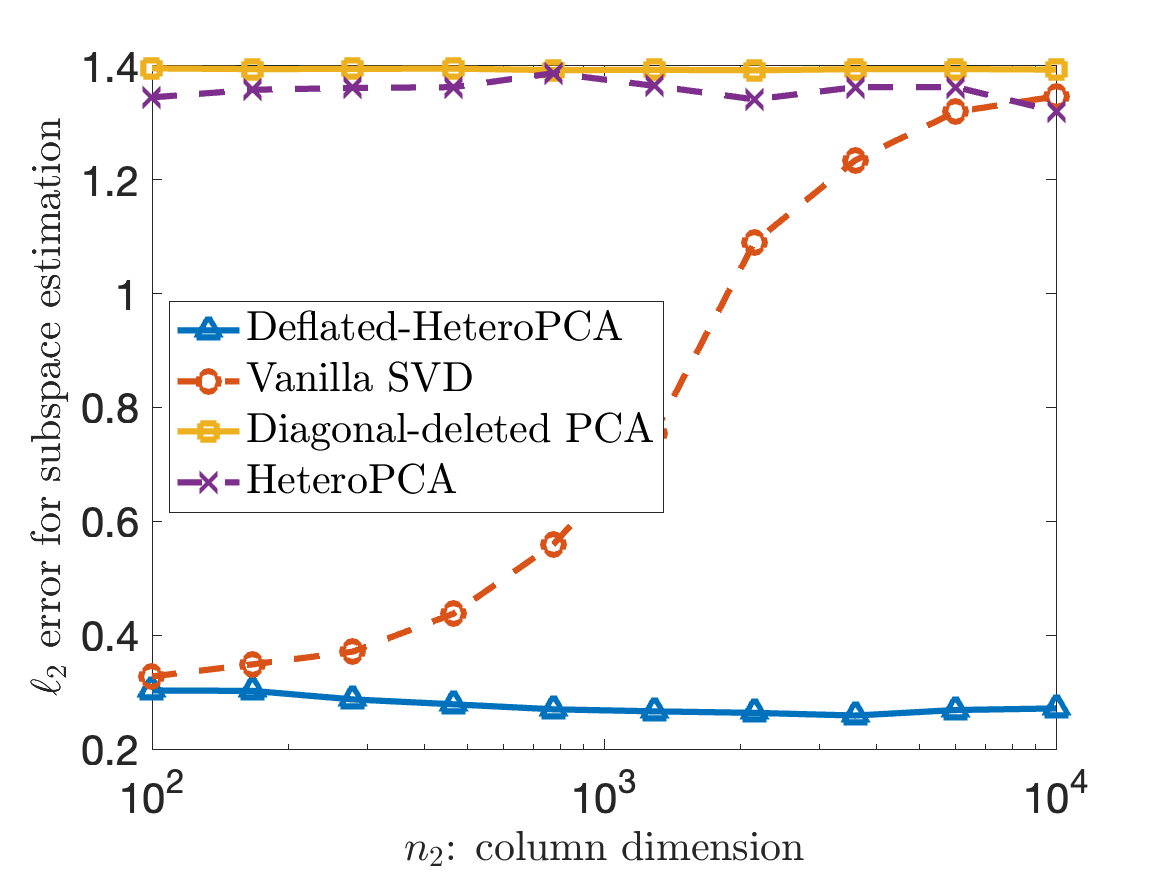}}
		\subfigure[$\omega = 1, \kappa = 5$, $\ell_{2,\infty}$ error]{
			\includegraphics[width=0.32\linewidth]{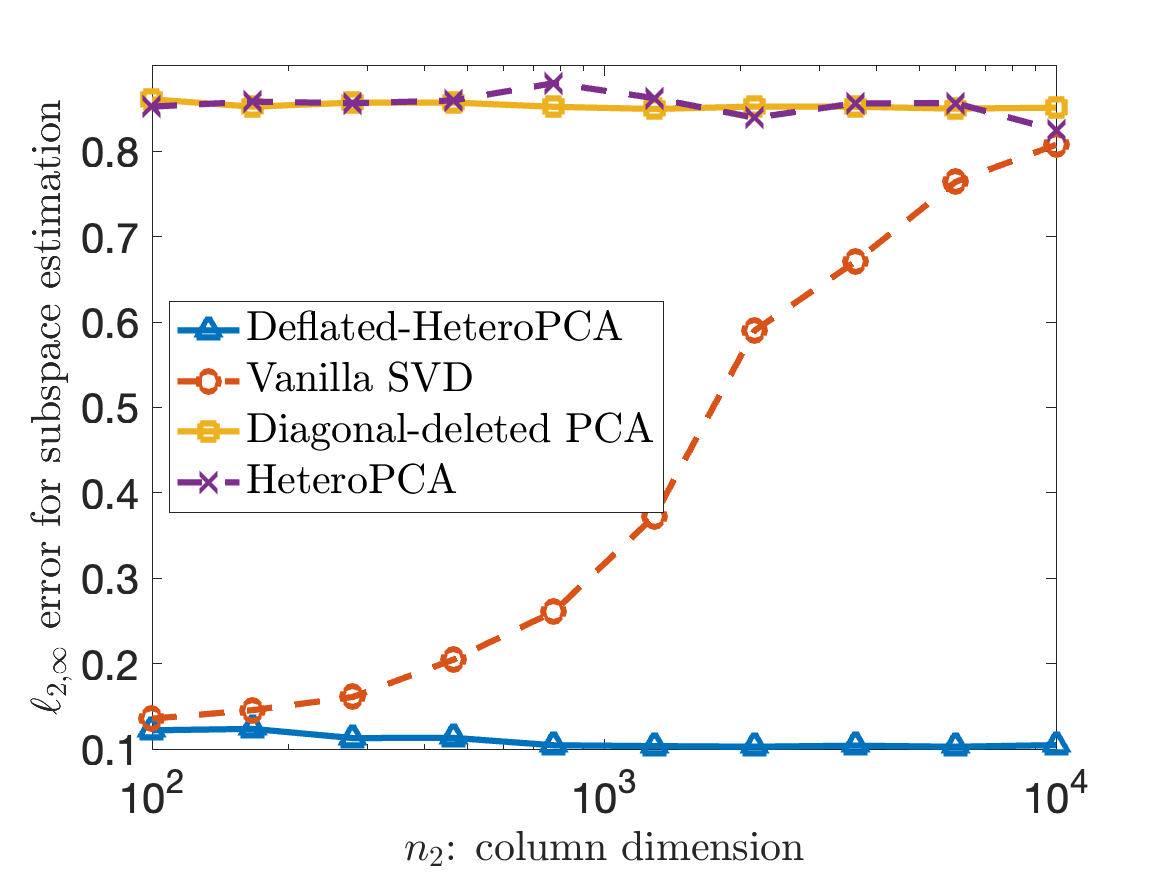}}
	\end{minipage}
	\caption{Estimation errors of $\bm{U}$ for {\sf Deflated-HeteroPCA}, {\sf Diagonal-deleted PCA}, {\sf HeteroPCA} and {\sf Vanilla SVD} when $r = 5$. 
	Plot (a) (resp.~(b)) displays the $\ell_2$ (resp.~$\ell_{2,\infty}$) error vs.~the noise level $\omega$ (where $n_1 = 100, n_2 = 1,000, \kappa = 5$). 
	Plot (c) (resp.~(d)) shows the $\ell_2$ (resp.~$\ell_{2,\infty}$) error vs.~the condition number $\kappa$ (where $n_1 = 100, n_2 = 1,000, \omega = 1$). 
	Plot (e) (resp.~(f)) diaplsys the $\ell_2$ (resp.~$\ell_{2,\infty}$) error vs.~the column dimension $n_2$ (where $n_1 = 100, \kappa = 5, \omega = 1$).}
	\label{fig:simulation_r_5}
\end{figure}

\paragraph{Factor model.} 
We then turn attention to the factor model \eqref{model:PCA}. We consider the case with $d = 100, r = 3$, and randomly generate the subspace $\bm{U}^\star \in \mathcal{O}^{d,3}$ and $\bm{F} = [\bm{f}_1\ \dots \ \bm{f}_n] \in \bbR^{3 \times n}$ with i.i.d.~standard Gaussian entries. We set the diagonal matrix $\bm{\Lambda}^\star = {\sf diag}(\lambda_1^\star, \lambda_2^\star, \lambda_3^\star)$ with $\lambda_1^\star = \kappa\lambda_3^\star$ and $\lambda_2^\star = \lambda_3^\star = (d/n)^{1/2} + d/n$. The noise matrix is generated in the same way as in the previous setting. We report in Figure \ref{fig:simulation_PCA_r_3} the $\ell_2$ and $\ell_{2,\infty}$ errors for the principal subspace for the four methods, {\sf Deflated-HeteroPCA}, {\sf Diagonal-deleted PCA}, {\sf HeteroPCA} and {\sf Vanilla SVD}. The numerical results suggest that the proposed {\sf Deflated-HeteroPCA} algorithm achieves the best performance among all these methods, 
which is not affected as $\kappa_{\sf pc}$ varies.
\begin{figure}[ht!]
	\centering
	\begin{minipage}[t]{\linewidth}
		\centering
		\subfigure[$\kappa_{\sf pc}=100, n = 1,000$, $\ell_2$ error]{
			\includegraphics[width=0.32\linewidth]{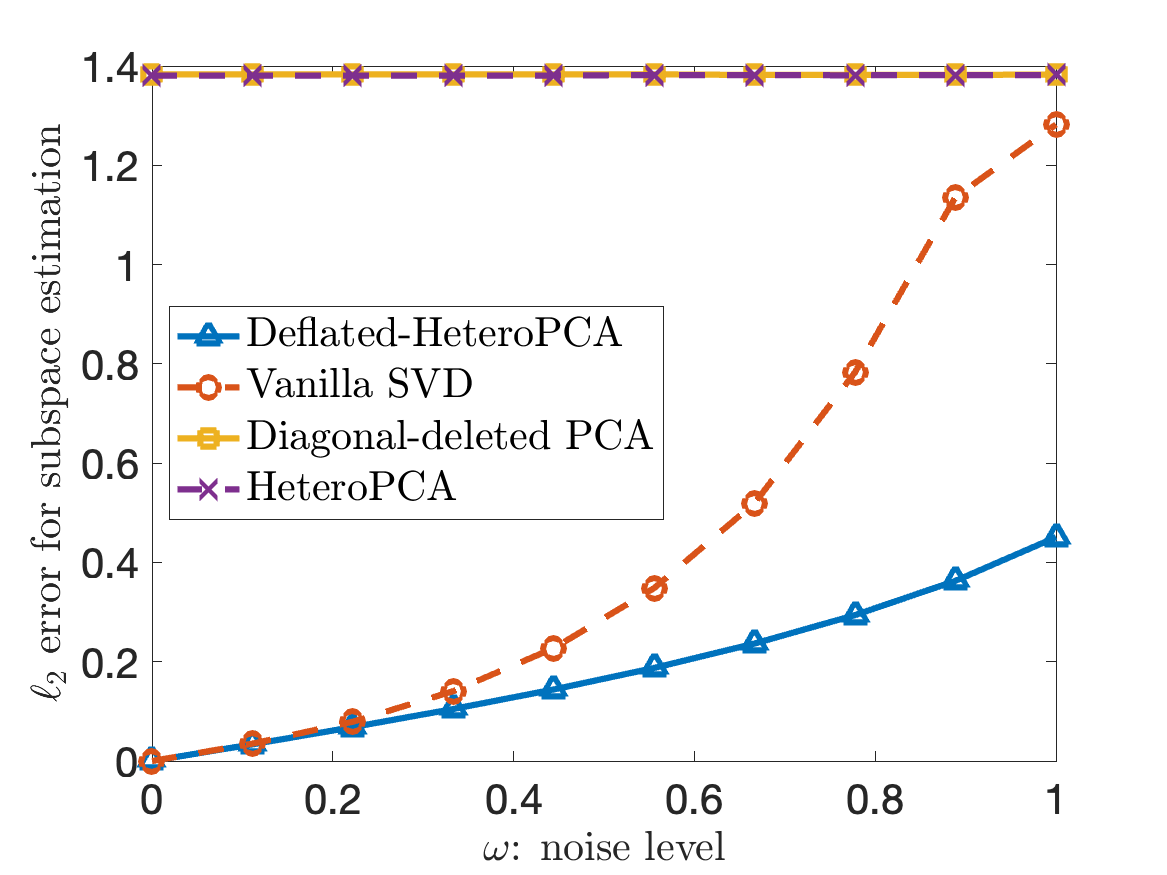}}
		\subfigure[$\kappa_{\sf pc}=100, n = 1,000$, $\ell_{2,\infty}$ error]{
			\includegraphics[width=0.32\linewidth]{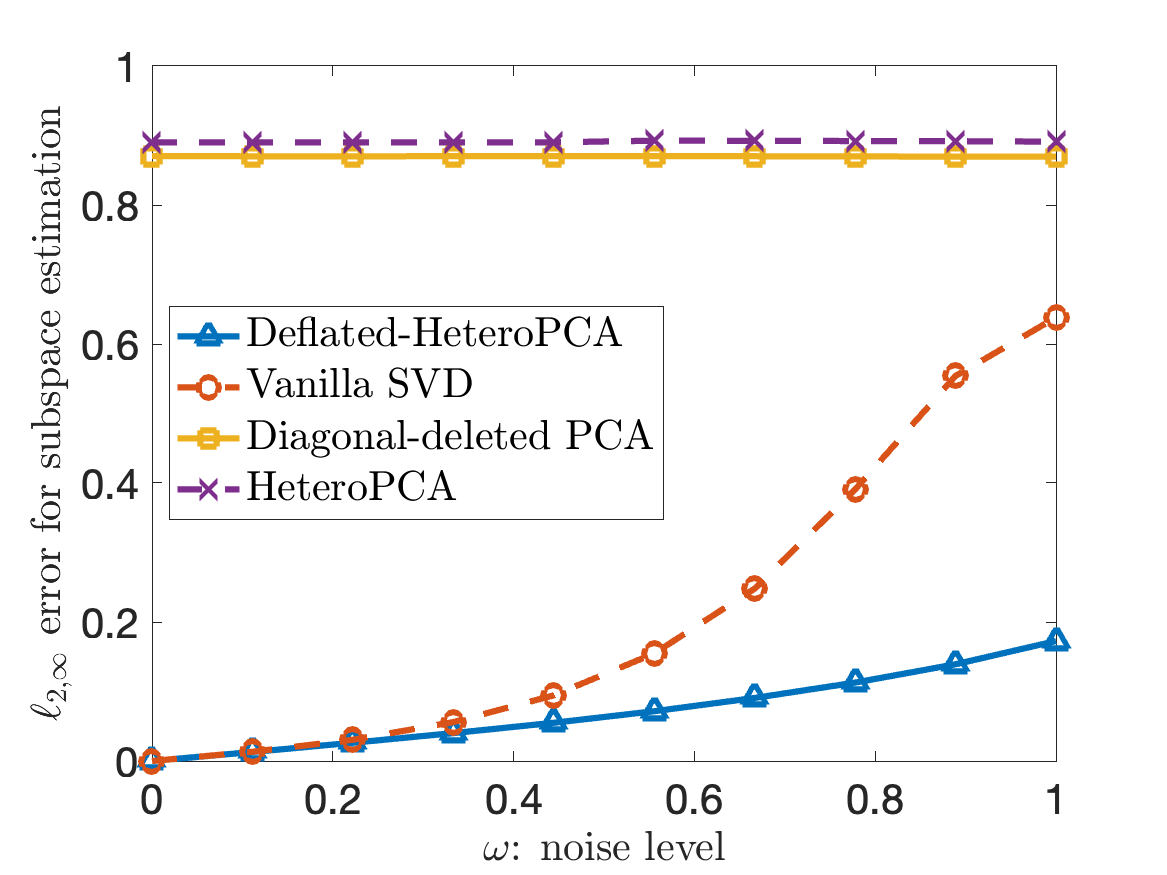}}
		\subfigure[$\omega = 1, n = 1,000$, $\ell_2$ error]{
			\includegraphics[width=0.32\linewidth]{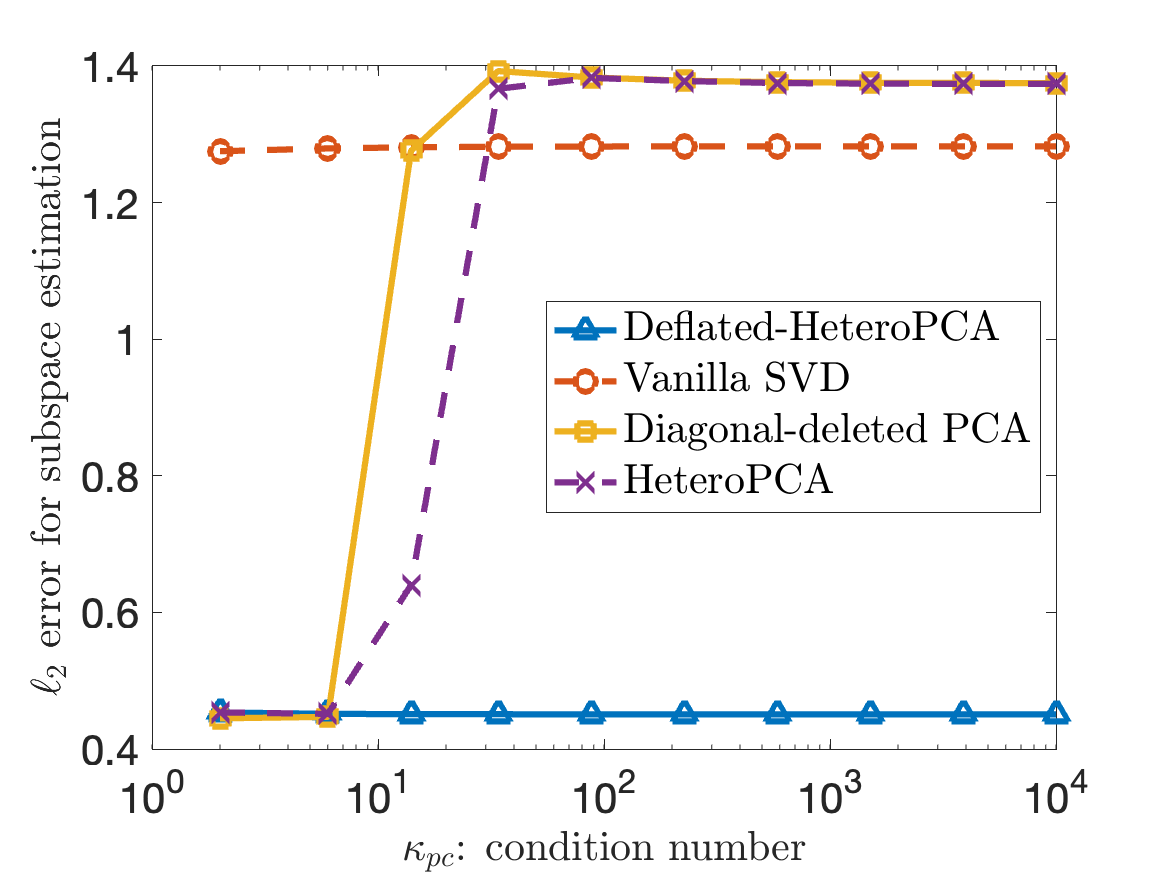}}
		\subfigure[$\omega = 1, n = 1,000$, $\ell_{2,\infty}$ error]{
			\includegraphics[width=0.32\linewidth]{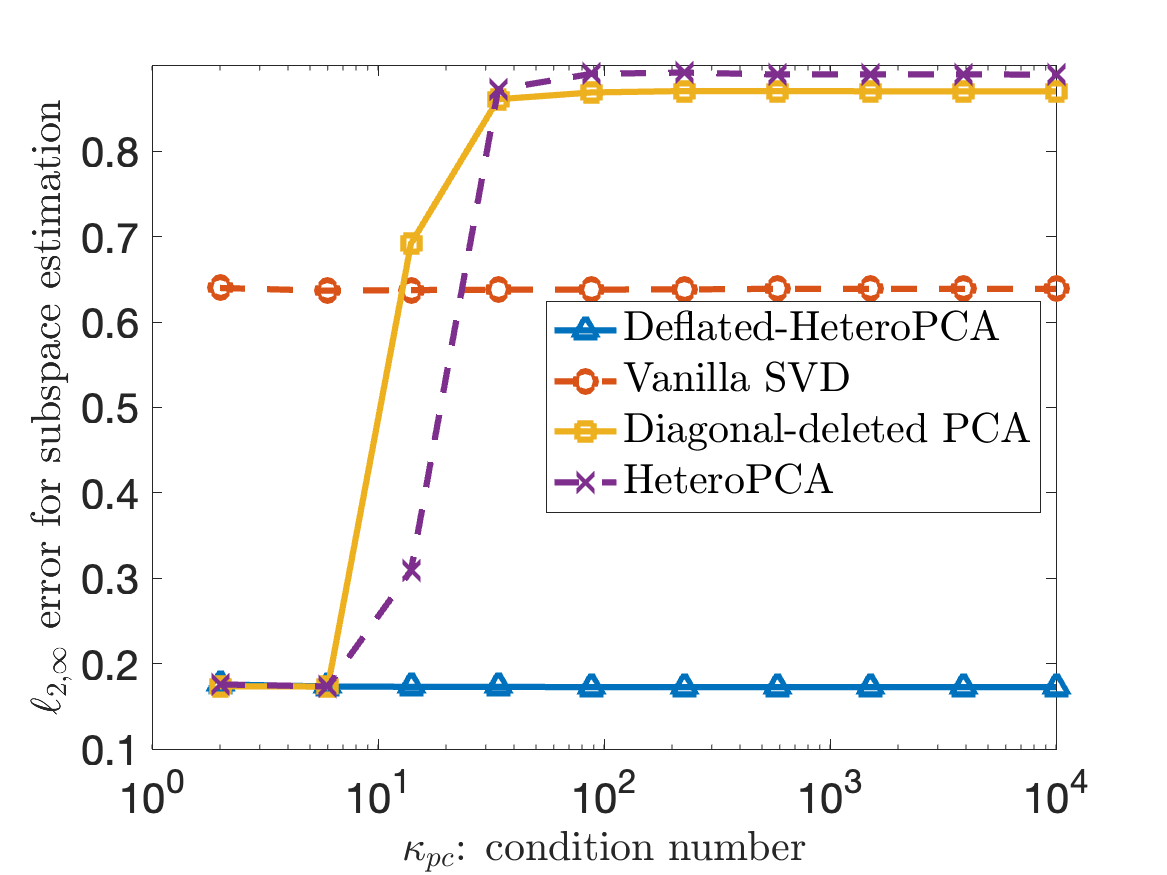}}
		\subfigure[$\omega = 1, \kappa_{\sf pc} = 100$, $\ell_{2}$ error]{
			\includegraphics[width=0.32\linewidth]{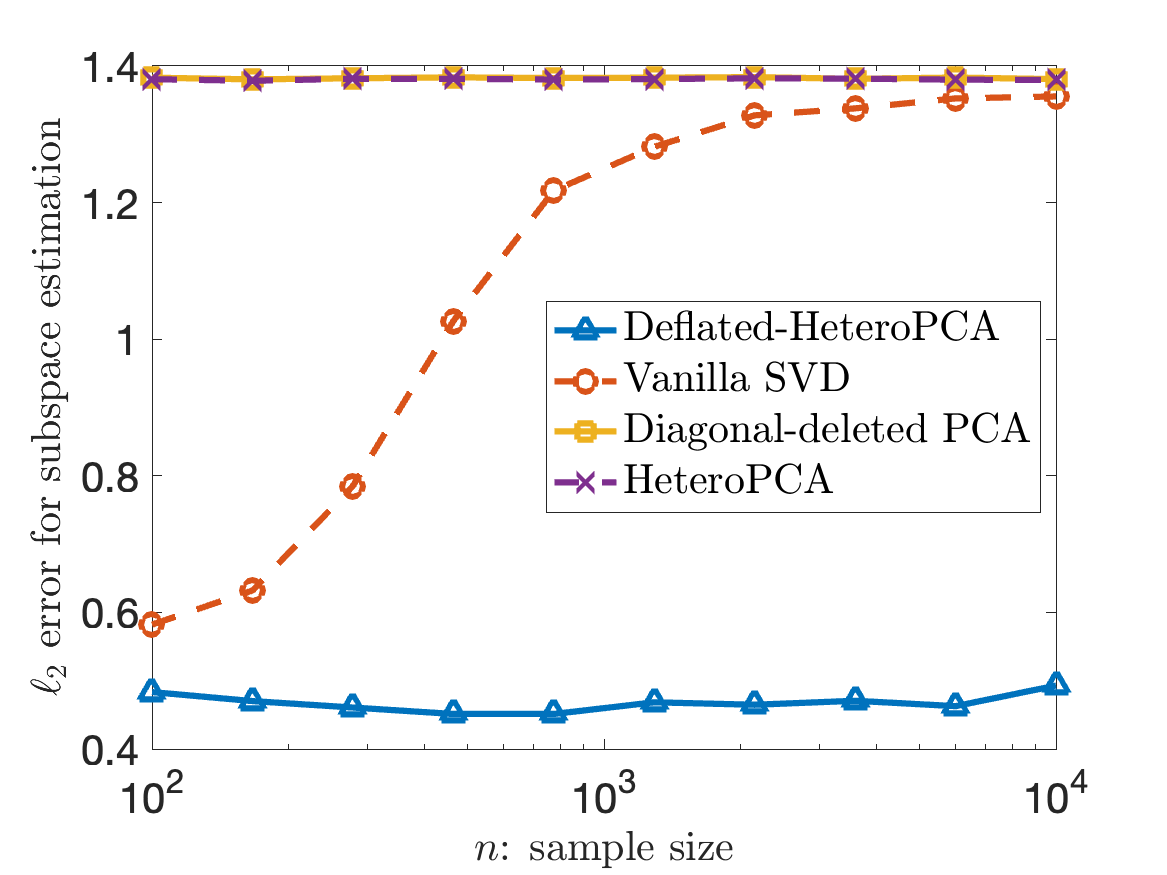}}
		\subfigure[$\omega = 1, \kappa_{\sf pc} = 100$, $\ell_{2,\infty}$ error]{
			\includegraphics[width=0.32\linewidth]{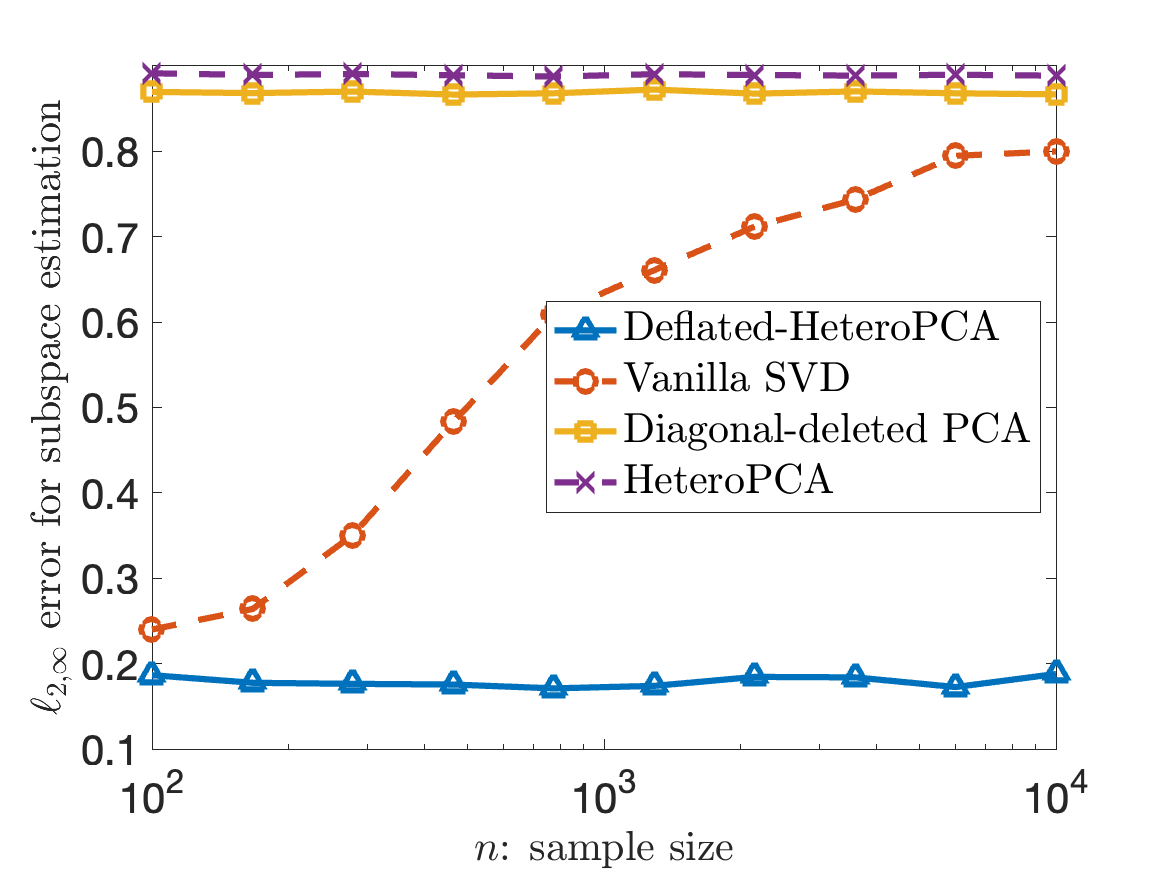}}
	\end{minipage}
	\caption{Estimation errors of $\bm{U}$ for {\sf Deflated-HeteroPCA}, {\sf Diagonal-deleted PCA}, {\sf HeteroPCA} and {\sf Vanilla SVD} under the factor model \eqref{model:PCA} when $r = 3$. 
	Plot (a) (resp.~(b)) displays the $\ell_2$ (resp.~$\ell_{2,\infty}$) error vs.~the noise level $\omega$ (where $d = 100, n = 1,000, \kappa_{\sf pc} = 100$). 
	Plot (c) (resp.~(d)) shows the $\ell_2$ (resp.~$\ell_{2,\infty}$) error vs.~the condition number $\kappa_{\sf pc}$ (where $d = 100, n = 1,000, \omega = 1$). 
	Plot (e) (resp.~(f)) displays the $\ell_2$ (resp.~$\ell_{2,\infty}$) error vs.~the sample size $n$ (where $d = 100, \kappa_{\sf pc} = 100, \omega = 1$).}\label{fig:simulation_PCA_r_3}
\end{figure}

\paragraph{Poisson PCA.} We consider the Poisson PCA problem \citep{zhang2022heteroskedastic,liu2018epca}: suppose that the truth $\bm{X}^\star = \bm{U}^\star\bm{\Sigma}^\star\bm{V}^\star \in \bbR^{n_1 \times n_2}$ is a rank-$r$ matrix with positive entries. Our goal is to estimate the column subspace $\bm{U}^\star \in \bbR^{n_1 \times r}$ based on the observations $\bm{Y} \in \bbR^{n_1 \times n_2}$, where each entry $Y_{i,j}$ of $\bm{Y}$ is an independent random variable following a Poisson distribution with mean $X_{i,j}^\star$, that is, $Y_{i,j} \sim \mathsf{Poisson}(X_{i,j}^\star)$. More specifically, we fix $n_1 = 100, n_2 = 1,000, r = 3$ and generate random matrices $\widetilde{\bm{U}} \in \bbR^{n_1 \times 3}$ and $\widetilde{\bm{V}} \in \bbR^{n_2 \times 3}$ with i.i.d.~standard Gaussian entries. We let $\overline{\bm{U}} \in \bbR^{n_1 \times 3}$ (resp.~$\overline{\bm{V}} \in \bbR^{n_2 \times 3}$) denote the matrix with entries $\overline{U}_{i,j} = |\widetilde{U}_{i,j}|$ (resp.~$\overline{V}_{i,j} = |\widetilde{V}_{i,j}|$). We define $\overline{\bm{\Lambda}} = \frac{1}{5}{\sf diag}(\lambda^2, \lambda, \lambda)$ and let $\bm{X}^\star = \overline{\bm{U}}\,\overline{\bm{\Lambda}}\,\overline{\bm{V}}^\top$. The  empirical $\ell_2$ and $\ell_{2,\infty}$ errors for the subspace estimation for the four methods, {\sf Deflated-HeteroPCA}, {\sf Diagonal-deleted PCA}, {\sf HeteroPCA} and {\sf Vanilla SVD} are illustrated in Figure~\ref{fig:simulation_Poisson_r_3}. It is clearly seen that {\sf Deflated-HeteroPCA} outperforms the other three methods.

\begin{figure}[t]
	\centering
	\begin{minipage}[t]{\linewidth}
		\centering
		\subfigure[$\kappa=5, n_2 = 1,000$, $\ell_2$ error]{
			\includegraphics[width=0.45\linewidth]{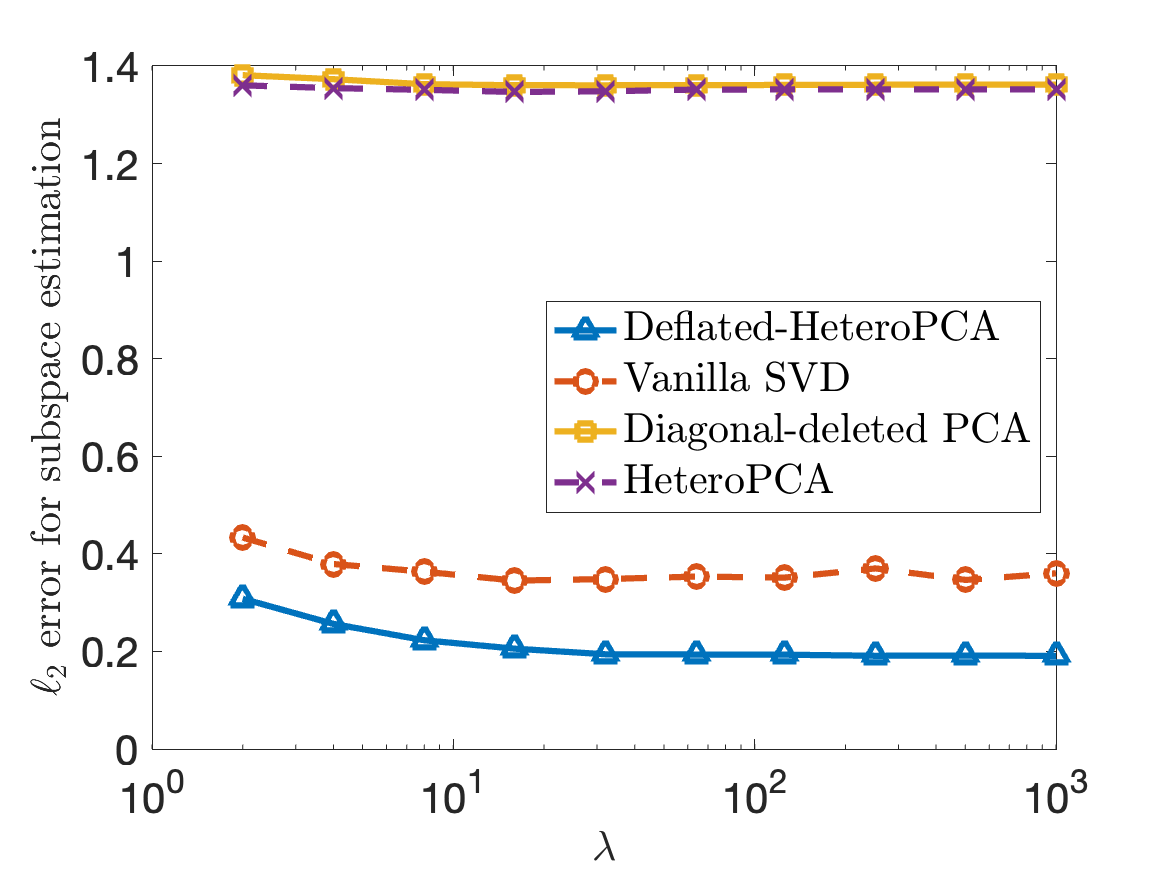}}
		\subfigure[$\kappa=5, n_2 = 1,000$, $\ell_{2,\infty}$ error]{
			\includegraphics[width=0.45\linewidth]{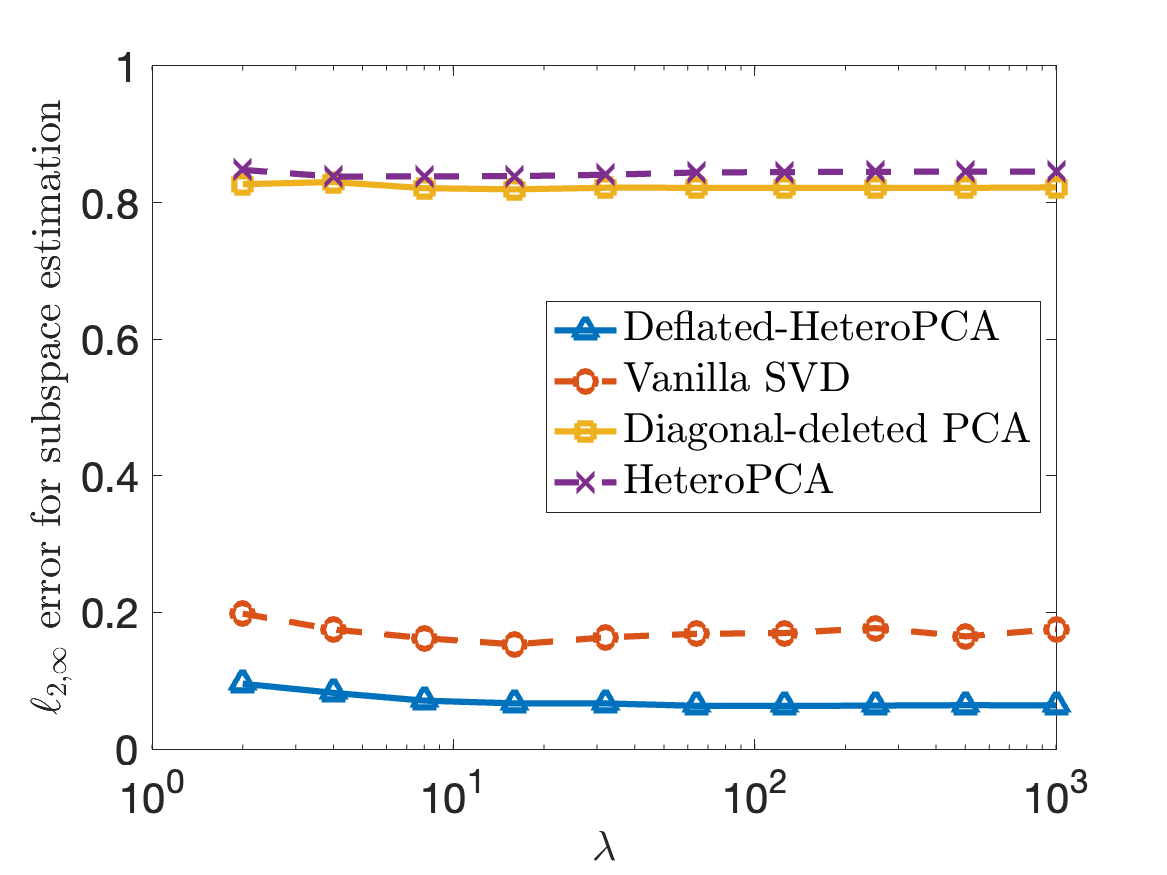}}
	\end{minipage}
	\caption{Estimation errors of $\bm{U}$ for {\sf Deflated-HeteroPCA}, {\sf Diagonal-deleted PCA}, {\sf HeteroPCA} and {\sf Vanilla SVD} under the Poisson PCA model. Plot (a) (resp. (b)) reports the $\ell_2$ (resp.~$\ell_{2,\infty}$) error vs.~$\lambda$ (where $n_1 = 100, n_2 = 1,000, r = 3$).}\label{fig:simulation_Poisson_r_3} 
\end{figure}

\paragraph{Tensor PCA.} Finally, we conduct numerical experiments for the tensor PCA model \eqref{model:tensor_SVD-all}. We fix $n = 50$ and $r = 3$, and introduce a quantity $\sigma^\star = n^{3/4}$. The subspaces $\bm{U}_1^\star \in \mathcal{O}^{100,3}$, $\bm{U}_2^\star \in \mathcal{O}^{100,3}$ and $\bm{U}_3^\star \in \mathcal{O}^{100,3}$ are generated randomly, and the core tensor $\mathcal{S}^\star \in \bbR^{3 \time 3 \times 3}$ is a diagonal tensor with entries $S_{1,1,1} = \kappa\sigma^\star$ and $S_{2,2,2} = S_{3,3,3} =\sigma^\star$. The noise tensor is generated in the following way:
we first generate three random vectors $\bm{\alpha}, \bm{\beta}$ and $\bm{\gamma}$, where $\{\alpha_i\}$, $\{\beta_j\}$, $\{\gamma_k\}$ are independently drawn from $[0, 1]$. 
We then generate each $E_{i,j,k}$ independently  from $\mathcal{N}(0, \omega^2\alpha_i^2\beta_j^2\gamma_k^2)$. The above four subspace estimation methods are applied to obtain initial subspace estimates, followed by 50 iterations of HOOI to refine the subspace estimators and construct the final tensor estimates. Figures \ref{fig:simulation_tensor_initial} and \ref{fig:simulation_tensor_final} report the initial subspace estimation errors and the final subspace/tensor estimation errors, respectively. We can see from these plots that the {\sf Deflated-HeteroPCA} algorithm produces faithful initial estimators in terms of both the $\ell_2$ and $\ell_{2,\infty}$ errors, outperforming the other three methods. Moreover, compared with the other three methods,  the {\sf Deflated-HeteroPCA} algorithm serves as a more effective initialization scheme that can help one achieve more reliable subspace and tensor estimators. 
\begin{figure}[t]
	\centering
	\begin{minipage}[t]{\linewidth}
		\centering
		\subfigure[$\kappa=6$, $\ell_2$ error]{
			\includegraphics[width=0.32\linewidth]{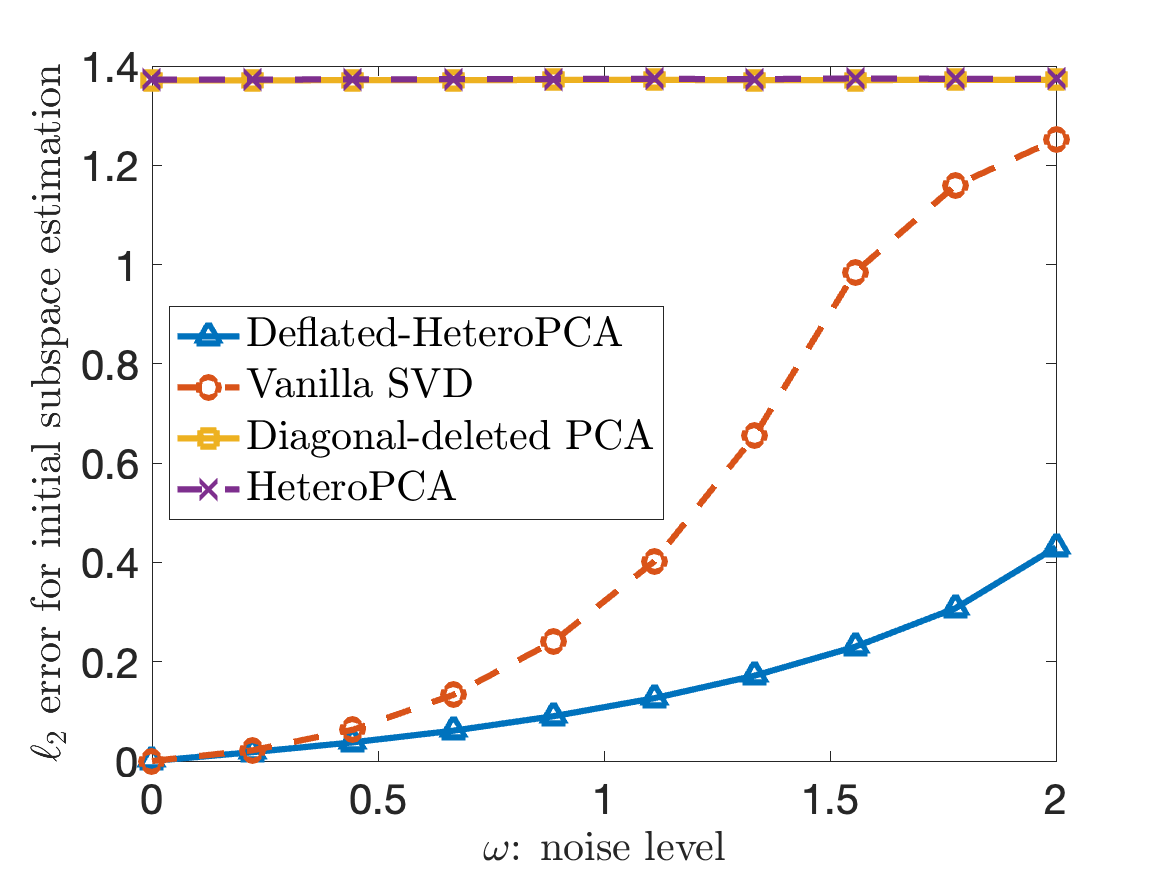}}
		\subfigure[$\kappa=6$, $\ell_{2,\infty}$ error]{
			\includegraphics[width=0.32\linewidth]{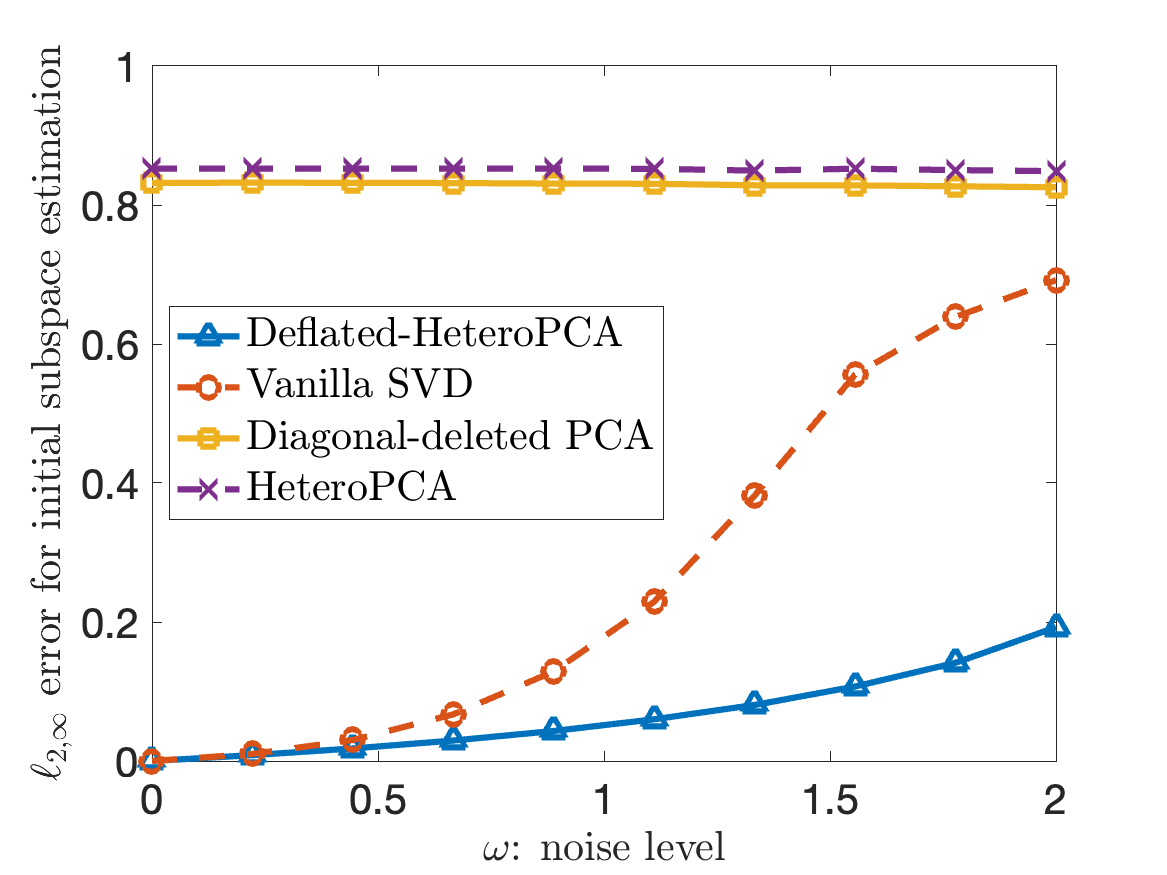}} \\
		\subfigure[$\omega = 2$, $\ell_2$ error]{
			\includegraphics[width=0.32\linewidth]{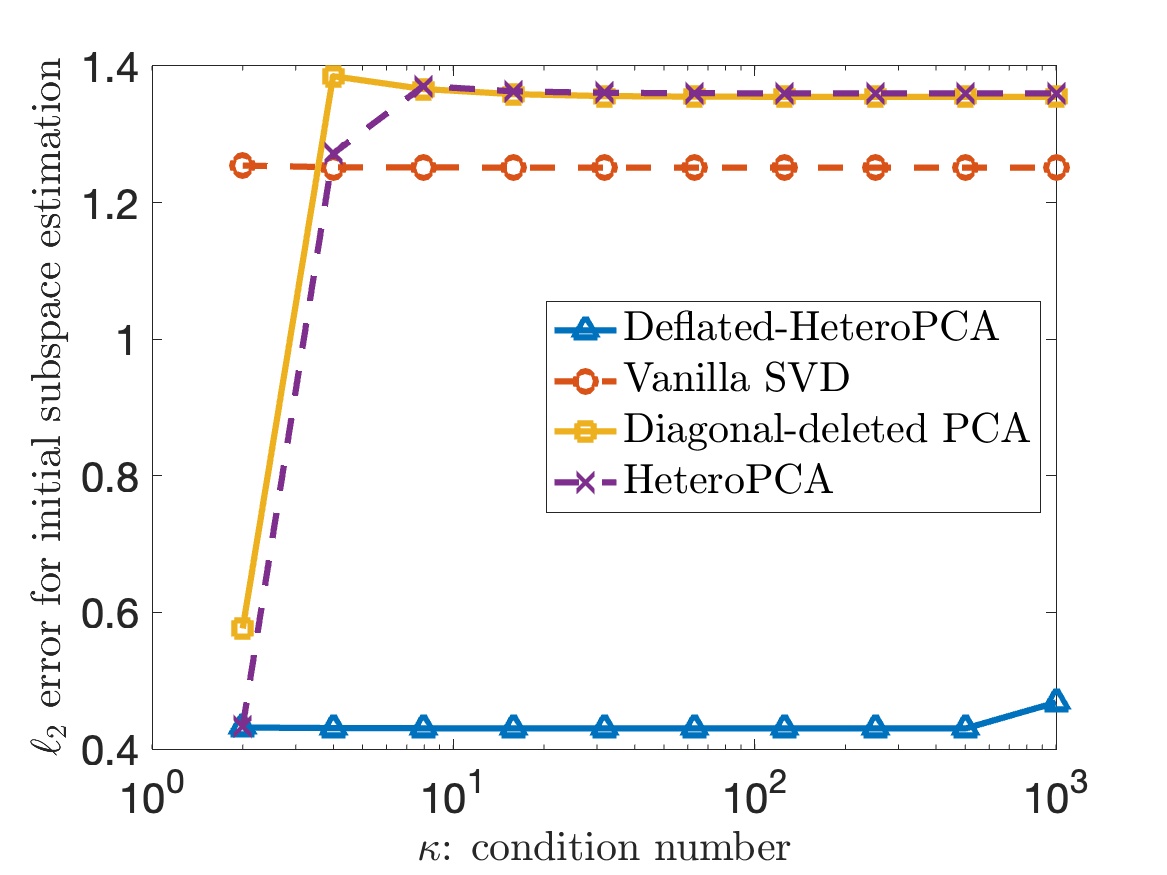}}
		\subfigure[$\omega = 2$, $\ell_{2,\infty}$ error]{
			\includegraphics[width=0.32\linewidth]{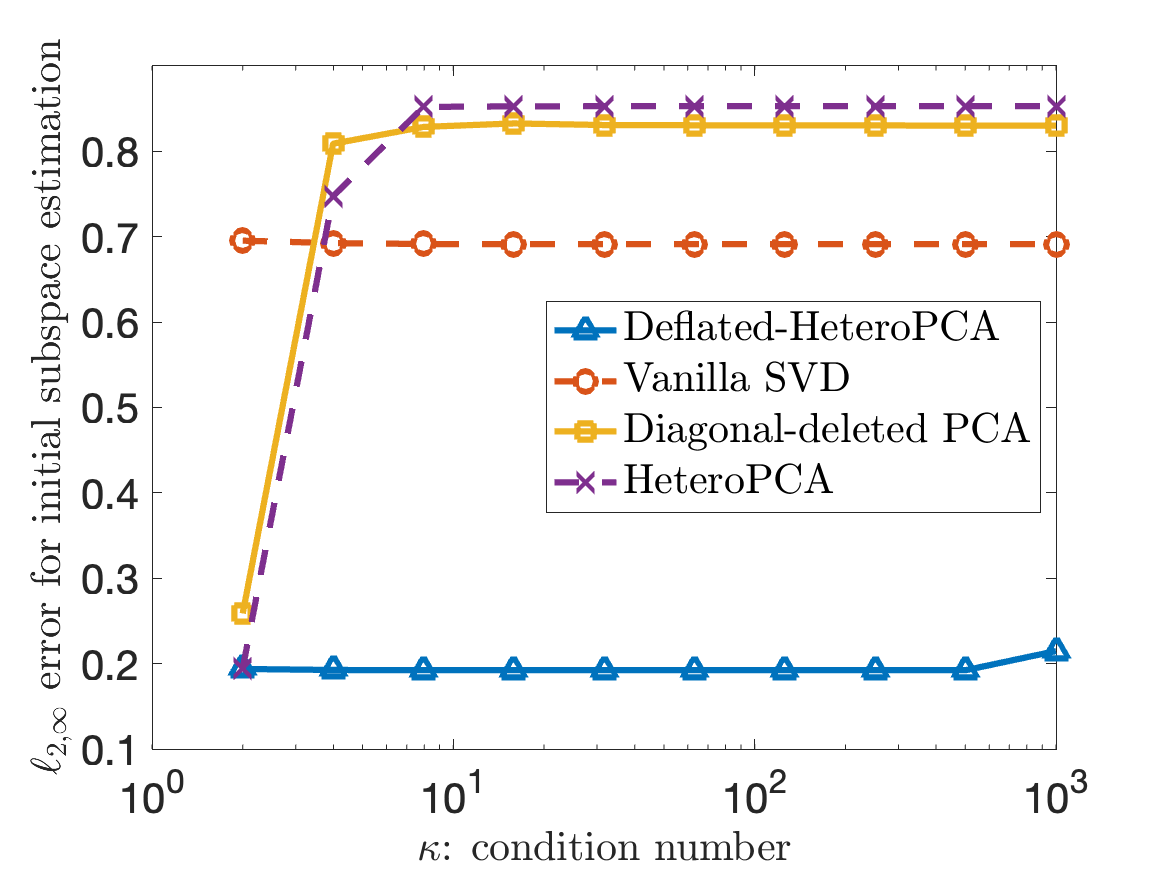}}
	\end{minipage}
	\caption{Initial estimation errors of $\widehat{\bm{U}}_1^0$ for {\sf Deflated-HeteroPCA}, {\sf Diagonal-deleted PCA}, {\sf HeteroPCA} and {\sf Vanilla SVD} under the tensor SVD model \eqref{model:tensor_SVD-all}. Plot~(a) (resp.~(b)) diplays the $\ell_2$ (resp.~$\ell_{2,\infty}$) error vs.~the noise level $\omega$ (where $n_1 = n_2 = n_3 = 50, r = 3, \kappa = 6$). 
	Plot (c) (resp.~(d)) shows the $\ell_2$ (resp.~$\ell_{2,\infty}$) error vs.~the condition number $\kappa$ (where $n_1 = n_2 = n_3 = 50, r = 3, \omega = 2$).}\label{fig:simulation_tensor_initial}
\end{figure}

\begin{figure}[t]
	\centering
	\begin{minipage}[t]{\linewidth}
		\centering
		\subfigure[$\kappa=6$, subspace estimation]{
			\includegraphics[width=0.32\linewidth]{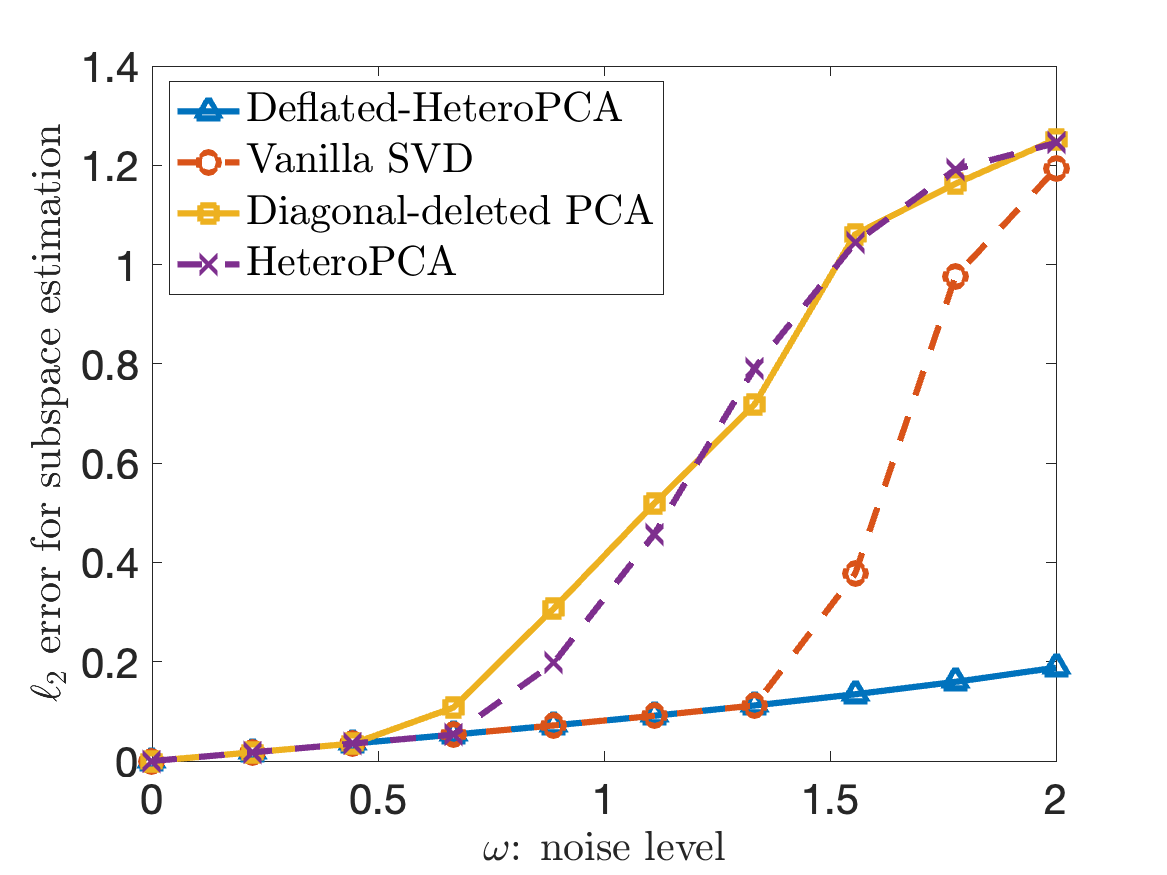}}
		\subfigure[$\kappa=6$, tensor estimation]{
			\includegraphics[width=0.32\linewidth]{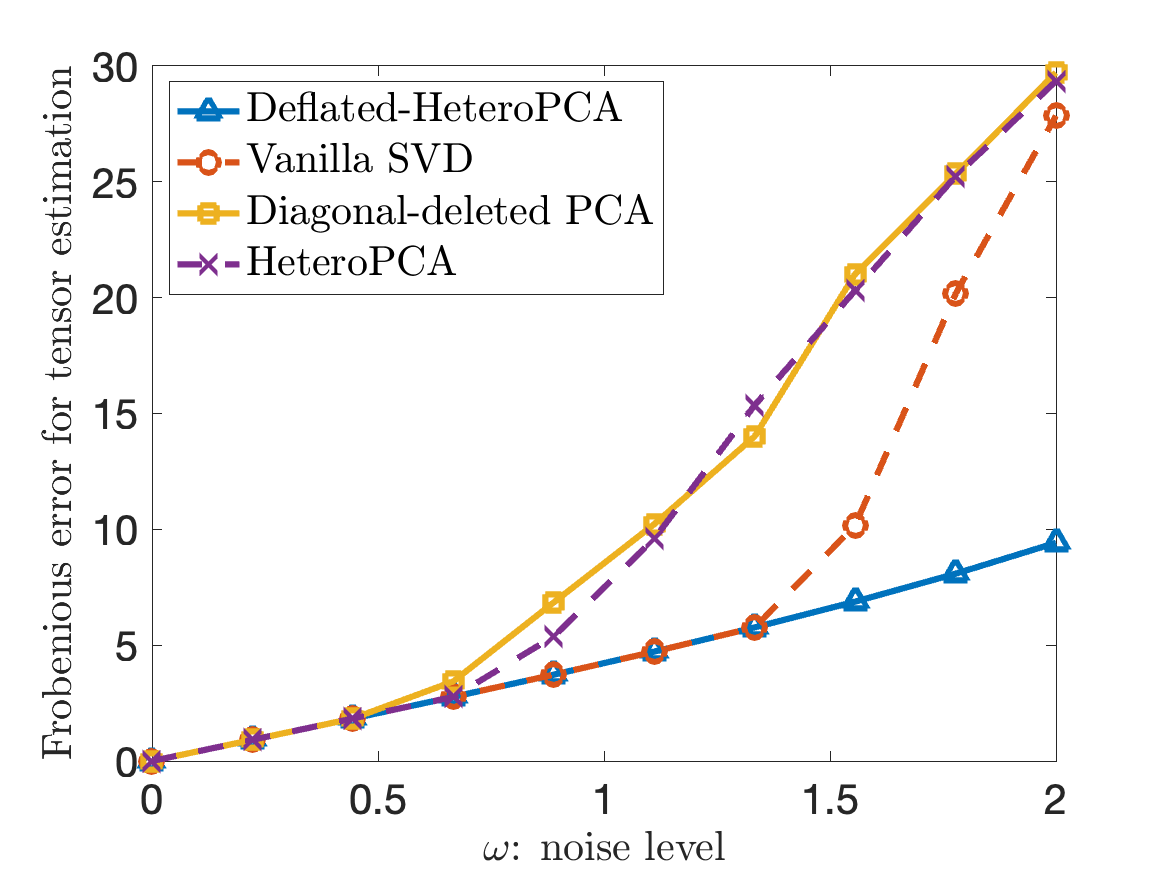}} \\
		\subfigure[$\omega = 2$, subspace estimation]{
			\includegraphics[width=0.32\linewidth]{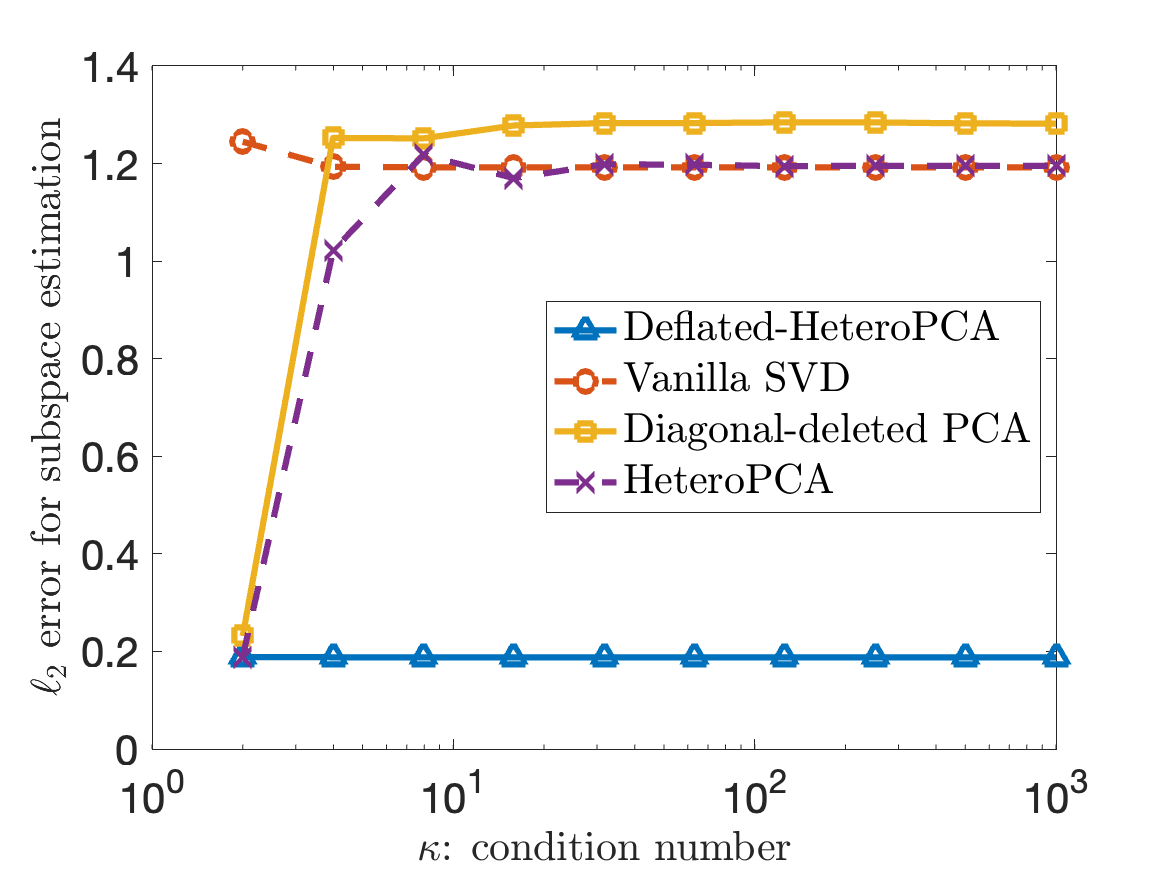}}
		\subfigure[$\omega = 2$, tensor estimation]{
			\includegraphics[width=0.32\linewidth]{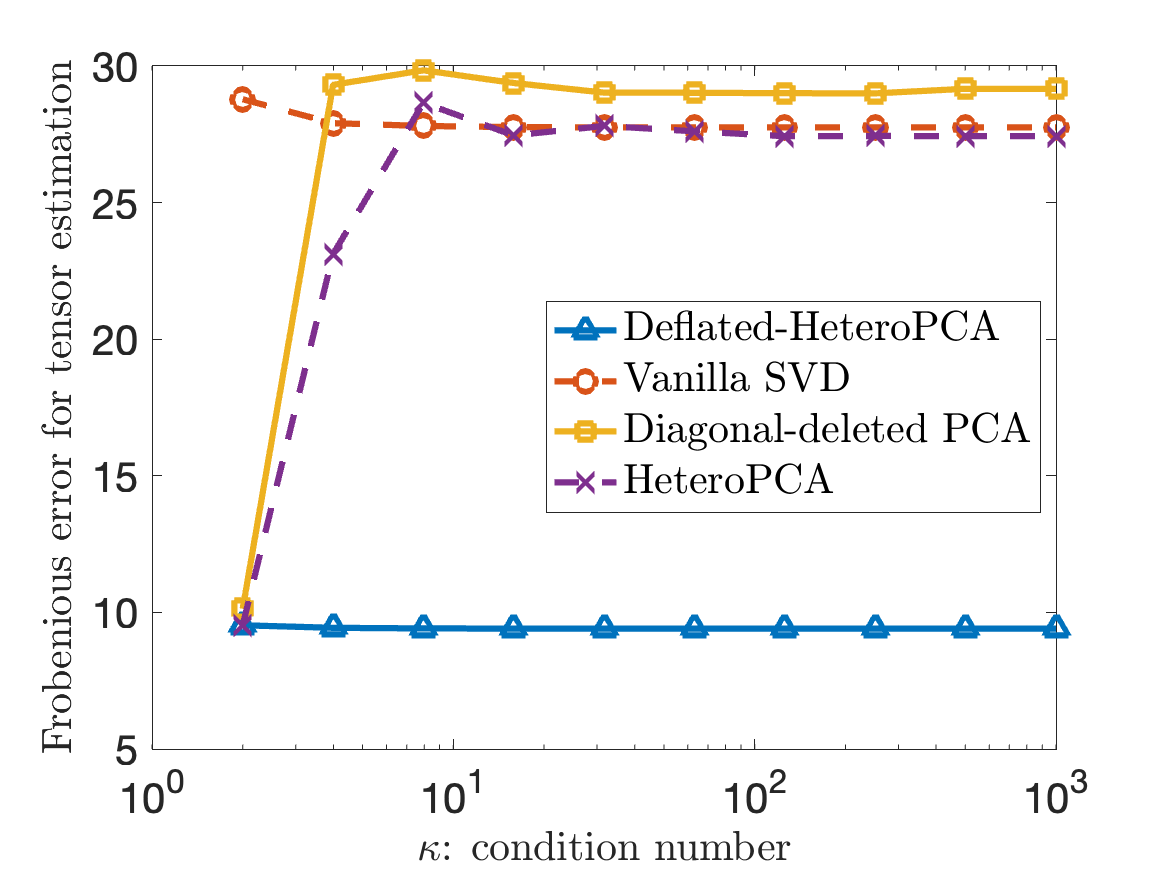}}
	\end{minipage}
	\caption{Final estimation errors of $\widehat{\bm{U}}_1$ and $\widehat{\bm{\mathcal{X}}}$ for {\sf Deflated-HeteroPCA}, {\sf Diagonal-deleted PCA}, {\sf HeteroPCA} and {\sf Vanilla SVD} under the tensor SVD model \eqref{model:tensor_SVD-all}. We report (a) (resp.~(b)) $\ell_2$ (resp.~Frobenious) error of $\widehat{\bm{U}}_1$ (resp.~$\widehat{\bm{\mathcal{X}}}$) vs. noise level $\omega$ (where $n_1 = n_2 = n_3 = 50, r = 3, \kappa = 6$); (c) (resp.~(d)) $\ell_2$ (resp.~Frobenious) error of $\widehat{\bm{U}}_1$ (resp.~$\widehat{\bm{\mathcal{X}}}$) vs. condition number $\kappa$ (where $n_1 = n_2 = n_3 = 50, r = 3, \omega = 2$).}\label{fig:simulation_tensor_final}
\end{figure}

    \section{Related works}\label{sec:related_work}

This paper is closely related to the problem of matrix denoising, which aims to estimate either a low-rank matrix or its column subspace based on noisy observations and spans a diverse array of applications \citep{chen2021spectral}. 
In addition to the examples of factor models and tensor estimation 
 \citep{cai2018rate,cai2021subspace,zhu2019high,richard2014statistical,zhang2018tensor,cai2021subspace}, 
 it can also help us understand and solve several clustering problems \citep{rohe2011spectral,florescu2016spectral,cai2021subspace,chen2022global,cai2018rate,loffler2021optimality,ndaoud2022sharp,srivastava2022robust,han2022exact,zhang2022leave}.
When it comes to the task of estimating the whole matrix, 
a number of methods have been put forward and thoroughly studied in the literature, including but not limited to singular value hard thresholding \citep{gavish2014optimal,chatterjee2015matrix}, singular value soft thresholding \citep{cai2010singular,koltchinskii2011nuclear,donoho2014minimax} and  singular value shrinkage \citep{nadakuditi2014optshrink,gavish2017optimal}. Turning to the task of subspace estimation, the vanilla SVD-based approach (see \eqref{eq:vanilla-SVD-alg}) has been commonly used and widely studied in the literature \citep{koltchinskii2016perturbation,cai2018rate,bao2021singular,xia2021normal,chen2021spectral}. 
How to perform uncertainty quantification for this approach has also been demonstrated in the previous work (see \citep{chen2021spectral}).  
In the scenario where the matrix dimensions are extremely unbalanced and the noise is heteroskedastic, however, such estimators can be highly suboptimal for subspace estimation. As already mentioned previously,  
the diagonal-deleted PCA and {\sf HeteroPCA} algorithms have been proposed to improve the performance over the vanilla SVD approach \citep{cai2021subspace,zhang2022heteroskedastic,agterberg2022entrywise,yan2024inference}. In fact, it has also been shown in \citet{yan2024inference} that the {\sf HeteroPCA} admits a non-asymptotic distributional theory, 
which paves the way to construction of fine-grained confidence regions for this problem. 
Another family of effective algorithms --- which can even accommodate the case when there is additional prior structure on the low-rank factors --- is approximate message passing \citep{montanari2021estimation,deshpande2017asymptotic,feng2022unifying,li2023approximate,li2022non,montanari2022fundamental}, 
for which the existing theory often requires more stringent assumptions on the noise components (e.g., i.i.d.~Gaussian).  
It is also worth mentioning that how to accelerate optimization-based low-rank estimation algorithms in spite of ill conditioning has been an active research topic as well, 
which oftentimes involves proper preconditioning \citep{tong2021accelerating,xu2023power}; the statistical guarantees therein, however, are still dependent on the condition number.

With regards to the factor model, one can easily find numerous works on this topic. The model \eqref{model:PCA} has been extensively studied under the names of spiked covariance models \citep{johnstone2001distribution,paul2007asymptotics,bai2012estimation,wang2017asymptotics,donoho2018optimal,perry2018optimality,bao2022statistical} and factor models \citep{lawley1962factor,bai2012statistical,fan2016projected,bai2016econometric}. Focusing on  principal component estimation under heteroskedastic noise, \cite{hong2016towards,hong2018asymptotic,hong2018optimally} investigate the case where the noise components within each noise vector $\bm{\varepsilon}_j$ are i.i.d., and develop asymptotic analysis for PCA and a variant called Weighted PCA. Turning to non-asymptotic analysis, the theoretical performances of  diagonal-deleted PCA \citep{cai2021subspace} and {\sf HeteroPCA}  have been investigated in \citep{cai2021subspace,zhang2022heteroskedastic,yan2024inference}. 
It is also worth noting that principal component estimation in the presence of missing data encounters additional challenges \citep{cai2021subspace,zhang2022heteroskedastic,zhu2019high,pavez2020covariance,yan2024inference}, which is beyond the scope of this work. 

Another important example considered in this paper is the tensor PCA or tensor SVD model \eqref{model:tensor_SVD-all}. 
Under this model, \cite{richard2014statistical,hopkins2015tensor,anandkumar2017homotopy,arous2019landscape,perry2020statistical} study the statistical and computational limits for rank-$1$ tensors. For low Tucker-rank tensors, many methods have been proposed for tensor/subspace estimation, including  high-order SVD (HOSVD, \cite{de2000multilinear}),  high-order orthogonal iteration (HOOI, \cite{de2000best,zhang2018tensor}), the sequentially truncated higher-order singular value decomposition algorithm (ST-HOSVD, \cite{vannieuwenhoven2012new}), projected gradient descent \citep{han2022optimal}, and scaled gradient descent \citep{tong2022scaling}. When the noise tensor has i.i.d.~Gaussian entries, \cite{zhang2018tensor} proves the statistical and computational limit for the tensor SVD and reveals that the HOOI achieves the optimal performance both statistically and computationally. Allowing the noise to be heteroskedastic, \cite{han2022optimal} shows that the optimal error rate can be achieved by the projected gradient descent with the initialization given by the {\sf HeteroPCA} if the condition number of the true tensor is bounded. In contrast to the prior literature, we consider the tensor and subspace estimation problem under heteroskedastic noise and aim to accommodate an arbitrarily large condition number; we show that the HOOI algorithm initialized by {\sf Deflated-HeteroPCA} yields optimal theoretical guarantees. In addition to the Tucker-rank decomposition, the tensor PCA/SVD model with the low CP-rank structure \citep{kolda2009tensor,anandkumar2014tensor,cai2021subspace,cai2022nonconvex,cai2020uncertainty} and the low tensor-train-rank structure \citep{zhou2022optimal,cai2022provable} have also received much attention in the past few years. 

In addition, recent years have witnessed much acitivity in developing $\ell_{\infty}$ and $\ell_{2,\infty}$ theoretical guarantees for singular subspaces and eigenspaces \citep{zhong2018near,fan2018eigenvector,cape2019two,agterberg2022entrywise}. 
Particularly worth noting is the leave-one-out analysis framework, which emerges as a powerful tool to derive fine-grained (e.g., entrywise or rowwise) bounds and finds applications in numerous high-dimensional estimation problems \citep{zhong2018near,ma2020implicit,chen2019spectral,abbe2020entrywise,chen2020noisy,chen2019inference,chen2021bridging,cai2021subspace,chen2021convex,cai2022nonconvex,abbe2022lp,yan2024inference,ling2022near,zhang2022leave,yang2022optimal}. However, existing $\ell_{2,\infty}$ estimation guaranteed obtained by means of the leave-one-out technique still rely on the condition number. To achieve a condition-number-free $\ell_{2,\infty}$ bound, we provide a novel analysis based on the representation theorem presented in \cite{xia2021normal}. The idea also shares similar spirit with the Neumann trick, which is commonly used in $\ell_{\infty}$ eigenvector analysis  \citep{eldridge2018unperturbed,chen2021asymmetry,cheng2021tackling}.

\section{Discussion}

This paper has studied subspace estimation from noisy low-rank matrices in the presence of unbalanced dimensionality and heteroskedastic noise. 
Recognizing a curse of ill-conditioning that appears in two cutting-edge algorithms, 
we have developed a new algorithm called {\sf Deflated-HeteroPCA} to strengthen the state-of-the-art statistical performance in the face of a large condition number, 
without compromising the range of SNRs that can be accommodated. 
We have demonstrated that the proposed estimator enjoys nearly rate-optimal statistical guarantees (in terms of both the spectral-norm error and the more fine-grained $\ell_{2,\infty}$-based error), 
which are unaffected by the underlying condition number (regardless of how large it is). 
When applied to two concrete statistical models (i.e., factor models and tensor PCA), 
our theory has led to remarkable improvement over the prior art (particularly for the ill-conditioned scenarios).

Our work suggests several potential avenues for future investigation. 
For example, the signal-to-noise ratio conditions \eqref{assump:snr2} and \eqref{assump:snr_two_to_infty} in our theory remain sub-optimal when it comes to their dependency on the rank $r$. 
How to tighten this rank dependency calls for a more refined analysis or a more powerful algorithm. 
Another direction worthy of future studies is the case with missing data (i.e., suppose we only have access to highly incomplete observations of the entries of the data matrix $\bm{Y}$ in \eqref{eq:model}).  It would be of great interest to extend our approach and develop a computationally efficient estimator that enjoys condition-number-free and rate-optimal estimation guarantees in the presence of missing data. Furthermore, note that the independent noise assumption plays an important role on our current theoretical analysis. Having said that, our method has potential to deal with more general correlated noise distributions (e.g., the one arising in network data). Our follow-up work \cite{zhou2023heteroskedastic} applied a clustering method based on {\sf Deflated-HeteroPCA} to the flight route network data, which demonstrates superior clustering performance compared to prior algorithms. We leave more extensive theoretical studies for the case with correlated data to future investigation.

\section*{Acknowledgements}

This work is supported in part by the Alfred P.~Sloan Research Fellowship,   
and the NSF grants CCF-1907661, DMS-2014279, IIS-2218713 and IIS-2218773.

\appendix

    \section{Proof of Theorem \ref{thm:spectral} ($\ell_2$ analysis for {\sf Deflated-HeteroPCA})}\label{proof:thm_spectral}
Before continuing, we introduce some notation about some intermediate objects that appear in our algorithm, which will be useful in the proofs. First,  set 
\begin{subequations}
	\begin{align}\label{def:inter_initialization}
		\bm{G}_{k+1}^{0} := \bm{G}_k, \qquad 0 \leq k \leq k_{\sf max},
	\end{align}
	where we recall that $$\bm{G}_0 = \mathcal{P}_{\sf off\text{-}diag}(\bm{Y}\bm{Y}^\top).$$
	For each $t = 0, 1, \dots, t_{k+1}$ and $k = 0, 1, \dots, k_{\sf max}$, let
	\begin{align}\label{def:inter_SVD}
		\bm{U}_{k+1}^{t}\bm{\Lambda}_{k+1}^{t}\bm{U}_{k+1}^{t\top} :=  \text{ the rank-}r_k \text{ leading eigendecompostion of } \bm{G}_{k+1}^t, 
	\end{align}
	and define 
	\begin{align}\label{def:inter_G}
		\bm{G}_{k+1}^{t+1} := \mathcal{P}_{\sf off\text{-}diag}\left(\bm{G}_{k+1}^{t}\right) + \mathcal{P}_{\sf diag}\left(\bm{U}_{k+1}^{t}\bm{\Lambda}_{k+1}^{t}\bm{U}_{k+1}^{t\top}\right), 
	\end{align}
\end{subequations}
which corresponds to the matrix computed by {\sf HeteroPCA} in the $t$-th iteration of the $(k+1)$-th round. 

In this section, we intend to prove a slightly more general version of Theorem \ref{thm:spectral} as follows.   
\begin{thm}\label{thm:spectral-general}
Suppose that Assumption \ref{assump:hetero} holds. Suppose that 
\begin{subequations}
	\begin{align}
		\sigma_r^\star &\geq C_0r\sqrt{\mu r\omega_{\sf max}^2 + \omega_{\sf col}\left(\omega_{\sf row} + \omega_{\sf col}\right)}\sqrt{\log n} \label{assump:snr}\\
		\mu &\leq c_0\frac{n_1}{r^3}
	\end{align}
\end{subequations}
for some sufficiently large (resp.~small) constant $C_0 > 0$ (resp.~$c_0 > 0$).
If the numbers of iterations obey \eqref{ineq:iter1},
then with probability exceeding $1 - O(n^{-10})$, the output returned by Algorithm~\ref{algorithm:sequential_heteroPCA} satisfies
\begin{align}\label{ineq:svd_spectral}
	\left\|\bm{U}\bm{R}_{\bm{U}} - \bm{U}^\star\right\| \lesssim \frac{\sqrt{\left(\mu r\omega_{\sf max}^2 + \omega_{\sf col}^2\right)\log n}}{\sigma_r^{\star}} + \frac{\omega_{\sf col}\omega_{\sf row}\log n}{\sigma_r^{\star2}} + e^{-t_{k_{\sf max}}}.
\end{align}
\end{thm}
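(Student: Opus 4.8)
The plan is to prove the bound by an induction over the rounds $k = 1, \dots, k_{\sf max}$ of {\sf Deflated-HeteroPCA}, maintaining at each step a spectral-norm error bound on the running estimates. The core quantity to track is the discrepancy between $\bm{G}_k$ and the idealized Gram matrix $\bm{X}^\star\bm{X}^{\star\top}$ in the directions that matter. First I would establish the probabilistic events that hold with probability $1 - O(n^{-10})$: concentration bounds on $\|\bm{E}\|$, on $\|\bm{E}\bm{V}^\star\|$, on the off-diagonal part $\|\mathcal{P}_{\sf off\text{-}diag}(\bm{E}\bm{E}^\top - \bbE[\bm{E}\bm{E}^\top])\|$, and on $\|\mathcal{P}_{\sf off\text{-}diag}(\bm{X}^\star\bm{E}^\top + \bm{E}\bm{X}^{\star\top})\|$. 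Under the SNR assumption \eqref{assump:snr} and the truncation Assumption \ref{assump:hetero}, a matrix Bernstein / truncation argument (as in \citet{zhang2022heteroskedastic,cai2021subspace}) controls these quantities by $\sqrt{(\mu r \omega_{\sf max}^2 + \omega_{\sf col}(\omega_{\sf row}+\omega_{\sf col}))\log n}$ and related expressions. I would also verify that the data-driven rank selection \eqref{def:R_k}--\eqref{def:r_1} correctly identifies blocks whose true singular values are well-conditioned (ratio $\le$ some constant) and well-separated from the next block (spectral gap $\gtrsim \sigma_{r_k}^{\star 2}/r$); this uses Weyl's inequality to transfer the spectral structure of $\bm{G}_{k-1}$ to that of $\bm{X}^\star\bm{X}^{\star\top}$, given that $\bm{G}_{k-1}$ is already a good approximation from the previous round.

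The inductive step is the heart of the argument. Assuming $\bm{G}_{k-1}$ approximates the "partially deflated" target $\bm{X}^\star\bm{X}^{\star\top} - \mathcal{P}_{\sf diag}(\sum_{i > r_{k-1}} \sigma_i^{\star 2} \bm{u}_i^\star \bm{u}_i^{\star\top})$ well in off-diagonal, plus captures the top-$r_{k-1}$ eigen-structure on the diagonal, I would analyze one run of {\sf HeteroPCA} with rank $r_k$. The key is that within round $k$, the "effective condition number" seen by {\sf HeteroPCA} is bounded, because (i) the first condition in \eqref{def:R_k} bounds $\sigma_{r_{k-1}+1}^\star/\sigma_{r_k}^\star$ by a constant, and (ii) the diagonal bias that diagonal-deletion would otherwise incur — roughly of order $\mu r \sigma_{r_{k-1}+1}^{\star 2}/n_1$ — is controlled because the contribution of the already-estimated top $r_{k-1}$ directions has been added back into the diagonal of $\bm{G}_{k-1}$. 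I would invoke the single-block {\sf HeteroPCA} analysis (a Davis--Kahan / perturbation argument combined with the contraction of the diagonal-imputation fixed-point iteration; see \citet[Theorem 4]{zhang2022heteroskedastic}) to show that after $t_k$ iterations the error contracts geometrically down to the noise floor plus $e^{-t_k}\cdot(\text{previous-block residual})$. Chaining these contractions across rounds with the iteration counts \eqref{ineq:iter1} telescopes the residuals so that the $e^{-t_k}$ terms for $k < k_{\sf max}$ are absorbed, leaving only the final noise floor plus $e^{-t_{k_{\sf max}}}$.

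The final spectral-norm bound on $\|\bm{U}\bm{R}_{\bm{U}} - \bm{U}^\star\|$ then follows from a Davis--Kahan-type argument applied to the last iterate: $\bm{U} = \bm{U}_{k_{\sf max}}$ is the top-$r$ eigenspace of $\bm{G}_{k_{\sf max}}$, which is within the established distance of $\bm{X}^\star\bm{X}^{\star\top}$ (now with its diagonal fully imputed), whose top-$r$ eigenspace is exactly $\bm{U}^\star$; the eigengap here is $\sigma_r^{\star 2}$, producing the $\omega_{\sf col}\sqrt{\log n}/\sigma_r^\star$ and $\omega_{\sf col}\omega_{\sf row}\log n/\sigma_r^{\star 2}$ terms. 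I expect the main obstacle to be the inductive control of the diagonal bias: one must show that the bias "deflates" — i.e., that after round $k$ the residual diagonal error is governed by $\sigma_{r_k+1}^{\star 2}$ rather than $\sigma_1^{\star 2}$ — which requires carefully propagating both the subspace error and the matrix-level error through the rank-selection logic, and arguing that an adversarial choice of $\{r_k\}$ cannot occur because whenever a large spectral ratio would force a bad block, the second condition in \eqref{def:R_k} fails to be met at the wrong location and the algorithm instead splits the block. Making this dichotomy rigorous, while keeping all constants compatible with the SNR threshold, is the delicate part.
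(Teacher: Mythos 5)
There is a genuine gap, and it sits at the very first step of your plan: the choice of reference matrix. You propose to compare the iterates against $\bm{X}^{\star}\bm{X}^{\star\top}$ and to treat both $\mathcal{P}_{\sf off\text{-}diag}(\bm{X}^\star\bm{E}^\top + \bm{E}\bm{X}^{\star\top})$ and the quadratic term as perturbations. But the cross term scales with the \emph{largest} signal strength: $\|\bm{X}^\star\bm{E}^\top\| \leq \sigma_1^\star\|\bm{E}\bm{V}^\star\|$, and there is no smaller bound in general. Any Davis--Kahan / deflation argument at the Gram-matrix level divides this perturbation by an eigengap of order $\sigma_r^{\star2}$, producing a term of order $\kappa\,\|\bm{E}\bm{V}^\star\|/\sigma_r^\star$ rather than the claimed $\omega_{\sf col}\sqrt{\log n}/\sigma_r^\star$; likewise, the requirement that the perturbation be $\ll \sigma_r^{\star2}/r$ (needed for your block machinery and rank-selection step to go through) would force an SNR condition growing with $\kappa$, contradicting \eqref{assump:snr}. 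In other words, your setup reproduces exactly the decomposition used in the prior $\kappa$-dependent analyses, so the condition-number-free conclusion cannot follow from it no matter how carefully the deflation induction is executed.

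The paper avoids this by never putting the cross term into the noise. It writes $\bm{Y}\bm{Y}^\top = \left(\bm{U}^\star\bm{\Sigma}^\star + \bm{E}\bm{V}^\star\right)\left(\bm{U}^\star\bm{\Sigma}^\star + \bm{E}\bm{V}^\star\right)^\top + \bm{Z}$ with $\bm{Z} = \bm{E}\bm{E}^\top - \bm{E}\bm{V}^\star\bm{V}^{\star\top}\bm{E}^\top$, so the only residual perturbation is quadratic in the noise and condition-number free. A deterministic theorem about {\sf Deflated-HeteroPCA} (your deflation induction is in the right spirit for this part, and roughly matches the paper's proof of that intermediate result) is then applied with the reference eigenspace $\widetilde{\bm{U}}$ of the modified signal $\left(\bm{U}^\star\bm{\Sigma}^\star + \bm{E}\bm{V}^\star\right)\left(\bm{U}^\star\bm{\Sigma}^\star + \bm{E}\bm{V}^\star\right)^\top$, after verifying that $\widetilde{\bm{U}}$ inherits incoherence; this yields only the second-order term $\omega_{\sf col}\omega_{\sf row}\log n/\sigma_r^{\star2}$ plus $e^{-t_{k_{\sf max}}}$. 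The first-order term $\sqrt{(\mu r\omega_{\sf max}^2 + \omega_{\sf col}^2)\log n}/\sigma_r^\star$ comes from a separate, factor-level $\sin\Theta$ comparison of $\widetilde{\bm{U}}$ with $\bm{U}^\star$, where the relevant gap is $\sigma_r^\star$ (not $\sigma_r^{\star2}$) and the perturbation is $\|\bm{E}\bm{V}^\star\|$ — this is the step that removes $\kappa$, and it has no counterpart in your proposal. To repair your argument you would need to introduce this (or an equivalent) absorption of $\bm{E}\bm{V}^\star$ into the signal factor before running the deflation analysis.
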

Evidently, if we further have $0 < \mu r\omega_{{\sf max}}^2 \lesssim \omega_{\sf col}^2$ and if the number of iterations $t_{k_{\sf max}}$ obeys \eqref{ineq:iter2}, 
then it is easy to check that the bound \eqref{ineq:svd_spectral} (resp.~the signal-to-noise ratio condition \eqref{assump:snr}) implies \eqref{ineq:svd_spectral2} (resp.~\eqref{assump:snr2}). 
This allows us to focus attention on establishing Theorem~\ref{thm:spectral-general}.

\subsection{A key intermediate result and the proof of Theorem~\ref{thm:spectral-general}} 

Towards proving Theorem~\ref{thm:spectral-general}, 
we first single out a deterministic result that plays a crucial role in bounding $\left\|\bm{U}\bm{R}_{\bm{U}} - \bm{U}^\star\right\|$; its proof is postponed to Section~\ref{proof:thm_deterministic}.
\begin{thm}\label{thm:deterministic}
	Suppose that we observe a matrix $\bm{M} = \overline{\bm{U}}\,\overline{\bm \Lambda}\,\overline{\bm U}^\top + \bm{Z}$, 
	where $\overline{\bm \Lambda} \in \mathbb{R}^{r\times r}$ is a diagonal matrix with diagonal entries $\overline{\lambda}_1 \geq \dots \geq \overline{\lambda}_r > 0$ and $\overline{\bm U} \in \mathcal{O}^{n_1 , r}$ satisfies 
	\begin{subequations}
	\begin{equation}
		\|\overline{\bm U}\|_{2,\infty} \leq \sqrt{\frac{\overline\mu r}{n_1}}
		\qquad \text{with }~\overline{\mu} \leq c_0\frac{n_1}{r^3}
		\label{eq:spectral-det-mu-UB}
	\end{equation}
	for some sufficiently small constant $c_0 > 0$. 
	Also, assume that
	\begin{align}
		\overline\lambda_r &\geq C_0r\|\mathcal{P}_{\sf off\text{-}diag}(\bm{Z})\| .
		\label{eq:lambda-Poffdiag-thm-det}
	\end{align}
	\end{subequations}
	Then Algorithm \ref{algorithm:sequential_heteroPCA} with initialization $\bm{G}_0 = \mathcal{P}_{\sf off\text{-}diag}(\bm{M})$ 
	yields an estimate $\bm{U}$ satisfying 
	\begin{align}\label{ineq:deterministic}
		\big\|\bm{U}\bm{U}^\top - \overline{\bm{U}}\,\overline{\bm{U}}^\top\big\| \lesssim \frac{\left\|\mathcal{P}_{\sf off\text{-}diag}\left(\bm{Z}\right)\right\|}{\overline\lambda_{r}} + e^{-t_{k_{\sf max}}},
	\end{align}
	provided that the numbers of iterations obey
	\begin{subequations}
	\begin{align}
		t_1 &> \log\left(\frac{\frac{\overline{\mu} r}{n_1}\overline\lambda_1}{\sqrt{\frac{\overline{\mu} r}{n_1}}\overline\lambda_{r_1 + 1} + \left\|\mathcal{P}_{\sf off\text{-}diag}\left(\bm{Z}\right)\right\|}\right) \, \vee 0 \\
		t_k &> \log\left(\frac{7\sqrt{\frac{\overline{\mu}r}{n_1}}\overline\lambda_{r_{k-1}+1} + \left\|\mathcal{P}_{\sf off\text{-}diag}\left(\bm{Z}\right)\right\|}{\sqrt{\frac{\overline{\mu}r}{n_1}}\overline\lambda_{r_{k}+1} + \left\|\mathcal{P}_{\sf off\text{-}diag}\left(\bm{Z}\right)\right\|}\right), \quad 1 \leq k \leq k_{\sf max}-1.
	\end{align}
	\end{subequations}
\end{thm}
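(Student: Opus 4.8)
The plan is to run an induction over the rounds $k=1,\dots,k_{\sf max}$ of Algorithm~\ref{algorithm:sequential_heteroPCA}, maintaining after round $k$ a bound of the form $\|\bm{G}_k-\overline{\bm{M}}\|\lesssim\eta_k$, where $\overline{\bm{M}}:=\overline{\bm{U}}\,\overline{\bm\Lambda}\,\overline{\bm{U}}^\top$ and $\eta_k\asymp\sqrt{\overline{\mu}r/n_1}\,\overline{\lambda}_{r_k+1}+\|\mathcal{P}_{\sf off\text{-}diag}(\bm{Z})\|$ (with the convention $\overline{\lambda}_{r+1}:=0$), together with a ``delocalization'' bound $\|\bm{U}_{k}^{t}\|_{2,\infty}\lesssim\sqrt{\overline{\mu}r/n_1}$ for every running subspace iterate. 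The conclusion then drops out of the $k=k_{\sf max}$ case: since $\overline{\lambda}_{r+1}=0$, the error $\|\bm{G}_{k_{\sf max}}-\overline{\bm{M}}\|$ is driven down to $\lesssim\|\mathcal{P}_{\sf off\text{-}diag}(\bm{Z})\|$ plus a residual contraction term of size $\lesssim\overline{\lambda}_r e^{-t_{k_{\sf max}}}$, and a single application of the Davis--Kahan $\sin\Theta$ theorem with gap $\overline{\lambda}_r-\overline{\lambda}_{r+1}=\overline{\lambda}_r$ gives \eqref{ineq:deterministic}; the smallness needed to invoke Davis--Kahan is furnished by \eqref{eq:lambda-Poffdiag-thm-det}.

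The induction rests on two lemmas. First, a \emph{rank-selection lemma}: provided $\|\bm{G}_{k-1}-\overline{\bm{M}}\|\lesssim\overline{\lambda}_{r_{k-1}}/r$ (which the induction maintains, using the budget $\overline{\mu}\le c_0 n_1/r^3$ and \eqref{eq:lambda-Poffdiag-thm-det}), the rule \eqref{def:R_k}--\eqref{def:r_1} selects ranks for which the \emph{true} spectrum obeys $\overline{\lambda}_{r_{k-1}+1}/\overline{\lambda}_{r_k}\lesssim1$ and $\overline{\lambda}_{r_k}-\overline{\lambda}_{r_k+1}\gtrsim\overline{\lambda}_{r_k}/r$, and that $k_{\sf max}\le r$. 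The well-conditioning and the gap transfer from $\bm{G}_{k-1}$ to $\overline{\bm{M}}$ via Weyl's inequality; that $\mathcal{R}_k\neq\emptyset$ whenever $\sigma_{r_{k-1}+1}(\bm{G}_{k-1})/\sigma_r(\bm{G}_{k-1})>4$ follows from a short pigeonhole argument on the consecutive ratios $\sigma_j(\bm{G}_{k-1})/\sigma_{j+1}(\bm{G}_{k-1})$ — if none of them drops by a $1/r$ fraction, the product over a block of length $\le r$ exceeds $(1-1/r)^{r-1}>1/4$, a contradiction — and when $\mathcal{R}_k=\emptyset$ the fallback $r_k=r$ is automatically $O(1)$-conditioned with the trivial gap down to $\overline{\lambda}_{r+1}=0$. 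Second, the \emph{per-round contraction lemma}: within round $k$ one has $\bm{G}_k^{t+1}=\overline{\bm{M}}-\mathcal{P}_{\sf diag}(\overline{\bm{M}})+\mathcal{P}_{\sf diag}(\bm{U}_k^{t}\bm\Lambda_k^{t}\bm{U}_k^{t\top})+\mathcal{P}_{\sf off\text{-}diag}(\bm{Z})$, so the only time-varying error is the diagonal-imputation error $e_t:=\|\mathcal{P}_{\sf diag}(\bm{U}_k^{t}\bm\Lambda_k^{t}\bm{U}_k^{t\top}-\overline{\bm{M}})\|$, and I plan to prove a recursion $e_{t+1}\le\gamma\,(e_t+\eta_k)$ for a universal constant $\gamma<1$. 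Iterating gives $e_{t_k}\lesssim\eta_k+\gamma^{t_k}e_0$; since $e_0$ at the start of round $k$ is $\lesssim\overline{\mu}r/n_1\cdot\overline{\lambda}_{r_{k-1}+1}+\eta_{k-1}$ (for round $1$, $e_0\lesssim\overline{\mu}r/n_1\cdot\overline{\lambda}_1$, the size of the diagonal of $\overline{\bm{M}}$ by incoherence), the prescribed iteration counts $t_1,\dots,t_{k_{\sf max}-1}$ are exactly what is needed to absorb $\gamma^{t_k}e_0$ into $\eta_k$.

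The heart of the per-round lemma — and the reason the final bound is \emph{condition-number-free} — is that the potentially dangerous factor $\overline{\lambda}_1$ never multiplies a first-order error. Writing $\overline{\bm{M}}_{>r_k}:=\sum_{j>r_k}\overline{\lambda}_j\overline{\bm{u}}_j\overline{\bm{u}}_j^\top$, the not-yet-processed blocks enter $\bm{G}_k^t-\overline{\bm{M}}$ only through $\mathcal{P}_{\sf off\text{-}diag}(\overline{\bm{M}}_{>r_k})$, whose low-rank part $\overline{\bm{M}}_{>r_k}$ is \emph{orthogonal} to $\overline{\bm{U}}_{:,1:r_k}$ and hence cannot rotate the target top-$r_k$ subspace — only its diagonal part survives, of size $\lesssim\overline{\mu}r/n_1\cdot\overline{\lambda}_{r_k+1}$ by incoherence; and although the already-processed blocks appear inside $\bm{U}_k^{t}\bm\Lambda_k^{t}\bm{U}_k^{t\top}$ weighted by their own (large) eigenvalues, splitting $\mathcal{P}_{\sf diag}(\bm{U}_k^{t}\bm\Lambda_k^{t}\bm{U}_k^{t\top}-\overline{\bm{M}})$ block by block shows that each processed block $k'<k$ contributes a diagonal error commensurate with its \emph{within-block} (i.e.\ $O(1)$-conditioned) accuracy rather than with $\overline{\lambda}_1$. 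Making this quantitative requires a coupled recursion that tracks both the spectral-norm and the $\ell_{2,\infty}$ subspace errors of $\bm{U}_k^{t}$ — the $\ell_{2,\infty}$ part being exactly what feeds the diagonal update $e_{t+1}$, and exactly what the budget $\overline{\mu}\le c_0 n_1/r^3$ keeps under control — with Davis--Kahan and standard eigenspace-perturbation identities, applied on the $O(1)$-conditioned subblock with its $\gtrsim\overline{\lambda}_{r_k}/r$ gap, supplying the contraction factor $\gamma$.

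The main obstacle is precisely this per-round contraction lemma: establishing $e_{t+1}\le\gamma(e_t+\eta_k)$ while preventing any accumulation of the global condition number $\overline{\lambda}_1/\overline{\lambda}_r$ across rounds. The block-orthogonality observation is what removes the dangerous term, but turning it into a proof requires (a) a careful block-by-block decomposition of the diagonal-update error that matches each processed block against the accuracy already attained for it in its own round, and (b) a self-bounding/induction argument keeping $\|\bm{U}_k^{t}\|_{2,\infty}$ and the row-wise errors small simultaneously across all iterations of all rounds. The remaining bookkeeping — how the error floors $\eta_k$ telescope down the spectrum and how the prescribed $t_k$ interact with them — is routine but where most of the pages will go; the Weyl/Davis--Kahan arguments for the rank-selection lemma and the final step are comparatively standard.
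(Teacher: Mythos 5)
Your architecture matches the paper's proof of Theorem~\ref{thm:deterministic} almost exactly: a round-by-round induction maintaining $\|\bm{G}_k-\overline{\bm{M}}\|\lesssim \sqrt{\overline{\mu}r/n_1}\,\overline{\lambda}_{r_k+1}+\|\mathcal{P}_{\sf off\text{-}diag}(\bm{Z})\|$, the pigeonhole-plus-Weyl rank-selection lemma (including the observation that $\mathcal{R}_k=\emptyset$ forces the whole remaining block to be $4$-conditioned), a per-round geometric contraction of the diagonal-imputation error toward that floor with the prescribed $t_k$ absorbing the transient, and a final Davis--Kahan step with gap $\overline{\lambda}_r$ using $\overline{\lambda}_{r_{k_{\sf max}-1}+1}\asymp\overline{\lambda}_r$. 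Where you diverge is in how the contraction step is implemented. The paper never tracks the incoherence of the running iterate $\bm{U}_k^t$ and never decomposes the update block by block: it writes the diagonal error of $\bm{P}_{\bm{U}_k^{t-1}}\bm{G}_k^{t-1}-\overline{\bm{M}}$ as three terms --- $\mathcal{P}_{\sf diag}\bigl(\bm{P}_{\overline{\bm{U}}_k}(\bm{G}_k^{t-1}-\overline{\bm{M}})\bigr)$, $\mathcal{P}_{\sf diag}\bigl(\bm{P}_{(\bm{U}_k^{t-1})_\perp}\overline{\bm{M}}\bigr)$, and $\mathcal{P}_{\sf diag}\bigl((\bm{P}_{\bm{U}_k^{t-1}}-\bm{P}_{\overline{\bm{U}}_k})(\bm{G}_k^{t-1}-\overline{\bm{M}})\bigr)$ --- bounding the first two by $\sqrt{\overline{\mu}r/n_1}$ times spectral quantities via the incoherence of the \emph{true} $\overline{\bm{U}}$ (plus Lemma~\ref{lm:projection} for the second), and the third by Davis--Kahan, which makes it quadratic in $L_k^{t-1}$ and hence negligible. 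This is both simpler and it sidesteps the issue your route would hit: in this purely deterministic setting you cannot establish $\|\bm{U}_k^t\|_{2,\infty}\lesssim\sqrt{\overline{\mu}r/n_1}$ --- the only available perturbation bound, $\|\bm{U}_k^t\|_{2,\infty}\le\|\bm{U}_k^t\bm{U}_k^{t\top}-\bm{P}_{\overline{\bm{U}}_k}\|+\|\overline{\bm{U}}\|_{2,\infty}$ with the first term controlled by $rL_k^t/\overline{\lambda}_{r_k}=O(1)$, only yields an absolute constant. That constant is still enough to drive a contraction $e_{t+1}\le\gamma(e_t+\eta_k)$ with $\gamma<1$ (this is in fact how the paper's separate $\ell_{2,\infty}$ analysis proceeds, where an oracle comparison is available), so your plan survives, but the stated delocalization bound should be weakened to an absolute constant, and the block-by-block matching of processed blocks against their own rounds' accuracies is unnecessary machinery: the single identity $\bm{U}_k^t\bm{\Lambda}_k^t\bm{U}_k^{t\top}=\bm{P}_{\bm{U}_k^t}\bm{G}_k^t$ together with the three-term split above already keeps $\overline{\lambda}_1$ out of every first-order term.
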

In a nutshell, Theorem~\ref{thm:deterministic} asserts that the subspace estimation error of {\sf Deflated\text{-}HeteroPCA} depends only on (i) the size of  $\bm{Z}$ after diagonal deletion and (ii) the $r$-th leading eigenvalue of $\overline{\bm{U}}\,\overline{\bm \Lambda}\,\overline{\bm U}^\top$,   
provided that the numbers of iterations exceed some logarithmic factors. Notably, the estimation error bound \eqref{ineq:deterministic} 
holds irrespective of the condition number of $\overline{\bm \Lambda}$ and the noise entries $\mathcal{P}_{\sf diag}(\bm{Z})$ in the diagonal (in fact, these diagonal entries of $\bm{Z}$ are never used by {\sf Deflated\text{-}HeteroPCA}). 

We now demonstrate how to invoke Theorem~\ref{thm:deterministic} to establish Theorem~\ref{thm:spectral-general}, 
which consists of several steps below. 
Before proceeding, we isolate one important matrix $\bm{U}^{\star}\bm{\Sigma}^{\star} + \bm{E}\bm{V}^{\star} \in \bbR^{n_1 \times r}$, 
and denote its SVD as 
\begin{equation}
	\widetilde{\bm{U}}\widetilde{\bm{\Sigma}}\widetilde{\bm{W}}^\top = \bm{U}^{\star}\bm{\Sigma}^{\star} + \bm{E}\bm{V}^{\star}, 
	\label{eq:SVD-Usigma-plus-EV}
\end{equation}
where  $\widetilde{\bm{U}} \in \mathcal{O}^{n_1, r}, \widetilde{\bm W} \in \mathcal{O}^{r,r}$ and $\widetilde{\bm \Sigma} = {\sf diag}(\widetilde{\sigma}_1, \dots, \widetilde{\sigma}_r)$ with $\widetilde{\sigma}_1 \geq \dots \geq \widetilde{\sigma}_r \geq 0$. 

\paragraph{Step 1: bounding the spectrum of $\widetilde{\bm{\Sigma}}^{-1}$.}
We start by controlling the spectrum of $\widetilde{\bm{\Sigma}}$. 
Taking Weyl's inequality, Assumption \ref{assump:hetero} and Lemma \ref{lm:noise} together implies that with probability exceeding $1 - O(n^{-10})$,
\begin{align}\label{ineq:singular_difference}
	\max_{1 \leq i \leq r}\left|\widetilde{\sigma}_i - \sigma_i^{\star}\right| \leq \left\|\bm{E}\bm{V}^{\star}\right\| 
	&\lesssim \sqrt{\left(r\omega_{\sf max}^2 + \omega_{\sf col}^2\right)\log n} + B\log n\sqrt{\frac{\mu_2 r}{n_2}}\notag\\
	&\lesssim \sqrt{\left(r\omega_{\sf max}^2 + \omega_{\sf col}^2\right)\log n} + \frac{\omega_{\sf row}}{\sqrt{\log n}}\log n\sqrt{\frac{\mu_2 r}{n_2}}\notag\\
	&\lesssim \sqrt{\left(r\omega_{\sf max}^2 + \omega_{\sf col}^2\right)\log n} + \sqrt{n_2}\omega_{\sf max}\sqrt{\log n}\sqrt{\frac{\mu r}{n_2}}\notag\\
	&\lesssim \sqrt{\left(\mu r\omega_{\sf max}^2 + \omega_{\sf col}^2\right)\log n},
\end{align}
where the second line relies on Assumption \ref{assump:hetero}. 
Consequently, one can deduce that
\begin{align}\label{ineq1}
	\big\|\widetilde{\bm{\Sigma}}^{-1}\big\| 
	= \frac{1}{\widetilde{\sigma}_r} \leq \frac{1}{\sigma^{\star}_r - \left\|\bm{E}\bm{V}^{\star}\right\|} \leq \frac{\sqrt{2}}{\sigma^{\star}_r} ,
\end{align}
provided that $\sigma^{\star}_r \geq C_0 \sqrt{\left(\mu r\omega_{\sf max}^2 + \omega_{\sf col}^2\right)\log n}$ for some large enough constant $C_0>0$. 
It is also seen that
\begin{align}
	\sigma^{\star}_r &\leq \widetilde{\sigma}_r + \left\|\bm{E}\bm{V}^{\star}\right\| \leq 	 \widetilde{\sigma}_r + \frac{1}{2} \sigma^{\star}_r \notag\\
	&\Longrightarrow \qquad 
	\sigma^{\star}_r \leq 2 \widetilde{\sigma}_r. 
	\label{ineq12345}
\end{align}
Repeating the same argument also reveals that
\begin{equation}
	 \frac{1}{2} \widetilde{\sigma}_i \leq \sigma^{\star}_i \leq 2 \widetilde{\sigma}_i,
	 \qquad 1\leq i\leq r. 
\end{equation}

\paragraph{Step 2: bounding $\|\widetilde{\bm{U}}\|_{2,\infty}$.}
We now move on to control $\|\widetilde{\bm{U}}\|_{2,\infty}$, 
a sort of incoherence condition needed in order to invoke Theorem~\ref{thm:deterministic} (see~\eqref{eq:spectral-det-mu-UB}).  
Towards this, we would like to first the discrepancy between $\bm{U}^{\star}\bm{U}^{\star\top}\widetilde{\bm{U}}$ and $\widetilde{\bm{U}}$, which would in turn allow us to switch attention to the $\ell_{2,\infty}$ norm of $\bm{U}^{\star}\bm{U}^{\star\top}\widetilde{\bm{U}}$. 
Recognizing that 
\begin{align}\label{ineq24}
	\big(\widetilde{\bm{U}}-\bm{U}^{\star}\bm{U}^{\star\top}\widetilde{\bm{U}}\big)\widetilde{\bm{\Sigma}}\widetilde{\bm{W}}^\top &= \mathcal{P}_{\bm{U}_{\perp}^{\star}}\widetilde{\bm U}\widetilde{\bm \Sigma}\widetilde{\bm W}^\top
	= \mathcal{P}_{\bm{U}_{\perp}^{\star}}\left(\bm{U}^{\star}\bm{\Sigma}^{\star} + \bm{E}\bm{V}^{\star}\right)\notag\\
	&= \bm{E}\bm{V}^{\star} - \bm{U}^{\star}\bm{U}^{\star\top}\bm{E}\bm{V}^{\star},
\end{align}
we can readily use $\|\widetilde{\bm{W}}\|=1$ to derive
\begin{align}\label{ineq25}
	\big\|\bm{U}^{\star}\bm{U}^{\star\top}\widetilde{\bm{U}} - \widetilde{\bm{U}}\big\|_{2, \infty} &= \left\|\left(\bm{E}\bm{V}^{\star} - \bm{U}^{\star}\bm{U}^{\star\top}\bm{E}\bm{V}^{\star}\right)\widetilde{\bm{W}}\widetilde{\bm{\Sigma}}^{-1}\right\|_{2, \infty}\notag\\
	&\leq \left(\left\|\bm{E}\bm{V}^{\star}\right\|_{2, \infty} + \left\|\bm{U}^{\star}\bm{U}^{\star\top}\bm{E}\bm{V}^{\star}\right\|_{2,\infty}\right)\big\|\widetilde{\bm{\Sigma}}^{-1}\big\|.
\end{align}
In view of Lemma \ref{lm:noise} and Assumption \ref{assump:hetero}, with probability exceeding $1 - O(n^{-10})$, one has
\begin{align*}
	\left\|\bm{E}\bm{V}^\star\right\|_{2,\infty} \lesssim \left(B\log n + \omega_{\sf row}\sqrt{\log n}\right)\sqrt{\frac{\mu_2 r}{n_2}} \asymp  \omega_{\sf row}\sqrt{\log n}\sqrt{\frac{\mu_2 r}{n_2}}
\end{align*}
and
\begin{align*}
	\left\|\bm{U}^{\star}\bm{U}^{\star\top}\bm{E}\bm{V}^{\star}\right\|_{2,\infty} &\leq \left\|\bm{U}^{\star}\right\|_{2, \infty}\left\|\bm{U}^{\star\top}\bm{E}\bm{V}^{\star}\right\|\\ &\lesssim \sqrt{\frac{\mu_1 r}{n_1}}\left(B\frac{\mu r}{\sqrt{n_1n_2}}\log n + \left(\sqrt{\frac{\mu r}{n_2}}\omega_{\sf row} + \sqrt{\frac{\mu r}{n_1}}\omega_{\sf col}\right)\sqrt{\log n}\right)\\
	&\lesssim \sqrt{\frac{\mu_1 r}{n_1}}\left(\frac{\omega_{\sf row}}{\sqrt{\log n}}\sqrt{\frac{\mu r}{n_2}}\log n + \left(\sqrt{\frac{\mu r}{n_2}}\omega_{\sf row} + \sqrt{\frac{\mu r}{n_1}}\omega_{\sf col}\right)\sqrt{\log n}\right)\\
	&\lesssim \sqrt{\frac{\mu_1 r}{n_1}}\left(\sqrt{\frac{\mu r}{n_2}}\omega_{\sf row} + \sqrt{\frac{\mu r}{n_1}}\omega_{\sf col}\right)\sqrt{\log n},
\end{align*}
where the second line has also made use of the assumption that $\mu r \lesssim n_1$. 
Putting \eqref{ineq25} and the previous two inequalities together and using the assumption $\mu r \lesssim n_1$, we arrive at
\begin{align}\label{ineq:two_to_infty_tilde_U}
	\big\|\bm{U}^{\star}\bm{U}^{\star\top}\widetilde{\bm{U}} - \widetilde{\bm{U}}\big\|_{2, \infty} &\lesssim \left(\omega_{\sf row}\sqrt{\log n}\sqrt{\frac{\mu_2 r}{n_2}} + \sqrt{\frac{\mu_1 r}{n_1}}\left(\sqrt{\frac{\mu r}{n_2}}\omega_{\sf row} + \sqrt{ \frac{\mu r}{n_1}}\omega_{\sf col}\right)\sqrt{\log n}\right)\frac{1}{\sigma^{\star}_r}\notag\\
	&\lesssim \frac{\left(\sqrt{\frac{\mu r}{n_2}}\omega_{\sf row} + \frac{\mu r}{n_1}\omega_{\sf col}\right)\sqrt{\log n}}{\sigma_r^{\star}}\notag\\
	&\ll \frac{1}{r} = \sqrt{\frac{\left(\frac{n_1}{r^3}\right)r}{n_1}} 
\end{align}
with probability exceeding $1 - O(n^{-10})$, 
provided that
\begin{align*}
	\sigma_r^\star \gg r\left[\left(\sqrt{\frac{\mu r}{n_2}}\omega_{\sf row} + \frac{\mu r}{n_1}\omega_{\sf col}\right)\sqrt{\log n} +  \sqrt{\left(\mu r\omega_{\sf max}^2 + \omega_{\sf col}^2\right)\log n}\right] \asymp r\sqrt{\left(\mu r\omega_{\sf max}^2 + \omega_{\sf col}^2\right)\log n}.
\end{align*}
As a result, with probability at least $1 - O(n^{-10})$, we reach the following upper bound:
\begin{align}
	\big\|\widetilde{\bm U}\big\|_{2, \infty} &\leq \big\|\bm{U}^{\star}\bm{U}^{\star\top}\widetilde{\bm{U}} - \widetilde{\bm{U}}\big\|_{2, \infty} + \big\|\bm{U}^{\star}\bm{U}^{\star\top}\widetilde{\bm{U}}\big\|_{2, \infty}\notag\\
	&\leq \big\|\bm{U}^{\star}\bm{U}^{\star\top}\widetilde{\bm{U}} - \widetilde{\bm{U}}\big\|_{2, \infty} + \left\|\bm{U}^{\star}\right\|_{2, \infty}\big\|\bm{U}^{\star\top}\widetilde{\bm{U}}\big\|\notag\\
	&\leq \big\|\bm{U}^{\star}\bm{U}^{\star\top}\widetilde{\bm{U}} - \widetilde{\bm{U}}\big\|_{2, \infty} + \sqrt{\frac{\mu r}{n_1}} \ll \sqrt{\frac{\left(\frac{n_1}{r^3}\right)r}{n_1}},
	\label{ineq:crude_two_to_infty_tilde_U}
\end{align}
where the last inequality holds under our assumption that $\mu r^3 \lesssim n_1$. 
With this $\ell_{2,\infty}$ bound for $\widetilde{\bm U}$ in place --- which reveals an upper bound $O\big(\frac{n_1}{r^3}\big)$ on the incoherence parameter of $\widetilde{\bm U}$ (see the requirement~\eqref{eq:spectral-det-mu-UB}) --- 
we can proceed to apply Theorem \ref{thm:deterministic} in the next step.

\paragraph{Step 3: bounding $\|\bm{U}\bm{U}^{\top} - \bm{U}^\star\bm{U}^{\star\top}\|$ and $\|\bm{U}\bm{R}_{\bm{U}} - \bm{U}^\star\|$.} 
In this step, we shall first invoke Theorem \ref{thm:deterministic} to control $\|\bm{U}\bm{U}^\top - \widetilde{\bm{U}}\widetilde{\bm{U}}^\top\|$, 
and then apply standard eigenspace perturbation theory to bound $\|  \widetilde{\bm{U}}\widetilde{\bm{U}}^\top - \bm{U}^{\star}\bm{U}^{\star\top}\|$.

 To begin with, let us write
\begin{align}\label{ineq:rewrite_model}
	\bm{Y}\bm{Y}^{\top} = \left(\bm{X}^{\star} + \bm{E}\right)\left(\bm{X}^{\star} + \bm{E}\right)^\top
	&= \left(\bm{U}^{\star}\bm{\Sigma}^{\star} + \bm{E}\bm{V}^{\star}\right)\left(\bm{U}^{\star}\bm{\Sigma}^{\star} + \bm{E}\bm{V}^{\star}\right)^\top + \big(  \bm{E}\bm{E}^\top - \bm{E}\bm{V}^{\star}\bm{V}^{\star\top}\bm{E}^\top \big).
\end{align}
Recall that $\widetilde{\bm{U}}$ represents the column subspace of $\left(\bm{U}^{\star}\bm{\Sigma}^{\star} + \bm{E}\bm{V}^{\star}\right)\left(\bm{U}^{\star}\bm{\Sigma}^{\star} + \bm{E}\bm{V}^{\star}\right)^\top$ (cf.~\eqref{eq:SVD-Usigma-plus-EV}). 
Thus, in order to apply Theorem \ref{thm:deterministic} to control $\|\bm{U}\bm{U}^\top - \widetilde{\bm{U}}\widetilde{\bm{U}}^\top\|$, 
the key lies in coping with 
$\|\mathcal{P}_{\sf off\text{-}diag}(\bm{\bm{E}\bm{E}^\top - \bm{E}\bm{V}^{\star}\bm{V}^{\star\top}\bm{E}^\top})\|$. 
By virtue of Lemma \ref{lm:off-diag} and Assumption \ref{assump:hetero}, with probability exceeding $1 - O(n^{-10})$ we have
\begin{align}
	\left\|\mathcal{P}_{\sf off\text{-}diag}\big(\bm{E}\bm{E}^\top\big)\right\| &\lesssim B^2\log^2 n + \omega_{\sf col}\left(\omega_{\sf row} + \omega_{\sf col}\right)\log n \notag\\
	&\lesssim \frac{\omega_{\sf row}\omega_{\sf col}}{\log n}\log^2 n + \omega_{\sf col}\left(\omega_{\sf row} + \omega_{\sf col}\right)\log n \notag\\
	&\asymp \omega_{\sf col}\left(\omega_{\sf row} + \omega_{\sf col}\right)\log n.
	\label{eq:P-offdiag-EE-spectral-bound}
\end{align}
Putting \eqref{ineq:singular_difference} and \eqref{eq:P-offdiag-EE-spectral-bound} together, we arrive at, with probability exceeding $1 - O(n^{-10})$,
\begin{align*}
\left\Vert \mathcal{P}_{\sf off\text{-}diag}\left(\bm{\bm{E}\bm{E}^{\top}-\bm{E}\bm{V}^{\star}\bm{V}^{\star\top}\bm{E}^{\top}}\right)\right\Vert  & \leq\left\Vert \mathcal{P}_{\sf off\text{-}diag}\left(\bm{E}\bm{E}^{\top}\right)\right\Vert +\left\Vert \bm{E}\bm{V}^{\star}\bm{V}^{\star\top}\bm{E}^{\top}\right\Vert +\left\Vert \mathcal{P}_{\sf diag}\left(\bm{E}\bm{V}^{\star}\bm{V}^{\star\top}\bm{E}^{\top}\right)\right\Vert \\
 & \leq\left\Vert \mathcal{P}_{\sf off\text{-}diag}\left(\bm{E}\bm{E}^{\top}\right)\right\Vert +2\left\Vert \bm{E}\bm{V}^{\star}\bm{V}^{\star\top}\bm{E}^{\top}\right\Vert \\	
	& \leq\left\Vert \mathcal{P}_{\sf off\text{-}diag}\left(\bm{E}\bm{E}^{\top}\right)\right\Vert +2\left\Vert \bm{E}\bm{V}^{\star}\right\Vert^2 \\
	&\lesssim \omega_{\sf col}\left(\omega_{\sf row} + \omega_{\sf col}\right)\log n + \left(\mu r\omega_{\sf max}^2 + \omega_{\sf col}^2\right)\log n\\
	&\asymp \left(\mu r\omega_{\sf max}^2 + \omega_{\sf col}\left(\omega_{\sf row} + \omega_{\sf col}\right)\right)\log n\\
	&\ll \frac{\sigma_r^{\star2}}{r} \lesssim \frac{\widetilde{\sigma}_r^2}{r}, 
\end{align*}
where the last inequality arises from our assumption \eqref{assump:snr} on $\sigma_r^{\star}$ and \eqref{ineq12345}. 
In view of Theorem \ref{thm:deterministic}, \eqref{ineq12345}, \eqref{ineq:crude_two_to_infty_tilde_U} and the previous inequality, we can easily check that: if $\{t_i\}$ satisfy \eqref{ineq:iter1}, then
one has
\begin{align}
	\big\|\bm{U}\bm{U}^\top - \widetilde{\bm{U}}\widetilde{\bm{U}}^\top\big\| &\leq \frac{\left\|\mathcal{P}_{\sf off\text{-}diag}\left(\bm{\bm{E}\bm{E}^\top - \bm{E}\bm{V}^{\star}\bm{V}^{\star\top}\bm{E}^\top}\right)\right\|}{\widetilde{\sigma}_r^2} + e^{-t_{k_{\sf max}}} \notag\\ 
	&\lesssim \frac{\left(\mu r\omega_{\sf max}^2 + \omega_{\sf col}\left(\omega_{\sf row} + \omega_{\sf col}\right)\right)\log n}{\sigma_r^{\star2}} + e^{-t_{k_{\sf max}}}
	\label{eq:UU-tildeU-tildeU-dist}
\end{align}
with probability exceeding $1 - O(n^{-10})$, 
provided that
\begin{align*}
	\sigma_r^{\star} &\gg r\left[\sqrt{\left(\mu r\omega_{\sf max}^2 + \omega_{\sf col}\left(\omega_{\sf row} + \omega_{\sf col}\right)\right)} + \sqrt{\frac{\mu r}{n_2}}\omega_{\sf row} + \frac{\mu r}{n_1}\omega_{\sf col}\right]\sqrt{\log n}\\
	&\asymp r\sqrt{\left(\mu r\omega_{\sf max}^2 + \omega_{\sf col}\left(\omega_{\sf row} + \omega_{\sf col}\right)\right)\log n}.
\end{align*}

Next, let us turn to bounding $\|  \widetilde{\bm{U}}\widetilde{\bm{U}}^\top - \bm{U}^{\star}\bm{U}^{\star\top}\|$. 
Taking \eqref{ineq:singular_difference} and the $\sin\Theta$ theorem \citep[Theorem~2.9]{chen2021spectral} together shows that
\begin{align*}
	\big\|\widetilde{\bm{U}}\widetilde{\bm{U}}^\top - \bm{U}^\star\bm{U}^{\star\top}\big\| \lesssim \frac{\sqrt{\left(\mu r\omega_{\sf max}^2 + \omega_{\sf col}^2\right)\log n}}{\sigma_r^{\star} - \left\|\bm{E}\bm{V}^{\star}\right\|} 
	\asymp \frac{\sqrt{\left(\mu r\omega_{\sf max}^2 + \omega_{\sf col}^2\right)\log n}}{\sigma_r^{\star}}
\end{align*}
with probability at least $1 - O(n^{-10})$. 
Combine this with \eqref{eq:UU-tildeU-tildeU-dist} and invoke the triangle inequality to yield
\begin{align*}
	\left\|\bm{U}\bm{U}^\top - \bm{U}^\star\bm{U}^{\star\top}\right\| 
	& \leq \big\|\widetilde{\bm{U}}\widetilde{\bm{U}}^\top - \bm{U}^\star\bm{U}^{\star\top}\big\| + 
	\big\|\bm{U}\bm{U}^\top - \widetilde{\bm{U}}\widetilde{\bm{U}}^\top\big\| \\
	&\lesssim \frac{\sqrt{\left(\mu r\omega_{\sf max}^2 + \omega_{\sf col}^2\right)\log n}}{\sigma_r^{\star}} + \frac{\left(\mu r\omega_{\sf max}^2 + \omega_{\sf col}\left(\omega_{\sf row} + \omega_{\sf col}\right)\right)\log n}{\sigma_r^{\star2}} + e^{-t_{k_{\sf max}}}\\
	&\asymp \frac{\sqrt{\left(\mu r\omega_{\sf max}^2 + \omega_{\sf col}^2\right)\log n}}{\sigma_r^{\star}} + \left(\frac{\sqrt{\left(\mu r\omega_{\sf max}^2 + \omega_{\sf col}^2\right)\log n}}{\sigma_r^{\star}}\right)^2 + \frac{\omega_{\sf col}\omega_{\sf row}\log n}{\sigma_r^{\star2}} + e^{-t_{k_{\sf max}}}\\
	&\asymp \frac{\sqrt{\left(\mu r\omega_{\sf max}^2 + \omega_{\sf col}^2\right)\log n}}{\sigma_r^{\star}} + \frac{\omega_{\sf col}\omega_{\sf row}\log n}{\sigma_r^{\star2}} + e^{-t_{k_{\sf max}}}
\end{align*}
under our assumption on $\sigma_r^{\star}$. 
Finally, using the basic inequality $\|\bm{U}\bm{R}_{\bm{U}} - \bm{U}^\star\| \leq \sqrt{2}\|\bm{U}\bm{U}^\top - \bm{U}^\star\bm{U}^{\star\top}\|$ \cite[Lemma 2.5]{chen2021spectral}
yields the desired result in Theorem~\ref{thm:spectral-general}. 

To finish up, it suffices to justify  the intermediate result in Theorem~\ref{thm:deterministic}, 
which we shall accomplish next.


\subsection{Proof of Theorem~\ref{thm:deterministic}}\label{proof:thm_deterministic}
We now present our proof of Theorem~\ref{thm:deterministic}. 
Recall the definitions of $\bm{G}_k^t$ and $\bm{U}_k^t$ in \eqref{def:inter_initialization}-\eqref{def:inter_G}. For any $k \geq 1$ and $0 \leq t \leq t_k$, 
we introduce the following convenient notation: 
\begin{equation}
\overline{\bm{M}} = \overline{\bm{U}}\,\overline{\bm \Lambda}\,\overline{\bm U}^\top, \quad D_k^t = \left\|\mathcal{P}_{\sf diag}\left(\bm{G}_k^t - \overline{\bm{M}}\right)\right\|, \quad L_k^t = \left\|\bm{G}_k^t - \overline{\bm{M}}\right\|, \quad {and} \quad \overline{\bm{U}}_k = \overline{\bm{U}}_{:,1:r_k}.
	\label{eq:def-Dkt-Lkt-Uk}
\end{equation}

\paragraph{Step 1: a basic property about $r_1$ as selected in Algorithm~\ref{algorithm:sequential_heteroPCA}.}
For $k = 1$, we first show that the rank $r_1$ selected in Algorithm~\ref{algorithm:sequential_heteroPCA} lies within
\begin{align}\label{condition:r_1}
	r_1 ~\in~ \mathcal{R}_1 := \left\{r' \leq r: \frac{\sigma_1\left(\bm{G}_0\right)}{\sigma_{r'}\left(\bm{G}_0\right)} \leq 4 \quad \text{and}\quad \sigma_{r'}\left(\bm{G}_{0}\right) - \sigma_{r' + 1}\left(\bm{G}_{0}\right) \geq \frac{1}{r}\sigma_{r'}\left(\bm{G}_{0}\right)\right\}.
\end{align}
To do so, it suffices to verify that $\mathcal{R}_1$ is non-empty, towards which we divide into two scenarios.
\begin{itemize}
	\item \emph{Case 1: $\{i \in [r-1]: \sigma_i(\bm{G}_0) \geq \frac{r}{r-1}\sigma_{i+1}(\bm{G}_0)\}$ is non-empty.} Take $1\leq \widetilde{r} \leq r-1$ to be the smallest entry in this set. Then it is seen that
	\begin{align}\label{ineq:signal_ratio1}
		\frac{\sigma_1\left(\bm{G}_0\right)}{\sigma_{\widetilde{r}}\left(\bm{G}_0\right)} = \prod_{j=1}^{\widetilde{r}-1}\frac{\sigma_j\left(\bm{G}_0\right)}{\sigma_{j+1}\left(\bm{G}_0\right)} \leq \left(\frac{r}{r-1}\right)^{r-2} \leq 4,
	\end{align}
	thus implying that $\widetilde{r} \in \mathcal{R}_1$.
	\item \emph{Case 2: $\{i \in [r-1]: \sigma_i(\bm{G}_0) \geq \frac{r}{r-1}\sigma_{i+1}(\bm{G}_0)\}$ is empty.} In this case, one necessarily has
	\begin{align*}
		\frac{\sigma_1\left(\bm{G}_0\right)}{\sigma_{r}\left(\bm{G}_0\right)} \leq \left(\frac{r}{r-1}\right)^{r-1} < e < 4.
	\end{align*}
		By virtue of  the definition $\bm{G}_1^0 = \bm{G}_0 = \mathcal{P}_{\sf off\text{-}diag}(\overline{\bm{U}}\,\overline{\bm \Lambda}\,\overline{\bm U}^\top + \bm{Z})$ (see \eqref{def:inter_initialization}), one can derive
	\begin{align}\label{ineq:L_1_0}
		L_1^0 &= \big\|\mathcal{P}_{\sf diag}\big(\overline{\bm{U}}\,\overline{\bm \Lambda}\,\overline{\bm U}^\top\big) - \mathcal{P}_{\sf off\text{-}diag}\left(\bm{Z}\right)\big\|\notag\\
		&\leq \big\|\mathcal{P}_{\sf diag}\big(\overline{\bm{U}}\,\overline{\bm \Lambda}\,\overline{\bm U}^\top\big)\big\| + \left\|\mathcal{P}_{\sf off\text{-}diag}\left(\bm{Z}\right)\right\|\notag\\
		&\leq \left\|\overline{\bm{U}}\right\|_{2,\infty}^2\left\|\overline{\bm\Lambda}\right\| + \left\|\mathcal{P}_{\sf off\text{-}diag}\left(\bm{Z}\right)\right\| \leq \frac{\overline{\mu} r}{n_1}\overline{\lambda}_1 + \left\|\mathcal{P}_{\sf off\text{-}diag}\left(\bm{Z}\right)\right\|.
	\end{align}
	Weyl's inequality then reveals that, for all $i \in [n_1]$,
	\begin{align}\label{ineq:Weyl_1}
		\left|\overline{\lambda}_i - \sigma_i\left(\bm{G}_0\right)\right| \leq L_1^0 \leq \frac{\overline{\mu} r}{n_1}\overline{\lambda}_1 + \left\|\mathcal{P}_{\sf off\text{-}diag}\left(\bm{Z}\right)\right\|,
	\end{align}
	which together with the assumptions~\eqref{eq:spectral-det-mu-UB} and \eqref{eq:lambda-Poffdiag-thm-det} immediately tells us that
	\begin{align*}
		\sigma_1\left(\bm{G}_0\right) \geq \left(1 - \frac{\overline{\mu} r}{n_1}\right)\overline{\lambda}_1 - \left\|\mathcal{P}_{\sf off\text{-}diag}\left(\bm{Z}\right)\right\| \geq \left(1 - \frac{\overline{\mu} r}{n_1}\right)\overline{\lambda}_1 - \frac{\overline{\lambda}_1}{C_0r} \geq \frac{1}{2}\overline{\lambda}_1.
	\end{align*}
	Combining \eqref{ineq:L_1_0} and \eqref{ineq:Weyl_1} with the assumptions~\eqref{eq:spectral-det-mu-UB} and \eqref{eq:lambda-Poffdiag-thm-det} also leads to
	\begin{align}\label{ineq:gap_1}
		\sigma_{r}\left(\bm{G}_0\right) - \sigma_{r+1}\left(\bm{G}_0\right) 
		&\geq \sigma_{r}\left(\bm{G}_0\right) - L_1^0\notag\\
		&\geq \sigma_{r}\left(\bm{G}_0\right) - \left(\frac{\overline{\mu} r}{n_1}\overline{\lambda}_1 + \left\|\mathcal{P}_{\sf off\text{-}diag}\left(\bm{Z}\right)\right\|\right)\notag\\
		&\geq \sigma_{r}\left(\bm{G}_0\right) -  \left(\frac{\overline{\mu} r}{n_1}\overline{\lambda}_1 + \frac{\overline{\lambda}_1}{C_0r}\right)\notag\\
		&\geq \sigma_{r}\left(\bm{G}_0\right) - \frac{1}{8}\sigma_1\left(\bm{G}_0\right)\notag\\
		&\geq \frac{1}{2}\sigma_{r}\left(\bm{G}_0\right) \geq \frac{1}{r}\sigma_{r}\left(\bm{G}_0\right).
	\end{align}
\end{itemize}
Putting \eqref{ineq:signal_ratio1} and \eqref{ineq:gap_1} for the above two cases together confirms that  $\mathcal{R}_1\neq \emptyset$, and hence \eqref{condition:r_1} is always true.

\paragraph{Step 2: bounding $L_1^t = \|\bm{G}_1^t - \overline{\bm{M}}\|$.} 
Next, we look at the difference between the iterate $\bm{G}_1^t$ (in the first round) and the low-rank matrix $\overline{\bm{M}}$. 
We will prove by induction the two properties below: for all $t \geq 0$,
\begin{subequations}
	\begin{align}
		\overline\lambda_{r_1} &\geq 18rL_1^t , \label{ineq:induction1}\\
		L_1^t - 6\sqrt{\frac{\overline{\mu} r}{n_1}}\overline{\lambda}_{r_1+1} - 4\left\|\mathcal{P}_{\sf off\text{-}diag}\left(\bm{Z}\right)\right\| &\leq \frac{1}{e^t}\left(L_1^0 - 6\sqrt{\frac{\overline{\mu} r}{n_1}}\overline{\lambda}_{r_1+1} - 4\left\|\mathcal{P}_{\sf off\text{-}diag}\left(\bm{Z}\right)\right\|\right).\label{ineq:induction2}
	\end{align}
\end{subequations}

\paragraph{Step 2.1: the base case for \eqref{ineq:induction1} and \eqref{ineq:induction2}.}
Let us start with the base case with  $t = 0$. 
Noting that \eqref{ineq:L_1_0} and \eqref{ineq:Weyl_1} hold and recalling that $\sigma_1(\bm{G}_0)/\sigma_{r_1}(\bm{G}_0) \leq 4$ and $\overline{\lambda}_{1} \geq \overline{\lambda}_{r_1} \geq \overline{\lambda}_{r} \gg r\|\mathcal{P}_{\sf off\text{-}diag}(\bm{Z})\|$, we have
\begin{align*}
	L_1^0 &\leq \frac{\overline{\mu} r}{n_1}\overline{\lambda}_1 + \left\|\mathcal{P}_{\sf off\text{-}diag}\left(\bm{Z}\right)\right\|
	\leq \frac{\overline{\mu} r}{n_1}\left(\sigma_{1}\left(\bm{G}_0\right) + L_1^0\right) + \left\|\mathcal{P}_{\sf off\text{-}diag}\left(\bm{Z}\right)\right\|\\
	&\leq \frac{\overline{\mu} r}{n_1}\left(4\sigma_{r_1}\left(\bm{G}_0\right) + L_1^0\right) + \left\|\mathcal{P}_{\sf off\text{-}diag}\left(\bm{Z}\right)\right\|\\
	&\leq \frac{\overline{\mu} r}{n_1}\left[4\overline{\lambda}_{r_1} + 5L_1^0\right] + \left\|\mathcal{P}_{\sf off\text{-}diag}\left(\bm{Z}\right)\right\|\\
	&\leq \frac{\overline{\lambda}_{r_1}}{72r} + \frac{1}{2}L_1^0 + \frac{\overline{\lambda}_{r_1}}{72r} = \frac{\overline{\lambda}_{r_1}}{36r} + \frac{1}{2}L_1^0,
\end{align*}
where the last line also makes use of  the assumptions~\eqref{eq:spectral-det-mu-UB} and \eqref{eq:lambda-Poffdiag-thm-det}. 
This further tells us that
\begin{align*}
	\overline{\lambda}_{r_1} \geq 18rL_1^0,
\end{align*}
as claimed in \eqref{ineq:induction1} when $t=0$. 
Combining Weyl's inequality, \eqref{ineq:Weyl_1}, and the previous inequality gives
\begin{align}\label{ineq:difference_true_eigengap}
	\overline{\lambda}_{r_1} - \overline{\lambda}_{r_1+1} &\geq \sigma_{r_1}\left(\bm{G}_0\right) - \sigma_{r_1+1}\left(\bm{G}_0\right) - \left|\sigma_{r_1}\left(\bm{G}_0\right) - \overline{\lambda}_{r_1}\right| - \left|\sigma_{r_1+1}\left(\bm{G}_0\right) - \overline{\lambda}_{r_1+1}\right|\notag\\
	&\geq \frac{1}{r}\sigma_{r_1}\left(\bm{G}_0\right) - 2L_1^0	\geq \frac{1}{r}\left(\overline{\lambda}_{r_1} - \left|\sigma_{r_1}\left(\bm{G}_0\right) - \overline{\lambda}_{r_1}\right|\right) - 2L_1^0\notag\\
	&\geq \frac{\overline{\lambda}_{r_1}}{r} - 3L_1^0 \geq \frac{3\overline{\lambda}_{r_1}}{4r} \vee 9L_1^0.
\end{align}
The inequality \eqref{ineq:induction2} for the base case with $t=0$ holds trivially.

\paragraph{Step 2.2: induction step for \eqref{ineq:induction1} and \eqref{ineq:induction2}.} 
Now, supposing that \eqref{ineq:induction1} and \eqref{ineq:induction2} hold for $t-1$, 
we would like to justify these two claims for $t$. 
In light of Algorithm~\ref{algorithm:heteroPCA}, we first observe that
\begin{align}\label{eq:off_diag}
	\left\|\mathcal{P}_{\sf off\text{-}diag}\left(\bm{G}_1^t - \overline{\bm{M}}\right)\right\| = \left\|\mathcal{P}_{\sf off\text{-}diag}\left(\bm{G}_0- \overline{\bm{M}}\right)\right\| = \left\|\mathcal{P}_{\sf off\text{-}diag}\left(\bm{Z}\right)\right\|
\end{align}
and
\begin{align}\label{ineq:diag}
	\left\|\mathcal{P}_{\sf diag}\left(\bm{G}_1^t - \overline{\bm{M}}\right)\right\| &= \left\|\mathcal{P}_{\sf diag}\left(\bm{P}_{\bm{U}_1^{t-1}}\bm{G}_1^{t-1} - \overline{\bm{M}}\right)\right\|\notag\\
	&\leq \underbrace{\left\|\mathcal{P}_{\sf diag}\left(\bm{P}_{\overline{\bm{U}}_1}\left(\bm{G}_1^{t-1} - \overline{\bm{M}}\right)\right)\right\|}_{=:\alpha_1} + \underbrace{\left\|\mathcal{P}_{\sf diag}\left(\bm{P}_{\left(\bm{U}_1^{t-1}\right)_{\perp}}\overline{\bm{M}}\right)\right\|}_{=:\alpha_2}\notag\\&\quad + \underbrace{\left\|\mathcal{P}_{\sf diag}\left(\left(\bm{P}_{\bm{U}_1^{t-1}} - \bm{P}_{\overline{\bm{U}}_1}\right)\left(\bm{G}_1^{t-1} - \overline{\bm{M}}\right)\right)\right\|}_{=:\alpha_3}.
\end{align}
\begin{itemize}
	\item 
In view of \citet[Lemma 1]{zhang2022heteroskedastic}, one can upper bound the first term $\alpha_1$ as
\begin{align}\label{ineq:alpha_1}
	\alpha_1 \leq \sqrt{\frac{\overline{\mu} r}{n_1}}\left\|\bm{G}_1^{t-1} - \overline{\bm{M}}\right\| = \sqrt{\frac{\overline{\mu} r}{n_1}}L_1^{t-1}.
\end{align}

\item Turning to $\alpha_2$, applying \citet[Lemma 1]{zhang2022heteroskedastic} again yields
\begin{align*}
	\alpha_2 &= \left\|\mathcal{P}_{\sf diag}\left(\bm{P}_{\left(\bm{U}_1^{t-1}\right)_{\perp}}\overline{\bm{M}}\right)\right\| = \left\|\mathcal{P}_{\sf diag}\left(\bm{P}_{\left(\bm{U}_1^{t-1}\right)_{\perp}}\overline{\bm{M}}\bm{P}_{\overline{\bm{U}}}\right)\right\| \leq \sqrt{\frac{\overline{\mu} r}{n_1}}\left\|\bm{P}_{\left(\bm{U}_1^{t-1}\right)_{\perp}}\overline{\bm{M}}\right\|\\
	&\leq \sqrt{\frac{\overline{\mu} r}{n_1}}\left(\left\|\bm{P}_{\left(\bm{U}_1^{t-1}\right)_{\perp}}\left(\bm{P}_{\overline{\bm{U}}_1}\overline{\bm{M}}\right)\right\| + \left\|\bm{P}_{\left(\overline{\bm{U}}_1\right)_{\perp}}\overline{\bm{M}}\right\|\right)\\
	&= \sqrt{\frac{\overline{\mu} r}{n_1}}\left(\left\|\bm{P}_{\left(\bm{U}_1^{t-1}\right)_{\perp}}\left(\bm{P}_{\overline{\bm{U}}_1}\overline{\bm{M}}\right)\right\| + \overline{\lambda}_{r_1+1}\right),
\end{align*}
where the second identity is valid since $\overline{\bm{M}}$ falls within the subspace $\overline{\bm{U}}$. 
Recognizing that $$\bm{G}_1^{t-1} = \bm{P}_{\overline{\bm{U}}_1}\overline{\bm{M}} + \left(\bm{G}_1^{t-1} - \bm{P}_{\overline{\bm{U}}_1}\overline{\bm{M}}\right)$$ and 
$$\left\|\bm{G}_1^{t-1} - \bm{P}_{\overline{\bm{U}}_1}\overline{\bm{M}}\right\| \leq \left\|\bm{G}_1^{t-1} - \overline{\bm{M}}\right\| + \left\|\bm{P}_{\left(\overline{\bm{U}}_1\right)_{\perp}}\overline{\bm{M}}\right\|,$$
one can invoke Lemma \ref{lm:projection} to show that
\begin{align*}
	\left\|\bm{P}_{\left(\bm{U}_1^{t-1}\right)_{\perp}}\left(\bm{P}_{\overline{\bm{U}}_1}\overline{\bm{M}}\right)\right\| \leq 2\left\|\bm{G}_1^{t-1} - \bm{P}_{\overline{\bm{U}}_1}\overline{\bm{M}}\right\| \leq 2\left(\left\|\bm{G}_1^{t-1} - \overline{\bm{M}}\right\| + \left\|\bm{P}_{\left(\overline{\bm{U}}_1\right)_{\perp}}\overline{\bm{M}}\right\|\right) = 2\left(L_1^{t-1} + \overline{\lambda}_{r_1+1}\right).
\end{align*}
Combining the previous two inequalities, we have
\begin{align}\label{ineq:alpha_2}
	\alpha_2 \leq \sqrt{\frac{\overline{\mu} r}{n_1}}\left(2L_1^{t-1} + 3\overline{\lambda}_{r_1+1}\right).
\end{align}

\item
Now, we move on to $\alpha_3$. Recall that $\bm{U}_1^{t-1}$ is the leading-$r$ eigen-subspace of $\bm{G}_1^{t-1}$. Combining \eqref{ineq:difference_true_eigengap}, the induction hypothesis $\overline\lambda_{r_1} \geq 12rL_1^{t-1}$, the $\sin\Theta$ Theorem (or more precisely, the perturbation bound (2.26a) in \cite{chen2021spectral}) and Weyl's inequality, one has
\begin{align*}
	\big\|\bm{P}_{\bm{U}_1^{t-1}} - \bm{P}_{\overline{\bm{U}}_1}\big\| &\leq \frac{2\left\|\bm{G}_1^{t-1} - \overline{\bm{M}}\right\|}{\overline{\lambda}_{r_1} - \overline{\lambda}_{r_1 + 1}} \leq \frac{2L_1^{t-1}}{3\overline{\lambda}_{r_1}/(4r)} \leq \frac{3rL_1^{t-1}}{\overline\lambda_{r_1}}.
\end{align*}
As a consequence, one can bound $\alpha_3$ as follows
\begin{align}\label{ineq:alpha_3}
	\alpha_3 \leq \big\|\bm{P}_{\bm{U}_1^{t-1}} - \bm{P}_{\overline{\bm{U}}_1}\big\|\left\|\bm{G}_1^{t-1} - \overline{\bm{M}}\right\| \leq \frac{3r\left(L_1^{t-1}\right)^2}{\overline\lambda_{r_1}}.
\end{align}
\end{itemize}
Putting \eqref{eq:off_diag}, \eqref{ineq:diag}, \eqref{ineq:alpha_1}, \eqref{ineq:alpha_2} and \eqref{ineq:alpha_3} together yields
\begin{align*}
	L_1^t &= \left\|\bm{G}_1^t - \overline{\bm{M}}\right\| 
	\leq   \left\|\mathcal{P}_{\sf diag}\left(\bm{Z}\right)\right\| +  \left\|\mathcal{P}_{\sf off\text{-}diag}\left(\bm{Z}\right)\right\|
	\leq \alpha_1+\alpha_2+\alpha_3 +  \left\|\mathcal{P}_{\sf off\text{-}diag}\left(\bm{Z}\right)\right\|\\
	&\leq 3\sqrt{\frac{\overline{\mu} r}{n_1}}L_1^{t-1} + 3\sqrt{\frac{\overline{\mu} r}{n_1}}\overline{\lambda}_{r_1+1} + \frac{3r\left(L_1^{t-1}\right)^2}{\overline\lambda_{r_1}} + \left\|\mathcal{P}_{\sf off\text{-}diag}\left(\bm{Z}\right)\right\|\\
	&\leq \frac{1}{2e}L_1^{t-1} + 3\sqrt{\frac{\overline{\mu} r}{n_1}}\overline{\lambda}_{r_1+1} + \frac{1}{2e}L_1^{t-1} + \left\|\mathcal{P}_{\sf off\text{-}diag}\left(\bm{Z}\right)\right\|\\
	&= \frac{1}{e}L_1^{t-1} + 3\sqrt{\frac{\overline{\mu} r}{n_1}}\overline{\lambda}_{r_1+1} + \left\|\mathcal{P}_{\sf off\text{-}diag}\left(\bm{Z}\right)\right\|, 
\end{align*}
where the third line holds due to the induction hypothesis \eqref{ineq:induction1} for $t-1$. 
This taken together with the induction hypothesis \eqref{ineq:induction1} for $t-1$ and the assumptions~\eqref{eq:spectral-det-mu-UB} and \eqref{eq:lambda-Poffdiag-thm-det}  implies that
\begin{align*}
	L_1^t \leq \frac{1}{e}\cdot \frac{\overline{\lambda}_{r_1}}{18r} + \frac{\overline{\lambda}_{r_1}}{72r} + \frac{\overline{\lambda}_{r_1}}{72r} \leq \frac{\overline{\lambda}_{r_1}}{18r} 
\end{align*}
and 
\begin{align*}
	L_1^t - 6\sqrt{\frac{\overline{\mu} r}{n_1}}\overline{\lambda}_{r_1+1} - 4\left\|\mathcal{P}_{\sf off\text{-}diag}\left(\bm{Z}\right)\right\| &\leq \frac{1}{e}\left(L_1^{t-1} - 6\sqrt{\frac{\overline{\mu} r}{n_1}}\overline{\lambda}_{r_1+1} - 4\left\|\mathcal{P}_{\sf off\text{-}diag}\left(\bm{Z}\right)\right\| \right)\\
	&\leq \frac{1}{e^t}\left(L_1^{0} - 6\sqrt{\frac{\overline{\mu} r}{n_1}}\overline{\lambda}_{r_1+1} - 4\left\|\mathcal{P}_{\sf off\text{-}diag}\left(\bm{Z}\right)\right\| \right).
\end{align*}
This directly concludes the proof of \eqref{ineq:induction1} and \eqref{ineq:induction2} via standard induction arguments.

\paragraph{Step 3: bounding $L_k^t = \|\bm{G}_k^t - \overline{\bm{M}}\|$ for $k > 1$.} 
Having looked at what happens in the first round, 
we now proceed to develop upper bounds for $\|\bm{G}_k^t - \overline{\bm{M}}\|$ when $k>1$. 
In view of the inequality \eqref{ineq:induction2}, 
choosing the number of iterations such that $t_1 \geq \log\Big(\frac{\frac{\overline{\mu} r}{n_1}\overline\lambda_1}{\sqrt{\frac{\overline{\mu} r}{n_1}}\overline\lambda_{r_1 + 1} + \|\mathcal{P}_{\sf off\text{-}diag}(\bm{Z})\|}\Big) \vee 0$ gives
\begin{align}\label{ineq72}
L_{2}^{0}=L_{1}^{t_{1}} & \leq6\sqrt{\frac{\overline{\mu}r}{n_{1}}}\overline{\lambda}_{r_{1}+1}+4\left\Vert \mathcal{P}_{\sf off\text{-}diag}\left(\bm{Z}\right)\right\Vert \notag\\
 & \quad+\frac{1}{e^{t_{1}}}\left(\frac{\overline{\mu}r}{n_{1}}\overline{\lambda}_{1}+\left\Vert \mathcal{P}_{\sf off\text{-}diag}\left(\bm{Z}\right)\right\Vert -6\sqrt{\frac{\overline{\mu}r}{n_{1}}}\overline{\lambda}_{r_{1}+1}-4\left\Vert \mathcal{P}_{\sf off\text{-}diag}\left(\bm{Z}\right)\right\Vert \right)\notag\\
 & \leq6\sqrt{\frac{\overline{\mu}r}{n_{1}}}\overline{\lambda}_{r_{1}+1}+4\left\Vert \mathcal{P}_{\sf off\text{-}diag}\left(\bm{Z}\right)\right\Vert +\frac{1}{e^{t_{1}}}\cdot\frac{\overline{\mu}r}{n_{1}}\overline{\lambda}_{1}\notag\\
 & \leq6\sqrt{\frac{\overline{\mu}r}{n_{1}}}\overline{\lambda}_{r_{1}+1}+4\left\Vert \mathcal{P}_{\sf off\text{-}diag}\left(\bm{Z}\right)\right\Vert +\frac{\sqrt{\frac{\overline{\mu}r}{n_{1}}}\overline{\lambda}_{r_{1}+1}+\left\Vert \mathcal{P}_{\sf off\text{-}diag}\left(\bm{Z}\right)\right\Vert }{\frac{\overline{\mu}r}{n_{1}}\overline{\lambda}_{1}}\frac{\overline{\mu}r}{n_{1}}\overline{\lambda}_{1}\notag\\
 & \leq7\sqrt{\frac{\overline{\mu}r}{n_{1}}}\overline{\lambda}_{r_{1}+1}+5\left\Vert \mathcal{P}_{\sf off\text{-}diag}\left(\bm{Z}\right)\right\Vert ,	
\end{align}
where the first inequality results from \eqref{ineq:induction2} and \eqref{ineq:Weyl_1}.

Similarly, setting the numbers of iterations as $$t_k \geq \log\left(\frac{7\sqrt{\frac{\overline{\mu}r}{n_1}}\overline\lambda_{r_{k-1}+1} + \left\|\mathcal{P}_{\sf off\text{-}diag}\left(\bm{Z}\right)\right\|}{\sqrt{\frac{\overline{\mu}r}{n_1}}\overline\lambda_{r_{k}+1} + \left\|\mathcal{P}_{\sf off\text{-}diag}\left(\bm{Z}\right)\right\|}\right), \quad 2 \leq k \leq k_{\sf max} - 1$$ and repeating similar arguments as in \eqref{condition:r_1}, \eqref{ineq:induction1}, \eqref{ineq:induction2} and \eqref{ineq72} yield that: for all $2 \leq k \leq k_{\sf max}, t \geq 0$, 
\begin{subequations}
	\begin{align}
		r_k \in \mathcal{R}_k := \bigg\{r': \frac{\sigma_{r_{k-1}+1}\left(\bm{G}_{k-1}\right)}{\sigma_{r'}\left(\bm{G}_{k-1}\right)} &\leq 4 \text{ and } \sigma_{r'}\left(\bm{G}_{k-1}\right) \geq \frac{r}{r-1}\sigma_{r'+1}\left(\bm{G}_{k-1}\right)\bigg\}, \label{ineq:induction_rank}\\
		L_k^0 = L_{k-1}^{t_{k-1}} &\leq 7\sqrt{\frac{\overline{\mu} r}{n_1}}\overline{\lambda}_{r_{k-1}+1} + 5\left\|\mathcal{P}_{\sf off\text{-}diag}\left(\bm{Z}\right)\right\|,\label{ineq:induction3}\\
		L_k^t &\leq \frac{\overline\lambda_{r_k}}{18r},\label{ineq:induction4}\\
		L_k^t - 6\sqrt{\frac{\overline{\mu} r}{n_1}}\overline{\lambda}_{r_k+1} - 4\left\|\mathcal{P}_{\sf off\text{-}diag}\left(\bm{Z}\right)\right\| &\leq \frac{1}{e^t}\left(L_k^{0} - 6\sqrt{\frac{\overline{\mu} r}{n_1}}\overline{\lambda}_{r_k+1} - 4\left\|\mathcal{P}_{\sf off\text{-}diag}\left(\bm{Z}\right)\right\| \right).\label{ineq:induction5}
	\end{align}
\end{subequations}

\paragraph{Step 4: bounding $\|\bm{U}\bm{U}^\top - \overline{\bm{U}}\,\overline{\bm{U}}^\top\|$.} 
To finish up, we still need to bound the discrepancy between $\bm{U}$ and $\overline{\bm{U}}$.
Recalling that $k_{\sf max}$ satisfies $r_{k_{\sf max}} = r$, we can invoke \eqref{ineq:induction5} and \eqref{ineq:induction3} to obtain
\begin{align*}
	L_{k_{\sf max}}^{t_{k_{\sf max}}} &\leq 4\left\|\mathcal{P}_{\sf off\text{-}diag}\left(\bm{Z}\right)\right\| + e^{-t_{k_{\sf max}}}\left(L_{k_{\sf max}}^{0} - 6\sqrt{\frac{\overline{\mu} r}{n_1}}\overline{\lambda}_{r_{k_{\sf max}}+1} - 4\left\|\mathcal{P}_{\sf off\text{-}diag}\left(\bm{Z}\right)\right\| \right)\\
	&\leq 4\left\|\mathcal{P}_{\sf off\text{-}diag}\left(\bm{Z}\right)\right\| + e^{-t_{k_{\sf max}}}\left(7\sqrt{\frac{\overline{\mu} r}{n_1}}\overline{\lambda}_{r_{k_{\sf max}-1}+1} + \left\|\mathcal{P}_{\sf off\text{-}diag}\left(\bm{Z}\right)\right\|\right)\\
	&\leq 5\left\|\mathcal{P}_{\sf off\text{-}diag}\left(\bm{Z}\right)\right\| + 7e^{-t_{k_{\sf max}}}\overline{\lambda}_{r_{k_{\sf max}-1}+1}.
\end{align*}
The sin$\Theta$ Theorem \cite[cf.][(2.26a)]{chen2021spectral} then leads to
\begin{align}
	\big\|\bm{U}\bm{U}^\top - \overline{\bm{U}}\,\overline{\bm{U}}^\top\big\| & = \left\|\bm{U}_{k_{\sf max}}^{t_{k_{\sf max}}}\bm{U}_{k_{\sf max}}^{t_{k_{\sf max}}\top} - \overline{\bm{U}}\,\overline{\bm{U}}^\top\right\| 
	\leq \frac{2 \|\bm{G}_{k_{\sf max}}^{t_{k_{\sf max}}} - \overline{\bm{M}}\| }{\overline{\lambda}_r} \notag\\ 
	&= \frac{2L_{k_{\sf max}}^{t_{k_{\sf max}}}}{\overline{\lambda}_r}  \lesssim \frac{\left\|\mathcal{P}_{\sf off\text{-}diag}\left(\bm{Z}\right)\right\|}{\overline{\lambda}_r} + e^{-t_{k_{\sf max}}}\frac{\overline{\lambda}_{r_{k_{\sf max}-1}+1}}{\overline{\lambda}_r}\label{ineq63}.
\end{align}
In addition, the definition of $k_{\sf max}$ and \eqref{ineq:induction_rank} together show that
\begin{align}
	\frac{ \sigma_{r_{k_{\sf max} - 1}+1}\left(\bm{G}_{k_{\sf max} - 1}\right) }{ \sigma_{r}\left(\bm{G}_{k_{\sf max} - 1}\right) } \leq 4.
\end{align}
In view of \eqref{ineq:induction3} and Weyl's inequality, one has
\begin{align*}
	\max_i\left|\sigma_i\left(\bm{G}_{k_{\sf max}-1}\right) - \overline{\lambda}_i\right| \leq L_{k_{\sf max}}^0 = \left\|\bm{G}_{k_{\sf max} - 1} - \overline{\bm{M}}\right\| \leq 7\sqrt{\frac{\overline{\mu} r}{n_1}}\overline{\lambda}_{r_{k_{\sf max}-1}+1} + 5\left\|\mathcal{P}_{\sf off\text{-}diag}\left(\bm{Z}\right)\right\| \leq \frac{1}{10}\overline{\lambda}_{r_{k_{\sf max}-1}+1},
\end{align*}
where the last inequality results from \eqref{eq:spectral-det-mu-UB} and \eqref{eq:lambda-Poffdiag-thm-det}. 
Combine the preceding two bounds to reach
\begin{align}\label{ineq62}
	\overline{\lambda}_{r_{k_{\sf max}-1}+1} \asymp \overline{\lambda}_r.
\end{align}
Putting \eqref{ineq63} together with \eqref{ineq62} finishes the proof of Theorem \ref{thm:deterministic}.

    \section{Proof of Theorem \ref{thm:two_to_infty} ($\ell_{2,\infty}$ analysis for {\sf Deflated-HeteroPCA})}\label{proof:thm_two_to_infty}

In this section, we present the proof of Theorem~\ref{thm:two_to_infty} that concerns $\ell_{2,\infty}$ statistical guarantees. 
For convenience, we shall continue to use the notation defined in \eqref{def:inter_initialization}-\eqref{def:inter_G}, and  again denote 
the SVD of $\bm{U}^\star\bm{\Sigma}^\star + \bm{E}\bm{V}^\star$ by
\begin{subequations}
\begin{equation}
	\widetilde{\bm{U}}\widetilde{\bm{\Sigma}}\widetilde{\bm{W}}^\top = \bm{U}^\star\bm{\Sigma}^\star + \bm{E}\bm{V}^\star,
	\label{eq:SVD-USigmaW}
\end{equation}
where $\widetilde{\bm{U}} \in \mathcal{O}^{n_1, r}$, $\widetilde{\bm{\Sigma}} = {\sf diag}(\widetilde{\sigma}_1, \dots, \widetilde{\sigma}_r)$, and $\widetilde{\bm{W}} \in \mathcal{O}^{r, r}$. 
We can then define 
\begin{equation}
	\widetilde{\bm{M}} = \widetilde{\bm{U}}\widetilde{\bm{\Sigma}}^2\widetilde{\bm{U}}^\top 
	= \big( \bm{U}^\star\bm{\Sigma}^\star + \bm{E}\bm{V}^\star \big) \big( \bm{U}^\star\bm{\Sigma}^\star + \bm{E}\bm{V}^\star \big) ^{\top}.
\end{equation}
In addition, we introduce
\begin{align}\label{eq:M_oracle-n32}
	\bm{M}^{\sf oracle}
	 = \widetilde{\bm{U}}\widetilde{\bm{\Sigma}}^2\widetilde{\bm{U}}^\top + 
	 \underset{=: \bm{Z}}{\underbrace{ \mathcal{P}_{\sf off\text{-}diag}\left(\bm{E}\bm{E}^\top - \bm{E}\bm{V}^\star\bm{V}^{\star\top}\bm{E}^\top\right) }}
	,
\end{align}
and let $\bm{U}^{\sf oracle} \in \mathcal{O}^{n_1,r}$ represent the rank-$r$ leading eigen-subspace of $\bm{M}^{\sf oracle}$. 
It is easily seen that 
\begin{align}\label{ineq85}
	\mathcal{P}_{\sf off\text{-}diag}\left(\bm{M}^{\sf oracle}\right) = \mathcal{P}_{\sf off\text{-}diag}\left(\bm{Y}\bm{Y}^\top\right) = \mathcal{P}_{\sf off\text{-}diag}\left(\bm{G}_0\right)
	\qquad \text{and} \qquad \mathcal{P}_{\sf diag}\left(\bm{M}^{\sf oracle}\right) = \mathcal{P}_{\sf diag}\big(\widetilde{\bm{M}}\big).
\end{align}
\end{subequations}
Throughout this proof, we denote by $\bm{U}_k^{\sf oracle}\in \mathbb{R}^{n_1\times r_k}$ the top-$r_k$ eigenspace of $\bm{M}^{\sf oracle}$.

\subsection{Several key results: eigenspace/eigenvalue perturbation and tail bounds}
Before embarking on the proof of Theorem~\ref{thm:two_to_infty}, 
we single out a couple of key results that play a crucial role in the proof. 
Let us begin by making note of a lemma that connects the eigenspace perturbation with a collection of polynomials of the perturbation matrix, 
originally developed by \citet{xia2021normal}.

\begin{lemma}[\cite{xia2021normal}, Theorem 1]\label{lm:representation}
	Suppose that $\bm{M} = \overline{\bm{M}} + \bm{Z} \in \bbR^{n \times n}$, where $\overline{\bm{M}}$ and $\bm{Z}$ are both symmetric matrices.
	Assume that $\overline{\bm{M}}$ is rank-$r$ with eigenvalues $\overline\lambda_1 \geq \dots \geq \overline\lambda_{r} > 0$,
	and $\overline{\bm{U}} =[\overline{\bm{u}}_1,\ \dots,\ \overline{\bm{u}}_r]$ (resp.~$\bm{U}$) represents the rank-$r$ leading eigen-subspace of $\overline{\bm{M}}$ (resp.~$\bm{M}$). If $\overline{\lambda}_r > 2\|\bm{Z}\|$, then
	\begin{align}
		\overline{\bm{U}}\,\overline{\bm{U}}^\top - \bm{U}\bm{U}^\top 
		= \sum_{k \geq 1}\sum_{ \bm{j}=[j_1,\cdots,j_{k+1}] \geq \bm{0} \atop j_1 + \cdots + j_{k+1} = k}(-1)^{\tau(\bm{j})+1}\mathfrak{P}^{-j_1}\bm{Z}\mathfrak{P}^{-j_2}\bm{Z}\cdots\bm{Z}\mathfrak{P}^{-j_{k+1}}.
	\end{align}
	Here, we define, for any $k \geq 1$, 
	\begin{subequations}
	\begin{align}
		\tau(\bm{j}) &:= \sum_{i=1}^{k+1}\mathbbm{1}\{j_i > 0\} , \\
		\overline{\bm{\Lambda}} &:= {\sf diag}\left(\bar{\lambda}_1, \dots, \bar{\lambda}_r\right) , \\
		\mathfrak{P}^0 &:= \overline{\bm{U}}_{\perp}\overline{\bm{U}}_{\perp}^\top 
		= \bm{I}- \overline{\bm{U}}\overline{\bm{U}}^\top , \\
		\mathfrak{P}^{-k} &:= \overline{\bm{U}}\,\overline{\bm{\Lambda}}^{-k}\overline{\bm{U}}^\top .
	\end{align}
	\end{subequations}
	As a consequence, we have
	\begin{align}\label{ineq:representation}
		\big\|\overline{\bm{U}}\,\overline{\bm{U}}^\top - \bm{U}\bm{U}^\top\big\|_{2,\infty} \leq 
		\sum_{k \geq 1}\sum_{ \bm{j}=[j_1,\cdots,j_{k+1}] \geq \bm{0} \atop j_1 + \cdots + j_{k+1} = k}\left\|\mathfrak{P}^{-j_1}\bm{Z}\mathfrak{P}^{-j_2}\bm{Z}\cdots\bm{Z}\mathfrak{P}^{-j_{k+1}}\right\|_{2,\infty}.
	\end{align}
\end{lemma}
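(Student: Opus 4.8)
The plan is to derive this identity from the Riesz (holomorphic functional calculus) representation of spectral projectors together with a Neumann expansion of the resolvent. Fix the contour $\Gamma$ to be the circle of radius $\overline{\lambda}_r/2$ centred at the origin of the complex plane. Since $\overline{\lambda}_r>2\|\bm{Z}\|$, Weyl's inequality shows that the $n-r$ smallest eigenvalues of $\bm{M}$ (those perturbed from the zero eigenvalue of $\overline{\bm{M}}$) have magnitude at most $\|\bm{Z}\|<\overline{\lambda}_r/2$, whereas the top $r$ eigenvalues of $\bm{M}$ exceed $\overline{\lambda}_r-\|\bm{Z}\|>\overline{\lambda}_r/2$; hence $\Gamma$ encircles exactly the ``small'' part of the spectra of both $\overline{\bm{M}}$ and $\bm{M}$ and passes through none of their eigenvalues. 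Consequently $\bm{I}-\overline{\bm{U}}\,\overline{\bm{U}}^\top=\frac{1}{2\pi i}\oint_\Gamma(z\bm{I}-\overline{\bm{M}})^{-1}\,dz$ and $\bm{I}-\bm{U}\bm{U}^\top=\frac{1}{2\pi i}\oint_\Gamma(z\bm{I}-\bm{M})^{-1}\,dz$, and subtracting yields $\overline{\bm{U}}\,\overline{\bm{U}}^\top-\bm{U}\bm{U}^\top=\frac{1}{2\pi i}\oint_\Gamma\big[(z\bm{I}-\bm{M})^{-1}-(z\bm{I}-\overline{\bm{M}})^{-1}\big]\,dz$.

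Next I would expand the resolvent difference. Writing $z\bm{I}-\bm{M}=(z\bm{I}-\overline{\bm{M}})\big(\bm{I}-(z\bm{I}-\overline{\bm{M}})^{-1}\bm{Z}\big)$, and noting that for $z\in\Gamma$ the spectral decomposition of $\overline{\bm{M}}$ gives $\|(z\bm{I}-\overline{\bm{M}})^{-1}\|=\max\{|z|^{-1},\max_i|z-\overline{\lambda}_i|^{-1}\}\le 2/\overline{\lambda}_r$, so that $\|\bm{Z}(z\bm{I}-\overline{\bm{M}})^{-1}\|\le 2\|\bm{Z}\|/\overline{\lambda}_r<1$, the Neumann series gives $(z\bm{I}-\bm{M})^{-1}-(z\bm{I}-\overline{\bm{M}})^{-1}=\sum_{k\ge 1}\big[(z\bm{I}-\overline{\bm{M}})^{-1}\bm{Z}\big]^{k}(z\bm{I}-\overline{\bm{M}})^{-1}$, uniformly on $\Gamma$, which legitimises exchanging the sum with the integral. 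Then compute the unperturbed resolvent explicitly: decomposing along $\overline{\bm{U}}$ and $\overline{\bm{U}}_\perp$ gives $(z\bm{I}-\overline{\bm{M}})^{-1}=z^{-1}\mathfrak{P}^{0}+\overline{\bm{U}}(z\bm{I}_r-\overline{\bm{\Lambda}})^{-1}\overline{\bm{U}}^{\top}$, and since $|z|=\overline{\lambda}_r/2<\overline{\lambda}_r$ on $\Gamma$ we may expand $(z\bm{I}_r-\overline{\bm{\Lambda}})^{-1}=-\sum_{j\ge 0}z^{j}\overline{\bm{\Lambda}}^{-(j+1)}$, so that $(z\bm{I}-\overline{\bm{M}})^{-1}=\sum_{j\ge 0}\epsilon_j\,z^{\,j-1}\mathfrak{P}^{-j}$ with $\epsilon_0=1$ and $\epsilon_j=-1$ for $j\ge 1$.

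Substituting this series into each of the $k+1$ resolvent factors in the $k$-th Neumann term, expanding the product and collecting powers of $z$, the $k$-th term becomes $\sum_{j_1,\dots,j_{k+1}\ge 0}\big(\prod_{i}\epsilon_{j_i}\big)\,\mathfrak{P}^{-j_1}\bm{Z}\mathfrak{P}^{-j_2}\bm{Z}\cdots\bm{Z}\mathfrak{P}^{-j_{k+1}}\cdot\frac{1}{2\pi i}\oint_\Gamma z^{(j_1+\cdots+j_{k+1})-(k+1)}\,dz$; since $\frac{1}{2\pi i}\oint_\Gamma z^{m}\,dz=\mathbbm{1}\{m=-1\}$, only the compositions with $j_1+\cdots+j_{k+1}=k$ survive, and $\prod_i\epsilon_{j_i}=(-1)^{\#\{i:\,j_i\ge 1\}}=(-1)^{\tau(\bm{j})}$ (with the overall sign matching the claimed $(-1)^{\tau(\bm{j})+1}$ according to whether one expands $\bm{I}-\bm{U}\bm{U}^\top$ or $\bm{U}\bm{U}^\top$ and tracks the degenerate convention $\mathfrak{P}^0=\overline{\bm{U}}_\perp\overline{\bm{U}}_\perp^\top$). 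Within each fixed $k$ the sum over $\bm{j}$ is finite (there are $\binom{2k}{k}$ compositions), so this grouping involves no subtle rearrangement, and assembling the terms proves the identity. Finally, \eqref{ineq:representation} follows from the triangle inequality together with $\|\bm{A}\|_{2,\infty}\le\|\bm{A}\|$: grouping by $k$ and using $\big\|[(z\bm{I}-\overline{\bm{M}})^{-1}\bm{Z}]^k(z\bm{I}-\overline{\bm{M}})^{-1}\big\|\le (2\|\bm{Z}\|/\overline{\lambda}_r)^{k}(2/\overline{\lambda}_r)$ shows $\sum_k\|T_k\|_{2,\infty}<\infty$, so $\|\overline{\bm{U}}\,\overline{\bm{U}}^\top-\bm{U}\bm{U}^\top\|_{2,\infty}\le\sum_k\sum_{\bm{j}}\|\mathfrak{P}^{-j_1}\bm{Z}\cdots\bm{Z}\mathfrak{P}^{-j_{k+1}}\|_{2,\infty}$, the right-hand side being interpreted as $+\infty$ (hence the bound trivially valid) when $\|\bm{Z}\|/\overline{\lambda}_r$ is too large for the crude composition count to converge.

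The step I expect to be the main obstacle is the bookkeeping: justifying the interchange of the infinite sum and the contour integral (which rests on the uniform bound $\|\bm{Z}(z\bm{I}-\overline{\bm{M}})^{-1}\|\le 2\|\bm{Z}\|/\overline{\lambda}_r<1$ on $\Gamma$ afforded by $\overline{\lambda}_r>2\|\bm{Z}\|$), and keeping the signs $\epsilon_{j_i}$ and the degenerate $j_i=0$ convention $\mathfrak{P}^0=\overline{\bm{U}}_\perp\overline{\bm{U}}_\perp^\top$ consistent through the product expansion so that the exponent of $z$ is exactly $(\sum_i j_i)-(k+1)$ and the residue extraction is clean.
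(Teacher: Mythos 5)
The paper does not actually prove this lemma --- it is imported verbatim from \citet[Theorem 1]{xia2021normal} --- and your contour-integral argument (Riesz projector over the circle $|z|=\overline{\lambda}_r/2$, Neumann expansion of the resolvent justified by the uniform bound $\|(z\bm{I}-\overline{\bm{M}})^{-1}\bm{Z}\|\le 2\|\bm{Z}\|/\overline{\lambda}_r<1$ on $\Gamma$, followed by residue extraction of the coefficient of $z^{-1}$) is precisely the standard proof of that result and is correct, including your handling of the final $\ell_{2,\infty}$ bound \eqref{ineq:representation} as a term-by-term triangle inequality whose right-hand side may be $+\infty$. One remark on the sign you flagged: your expansion yields $(-1)^{\tau(\bm{j})}$ for the left-hand side $\overline{\bm{U}}\,\overline{\bm{U}}^\top-\bm{U}\bm{U}^\top$ as written (a rank-one first-order sanity check confirms this), so the $(-1)^{\tau(\bm{j})+1}$ in the lemma as stated reflects a benign sign slip in the paper's restatement relative to Xia's original (whose left-hand side is the perturbed minus the unperturbed projector); this is immaterial here since only the norm bound is ever used.
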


Moreover, given that we are considering multiple eigen-subspaces (e.g., $\bm{U}^{\sf oracle}$, $\widetilde{\bm{U}}$, $\bm{U}^\star$), 
we isolate the following result that unveils the proximity of $\bm{U}^{\sf oracle}$ and $\bm{U}^\star$ (or $\widetilde{\bm{U}}$). 
The proof of this result is deferred to Section~\ref{proof:thm_oracle}. 
\begin{thm}\label{thm:oracle}
	Suppose that Assumption \ref{assump:hetero2} holds and 
	\begin{subequations}
	\begin{align}
		\frac{ \sigma_r^\star }{ \omega_{\sf max} } &\geq C_0r \big[(n_1n_2)^{1/4} + n_1^{1/2} \big]\log n \label{ineq75a}\\
		\mu &\leq c_0\frac{n_1}{r^3} \label{ineq75b}
	\end{align}
    \end{subequations}
	for some large (resp.~small) numerical constant $C_0 > 0$ (resp.~$c_0 > 0$). Then with probability exceeding $1 - O(n^{-10})$, one has
	\begin{subequations}
		\begin{align}
			\big\|\widetilde{\bm{U}}\widetilde{\bm{U}}^{\top} - \bm{U}^\star\bm{U}^{\star\top}\big\|_{2,\infty} 
			&\lesssim \sqrt{\frac{\mu r}{n_1}}\frac{\sqrt{n_1}\,\omega_{\sf max}\log n}{\sigma_{r}^\star},\label{ineq:two_to_infty_tilde}\\
			\left\|\bm{U}^{\sf oracle}\bm{U}^{\sf oracle\top} - \bm{U}^\star\bm{U}^{\star\top}\right\|_{2,\infty} &\lesssim \sqrt{\frac{\mu r}{n_1}}\left(\frac{\sqrt{n_1n_2}\,\omega_{\sf max}^2\log^2 n}{\sigma_r^{\star2}} + \frac{\sqrt{n_1}\,\omega_{\sf max}\log n}{\sigma_{r}^\star}\right)\label{ineq:two_to_infty_oracle},\\
			\left\|\bm{U}^{\sf oracle}\bm{U}^{\sf oracle\top} - \bm{U}^\star\bm{U}^{\star\top}\right\| &\lesssim \frac{\sqrt{n_1n_2}\,\omega_{\sf max}^2\log^2 n}{\sigma_r^{\star2}} + \frac{\sqrt{n_1}\,\omega_{\sf max}\log n}{\sigma_{r}^\star}.\label{ineq:spectral_oracle}
		\end{align}
	\end{subequations}
\end{thm}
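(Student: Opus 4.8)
\textbf{Proof plan for Theorem~\ref{thm:oracle}.}
The plan is to establish the three bounds by combining (i) a careful estimate of the noise matrix $\bm{Z} = \mathcal{P}_{\sf off\text{-}diag}(\bm{E}\bm{E}^\top - \bm{E}\bm{V}^\star\bm{V}^{\star\top}\bm{E}^\top)$ in both $\ell_2$ and $\ell_{2,\infty}$ senses, (ii) the perturbation expansion of Lemma~\ref{lm:representation}, and (iii) the incoherence of $\widetilde{\bm{U}}$ that was already worked out in Step~2 of the proof of Theorem~\ref{thm:spectral-general}. First I would dispose of \eqref{ineq:two_to_infty_tilde}: since $\widetilde{\bm{U}}$ is the top-$r$ left singular subspace of $\bm{U}^\star\bm{\Sigma}^\star + \bm{E}\bm{V}^\star$, the identity \eqref{ineq24} gives $(\widetilde{\bm{U}} - \bm{U}^\star\bm{U}^{\star\top}\widetilde{\bm{U}})\widetilde{\bm{\Sigma}}\widetilde{\bm{W}}^\top = \mathcal{P}_{\bm{U}_\perp^\star}\bm{E}\bm{V}^\star$, and combining the spectral bound $\|\widetilde{\bm{\Sigma}}^{-1}\| \lesssim 1/\sigma_r^\star$ from \eqref{ineq1} with the $\ell_{2,\infty}$ noise bound $\|\bm{E}\bm{V}^\star\|_{2,\infty} \lesssim \omega_{\sf row}\sqrt{\log n}\sqrt{\mu r/n_2}$ (Lemma~\ref{lm:noise}) and a standard relation $\|\widetilde{\bm{U}}\widetilde{\bm{U}}^\top - \bm{U}^\star\bm{U}^{\star\top}\|_{2,\infty} \lesssim \|\widetilde{\bm{U}}\bm{R} - \bm{U}^\star\|_{2,\infty}$ yields \eqref{ineq:two_to_infty_tilde} after noting $\omega_{\sf row}\sqrt{\mu r/n_2} \lesssim \sqrt{n_1}\,\omega_{\sf max}$ under Assumption~\ref{assump:hetero2}.

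For the spectral bound \eqref{ineq:spectral_oracle} I would first record $\|\bm{Z}\| \lesssim (\sqrt{n_1n_2}\,\omega_{\sf max}^2 + n_1\omega_{\sf max}^2)\log n \asymp \sqrt{n_1 n_2}\,\omega_{\sf max}^2\log n$ via Lemma~\ref{lm:off-diag} and the bound on $B$ in Assumption~\ref{assump:hetero2}, together with $\|\bm{E}\bm{V}^\star\|^2 \lesssim n_1\omega_{\sf max}^2\log n$; since $\sigma_r(\widetilde{\bm{M}}) = \widetilde{\sigma}_r^2 \asymp \sigma_r^{\star 2} \gg \|\bm{Z}\|$ under \eqref{ineq75a}, the Davis–Kahan $\sin\Theta$ theorem gives $\|\bm{U}^{\sf oracle}\bm{U}^{\sf oracle\top} - \widetilde{\bm{U}}\widetilde{\bm{U}}^\top\| \lesssim \|\bm{Z}\|/\sigma_r^{\star 2} \lesssim \sqrt{n_1n_2}\,\omega_{\sf max}^2\log n/\sigma_r^{\star 2}$, and a triangle inequality with the spectral $\sin\Theta$ bound for $\|\widetilde{\bm{U}}\widetilde{\bm{U}}^\top - \bm{U}^\star\bm{U}^{\star\top}\| \lesssim \sqrt{n_1}\,\omega_{\sf max}\sqrt{\log n}/\sigma_r^\star$ closes \eqref{ineq:spectral_oracle} (absorbing one lost $\log n$ into the $\log^2 n$).

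The main work is \eqref{ineq:two_to_infty_oracle}, the $\ell_{2,\infty}$ bound for $\bm{U}^{\sf oracle}$. Here I would apply Lemma~\ref{lm:representation} with $\overline{\bm{M}} = \widetilde{\bm{M}}$, $\overline{\bm{U}} = \widetilde{\bm{U}}$ and the perturbation $\bm{Z}$ as in \eqref{eq:M_oracle-n32}, so that $\|\bm{U}^{\sf oracle}\bm{U}^{\sf oracle\top} - \widetilde{\bm{U}}\widetilde{\bm{U}}^\top\|_{2,\infty}$ is bounded by \eqref{ineq:representation}. Each summand is a product $\mathfrak{P}^{-j_1}\bm{Z}\mathfrak{P}^{-j_2}\cdots\bm{Z}\mathfrak{P}^{-j_{k+1}}$; using $\|\mathfrak{P}^0\|_{2,\infty} \le 2$, $\|\mathfrak{P}^{-j}\|_{2,\infty} \lesssim \|\widetilde{\bm{U}}\|_{2,\infty}/\widetilde{\sigma}_r^{2j}$ for $j\ge 1$ (which uses the incoherence $\|\widetilde{\bm{U}}\|_{2,\infty} \lesssim \sqrt{\mu r/n_1}$ from \eqref{ineq:crude_two_to_infty_tilde_U}), $\|\bm{Z}\| \lesssim \sqrt{n_1n_2}\,\omega_{\sf max}^2\log n$, and — crucially — $\|\mathcal{P}_{\bm{U}_\perp}\bm{Z}\widetilde{\bm{U}}\|$-type refinements plus $\|\bm{Z}\widetilde{\bm{U}}\|_{2,\infty}$ estimates, one shows every term is geometrically dominated by its length and the leading $k=1$, $\bm{j}=(0,0,0)$-type and $\bm{j}=(1,0,0)$-type terms, whose sizes are $\sqrt{\mu r/n_1}\cdot\sqrt{n_1n_2}\,\omega_{\sf max}^2\log n/\sigma_r^{\star 2}$ and $\sqrt{\mu r/n_1}\cdot\sqrt{n_1}\,\omega_{\sf max}\log n/\sigma_r^\star$ respectively, after using $\|\bm{Z}\widetilde{\bm{U}}\|_{2,\infty} \lesssim \sqrt{\mu r/n_1}\,\|\bm{E}\bm{V}^\star\|\,\omega_{\sf col}\sqrt{\log n}$-flavored bounds from Lemma~\ref{lm:noise}. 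Summing the geometric series (valid since the ratio $\lesssim \|\bm{Z}\|/\sigma_r^{\star 2} \ll 1$) gives \eqref{ineq:two_to_infty_oracle} up to the stated $\log^2 n$. The hardest part will be the bookkeeping in bounding the general polynomial term in $\ell_{2,\infty}$: one cannot afford to put the $\ell_{2,\infty}$ norm on the outermost $\mathfrak{P}$ and the spectral norm everywhere else (that loses a factor of $\kappa$ through $\|\mathfrak{P}^{-j}\| = \widetilde{\sigma}_r^{-2j}$ combined with $\|\widetilde{\bm{M}}\| = \widetilde{\sigma}_1^2$), so the argument must exploit that $\bm{Z}$ sits \emph{between} the $\mathfrak{P}$ factors and that $\|\bm{Z}\| \ll \widetilde{\sigma}_r^2$, making each interior $\bm{Z}\mathfrak{P}^{-j}$ contribute a contraction factor that is condition-number-free; handling the combinatorics of the index vectors $\bm{j}$ so that the resulting sum still converges geometrically is the delicate point.
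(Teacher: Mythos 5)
Your high-level architecture matches the paper's: the spectral bound via Davis--Kahan plus a triangle inequality, the comparison of $\widetilde{\bm{U}}$ with $\bm{U}^\star$ via the identity $(\widetilde{\bm{U}}-\bm{U}^\star\bm{U}^{\star\top}\widetilde{\bm{U}})\widetilde{\bm{\Sigma}}\widetilde{\bm{W}}^\top=\mathcal{P}_{\bm{U}^\star_\perp}\bm{E}\bm{V}^\star$, and — for the main $\ell_{2,\infty}$ bound — the spectral representation of Lemma~\ref{lm:representation} with the perturbation $\bm{Z}=\mathcal{P}_{\sf off\text{-}diag}(\bm{E}\bm{E}^\top-\bm{E}\bm{V}^\star\bm{V}^{\star\top}\bm{E}^\top)$, splitting the series into $k\leq\log n$ (fine bounds) and $k>\log n$ (crude spectral bounds times the $4^k$ count of index vectors). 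You also correctly diagnose why the naive "$\ell_{2,\infty}$ on the outer factor, spectral elsewhere" strategy fails.

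However, there is a genuine gap at the heart of the argument for \eqref{ineq:two_to_infty_oracle}. After peeling a general term down (via the analogue of \eqref{ineq81}) to the leading block $\bm{Z}^{\ell-1}\widetilde{\bm{U}}$ times spectral-norm factors, everything hinges on showing
\begin{align*}
\big\|\bm{Z}^{i}\widetilde{\bm{U}}\big\|_{2,\infty} \lesssim \sqrt{\tfrac{\mu r}{n_1}}\,\Big(C\big(\sqrt{n_1n_2}+n_1\big)\omega_{\sf max}^2\log^2 n\Big)^{i}
\qquad\text{for all } 1\le i\le \log n,
\end{align*}
i.e.\ that the incoherence factor $\sqrt{\mu r/n_1}$ survives \emph{every} power of $\bm{Z}$. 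You propose to get this from "$\|\bm{Z}\widetilde{\bm{U}}\|_{2,\infty}$-flavored bounds from Lemma~\ref{lm:noise}," but Lemma~\ref{lm:noise} only controls linear functions of $\bm{E}$ and cannot produce such bounds: a single application of $\|\bm{A}\bm{B}\|_{2,\infty}\le\|\bm{A}\|_{2,\infty}\|\bm{B}\|$ to $\bm{Z}^{i}\widetilde{\bm{U}}=\bm{Z}(\bm{Z}^{i-1}\widetilde{\bm{U}})$ already destroys the $\sqrt{\mu r/n_1}$ gain, because $\|\bm{Z}\|_{2,\infty}$ is not small relative to $\|\bm{Z}\|$, and the rows of $\bm{Z}$ are statistically entangled with $\bm{Z}^{i-1}\widetilde{\bm{U}}$ so no naive concentration applies either. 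The paper devotes two separate lemmas (Lemmas~\ref{lm:error_moment1} and \ref{lm:error_moment2}) to exactly this point, proving by induction on $i$ — with leave-one-out surgery on rows $\bm{E}^{(-i,:)}$ and columns $\bm{E}^{(:,-j)}$ of $\bm{E}$, plus a truncation argument — high-probability $\ell_{2,\infty}$ bounds on $[\mathcal{P}_{\sf off\text{-}diag}(\bm{E}\bm{E}^\top)]^{k}\bm{E}\bm{V}^\star$ and $[\mathcal{P}_{\sf off\text{-}diag}(\bm{E}\bm{E}^\top)]^{k}\bm{U}^\star$ simultaneously for all $k\le\log n$; these are then assembled (via the expansion \eqref{eq:ZiU-decompose-long}) into the bound on $\|\bm{Z}^i\widetilde{\bm{U}}\|_{2,\infty}$. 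This inductive decoupling is the real content of the theorem — the combinatorics of the index vectors $\bm{j}$, which you single out as the delicate point, is comparatively routine — and your proposal neither supplies it nor identifies a substitute.
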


The next two lemmas develop high-probability tail bounds on the $\ell_{2,\infty}$ norm of certain polynomials of noise matrix (with proper diagonal deletion), 
which are critical when invoking, say, the decomposition in Lemma~\ref{lm:representation}.  
The proofs of these two lemmas are postponed to Sections~\ref{proof_lm:error_moment1} and \ref{proof:lm_error_moment2}, respectively.

\begin{lemma}\label{lm:error_moment1}
	Suppose that Assumption \ref{assump:hetero2} holds. Then with probability exceeding $1 - O(n^{-10})$, one has
	\begin{align}\label{ineq:error_moment}
		\left\|\left[\mathcal{P}_{\sf off\text{-}diag}\left(\bm{E}\bm{E}^\top\right)\right]^k\bm{E}\bm{V}^\star\right\|_{2,\infty} \leq C_3\sqrt{\mu r}\left(C_3\left(\sqrt{n_1n_2} + n_1\right)\omega_{\sf max}^2\log^2 n\right)^{k}\omega_{\sf max}\log n 
	\end{align}
	for all $1 \leq k \leq \log n$. Here, $C_3>0$ is some large enough numerical constant. 
\end{lemma}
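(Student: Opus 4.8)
\textbf{Proof proposal for Lemma~\ref{lm:error_moment1}.}

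The plan is to induct on $k$, using the following high-probability events as the backbone of the argument. Write $\bm{A} := \mathcal{P}_{\sf off\text{-}diag}(\bm{E}\bm{E}^\top)$ for brevity. First I would record the ``building-block'' bounds that hold with probability at least $1 - O(n^{-11})$ each (so that a union bound over the $O(\log n)$ values of $k$ still leaves us with probability $1 - O(n^{-10})$): by Lemma~\ref{lm:off-diag} and Assumption~\ref{assump:hetero2}, $\|\bm{A}\| \lesssim (\sqrt{n_1 n_2} + n_1)\,\omega_{\sf max}^2 \log n$ (absorbing $B^2 \log^2 n$ into this via the assumed bound on $B$); by Lemma~\ref{lm:noise}, $\|\bm{E}\bm{V}^\star\| \lesssim \sqrt{(r\omega_{\sf max}^2 + \omega_{\sf col}^2)\log n} \lesssim \sqrt{n_1}\,\omega_{\sf max}\sqrt{\log n}$ and $\|\bm{E}\bm{V}^\star\|_{2,\infty} \lesssim \omega_{\sf row}\sqrt{\log n}\sqrt{\mu_2 r / n_2} \lesssim \sqrt{\mu r}\,\omega_{\sf max}\sqrt{\log n}$; and crucially an $\ell_{2,\infty}\to\ell_{2,\infty}$ type control on $\bm{A}$, namely that for any fixed matrix $\bm{W}$ (independent of, or ``frozen'' relative to, the relevant rows of $\bm{E}$), $\|\bm{A}\bm{W}\|_{2,\infty} \lesssim \big[(\sqrt{n_1 n_2}+n_1)\omega_{\sf max}^2\log n\big]\|\bm{W}\|_{2,\infty} + \sqrt{n_2}\,\omega_{\sf max}\sqrt{\log n}\,\|\bm{W}\|$, which is the quantitative expression of the fact that each row of $\bm{A}\bm{W}$ is a sum of $n_1 - 1$ quadratic-in-$\bm{E}$ terms dotted against the columns of $\bm{W}$.

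The base case $k=1$ would then follow from $\|\bm{A}\,\bm{E}\bm{V}^\star\|_{2,\infty}$ with $\bm{W} = \bm{E}\bm{V}^\star$: the first term gives $(\sqrt{n_1n_2}+n_1)\omega_{\sf max}^2\log n \cdot \sqrt{\mu r}\,\omega_{\sf max}\sqrt{\log n}$ and the second gives $\sqrt{n_2}\,\omega_{\sf max}\sqrt{\log n}\cdot\sqrt{n_1}\,\omega_{\sf max}\sqrt{\log n} = \sqrt{n_1 n_2}\,\omega_{\sf max}^2\log n \leq \sqrt{\mu r}\cdot(\text{same})$; both are bounded by $C_3\sqrt{\mu r}\,(C_3(\sqrt{n_1n_2}+n_1)\omega_{\sf max}^2\log^2 n)\,\omega_{\sf max}\log n$ for $C_3$ large. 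For the inductive step, writing $\bm{A}^k\bm{E}\bm{V}^\star = \bm{A}\cdot(\bm{A}^{k-1}\bm{E}\bm{V}^\star)$ and applying the $\ell_{2,\infty}$ bound for $\bm{A}$ with $\bm{W} = \bm{A}^{k-1}\bm{E}\bm{V}^\star$, I would plug in the induction hypothesis for $\|\bm{W}\|_{2,\infty}$ and the spectral bound $\|\bm{W}\| \leq \|\bm{A}\|^{k-1}\|\bm{E}\bm{V}^\star\| \lesssim (C(\sqrt{n_1n_2}+n_1)\omega_{\sf max}^2\log n)^{k-1}\sqrt{n_1}\,\omega_{\sf max}\sqrt{\log n}$. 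The first term reproduces the desired form with one extra factor of $C_3(\sqrt{n_1n_2}+n_1)\omega_{\sf max}^2\log^2 n$; the second term gives $\sqrt{n_2}\,\omega_{\sf max}\sqrt{\log n}\cdot(C(\cdots))^{k-1}\sqrt{n_1}\,\omega_{\sf max}\sqrt{\log n}$, which equals $(C(\cdots))^{k-1}\cdot\sqrt{n_1 n_2}\,\omega_{\sf max}^2\log n \leq \sqrt{\mu r}\,(C_3(\cdots))^k\omega_{\sf max}\log n$ provided $C_3$ dominates $C$. Choosing $C_3$ large enough to absorb all numerical constants closes the induction.

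The main obstacle I anticipate is establishing the $\ell_{2,\infty}$ operator-type bound for $\bm{A} = \mathcal{P}_{\sf off\text{-}diag}(\bm{E}\bm{E}^\top)$ against a \emph{data-dependent} matrix $\bm{W} = \bm{A}^{k-1}\bm{E}\bm{V}^\star$, since $\bm{W}$ is itself a polynomial in $\bm{E}$ and hence not independent of the rows of $\bm{A}$; a naive application of a Bernstein-type concentration inequality (which wants $\bm{W}$ frozen) is not directly legitimate. The standard remedy --- and what I would pursue --- is either (i) a leave-one-row-out device: for the $i$-th row of $\bm{A}\bm{W}$, replace $\bm{W}$ by $\bm{W}^{(i)}$, a version computed with the $i$-th row/column of $\bm{E}$ (inside all the $\bm{A}$ factors) zeroed out, control $\|\bm{W} - \bm{W}^{(i)}\|$ and $\|\bm{W}-\bm{W}^{(i)}\|_{2,\infty}$ by a perturbation argument, then apply concentration conditionally on everything but the $i$-th row of $\bm{E}$; or (ii) since $k \leq \log n$ and we only need a \emph{polynomial-in-$n$} blow-up per level (the $\log^2 n$ versus $\log n$ slack in the statement is exactly this kind of cushion), invoke a crude but uniform moment/decoupling bound for degree-$(2k+1)$ polynomials of the sub-Gaussian (or truncated-then-bounded) entries of $\bm{E}$, at the cost of extra logarithmic factors that the statement already budgets for. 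I would favor route (i) for sharpness but note that route (ii) suffices for the claimed $\log^2 n$-per-level rate; the truncation from Assumption~\ref{assump:hetero2} (each $|E_{i,j}| \leq B$ on a $1-O(n^{-12})$ event) is what makes the worst-case deterministic bounds in the leave-one-out perturbation step manageable.
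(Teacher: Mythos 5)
Your route (i) is the paper's actual argument: a leave-one-row/column-out decomposition of $[\mathcal{P}_{\sf off\text{-}diag}(\bm{E}\bm{E}^\top)]^k\bm{E}\bm{V}^\star$, combined with induction over $k\leq\log n$ and the truncation device for unbounded noise. You have correctly isolated the central obstacle (the data dependence of $\bm{W}=\bm{A}^{k-1}\bm{E}\bm{V}^\star$ on the row of $\bm{E}$ being concentrated over) and the correct remedy. Two pieces of your sketch would, however, fail as written and need repair.

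First, your single-quantity recursion is not closed. When you condition on $\bm{E}^{(-i,:)}$ and apply Bernstein to $\bm{E}_{i,:}\bm{M}$ with $\bm{M}=\bm{E}^{(-i,:)\top}\bm{W}^{(i)}$, the resulting bound involves $\|\bm{M}\|_{2,\infty}=\max_j\|\bm{E}_{:,j}^{(-i)\top}\bm{W}^{(i)}\|_2$, i.e., an $\ell_{2,\infty}$ bound on $\bm{E}^\top$ times a polynomial in $\bm{E}$ — again a data-dependent object, this time indexed by columns. This is why the paper's induction simultaneously carries \emph{two} hypotheses, \eqref{ineq2} for $\|[\mathcal{P}_{\sf off\text{-}diag}(\bm{E}\bm{E}^\top)]^{\ell}\bm{E}\bm{V}^\star\|_{2,\infty}$ and \eqref{ineq3} for $\|\bm{E}^\top[\mathcal{P}_{\sf off\text{-}diag}(\bm{E}\bm{E}^\top)]^{\ell}\bm{E}\bm{V}^\star\|_{2,\infty}$, the latter handled by a separate column-wise leave-one-out decomposition. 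Your "operator bound" $\|\bm{A}\bm{W}\|_{2,\infty}\lesssim(\cdots)\|\bm{W}\|_{2,\infty}+(\cdots)\|\bm{W}\|$ quietly presupposes this second quantity is already controlled; it is not implied by $\|\bm{W}\|_{2,\infty}$ and $\|\bm{W}\|$ alone for data-dependent $\bm{W}$.

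Second, the probability bookkeeping. Building blocks holding with probability $1-O(n^{-11})$ and a union over the $O(\log n)$ values of $k$ do not suffice: each inductive level invokes the hypothesis for \emph{all} $n_1+n_2$ leave-one-out matrices $\bm{E}^{(-i,:)}$, $\bm{E}^{(:,-j)}$, and these invocations nest $k$ deep, so the total union is over $n^{O(\log n)}$ events. Each elementary concentration event must therefore fail with super-polynomially small probability — this is exactly why Lemmas~\ref{lm:1} and the bounded-noise claims in the paper are stated at level $1-O(n^{-C_2\log n})$, yielding the cumulative $1-O((n+3)^{2k}n^{-C_2\log n})$ in the induction. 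With only $n^{-11}$ per event the union bound collapses. Both issues are repairable within your framework, but neither is a detail one can wave through.
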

\begin{lemma}\label{lm:error_moment2}
	Suppose that Assumption \ref{assump:hetero2} holds. Then with probability exceeding $1 - O(n^{-10})$, one has
	\begin{align}\label{ineq:error_moment2}
		\left\|\left[\mathcal{P}_{\sf off\text{-}diag}\left(\bm{E}\bm{E}^\top\right)\right]^k\bm{U}^\star\right\|_{2,\infty} \leq C_3\sqrt{\frac{\mu r}{n_1}}\left(C_3\left(\sqrt{n_1n_2} + n_1\right)\omega_{\sf max}^2\log^2 n\right)^{k} 
	\end{align}
	for all $1 \leq k \leq \log n$. Here, $C_3>0$ is some large enough numerical constant.
\end{lemma}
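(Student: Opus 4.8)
The plan is to write $\bm{A} := \mathcal{P}_{\sf off\text{-}diag}(\bm{E}\bm{E}^\top)$ (a symmetric $n_1\times n_1$ matrix with $A_{i,j} = \langle\bm{E}_{i,:},\bm{E}_{j,:}\rangle$ for $i\ne j$ and $A_{i,i}=0$), set $\gamma_k := \|\bm{A}^k\bm{U}^\star\|_{2,\infty}$, and abbreviate $\rho := C(\sqrt{n_1n_2}+n_1)\omega_{\sf max}^2\log^2 n$ for a suitable constant $C$. By Lemma~\ref{lm:off-diag} together with the truncation bound of Assumption~\ref{assump:hetero2} (which forces $B^2\log^2 n \lesssim \sqrt{n_1n_2}\,\omega_{\sf max}^2$ and $\omega_{\sf col}(\omega_{\sf row}+\omega_{\sf col}) \lesssim (\sqrt{n_1n_2}+n_1)\omega_{\sf max}^2$), one has $\|\bm{A}\| \le \rho$ on an event of probability $1-O(n^{-12})$, hence $\|\bm{A}^m\bm{U}^\star\| \le \rho^m$ for all $m$ on that event. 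It therefore suffices to establish, on a further high-probability event, the base case $\gamma_1 \lesssim \rho\sqrt{\mu r/n_1}$ and the one-step recursion $\gamma_k \lesssim \rho\,\gamma_{k-1}$ for $2\le k\le\log n$; chaining these, with a union bound over the at most $\log n$ values of $k$, yields the claim after enlarging $C_3$.

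For the base case I would fix a row $i$ and write $[\bm{A}\bm{U}^\star]_{i,:} = \sum_{l=1}^{n_2} E_{i,l}\,\bm{q}_l$ with $\bm{q}_l := \sum_{j\ne i} E_{j,l}\bm{U}^\star_{j,:}$. Conditioning on the rows $\{\bm{E}_{j,:}\}_{j\ne i}$, the vectors $\bm{q}_l$ are fixed and independent of $\bm{E}_{i,:}$, so a vector Bernstein inequality (plus the truncation level $B$) gives $\|[\bm{A}\bm{U}^\star]_{i,:}\|_2 \lesssim \omega_{\sf max}\sqrt{\log n}\,(\sum_l\|\bm{q}_l\|_2^2)^{1/2} + B\log n\max_l\|\bm{q}_l\|_2$; concentration over the rows $j\ne i$ (using $\|\bm{U}^\star\|_F^2 = r$) bounds $\sum_l\|\bm{q}_l\|_2^2 \lesssim rn_2\omega_{\sf max}^2\log n$ and $\max_l\|\bm{q}_l\|_2 \lesssim \omega_{\sf max}\sqrt{r}\log n$, which after substituting the bound on $B$ and a union bound over $i$ delivers $\gamma_1 \lesssim \omega_{\sf max}^2\sqrt{r}\,(\sqrt{n_2}+(n_1n_2)^{1/4})\log^2 n \le \rho\sqrt{\mu r/n_1}$.

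The inductive step is where the difficulty lies. Writing $\bm{W} := \bm{A}^{k-1}\bm{U}^\star$, we have $[\bm{A}^k\bm{U}^\star]_{i,:} = \sum_l E_{i,l}\big(\sum_{j\ne i} E_{j,l}\bm{W}_{j,:}\big)$, but now $\bm{W}$ depends on $\bm{E}_{i,:}$ (the $(k-1)$-st power of $\bm{A}$ couples every row of $\bm{E}$) and, for each $l$, on the $l$-th column of $\bm{E}$ as well, so the conditioning used above decouples nothing. The plan is a nested leave-one-out argument: first replace $\bm{W}$ by $\bm{W}^{(i)} := (\bm{A}^{(i)})^{k-1}\bm{U}^\star$, where $\bm{A}^{(i)}$ is built from $\bm{E}$ with its $i$-th row resampled so that $\bm{W}^{(i)}$ is independent of $\bm{E}_{i,:}$; then, inside the evaluation of $\bm{q}_l := \sum_{j\ne i} E_{j,l}\bm{W}^{(i)}_{j,:}$, introduce a second surrogate $\bm{W}^{(i,l)}$ that also resamples the $l$-th column, whose row-sum $\sum_{j\ne i} E_{j,l}\bm{W}^{(i,l)}_{j,:}$ concentrates around $\omega_{\sf max}\|\bm{W}^{(i,l)}\|_F \lesssim \omega_{\sf max}\sqrt{n_1}\,\|\bm{W}^{(i,l)}\|_{2,\infty}$. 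One then feeds $\|\bm{W}^{(i,l)}\|_{2,\infty}\lesssim\gamma_{k-1}$ and $\|\bm{W}^{(i,l)}\|\le\rho^{k-1}$ (transferred back from the surrogates) into a conditional vector Bernstein bound to get the leading contribution $\lesssim \rho\,\gamma_{k-1}$, and controls the perturbation contributions $\bm{W}-\bm{W}^{(i)}$ and $\bm{W}^{(i)}-\bm{W}^{(i,l)}$ by telescoping $\bm{A}^{k-1}-(\bm{A}^{(i)})^{k-1}$ (and its column analogue) into $k-1$ summands, each a product of at most $k-2$ operator-norm-bounded matrix powers with a single rank-two ``one-row'' (resp. ``one-column'') difference.

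The hard part will be the bookkeeping of these nested surrogates: each resampling forces a re-expansion of a matrix power, and one must check that the accumulated perturbation errors — scaling roughly like $k$ times a one-row/one-column increment of operator norm $\lesssim(\sqrt{n_1n_2}+n_2)\omega_{\sf max}^2$ — never overwhelm the target $\rho\,\gamma_{k-1}$, including in the highly unbalanced regime $n_2\gg n_1$. Keeping the $\ell_2$-type quantities ($\|\bm{A}^m\bm{U}^\star\|$, $\|\bm{A}^m\bm{U}^\star\|_F$) and the $\ell_{2,\infty}$-type quantities ($\gamma_m$ and its surrogate versions) in one mutually recursive system, and verifying that the induction closes simultaneously for every $k\le\log n$, is the technical crux. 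An alternative I would fall back on is a direct $2p$-th moment estimate of $\|[\bm{A}^k\bm{U}^\star]_{i,:}\|_2$ with $p\asymp\log n$, where the same dependence is dispatched by combinatorial enumeration of index patterns in the expansion of $\bm{A}^k$; it trades the surrogate bookkeeping for an equally delicate counting argument and should yield the same bound. In either route, Lemma~\ref{lm:error_moment1} is a direct parallel handled by the identical scheme with $\bm{U}^\star$ replaced by $\bm{E}\bm{V}^\star$.
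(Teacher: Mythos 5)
Your overall architecture (decouple the $i$-th row, run an induction over the power $k$, finish with truncation) is in the same spirit as the paper's argument, but the step on which everything hinges — the one-step recursion $\gamma_k \lesssim \rho\,\gamma_{k-1}$ — is not established by what you describe, and the control you sketch for the surrogate errors would in fact fail. Telescoping $\bm{A}^{k-1}-(\bm{A}^{(i)})^{k-1}=\sum_m \bm{A}^m(\bm{A}-\bm{A}^{(i)})(\bm{A}^{(i)})^{k-2-m}$ and bounding each summand by "operator-norm-bounded matrix powers times a rank-two one-row increment" (your words) gives, after multiplying by $\bm{E}_{i,:}\bm{E}^{(-i,:)\top}$, a contribution of order $k\rho^{k}$ to $\|[\bm{A}^k\bm{U}^\star]_{i,:}\|_2$ with \emph{no} factor $\sqrt{\mu r/n_1}$; this overshoots the target $\rho^{k}\sqrt{\mu r/n_1}$ by up to roughly $\sqrt{n_1/(\mu r)}\,\log n$, so the induction does not close. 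To salvage it you must exploit that $\bm{A}-\bm{A}^{(i)}$ is supported on the $i$-th row and column, so that in each telescoped summand the remaining power acts on a \emph{single row} and the $\ell_{2,\infty}$ induction hypothesis can be invoked; even then, the piece of the increment of the form $\bm{a}^\top(\bm{A}^{(i)})^{j}\bm{U}^\star$ with $\bm{a}^\top=(\bm{E}_{i,:}-\bm{E}'_{i,:})\bm{E}^{(-i,:)\top}$ involves the resampled row $\bm{E}'_{i,:}$, which is \emph{not} independent of $\bm{A}^{(i)}$, forcing yet another layer of surrogates, and the conditional Bernstein step itself requires control of $\max_l\|\bm{q}_l\|_2$ and of $\|\bm{E}^{(-i,:)\top}(\bm{A}^{(i)})^{k-1}\bm{U}^\star\|_{\F}$ — i.e.\ a companion family of quantities that your single recursion for $\gamma_k$ does not carry. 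None of this bookkeeping is done, and you explicitly flag it as unresolved, so the proof is incomplete at its crux.

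For comparison, the paper avoids resampling altogether: it \emph{deletes} (zeros out) the $i$-th row via $\bm{E}^{(-i,:)}$ (and the $j$-th column via $\bm{E}^{(:,-j)}$), which produces exact algebraic identities for $[\mathcal{P}_{\sf off\text{-}diag}(\bm{E}\bm{E}^\top)]^{K}$ in which every cross term collapses to a vector with a single nonzero entry; such a term is bounded by $\|\bm{E}_{i,:}\bm{E}^{(-i,:)\top}\|_2^2\,\|\mathcal{P}_{\sf off\text{-}diag}(\bm{E}^{(-i,:)}\bm{E}^{(-i,:)\top})\|^{\ell}$ times the $\ell_{2,\infty}$ quantity at a lower power of the \emph{full} matrix, which is exactly how the factor $\sqrt{\mu r/n_1}$ survives every perturbation term. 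In addition, the induction is run jointly on two families, $\|[\mathcal{P}_{\sf off\text{-}diag}(\bm{E}\bm{E}^\top)]^{\ell}\bm{U}^\star\|_{2,\infty}$ and $\|\bm{E}^\top[\mathcal{P}_{\sf off\text{-}diag}(\bm{E}\bm{E}^\top)]^{\ell}\bm{U}^\star\|_{2,\infty}$, because the latter is precisely what the conditional concentration step needs (your $\max_l\|\bm{q}_l\|_2$). Finally, note that Assumption~\ref{assump:hetero2} only bounds $|E_{i,j}|$ with high probability, so a genuine truncation-and-recentering argument (as in the paper's treatment of the general case) is needed rather than assuming $|E_{i,j}|\le B$ outright; this is minor compared with the inductive gap above, but it should not be waved away either.
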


Finally, 
recall that the eigenspace perturbation theory depends heavily on both the spectral gap and the size of the perturbation matrix, 
which we shall study in the following lemma. In addition to these two properties, this lemma also provides an upper bound concerning the incoherence of $\widetilde{\bm{U}}$. 

\begin{lemma}\label{lm:collection_event}
	Instate the assumptions in Theorem \ref{thm:oracle}. 
	Let us overload the notation here by setting $\sigma_{r+1}^\star = \widetilde{\sigma}_{r+1} = 0$,  and define
	\begin{align}
		\mathcal{R}' = \left\{r': 1 \leq r' \leq r, \left(1 - \frac{1}{2r}\right)\sigma_{r'}^{\star2} \geq \sigma_{r'+1}^{\star2}\right\}.
		\label{eq:defn-Rprime-collection-event} 
	\end{align}
	Then with probability exceeding $1 - O(n^{-10})$, we have
	\begin{subequations}
	\begin{align}
			\left|\widetilde{\sigma}_{i} - \sigma_{i}^\star\right| &\leq \left\|\bm{E}\bm{V}^{\star}\right\| \leq \sqrt{C_5}\sqrt{n_1}\omega_{\sf max}\log n \label{ineq26}\\
			\widetilde{\sigma}_{r'}^2 - \widetilde{\sigma}_{r'+1}^2 &\geq \frac{1}{2}\left(\sigma_{r'}^{\star2}-\sigma_{r'+1}^{\star2}\right), \quad \forall r' \in \mathcal{R}' 
			\label{ineq78}\\
			\left\|\mathcal{P}_{\sf off\text{-}diag}\left(\bm{E}\bm{E}^\top - \bm{E}\bm{V}^\star\bm{V}^{\star\top}\bm{E}^\top\right)\right\| &\leq 3C_5\left(\sqrt{n_1n_2} + n_1\right)\omega_{\sf max}^2\log^2 n \label{ineq23-1357} \\
			\big\|\bm{U}^{\star}\bm{U}^{\star\top}\widetilde{\bm{U}} - \widetilde{\bm{U}}\big\|_{2, \infty} &\leq \frac{4C_5\sqrt{\mu r}\omega_{\sf max}\log n}{\sigma_r^\star}\leq \sqrt{\frac{\mu r}{n_1}} ,\label{ineq34}\\
			\big\|\widetilde{\bm{U}}\big\|_{2, \infty} &\leq 2\sqrt{\frac{\mu r}{n_1}}\label{ineq27}
	\end{align}
	\end{subequations}
	for some large enough constant $C_5>0$. 
\end{lemma}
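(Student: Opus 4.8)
\textbf{Proof plan for Lemma~\ref{lm:collection_event}.}
The plan is to establish the five claimed estimates one by one, treating each as a high-probability event and then taking a union bound. Throughout I would rely on the auxiliary concentration lemmas already invoked in the $\ell_2$ analysis --- namely Lemma~\ref{lm:noise} (controlling $\|\bm{E}\bm{V}^\star\|$, $\|\bm{E}\bm{V}^\star\|_{2,\infty}$, $\|\bm{U}^{\star\top}\bm{E}\bm{V}^\star\|$, etc.) and Lemma~\ref{lm:off-diag} (controlling $\|\mathcal{P}_{\sf off\text{-}diag}(\bm{E}\bm{E}^\top)\|$) --- together with Assumption~\ref{assump:hetero2}, under which $\omega_{\sf row}, \omega_{\sf col}\lesssim \sqrt{n}\,\omega_{\sf max}$ and the truncation level $B$ is dominated by $\omega_{\sf max}\min\{(n_1n_2)^{1/4},\sqrt{n_2}\}/\log n$. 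The SNR hypothesis \eqref{ineq75a} will be used repeatedly to absorb lower-order terms.

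\textbf{Step 1 (spectral perturbation of singular values).} For \eqref{ineq26}, I would write $\bm{U}^\star\bm{\Sigma}^\star+\bm{E}\bm{V}^\star = \bm{U}^\star\bm{\Sigma}^\star + \bm{E}\bm{V}^\star$ and apply Weyl's inequality to get $|\widetilde\sigma_i-\sigma_i^\star|\le \|\bm{E}\bm{V}^\star\|$; then invoke Lemma~\ref{lm:noise} and the noise assumption exactly as in \eqref{ineq:singular_difference}, simplifying the bound using $\omega_{\sf col}, \sqrt{\mu r}\,\omega_{\sf max}\lesssim \sqrt{n_1}\,\omega_{\sf max}$ and $B\log n\sqrt{\mu_2 r/n_2}\lesssim \sqrt{n_1}\,\omega_{\sf max}$, which yields the clean bound $\sqrt{C_5}\sqrt{n_1}\,\omega_{\sf max}\log n$.

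\textbf{Step 2 (eigen-gap of $\widetilde{\bm{M}}$).} For \eqref{ineq78}, note $\widetilde\sigma_{r'}^2-\widetilde\sigma_{r'+1}^2 = (\widetilde\sigma_{r'}-\widetilde\sigma_{r'+1})(\widetilde\sigma_{r'}+\widetilde\sigma_{r'+1})$; using Step~1 and $r'\in\mathcal{R}'$, one has $\widetilde\sigma_{r'}\ge\sigma_{r'}^\star-\Delta$ and $\widetilde\sigma_{r'+1}\le\sigma_{r'+1}^\star+\Delta$ with $\Delta=\sqrt{C_5}\sqrt{n_1}\,\omega_{\sf max}\log n\ll \sigma_r^\star/r$ by \eqref{ineq75a}. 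Since $(1-\tfrac{1}{2r})\sigma_{r'}^{\star2}\ge\sigma_{r'+1}^{\star2}$ forces $\sigma_{r'}^{\star2}-\sigma_{r'+1}^{\star2}\ge\tfrac{1}{2r}\sigma_{r'}^{\star2}$, a short algebraic manipulation shows the perturbed squared gap retains at least half of the unperturbed one; the argument is the squared-value analogue of \eqref{ineq:difference_true_eigengap}.

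\textbf{Step 3 (spectral norm of the perturbation $\bm{Z}$).} For \eqref{ineq23-1357}, I would reuse the bound already derived right before \eqref{eq:UU-tildeU-tildeU-dist}: decompose $\mathcal{P}_{\sf off\text{-}diag}(\bm{E}\bm{E}^\top-\bm{E}\bm{V}^\star\bm{V}^{\star\top}\bm{E}^\top)$ via the triangle inequality into $\|\mathcal{P}_{\sf off\text{-}diag}(\bm{E}\bm{E}^\top)\|+2\|\bm{E}\bm{V}^\star\|^2$, bound the first by Lemma~\ref{lm:off-diag} (getting $\lesssim\omega_{\sf col}(\omega_{\sf row}+\omega_{\sf col})\log n\lesssim n_1\omega_{\sf max}^2\log n$ after the truncation simplification, and also $\lesssim\sqrt{n_1n_2}\,\omega_{\sf max}^2\log^2 n$ in the regime $n_2\ge n_1$) and the second by $\|\bm{E}\bm{V}^\star\|^2\lesssim n_1\omega_{\sf max}^2\log^2 n$ from Step~1. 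Collecting terms and padding the constant to $3C_5$ gives the stated $\big(\sqrt{n_1n_2}+n_1\big)\omega_{\sf max}^2\log^2 n$ form.

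\textbf{Step 4 (incoherence of $\widetilde{\bm U}$).} For \eqref{ineq34} and \eqref{ineq27}, I would replay the computation in Step~2 of the proof of Theorem~\ref{thm:spectral-general}, i.e. the identity \eqref{ineq24} giving $(\widetilde{\bm U}-\bm{U}^\star\bm{U}^{\star\top}\widetilde{\bm U})\widetilde{\bm\Sigma}\widetilde{\bm W}^\top=\bm{E}\bm{V}^\star-\bm{U}^\star\bm{U}^{\star\top}\bm{E}\bm{V}^\star$, followed by $\|\widetilde{\bm\Sigma}^{-1}\|\le\sqrt2/\sigma_r^\star$ (from Step~1) and the $\ell_{2,\infty}$ noise bounds of Lemma~\ref{lm:noise}, yielding $\|\bm{U}^\star\bm{U}^{\star\top}\widetilde{\bm U}-\widetilde{\bm U}\|_{2,\infty}\lesssim \sqrt{\mu r}\,\omega_{\sf max}\log n/\sigma_r^\star$; under \eqref{inex75b}, i.e. \eqref{ineq75b}, and \eqref{ineq75a} this is $\le\sqrt{\mu r/n_1}$. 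Then \eqref{ineq27} follows from the triangle inequality $\|\widetilde{\bm U}\|_{2,\infty}\le\|\bm{U}^\star\bm{U}^{\star\top}\widetilde{\bm U}-\widetilde{\bm U}\|_{2,\infty}+\|\bm{U}^\star\|_{2,\infty}\le\sqrt{\mu r/n_1}+\sqrt{\mu r/n_1}=2\sqrt{\mu r/n_1}$, exactly as in \eqref{ineq:crude_two_to_infty_tilde_U}. Finally, a union bound over the finitely many events (and over $r'$) gives the overall probability $1-O(n^{-10})$.

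\textbf{Main obstacle.} None of the steps is conceptually deep, since each is an adaptation of an argument already carried out in the $\ell_2$ proof. The one place requiring genuine care is Step~2: because the lemma works with \emph{squared} singular values $\widetilde\sigma_{r'}^2-\widetilde\sigma_{r'+1}^2$ rather than $\widetilde\sigma_{r'}-\widetilde\sigma_{r'+1}$, I must verify that the additive perturbation $\Delta$ in the $\widetilde\sigma_i$'s --- which enters the squared gap multiplied by $\widetilde\sigma_{r'}+\widetilde\sigma_{r'+1}\asymp\sigma_{r'}^\star$ --- remains small relative to $\sigma_{r'}^{\star2}-\sigma_{r'+1}^{\star2}\ge\tfrac{1}{2r}\sigma_{r'}^{\star2}$; this is precisely where the factor $r$ in the SNR condition \eqref{ineq75a} is consumed, and the bookkeeping must be done carefully to land the clean constant $\tfrac12$ in \eqref{ineq78}. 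A secondary subtlety is making sure the two different bounds on $\|\mathcal{P}_{\sf off\text{-}diag}(\bm{E}\bm{E}^\top)\|$ (one via $\omega_{\sf row}\omega_{\sf col}$, one via $\omega_{\sf col}^2$) are both simplified consistently against $(\sqrt{n_1n_2}+n_1)\omega_{\sf max}^2$ using only Assumption~\ref{assump:hetero2}, rather than the stronger structural assumption $\mu r\omega_{\sf max}^2\le\omega_{\sf col}^2$ used in the $\ell_2$ theorem.
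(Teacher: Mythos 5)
Your plan is correct and follows essentially the same route as the paper's own proof: Weyl's inequality plus Lemma~\ref{lm:noise} for \eqref{ineq26}, the factorization $\widetilde\sigma_{r'}^2-\widetilde\sigma_{r'+1}^2=(\widetilde\sigma_{r'}-\widetilde\sigma_{r'+1})(\widetilde\sigma_{r'}+\widetilde\sigma_{r'+1})$ with the perturbation absorbed via the SNR condition for \eqref{ineq78}, the triangle inequality with Lemma~\ref{lm:off-diag} and $\|\bm{E}\bm{V}^\star\|^2$ for \eqref{ineq23-1357}, and a replay of the identity \eqref{ineq24}--\eqref{ineq25} with $\|\widetilde{\bm\Sigma}^{-1}\|\le\sqrt2/\sigma_r^\star$ for \eqref{ineq34}--\eqref{ineq27}, finishing with a union bound. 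Your Step~3 aside about the regime $n_2\ge n_1$ is unnecessary (the bound $\omega_{\sf col}(\omega_{\sf row}+\omega_{\sf col})\le(\sqrt{n_1n_2}+n_1)\omega_{\sf max}^2$ needs no such restriction), but this does not affect correctness.
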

\noindent 
The proof of this lemma can be found in Section~\ref{proof: lm_collection_event}.

\subsection{Main steps for proving Theorem \ref{thm:two_to_infty}}

In what follows, we shall demonstrate how to prove Theorem~\ref{thm:two_to_infty} with the assistance of Theorem~\ref{thm:oracle}. 
Reusing some of the notation in the proof of Theorem \ref{thm:deterministic}, we define
\begin{align}
	 D_k^t = \big\|\mathcal{P}_{\sf diag}\big(\bm{G}_k^t - \widetilde{\bm{M}}\big)\big\|, \qquad L_k^t = \big\|\bm{G}_k^t - \widetilde{\bm{M}}\big\|
	 \qquad {and} \qquad \widetilde{\bm{U}}_k = \widetilde{\bm{U}}_{:,1:r_k} 
	 \label{eq:defn-Dkt-Lkt-thm2}
\end{align}
for any $k \geq 1$ and any $0 \leq t \leq t_k$. 
We find it helpful to introduce the following event: 
\begin{align}\label{eq:event_E}
	\mathcal{E} = \left\{\eqref{ineq:two_to_infty_oracle}, \eqref{ineq:spectral_oracle}, \eqref{ineq26}, \eqref{ineq23-1357} \text{ and }\eqref{ineq27} \text{ hold}\right\}.
\end{align}
The results in Lemma~\ref{lm:collection_event} and Theorem~\ref{thm:oracle} combined with the union bound give
\begin{align}\label{ineq:probability_E}
	\bbP\left(\mathcal{E}\right) \geq 1 - O\left(n^{-10}\right).
\end{align}
Throughout the remainder of this proof, we shall assume that the event $\mathcal{E}$ occurs unless otherwise noted. 
A similar argument as in the proof of \eqref{condition:r_1} also tells us that
\begin{align}\label{set:r_1}
	r_1 \in \mathcal{R}_1 = \left\{r': \frac{\sigma_1\left(\bm{G}_0\right)}{\sigma_{r'}\left(\bm{G}_0\right)} \leq 4 \quad \text{ and } \quad
	\sigma_{r'}\left(\bm{G}_{0}\right) - \sigma_{r' + 1}\left(\bm{G}_{0}\right) \geq \frac{1}{r}\sigma_{r'}\left(\bm{G}_{0}\right)\right\}.
\end{align}

\paragraph{Step 1: bounding $D_1^t = \|\mathcal{P}_{\sf diag}(\bm{G}_1^t - \widetilde{\bm{M}})\|$.} %
We now proceed to control the quantities $\{D_1^t\}$ for the first round. 
More specifically, we intend to prove, by induction, the following properties:
\begin{subequations}
	\label{eq:hypothesis-proof-thm2}
	\begin{align}
		D_1^t - \left(14\sqrt{\frac{\mu r}{n_1}}\left\|\bm{Z}\right\| + 12\sqrt{\frac{\mu r}{n_1}}\widetilde{\sigma}_{r_1 + 1}^2\right) &\leq \frac{1}{e^t}\left[D_1^0 - \left(14\sqrt{\frac{\mu r}{n_1}}\left\|\bm{Z}\right\| + 12\sqrt{\frac{\mu r}{n_1}}\widetilde{\sigma}_{r_1 + 1}^2\right)\right],\label{ineq41a}\\
		\left\|\bm{U}_1^t\bm{U}_1^{t\top} - \bm{U}_1^{\sf oracle}\bm{U}_1^{\sf oracle\top}\right\|	&\leq 2\frac{D_1^t}{\lambda_{r_1}\left(\bm{M}^{\sf oracle}\right) - \lambda_{r_1 + 1}\left(\bm{M}^{\sf oracle}\right)} \leq \frac{1}{8},\label{ineq41b}\\
		\left\|\bm{U}_1^t\right\|_{2,\infty} &\leq \left\|\bm{U}_1^t\bm{U}_1^{t\top} - \bm{U}_1^{\sf oracle}\bm{U}_1^{\sf oracle\top}\right\| + \left\|\bm{U}_1^{\sf oracle}\right\|_{2,\infty} \leq \frac{1}{4e},\label{ineq41c}
	\end{align}
\end{subequations}
where $\bm{M}^{\sf oracle}$ is defined in \eqref{eq:M_oracle-n32} and we recall that $\bm{U}_1^{\sf oracle}\in \mathbb{R}^{n_1\times r_1}$ is the top-$r_1$ eigenspace of $\bm{M}^{\sf oracle}$.

\paragraph{Step 1.1: the base case with $t=0$ for \eqref{ineq41a}-\eqref{ineq41c}.} 
The claim \eqref{ineq41a} holds trivially when $t=0$. 
Also, given that the off-diagonal entries of $\bm{G}_{k}^t$ and $\bm{G}_0$ are the same, taking \citet[Lemma 1]{zhang2022heteroskedastic} together with the property \eqref{ineq27} yields
\begin{align}\label{ineq37}
	D_1^0 = \big\|\mathcal{P}_{\sf diag}\big(\widetilde{\bm{M}}\big)\big\| = \left\|\mathcal{P}_{\sf diag}\big(\bm{P}_{\widetilde{\bm{U}}}\widetilde{\bm{M}}\bm{P}_{\widetilde{\bm{U}}}\big)\right\| \leq 4\frac{\mu r}{n_1}\big\|\widetilde{\bm{M}}\big\| = 4\frac{\mu r}{n_1}\widetilde{\sigma}_1^2. 
\end{align}
This together with \eqref{ineq23-1357} further gives
\begin{align}\label{ineq38}
	L_1^0 &\leq D_1^0 + \big\|\mathcal{P}_{\sf off\text{-}diag}\big(\bm{G}_1^0 - \widetilde{\bm{M}}\big)\big\| = D_1^0 +  \big\|\mathcal{P}_{\sf off\text{-}diag}\big(\bm{G}_0 - \widetilde{\bm{M}}\big)\big\|\notag\\
	&\leq 4\frac{\mu r}{n_1}\widetilde{\sigma}_1^2 + \left\|\bm{Z}\right\| \leq 4\frac{\mu r}{n_1}\widetilde{\sigma}_1^2 + 3C_5\left(\sqrt{n_1n_2} + n_1\right)\omega_{\sf max}^2\log^2 n,
\end{align}
where we remind the reader that $\bm{Z} = \mathcal{P}_{\sf off\text{-}diag}\left(\bm{E}\bm{E}^\top - \bm{E}\bm{V}^\star\bm{V}^{\star\top}\bm{E}^\top\right)$.

Next, let us look at the spectrum of the matrices of interest. 
Note that 
\begin{align*}
	\big\|\bm{M}^{\sf oracle} - \widetilde{\bm{M}}\big\| = \left\|\bm{Z}\right\| \quad \text{and} \quad \big\|\mathcal{P}_{\sf diag}\big(\bm{M}^{\sf oracle} - \widetilde{\bm{M}}\big)\big\| = \left\|\mathcal{P}_{\sf diag}\left(\bm{Z}\right)\right\| = 0.
\end{align*}
It comes from Weyl's inequality that, for all $1 \leq i \leq r+1$,
\begin{align}
	\left|\sigma_{i}^\star - \widetilde{\sigma}_i\right| &\leq \left\|\bm{E}\bm{V}^\star\right\| \leq \sqrt{C_5}\sqrt{n_1}\omega_{\sf max}\log n,\label{ineq35a}\\
	\left|\widetilde{\sigma}_i^2 - \lambda_i\left(\bm{M}^{\sf oracle}\right)\right| &\leq \left\|\bm{Z}\right\| \leq 3C_5\left(\sqrt{n_1n_2} + n_1\right)\omega_{\sf max}^2\log^2 n,\label{ineq35b}
\end{align}
where the first line relies on \eqref{ineq26}, and the second line results from \eqref{ineq23-1357}. 
From the assumption~\eqref{assump:snr_two_to_infty}, we can further derive 
\begin{align}\label{ineq43}
	\forall i \in [r_1], \quad \frac{9}{10}\sigma_i^\star \leq \widetilde{\sigma}_i \leq \frac{11}{10}\sigma_i^\star, \quad  \frac{4}{5}\sigma_i^{\star2} \leq \lambda_i\left(\bm{G}_0\right) \leq \frac{61}{50}\sigma_i^{\star2}, \quad \lambda_{r_1+1}\left(\bm{G}_0\right) \leq \frac{\sigma_{r_1}^{\star2}}{100}.
\end{align}
Furthermore, we can easily verify that
\begin{align}\label{ineq36}
	\max\left\{\frac{\sigma_1^{\star2}}{\sigma_{r_1}^{\star2}},\ \frac{\widetilde{\sigma}_1^{2}}{\widetilde{\sigma}_{r_1}^{2}}\right\} \leq 8 \quad \text{and} \quad \min\left\{\frac{\widetilde{\sigma}_{r_1}^2 - \widetilde{\sigma}_{r_1+1}^2}{\widetilde{\sigma}_{r_1}^2},\ \frac{\sigma_{r_1}^{\star2} - \sigma_{r_1+1}^{\star2}}{\sigma_{r_1}^{\star2}}\right\} \geq \frac{1}{2r} > 1 - \left(1 - \frac{1}{4r}\right)^2
\end{align}
and 
\begin{align}\label{ineq39}
	\lambda_{r_1}\left(\bm{M}^{\sf oracle}\right) - \lambda_{r_1 + 1}\left(\bm{M}^{\sf oracle}\right) \asymp \widetilde{\sigma}_{r_1}^2 - \widetilde{\sigma}_{r_1+1}^2 \asymp \sigma_{r_1}^{\star2} - \sigma_{r_1+1}^{\star2} \gg \left\|\bm{Z}\right\|.
\end{align}
Recall that $\bm{U}_1^0$ (resp.~$\bm{U}_1^{\sf oracle}$) is the top-$r_1$ eigenspace of $\bm{G}_0$ (resp.~$\bm{M}^{\sf oracle}$). With the preceding inequalities about the singular values (or eigenvalues) in place, 
invoking the Davis-Kahan theorem \citep[Theorem 2.7]{chen2021spectral} and using \eqref{ineq37}  demonstrate that 
\begin{align}
	\left\|\bm{U}_1^0\bm{U}_1^{0\top} - \bm{U}_1^{\sf oracle}\bm{U}_1^{\sf oracle\top}\right\|	
	&\leq 2\frac{\left\|\bm{G}_0 - \bm{M}^{\sf oracle}\right\|}{\lambda_{r_1}\left(\bm{M}^{\sf oracle}\right) - \lambda_{r_1 + 1}\left(\bm{M}^{\sf oracle}\right)} 
	= 2\frac{D_1^0}{\lambda_{r_1}\left(\bm{M}^{\sf oracle}\right) - \lambda_{r_1 + 1}\left(\bm{M}^{\sf oracle}\right)} \notag\\
	&\lesssim \frac{\frac{\mu r}{n_1}\widetilde{\sigma}_1^2}{\widetilde{\sigma}_{r_1}^2 - \widetilde{\sigma}_{r_1+1}^2} \lesssim \frac{\mu r^2}{n_1} 
	 \ll \sqrt{\frac{\mu r}{n_1}} 
	\leq \frac{1}{16e},
	\label{eq:U10-Uoracle-2inf}
\end{align}
thus validating the claim \eqref{ineq41b} for $t=0$. Here, the first inequality is valid since, according to \eqref{ineq85},   
$$\left\|\bm{G}_0 - \bm{M}^{\sf oracle}\right\| = \left\|\mathcal{P}_{\sf diag}\left(\bm{M}^{\sf oracle}\right)\right\| = D_1^0.$$

Moreover, in view of Theorem \ref{thm:oracle} and \eqref{ineq27}, we can derive
\begin{align}\label{ineq40}
	\left\|\bm{U}^{\sf oracle}\right\|_{2,\infty} &
	=\left\|\bm{U}^{\sf oracle}\bm{U}^{\sf oracle \top}\right\|_{2,\infty} \leq \big\|\bm{U}^{\sf oracle}\bm{U}^{\sf oracle\top} - \widetilde{\bm{U}}\widetilde{\bm{U}}^\top\big\|_{2,\infty} + \big\|\widetilde{\bm{U}}\big\|_{2,\infty} \leq 3\sqrt{\frac{\mu r}{n_1}}, 
\end{align}
where we have also made use of the assumption~\eqref{assump:snr_two_to_infty}. 
Putting \eqref{eq:U10-Uoracle-2inf} and \eqref{ineq40} together leads to 
\begin{align*}
	\left\|\bm{U}_1^0\right\|_{2,\infty} &= \left\|\bm{U}_1^0\bm{U}_1^{0\top}\right\|_{2,\infty} \leq \left\|\bm{U}_1^0\bm{U}_1^{0\top} - \bm{U}_1^{\sf oracle}\bm{U}_1^{\sf oracle\top}\right\| + \left\|\bm{U}_1^{\sf oracle}\right\|_{2,\infty} \leq 4\sqrt{\frac{\mu r}{n_1}} \leq \frac{1}{4e},
\end{align*}
which validates the claim \eqref{ineq41c} when $t=0$. 
We have thus established \eqref{eq:hypothesis-proof-thm2} for the base case.

\paragraph{Step 1.2: induction step for \eqref{ineq41a}-\eqref{ineq41c}.} 
We now move on to the inductive step. 
Suppose that the induction hypotheses \eqref{ineq41a}-\eqref{ineq41c} hold for $t = t'$, 
and we would like to show their validity for $t=t'+1$.

Recalling that the diagonal entries of $\bm{G}_1^{t'+1}$ are equal to the diagonal entries of $\bm{U}_1^{t'}\bm{\Lambda}_1\bm{U}_1^{t'\top} = \bm{P}_{\bm{U}_1^{t'}}\bm{G}_1^{t'}$ and $\bm{U}_1^{t'}$ represents the rank-$r$ leading singular subspace of 
\begin{align*}
	\bm{G}_1^{t'} = \bm{P}_{\widetilde{\bm{U}}_1}\widetilde{\bm{M}} + \big(\bm{G}_1^{t'} - \bm{P}_{\widetilde{\bm{U}}_1}\widetilde{\bm{M}}\big),
\end{align*}
one can  obtain 
\begin{align}\label{ineq44}
	D_{1}^{t'+1} &= \big\|\mathcal{P}_{\sf diag}\big(\bm{G}_1^{t'+1} - \widetilde{\bm{M}}\big)\big\| 
	= \left\|\mathcal{P}_{\sf diag}\big(\bm{P}_{\bm{U}_1^{t'}}\bm{G}_1^{t'} - \widetilde{\bm{M}}\big)\right\|\notag\\
	&\leq \left\|\mathcal{P}_{\sf diag}\left(\bm{P}_{\bm{U}_1^{t'}}\big(\bm{G}_1^{t'} - \widetilde{\bm{M}}\big)\right)\right\| + \left\|\mathcal{P}_{\sf diag}\left(\bm{P}_{\left(\bm{U}_1^{t'}\right)_{\perp}}\widetilde{\bm{M}}\bm{P}_{\widetilde{\bm{U}}}\right)\right\|\notag\\
	&\overset{\mathrm{(i)}}{\leq} \big\|\bm{U}_1^{t'}\big\|_{2,\infty}\big\|\bm{G}_1^{t'} - \widetilde{\bm{M}}\big\| + \big\|\widetilde{\bm{U}}\big\|_{2,\infty}\left\|\big(\bm{U}_1^{t'}\big)_{\perp}\widetilde{\bm{M}}\right\|\notag\\
	&\overset{\mathrm{(ii)}}{\leq} \big\|\bm{U}_1^{t'}\big\|_{2,\infty}L_1^{t'} + 2\sqrt{\frac{\mu r}{n_1}}\left(\left\|\big(\bm{U}_1^{t'}\big)_{\perp}\bm{P}_{\widetilde{\bm{U}}_1}\widetilde{\bm{M}}\right\| + \left\|\bm{P}_{\widetilde{\bm{U}}_{:,r_{1}+1:r}}\widetilde{\bm{M}}\right\|\right)\notag\\
	&\overset{\mathrm{(iii)}}{\leq} \big\|\bm{U}_1^{t'}\big\|_{2,\infty}L_1^{t'} + 2\sqrt{\frac{\mu r}{n_1}}\left(2\left\|\bm{G}_1^{t'} - \bm{P}_{\widetilde{\bm{U}}_1}\widetilde{\bm{M}}\right\| + \left\|\bm{P}_{\widetilde{\bm{U}}_{:,r_{1}+1:r}}\widetilde{\bm{M}}\right\|\right)\notag\\
	&\overset{\mathrm{(iv)}}{\leq} \big\|\bm{U}_1^{t'}\big\|_{2,\infty}L_1^{t'} + 2\sqrt{\frac{\mu r}{n_1}}\left(2\big\|\bm{G}_1^{t'} - \widetilde{\bm{M}}\big\| + 3\left\|\bm{P}_{\widetilde{\bm{U}}_{:,r_{1}+1:r}}\widetilde{\bm{M}}\right\|\right)\notag\\
	&\leq \big\|\bm{U}_1^{t'}\big\|_{2,\infty}L_1^{t'} + 4\sqrt{\frac{\mu r}{n_1}}L_1^{t'} + 6\sqrt{\frac{\mu r}{n}}\widetilde{\sigma}_{r_1 + 1}^2, 
\end{align}
where (i) invokes \citet[Lemma 1]{zhang2022heteroskedastic}, (ii) results from \eqref{ineq27}, (iii) is a consequence of Lemma \ref{lm:projection}, 
and (iv) applies the triangle inequality. 
Recognizing that (see \eqref{eq:defn-Dkt-Lkt-thm2})
\begin{align*}
	L_1^{t'} \leq D_1^{t'} + \big \|\mathcal{P}_{\sf off\text{-}diag}\big(\bm{G}_1^{t'} - \widetilde{\bm{M}}\big) \big\| = D_1^{t'} + \left\|\bm{Z}\right\|,
\end{align*}
one can deduce that
\begin{align}
	D_{1}^{t'+1} &\leq \left(\big\|\bm{U}_1^{t'}\big\|_{2,\infty} +  4\sqrt{\frac{\mu r}{n}}\right)D_1^{t'} + \left(\big\|\bm{U}_1^{t'}\big\|_{2,\infty} +  4\sqrt{\frac{\mu r}{n}}\right)\left\|\bm{Z}\right\| + 6\sqrt{\frac{\mu r}{n_1}}\widetilde{\sigma}_{r_1 + 1}^2\notag\\
	&\stackrel{\eqref{ineq41c}}{\leq} \left(\frac{1}{4e} + \frac{1}{4e}\right)D_1^{t'} + \left(\big\|\bm{U}_1^{t'}\bm{U}_1^{t'\top} - \bm{U}_1^{\sf oracle}\bm{U}_1^{\sf oracle\top}\big\| + \left\|\bm{U}_1^{\sf oracle}\right\|_{2,\infty} +  4\sqrt{\frac{\mu r}{n}}\right)\left\|\bm{Z}\right\| + 6\sqrt{\frac{\mu r}{n_1}}\widetilde{\sigma}_{r_1 + 1}^2\notag\\
	&\stackrel{\eqref{ineq41b} \text{ and } \eqref{ineq40}}{\leq} \frac{1}{2e}D_1^{t'} + \left(2\frac{D_1^{t'}}{\lambda_{r_1}\left(\bm{M}^{\sf oracle}\right) - \lambda_{r_1 + 1}\left(\bm{M}^{\sf oracle}\right)} + 7\sqrt{\frac{\mu r}{n_1}}\right)\left\|\bm{Z}\right\| + 6\sqrt{\frac{\mu r}{n_1}}\widetilde{\sigma}_{r_1 + 1}^2\notag\\
	&\stackrel{\eqref{ineq39}}{\leq} \frac{1}{e}D_1^{t'} +  7\sqrt{\frac{\mu r}{n_1}}\left\|\bm{Z}\right\| + 6\sqrt{\frac{\mu r}{n_1}}\widetilde{\sigma}_{r_1 + 1}^2\label{ineq42}.
\end{align}
This together with the induction hypotheses further leads to
\begin{align*}
	D_{1}^{t'+1} - \left(14\sqrt{\frac{\mu r}{n_1}}\left\|\bm{Z}\right\| + 12\sqrt{\frac{\mu r}{n_1}}\widetilde{\sigma}_{r_1 + 1}^2\right) &\leq \frac{1}{e}\left[D_{1}^{t'} - \left(14\sqrt{\frac{\mu r}{n_1}}\left\|\bm{Z}\right\| + 12\sqrt{\frac{\mu r}{n_1}}\widetilde{\sigma}_{r_1 + 1}^2\right)\right]\\
	&\leq \frac{1}{e^{t'+1}}\left[D_{1}^{0} - \left(14\sqrt{\frac{\mu r}{n_1}}\left\|\bm{Z}\right\| + 12\sqrt{\frac{\mu r}{n_1}}\widetilde{\sigma}_{r_1 + 1}^2\right)\right], 
\end{align*}
thus justifying the induction hypothesis \eqref{ineq41a} for $t = t'+1$.

In addition,  \eqref{ineq42} allows us to derive
\begin{align*}
	\frac{D_{1}^{t'+1}}{\lambda_{r_1}\left(\bm{M}^{\sf oracle}\right) - \lambda_{r_1 + 1}\left(\bm{M}^{\sf oracle}\right)} &\leq \frac{\frac{1}{e}D_1^{t'} +  7\sqrt{\frac{\mu r}{n_1}}\left\|\bm{Z}\right\| + 6\sqrt{\frac{\mu r}{n_1}}\widetilde{\sigma}_{r_1 + 1}^2}{\lambda_{r_1}\left(\bm{M}^{\sf oracle}\right) - \lambda_{r_1 + 1}\left(\bm{M}^{\sf oracle}\right)}\\
	&\leq \frac{1}{e}\cdot \frac{1}{8} + \frac{7\sqrt{\frac{\mu r}{n_1}}\left\|\bm{Z}\right\|}{\lambda_{r_1}\left(\bm{M}^{\sf oracle}\right) - \lambda_{r_1 + 1}\left(\bm{M}^{\sf oracle}\right)} + \frac{C_5\sqrt{\frac{\mu r}{n_1}}\widetilde{\sigma}_{r_1 + 1}^2}{\widetilde{\sigma}_{r_1}^2 - \widetilde{\sigma}_{r_1 + 1}^2}\\
	&\leq \frac{1}{8e} + \frac{1}{80} + C_5\sqrt{\frac{\mu r^3}{n_1}} \leq \frac{1}{16}, 
\end{align*}
where the second line invokes the induction hypothesis \eqref{ineq41b} (when $t=t'$) and \eqref{ineq39}, 
and the last line relies on \eqref{ineq36} and the assumption \eqref{assump:snr_two_to_infty_234}.  

Recalling that $\mathcal{P}_{\sf off\text{-}diag}(\bm{M}^{\sf oracle}) = \mathcal{P}_{\sf off\text{-}diag}\left(\bm{Y}\bm{Y}^\top\right) = \mathcal{P}_{\sf off\text{-}diag}\left(\bm{G}_k^t\right)$ and $\mathcal{P}_{\sf off\text{-}diag}(\bm{M}^{\sf oracle}) = \mathcal{P}_{\sf off\text{-}diag}(\widetilde{\bm{M}})$, one has
\begin{align}\label{ineq83}
	\left\|\bm{G}_k^t - \bm{M}^{\sf oracle}\right\| = \left\|\mathcal{P}_{\sf diag}\left(\bm{G}_k^t - \bm{M}^{\sf oracle}\right)\right\| = \left\|\mathcal{P}_{\sf diag}\left(\bm{G}_k^t - \widetilde{\bm{M}}\right)\right\|  = D_k^t.
\end{align}
Therefore, we can readily apply the Davis-Kahan theorem \citep[Theorem~2.7]{chen2021spectral} to arrive at
\begin{align*}
	\big\|\bm{U}_1^{t'+1}\bm{U}_1^{t'+1\top} - \bm{U}_1^{\sf oracle}\bm{U}_1^{\sf oracle\top}\big\|	&\leq 2\frac{D_1^{t'+1}}{\lambda_{r_1}\left(\bm{M}^{\sf oracle}\right) - \lambda_{r_1 + 1}\left(\bm{M}^{\sf oracle}\right)} \leq \frac{1}{8}. 
\end{align*}
Here, we remind the readers that $\bm{U}_k^t$ (resp.~$\bm{U}_k^{\sf oracle}$) represents the top-$r_k$ eigenspace of $\bm{G}_k^t$ (resp.~$\bm{M}^{\sf oracle}$). 
This establishes the induction hypothesis \eqref{ineq41b} for $t = t'+1$, 
which in turn also validates \eqref{ineq41c} for $t = t'+1$.

Therefore, we have finished the proof for the hypotheses \eqref{ineq41a}-\eqref{ineq41c} when $t=t'+1$, thereby completing the induction step for the first round.

\paragraph{Step 2: bounding $D_k^t = \|\mathcal{P}_{\sf diag}(\bm{G}_k^t - \widetilde{\bm{M}})\|$ for $k > 1$.} 
Having established the desired properties for the first round, 
we would like extend these to accommodate $\{D_k^t\}$ for the $k$-th round with $k>1$. 
More precisely, we would like to further bound $\{\|\mathcal{P}_{\sf diag}(\bm{G}_k^t - \widetilde{\bm{M}})\|\}_{k > 1, t \geq 0}$ by means of a recursive argument.

To begin with, in view of \eqref{ineq23-1357} and \eqref{ineq36}, by choosing $$t_1 \geq \log\left(C\frac{\sigma_1^{\star2}}{\sigma_{r_1 + 1}^{\star2}} \right) \geq \log\Bigg(\frac{C\sqrt{\frac{\mu r}{n_1}}\sigma_1^{\star2}}{3C_5\left(\sqrt{n_1n_2} + n_1\right)\omega_{\sf max}^2\log^2 n + \sigma_{r_1 + 1}^{\star2}} \Bigg),$$
we have
\begin{align*}
	D_2^{0} = D_{1}^{t_1} &\leq 14\sqrt{\frac{\mu r}{n_1}}\left\|\bm{Z}\right\| + 12\sqrt{\frac{\mu r}{n_1}}\widetilde{\sigma}_{r_1 + 1}^2 + 3C_5\sqrt{\frac{\mu r}{n_1}}\left(\sqrt{n_1n_2} + n_1\right)\omega_{\sf max}^2\log^2 n + \sqrt{\frac{\mu r}{n_1}}\widetilde{\sigma}_{r_1 + 1}^2\\
	&\leq 45C_5\sqrt{\frac{\mu r}{n_1}}\left(\sqrt{n_1n_2} + n_1\right)\omega_{\sf max}^2\log^2 n + 13\sqrt{\frac{\mu r}{n_1}}\widetilde{\sigma}_{r_1 + 1}^2.
\end{align*}
Repeating similar arguments as in \eqref{ineq36} and \eqref{ineq39} yields
\begin{align}\label{ineq45}
	\max\left\{\frac{\sigma_{r_1+1}^{\star2}}{\sigma_{r_2}^{\star2}},\ \frac{\widetilde{\sigma}_{r_1+1}^{2}}{\widetilde{\sigma}_{r_2}^{2}}\right\} \leq 8 \quad \text{and} \quad \min\left\{\frac{\widetilde{\sigma}_{r_2}^2 - \widetilde{\sigma}_{r_2+1}^2}{\widetilde{\sigma}_{r_2}^2},\ \frac{\sigma_{r_2}^{\star2} - \sigma_{r_2+1}^{\star2}}{\sigma_{r_2}^{\star2}}\right\} \geq \frac{1}{2r} > 1 - \left(1 - \frac{1}{4r}\right)^2
\end{align}
and 
\begin{align}\label{ineq46}
	\lambda_{r_2}\left(\bm{M}^{\sf oracle}\right) - \lambda_{r_2 + 1}\left(\bm{M}^{\sf oracle}\right) \asymp \widetilde{\sigma}_{r_2}^2 - \widetilde{\sigma}_{r_2+1}^2 \asymp \sigma_{r_2}^{\star2} - \sigma_{r_2+1}^{\star2} \gg \left\|\bm{Z}\right\|.
\end{align}
We can then reach
\begin{align*}
	\frac{D_2^0}{\lambda_{r_2}\left(\bm{M}^{\sf oracle}\right) - \lambda_{r_2 + 1}\left(\bm{M}^{\sf oracle}\right)} &\lesssim \frac{45C_5\sqrt{\frac{\mu r}{n_1}}\left(\sqrt{n_1n_2} + n_1\right)\omega_{\sf max}^2\log^2 n}{\sigma_{r_2}^{\star2} - \sigma_{r_2+1}^{\star2}} + \frac{13\sqrt{\frac{\mu r}{n_1}}\widetilde{\sigma}_{r_1 + 1}^2}{\widetilde{\sigma}_{r_2}^2 - \widetilde{\sigma}_{r_2+1}^2}\\
	&\lesssim \sqrt{\frac{\mu r}{n_1}} + \sqrt{\frac{\mu r}{n_1}}\frac{\widetilde{\sigma}_{r_1 + 1}^2}{\widetilde{\sigma}_{r_2}^2}\cdot\frac{\widetilde{\sigma}_{r_2}^2}{\widetilde{\sigma}_{r_2}^2 - \widetilde{\sigma}_{r_2+1}^2}\\
	&\lesssim \sqrt{\frac{\mu r^3}{n_1}} \ll \frac{1}{8\sqrt{2}}.
\end{align*}
Thus, invoking the Davis-Kahan theorem \citep[Theorem~2.7]{chen2021spectral} and \eqref{ineq83} leads to
\begin{align*}
	\left\|\bm{U}_2^0\bm{U}_2^{0\top} - \bm{U}_2^{\sf oracle}\bm{U}_2^{\sf oracle\top}\right\| \leq \sqrt{2}\frac{D_2^0}{\lambda_{r_2}\left(\bm{M}^{\sf oracle}\right) - \lambda_{r_2 + 1}\left(\bm{M}^{\sf oracle}\right)} \lesssim \sqrt{\frac{\mu r^3}{n_1}} \ll \frac{1}{8},
\end{align*}
where we recall that $\bm{U}_k^t$ (resp.~$\bm{U}_k^{\sf oracle}$) is the top-$r_k$ eigenspace of $\bm{G}_k^t$ (resp.~$\bm{M}^{\sf oracle}$). 
%
Similar to the argument for \eqref{ineq44}, one can obtain 
\begin{align}
	D_2^{t+1} \leq \left\|\bm{U}_2^t\right\|_{2,\infty}L_2^{t} + 4\sqrt{\frac{\mu r}{n_1}}L_2^{t} + 6\sqrt{\frac{\mu r}{n_1}}\widetilde{\sigma}_{r_2 + 1}^2.
	\label{eq:D2-t-plus-1-UB}
\end{align}

Further, repeat similar arguments as in \eqref{ineq:induction_rank}, \eqref{ineq41a}-\eqref{ineq41c}, \eqref{ineq36}, \eqref{ineq39} and \eqref{eq:D2-t-plus-1-UB} 
to yield that: for all $1 \leq k \leq k_{\sf max}$ and $1 \leq t \leq t_k$,  one has the following properties: 
\begin{subequations}
	\begin{align}
		r_k &\in \mathcal{R}_k \quad \text{ where } \mathcal{R}_k \text{ is defined in } \eqref{ineq:induction_rank},\\
		D_k^t - \left(14\sqrt{\frac{\mu r}{n_1}}\left\|\bm{Z}\right\| + 12\sqrt{\frac{\mu r}{n_1}}\widetilde{\sigma}_{r_k + 1}^2\right) &\leq \frac{1}{e^t}\left[D_k^0 - \left(14\sqrt{\frac{\mu r}{n_1}}\left\|\bm{Z}\right\| + 12\sqrt{\frac{\mu r}{n_1}}\widetilde{\sigma}_{r_k + 1}^2\right)\right],\label{ineq45a}\\
		\left\|\bm{U}_k^t\bm{U}_k^{t\top} - \bm{U}_k^{\sf oracle}\bm{U}_k^{\sf oracle\top}\right\|	&\leq 2\frac{D_k^t}{\lambda_{r_k}\left(\bm{M}^{\sf oracle}\right) - \lambda_{r_k + 1}\left(\bm{M}^{\sf oracle}\right)} \leq \frac{1}{8},\label{ineq45b}\\
		\left\|\bm{U}_k^t\right\|_{2,\infty} &\leq \left\|\bm{U}_k^t\bm{U}_k^{t\top} - \bm{U}_k^{\sf oracle}\bm{U}_k^{\sf oracle\top}\right\| + \left\|\bm{U}_k^{\sf oracle}\right\|_{2,\infty} \leq \frac{1}{4},\label{ineq45c}\\
		D_{k+1}^0 = D_k^{t_k} &\leq 45C_5\sqrt{\frac{\mu r}{n_1}}\left(\sqrt{n_1n_2} + n_1\right)\omega_{\sf max}^2\log^2 n + 13\sqrt{\frac{\mu r}{n_1}}\widetilde{\sigma}_{r_k + 1}^2.\label{ineq45d}\\
		\max\left\{\frac{\sigma_{r_k+1}^{\star2}}{\sigma_{r_k}^{\star2}},\ \frac{\widetilde{\sigma}_{r_k+1}^{2}}{\widetilde{\sigma}_{r_k}^{2}}\right\} \leq 8 \quad \text{and} &\quad \min\left\{\frac{\widetilde{\sigma}_{r_k}^2 - \widetilde{\sigma}_{r_k+1}^2}{\widetilde{\sigma}_{r_k}^2},\ \frac{\sigma_{r_k}^{\star2} - \sigma_{r_k+1}^{\star2}}{\sigma_{r_k}^{\star2}}\right\} \geq \frac{1}{2r},\label{ineq45e}\\
		\lambda_{r_k}\left(\bm{M}^{\sf oracle}\right) - \lambda_{r_k + 1}\left(\bm{M}^{\sf oracle}\right) &\asymp \widetilde{\sigma}_{r_k}^2 - \widetilde{\sigma}_{r_k+1}^2 \asymp \sigma_{r_k}^{\star2} - \sigma_{r_k+1}^{\star2} \gg \left(\sqrt{n_1n_2} + n_1\right)\omega_{\sf max}^2\log^2 n,\label{ineq45f} \\
		D_k^{t+1} &\leq \left\|\bm{U}_k^t\right\|_{2,\infty}L_k^{t} + 4\sqrt{\frac{\mu r}{n_1}}L_k^{t} + 6\sqrt{\frac{\mu r}{n_1}}\widetilde{\sigma}_{r_k + 1}^2 , 
		\label{eq:Dk-t-plus-1-UB}
	\end{align}
\end{subequations}
provided that the numbers of iterations $t_i$ satisfy \eqref{ineq:iter1}-\eqref{ineq:iter2}. 
Here, we remind the reader that $\bm{U}_k^{\sf oracle}$ represents the top-$r_k$ eigenspace of $\bm{M}^{\sf oracle}$. 
Given that these can be established using exactly the same arguments as before, 
we omit the details here for the sake of brevity. 

By letting $k = k_{\sf max}$ in \eqref{ineq45a} and \eqref{ineq45d} and recalling \eqref{ineq35b}, we immediately have
\begin{align*}
	D_{k_{\sf max}}^{t_{k_{\sf max}}} \lesssim \sqrt{\frac{\mu r}{n_1}}\left(\sqrt{n_1n_2} + n_1\right)\omega_{\sf max}^2\log^2 n.
\end{align*} 
Then the Davis-Kahan sin$\Theta$ theorem reveals that
\begin{align}
	\left\|\bm{U}\bm{U}^\top - \bm{U}^{\sf oracle}\bm{U}^{\sf oracle\top}\right\| 
	&\lesssim \frac{\| \bm{G}_{k_{\sf max}}^{t_{k_{\sf max}}} - \bm{M}^{\mathsf{oracle}} \|}{\lambda_{r_{k_{\sf max}}}\left(\bm{M}^{\sf oracle}\right) - \lambda_{r_{k_{\sf max}} + 1}\left(\bm{M}^{\sf oracle}\right)} 
	= \frac{D_{k_{\sf max}}^{t_{k_{\sf max}}}}{\lambda_{r_{k_{\sf max}}}\left(\bm{M}^{\sf oracle}\right) - \lambda_{r_{k_{\sf max}} + 1}\left(\bm{M}^{\sf oracle}\right)} \notag\\
	&\lesssim \frac{\sqrt{\frac{\mu r}{n_1}}\left(\sqrt{n_1n_2} + n_1\right)\omega_{\sf max}^2\log^2 n}{\lambda_{r_{k_{\sf max}}}\left(\bm{M}^{\sf oracle}\right) - \lambda_{r_{k_{\sf max}} + 1}\left(\bm{M}^{\sf oracle}\right)} \notag\\
	&\lesssim \frac{\sqrt{\frac{\mu r}{n_1}}\left(\sqrt{n_1n_2} + n_1\right)\omega_{\sf max}^2\log^2 n}{\sigma_{r_{k_{\sf max}}}^{\star2} - \sigma_{r_{k_{\sf max}}+1}^{\star2}} \notag\\
	&\asymp \frac{\sqrt{\frac{\mu r}{n_1}}\left(\sqrt{n_1n_2} + n_1\right)\omega_{\sf max}^2\log^2 n}{\sigma_{r}^{\star2}}, \label{ineq84}
\end{align}
where the first line also applies \eqref{ineq85},  the third line relies on \eqref{ineq45f}, 
and the last line holds since $r_{k_{\sf max}} = r$.

\paragraph{Step 3: bounding $\|\bm{U}\bm{R}_{\bm{U}} - \bm{U}^{\star}\|_{2, \infty}$ and $\|\bm{U}\bm{R}_{\bm{U}} - \bm{U}^{\star}\|$.}
In the final step, we invoke Theorem \ref{thm:oracle} to establish the desired bounds on $\|\bm{U}\bm{R}_{\bm{U}} - \bm{U}^{\star}\|$ and $\|\bm{U}\bm{R}_{\bm{U}} - \bm{U}^{\star}\|_{2, \infty}$. 
To begin with, inequality \eqref{ineq84} taken together with Theorem \ref{thm:oracle} gives 
\begin{align}\label{ineq:two_to_infty_UU}
	\left\|\bm{U}\bm{U}^\top - \bm{U}^{\star}\bm{U}^{\star\top}\right\|_{2,\infty} &\leq \left\|\bm{U}\bm{U}^\top - \bm{U}^{\sf oracle}\bm{U}^{\sf oracle\top}\right\| + \left\|\bm{U}^{\sf oracle}\bm{U}^{\sf oracle\top} - \bm{U}^{\star}\bm{U}^{\star\top}\right\|_{2,\infty}\notag\\
	&\leq \sqrt{\frac{\mu r}{n_1}}\left(\frac{\sqrt{n_1n_2}\omega_{\sf max}^2\log^2 n}{\sigma_r^{\star2}} + \frac{\sqrt{n_1}\omega_{\sf max}\log n}{\sigma_{r}^\star}\right)
\end{align}
and 
\begin{align}\label{ineq:spectral_UU}
	\left\|\bm{U}\bm{U}^\top - \bm{U}^{\star}\bm{U}^{\star\top}\right\| &\leq \left\|\bm{U}\bm{U}^\top - \bm{U}^{\sf oracle}\bm{U}^{\sf oracle\top}\right\| + \left\|\bm{U}^{\sf oracle}\bm{U}^{\sf oracle\top} - \bm{U}^{\star}\bm{U}^{\star\top}\right\|\notag\\
	&\lesssim \frac{\sqrt{\frac{\mu r}{n_1}}\left(\sqrt{n_1n_2} + n_1\right)\omega_{\sf max}^2\log^2 n}{\sigma_{r}^{\star2}} + \frac{\sqrt{n_1n_2}\omega_{\sf max}^2\log^2 n}{\sigma_r^{\star2}} + \frac{\sqrt{n_1}\omega_{\sf max}\log n}{\sigma_{r}^\star}\notag\\
	&\lesssim \frac{\sqrt{n_1n_2}\omega_{\sf max}^2\log^2 n}{\sigma_r^{\star2}} + \frac{\sqrt{n_1}\omega_{\sf max}\log n}{\sigma_{r}^\star}.
\end{align}
As an immediate consequence of \eqref{ineq:two_to_infty_UU} and Definition~\ref{assump:incoherence}, we have
\begin{align}
	\left\|\bm{U}\right\|_{2,\infty} &= \left\|\bm{U}\bm{U}^\top\right\|_{2,\infty} \leq \left\|\bm{U}\bm{U}^\top - \bm{U}^{\star}\bm{U}^{\star\top}\right\|_{2,\infty} + \left\| \bm{U}^{\star}\bm{U}^{\star\top}\right\|_{2,\infty} \notag\\
	&\leq \frac{\sqrt{\frac{\mu r}{n_1}}\left(\sqrt{n_1n_2} + n_1\right)\omega_{\sf max}^2\log^2 n}{\sigma_{r}^{\star2}} + \sqrt{\frac{\mu r}{n_1}} \leq 2\sqrt{\frac{\mu r}{n_1}}.
	\label{eq:U-inc-thm2}
\end{align}
Recalling that $\bm{R}_{\bm{U}} = {\sf sgn}(\bm{U}^\top\bm{U}^{\star})$, one can invoke \citet[Eqn.~(4.123) and Lemma 2.5]{chen2021spectral} to obtain
\begin{align}
	\left\|\bm{R}_{\bm{U}} - \bm{U}^\top\bm{U}^{\star}\right\| \leq \left\|\bm{U}\bm{U}^\top - \bm{U}^{\star}\bm{U}^{\star\top}\right\|^2.
	\label{eq:Ru-UU-ub}
\end{align}
We can then arrive at
\begin{align*}
	\left\|\bm{U}\bm{R}_{\bm{U}} - \bm{U}^{\star}\right\|_{2, \infty} &\leq \left\|\bm{U}\left(\bm{R}_{\bm{U}} - \bm{U}^\top\bm{U}^{\star}\right)\right\|_{2,\infty} + \left\|\bm{U}\bm{U}^\top\bm{U}^{\star} - \bm{U}^{\star}\right\|_{2,\infty}\\
	&\leq \left\|\bm{U}\right\|_{2,\infty}\left\|\bm{R}_{\bm{U}} - \bm{U}^\top\bm{U}^{\star}\right\| + \left\|\left(\bm{U}\bm{U}^\top - \bm{U}^{\star}\bm{U}^{\star\top}\right)\bm{U}^\star\right\|_{2,\infty}\\
	&\leq \left\|\bm{U}\right\|_{2,\infty}\left\|\bm{U}\bm{U}^\top - \bm{U}^{\star}\bm{U}^{\star\top}\right\|^2 + \left\|\bm{U}\bm{U}^\top - \bm{U}^{\star}\bm{U}^{\star\top}\right\|_{2,\infty}\left\|\bm{U}^{\star}\right\|\\
	&\lesssim \sqrt{\frac{\mu r}{n_1}}\left(\frac{\sqrt{n_1n_2}\omega_{\sf max}^2\log^2 n}{\sigma_r^{\star2}} + \frac{\sqrt{n_1}\omega_{\sf max}\log n}{\sigma_{r}^\star}\right)^2\\
	&\quad + \sqrt{\frac{\mu r}{n_1}}\left(\frac{\sqrt{n_1n_2}\omega_{\sf max}^2\log^2 n}{\sigma_r^{\star2}} + \frac{\sqrt{n_1}\omega_{\sf max}\log n}{\sigma_{r}^\star}\right)\\
	&\asymp \sqrt{\frac{\mu r}{n_1}}\left(\frac{\sqrt{n_1n_2}\omega_{\sf max}^2\log^2 n}{\sigma_r^{\star2}} + \frac{\sqrt{n_1}\omega_{\sf max}\log n}{\sigma_{r}^\star}\right), 
\end{align*}
where the third line makes use of \eqref{eq:Ru-UU-ub}, 
the fourth line invokes \eqref{ineq:two_to_infty_UU}, \eqref{ineq:spectral_UU} and \eqref{eq:U-inc-thm2}, 
and the last line results from the assumption~\eqref{assump:snr_two_to_infty}. 
In addition, inequality \eqref{ineq:spectral_UU} and $\|\bm{U}\bm{R}_{\bm{U}} - \bm{U}^{\star}\| \leq \sqrt{2}\|\bm{U}\bm{U}^\top - \bm{U}^{\star}\bm{U}^{\star\top}\|$ (see the proof of \citet[Lemma 2.6]{chen2021spectral}) taken collectively yield 
\begin{align*}
	\left\|\bm{U}\bm{R}_{\bm{U}} - \bm{U}^{\star}\right\| \lesssim \|\bm{U}\bm{U}^\top - \bm{U}^{\star}\bm{U}^{\star\top}\|
	\lesssim \frac{\sqrt{n_1n_2}\omega_{\sf max}^2\log^2 n}{\sigma_r^{\star2}} + \frac{\sqrt{n_1}\omega_{\sf max}\log n}{\sigma_{r}^\star}.
\end{align*}
This concludes the proof.

\subsection{Proof of Theorem \ref{thm:oracle}}\label{proof:thm_oracle}

Let us define the following event:
\begin{align}\label{eq:event}
	\mathcal{E}' := \{\eqref{ineq:error_moment} \text{ and } \eqref{ineq:error_moment2} \text{ hold  for } 0 \leq k \leq \log n\} \cap \{\eqref{ineq26}, \eqref{ineq78}, \eqref{ineq23-1357}, \eqref{ineq34} \text{ and } \eqref{ineq27} \text{ hold}\}.
\end{align}
Then Lemma \ref{lm:error_moment1}, Lemma \ref{lm:error_moment2}, Lemma \ref{lm:collection_event} and the union bound taken collectively imply that
\begin{align}\label{ineq:probability_event}
	\bbP\left(\mathcal{E}'\right) \geq 1 - O\left(n^{-10}\right).
\end{align}
In the rest of the proof, we shall assume that $\mathcal{E}'$ occurs unless otherwise noted.

Recall that $\bm{Z}=\mathcal{P}_{\sf off\text{-}diag}\left(\bm{E}\bm{E}^\top - \bm{E}\bm{V}^\star\bm{V}^{\star\top}\bm{E}^\top\right)$ (see \eqref{eq:M_oracle-n32}) 
and that $\widetilde{\bm{U}}\widetilde{\bm{\Sigma}}\widetilde{\bm{W}}^\top$ denotes the SVD of $\bm{U}^{\star}\bm{\Sigma}^{\star} + \bm{E}\bm{V}^{\star} \in \bbR^{n_1 \times r}$ (cf.~\eqref{eq:SVD-USigmaW}). 
In view of Lemma \ref{lm:representation}, to bound $\|\bm{U}^{\sf oracle}\bm{U}^{\sf oracle\top} - \widetilde{\bm{U}}\widetilde{\bm{U}}^\top\|_{2,\infty}$, 
it suffices to (i) bound each of the terms $\left\|\mathfrak{P}^{-j_1}\bm{Z}\mathfrak{P}^{-j_2}\bm{Z}\cdots\bm{Z}\mathfrak{P}^{-j_{k+1}}\right\|_{2,\infty}$ for $1 \leq k \leq \log n$, where $\bm{j} = [j_1,\cdots,j_{k+1}] \geq \bm{0}$ and $j_1 + \cdots + j_{k+1} = k$; and (ii) show that the total contribution of the remaining terms on the right-hand side of \eqref{ineq:representation} 
is well-controlled. Based on these ideas, our proof consists of four steps below.

\paragraph{Step 1: bounding $\|\bm{Z}^i\widetilde{\bm{U}}\|_{2,\infty}$.} 
We start by bounding a simpler term $\|\bm{Z}^i\widetilde{\bm{U}}\|_{2,\infty}$. It follows from \eqref{ineq26} that
\begin{align}\label{ineq71}
	\big\|\widetilde{\bm{\Sigma}}^{-1}\big\| \leq \frac{1}{\sigma_r^\star - \left\|\bm{E}\bm{V}^\star\right\|} \leq \frac{\sqrt{2}}{\sigma_r^\star}.
\end{align}
It is also observed from \eqref{eq:SVD-USigmaW} that
\begin{align}
	\widetilde{\bm{U}} &= \left(\bm{U}^{\star}\bm{\Sigma}^{\star} + \bm{E}\bm{V}^{\star}\right)\widetilde{\bm{W}}\widetilde{\bm{\Sigma}}^{-1}\\ &= \bm{U}^{\star}\bm{U}^{\star\top}\left(\bm{U}^{\star}\bm{\Sigma}^{\star} + \bm{E}\bm{V}^{\star}\right)\widetilde{\bm{W}}\widetilde{\bm{\Sigma}}^{-1} + \left(\bm{E}\bm{V}^{\star} - \bm{U}^{\star}\bm{U}^{\star\top}\bm{E}\bm{V}^{\star}\right)\widetilde{\bm{W}}\widetilde{\bm{\Sigma}}^{-1} \notag\\
	&= \bm{U}^{\star}\bm{U}^{\star\top}\widetilde{\bm{U}} + \left(\bm{E}\bm{V}^{\star} - \bm{U}^{\star}\bm{U}^{\star\top}\bm{E}\bm{V}^{\star}\right)\widetilde{\bm{W}}\widetilde{\bm{\Sigma}}^{-1}.
	\label{eq:identity-Utilde-decopmose}
\end{align}
As a consequence, $\bm{Z}^i\widetilde{\bm{U}}$ admits the following decomposition:
\begin{align}
	\bm{Z}^i\widetilde{\bm{U}} &= \left[\mathcal{P}_{\sf off\text{-}diag}\left(\bm{E}\bm{E}^\top - \bm{E}\bm{V}^\star\bm{V}^{\star\top}\bm{E}^\top\right)\right]^i\widetilde{\bm{U}} \notag\\
	&= -\sum_{j=0}^{i-1}\left[\mathcal{P}_{\sf off\text{-}diag}\left(\bm{E}\bm{E}^\top\right)\right]^{j}\mathcal{P}_{\sf off\text{-}diag}\left( \bm{E}\bm{V}^\star\bm{V}^{\star\top}\bm{E}^\top\right)\left[\mathcal{P}_{\sf off\text{-}diag}\left(\bm{E}\bm{E}^\top - \bm{E}\bm{V}^\star\bm{V}^{\star\top}\bm{E}^\top\right)\right]^{i-j-1}\widetilde{\bm{U}} \notag\\
	&\quad + \left[\mathcal{P}_{\sf off\text{-}diag}\left(\bm{E}\bm{E}^\top\right)\right]^{i}\widetilde{\bm{U}} \notag\\
	&= -\sum_{j=0}^{i-1}\left[\mathcal{P}_{\sf off\text{-}diag}\left(\bm{E}\bm{E}^\top\right)\right]^{j}\bm{E}\bm{V}^\star\bm{V}^{\star\top}\bm{E}^\top\left[\mathcal{P}_{\sf off\text{-}diag}\left(\bm{E}\bm{E}^\top - \bm{E}\bm{V}^\star\bm{V}^{\star\top}\bm{E}^\top\right)\right]^{i-j-1}\widetilde{\bm{U}} \notag\\
	&\quad + \sum_{j=0}^{i-1}\left[\mathcal{P}_{\sf off\text{-}diag}\left(\bm{E}\bm{E}^\top\right)\right]^{j}\mathcal{P}_{\sf diag}\left( \bm{E}\bm{V}^\star\bm{V}^{\star\top}\bm{E}^\top\right)\left[\mathcal{P}_{\sf off\text{-}diag}\left(\bm{E}\bm{E}^\top - \bm{E}\bm{V}^\star\bm{V}^{\star\top}\bm{E}^\top\right)\right]^{i-j-1}\widetilde{\bm{U}} \notag\\
	&\quad + \left[\mathcal{P}_{\sf off\text{-}diag}\left(\bm{E}\bm{E}^\top\right)\right]^{i}\bm{U}^{\star}\bm{U}^{\star\top}\widetilde{\bm{U}} \notag\\
	&\quad + \left[\mathcal{P}_{\sf off\text{-}diag}\left(\bm{E}\bm{E}^\top\right)\right]^{i}\left(\bm{E}\bm{V}^{\star} - \bm{U}^{\star}\bm{U}^{\star\top}\bm{E}\bm{V}^{\star}\right)\widetilde{\bm{W}}\widetilde{\bm{\Sigma}}^{-1}, 
	\label{eq:ZiU-decompose-long}
\end{align}
where the second identity is valid due to the following relation
\begin{align*}
	\left(\bm{A} + \bm{B}\right)^{i} = \bm{B}^{i} + \sum_{j = 0}^{i-1}\bm{B}^j\bm{A}\left(\bm{A} + \bm{B}\right)^{i-j-1}
\end{align*}
that holds for any matrices $\bm{A}, \bm{B} \in \bbR^{n_1 \times n_1}$, and the third identity in \eqref{eq:ZiU-decompose-long} arises from \eqref{eq:identity-Utilde-decopmose}. 
This allows us to bound $\|\bm{Z}^i\widetilde{\bm{U}}\|_{2,\infty}$, for any $1 \leq i \leq \log n$,  as follows:
\begin{align}\label{ineq28}
	&\big\|\bm{Z}^i\widetilde{\bm{U}}\big\|_{2,\infty}\notag\\ &\quad\leq \sum_{j=0}^{i-1}\left\|\left[\mathcal{P}_{\sf off\text{-}diag}\left(\bm{E}\bm{E}^\top\right)\right]^{j}\bm{E}\bm{V}^\star\right\|_{2,\infty}\left\|\bm{V}^{\star\top}\bm{E}^\top\left[\mathcal{P}_{\sf off\text{-}diag}\left(\bm{E}\bm{E}^\top - \bm{E}\bm{V}^\star\bm{V}^{\star\top}\bm{E}^\top\right)\right]^{i-j-1}\widetilde{\bm{U}}\right\|\notag\\
	&\qquad + \sum_{j=0}^{i-1}\left\|\left[\mathcal{P}_{\sf off\text{-}diag}\left(\bm{E}\bm{E}^\top\right)\right]^{j}\mathcal{P}_{\sf diag}\left( \bm{E}\bm{V}^\star\bm{V}^{\star\top}\bm{E}^\top\right)\left[\mathcal{P}_{\sf off\text{-}diag}\left(\bm{E}\bm{E}^\top - \bm{E}\bm{V}^\star\bm{V}^{\star\top}\bm{E}^\top\right)\right]^{i-j-1}\widetilde{\bm{U}}\right\|\notag\\
	&\qquad + \left\|\left[\mathcal{P}_{\sf off\text{-}diag}\left(\bm{E}\bm{E}^\top\right)\right]^{i}\bm{U}^{\star}\right\|_{2,\infty}\big\|\bm{U}^{\star\top}\widetilde{\bm{U}}\big\| + \left\|\left[\mathcal{P}_{\sf off\text{-}diag}\left(\bm{E}\bm{E}^\top\right)\right]^{i}\bm{E}\bm{V}^{\star}\right\|_{2,\infty}\big\|\widetilde{\bm{W}}\big\|\,\big\|\widetilde{\bm{\Sigma}}^{-1}\big\|\notag\\
	&\qquad + \left\|\left[\mathcal{P}_{\sf off\text{-}diag}\left(\bm{E}\bm{E}^\top\right)\right]^{i}\bm{U}^{\star}\right\|_{2,\infty}\left\|\bm{U}^{\star\top}\bm{E}\bm{V}^{\star}\right\|\big\|\widetilde{\bm{W}}\big\|\,\big\|\widetilde{\bm{\Sigma}}^{-1}\big\|\notag\\
	&\quad \leq \sum_{j=0}^{i-1}\left\|\left[\mathcal{P}_{\sf off\text{-}diag}\left(\bm{E}\bm{E}^\top\right)\right]^{j}\bm{E}\bm{V}^\star\right\|_{2,\infty}\left\|\bm{E}\bm{V}^{\star}\right\|\left\|\mathcal{P}_{\sf off\text{-}diag}\left(\bm{E}\bm{E}^\top - \bm{E}\bm{V}^\star\bm{V}^{\star\top}\bm{E}^\top\right)\right\|^{i-j-1}\notag\\
	&\qquad + \sum_{j=0}^{i-1}\left\|\mathcal{P}_{\sf off\text{-}diag}\left(\bm{E}\bm{E}^\top\right)\right\|^{j}\left\|\bm{E}\bm{V}^\star\right\|_{2,\infty}^2\left\|\mathcal{P}_{\sf off\text{-}diag}\left(\bm{E}\bm{E}^\top - \bm{E}\bm{V}^\star\bm{V}^{\star\top}\bm{E}^\top\right)\right\|^{i-j-1}\notag\\
	&\qquad + \left\|\left[\mathcal{P}_{\sf off\text{-}diag}\left(\bm{E}\bm{E}^\top\right)\right]^{i}\bm{U}^{\star}\right\|_{2,\infty} + \left\|\left[\mathcal{P}_{\sf off\text{-}diag}\left(\bm{E}\bm{E}^\top\right)\right]^{i}\bm{E}\bm{V}^{\star}\right\|_{2,\infty}\big\|\widetilde{\bm{\Sigma}}^{-1}\big\|\notag\\
	&\qquad + \left\|\left[\mathcal{P}_{\sf off\text{-}diag}\left(\bm{E}\bm{E}^\top\right)\right]^{i}\bm{U}^{\star}\right\|_{2,\infty}\left\|\bm{E}\bm{V}^{\star}\right\|\big\|\widetilde{\bm{\Sigma}}^{-1}\big\|\notag\\
	&\quad \leq \sum_{j=0}^{i-1}\left(C_3\sqrt{\mu r}\left(C_3\left(\sqrt{n_1n_2} + n_1\right)\omega_{\sf max}^2\log^2 n\right)^{j}\omega_{\sf max}\log n\right)\cdot C_5\sqrt{n_1}\omega_{\sf max}\log n\notag\\
	&\hspace{1.5cm}\cdot \left(3C_5\left(\sqrt{n_1n_2} + n_1\right)\omega_{\sf max}^2\log^2 n\right)^{i-j-1}\notag\\
	&\qquad + \sum_{j=0}^{i-1}\left(C_5\left(\sqrt{n_1n_2} + n_1\right)\omega_{\sf max}^2\log^2 n\right)^j\left(C_3\sqrt{\mu r}\omega_{\sf max}\log n\right)^2\cdot \left(3C_5\left(\sqrt{n_1n_2} + n_1\right)\omega_{\sf max}^2\log^2 n\right)^{i-j-1}\notag\\
	&\qquad + C_3\sqrt{\frac{\mu r}{n_1}}\left(C_3\left(\sqrt{n_1n_2} + n_1\right)\omega_{\sf max}^2\log^2 n\right)^{i}\notag\\&\qquad + C_3\sqrt{\mu r}\left(C_3\left(\sqrt{n_1n_2} + n_1\right)\omega_{\sf max}^2\log^2 n\right)^{i}\omega_{\sf max}\log n\cdot \frac{2}{\sigma_r^\star}\notag\\
	&\qquad + C_3\sqrt{\frac{\mu r}{n_1}}\left(C_3\left(\sqrt{n_1n_2} + n_1\right)\omega_{\sf max}^2\log^2 n\right)^{i}\cdot C_5\sqrt{n_1}\omega_{\sf max}\log n\cdot \frac{2}{\sigma_r^\star}\notag\\
	&\quad \leq 4C_3\sqrt{\frac{\mu r}{n_1}}\left(C_3\left(\sqrt{n_1n_2} + n_1\right)\omega_{\sf max}^2\log^2 n\right)^{i},
\end{align}
provided that $C_3 \geq 6C_5$ and $\sigma_r^\star \geq C_0\sqrt{n_1}\omega_{\sf max}\log n$. 
Here,  the first inequality relies on \eqref{eq:ZiU-decompose-long} and the triangle inequality, 
the second inequality makes use of $\|\widetilde{\bm{W}}\|=1$, 
whereas the third inequality results from \eqref{ineq:error_moment}, \eqref{ineq:error_moment2}, \eqref{ineq26}, \eqref{ineq23-1357} and \eqref{ineq71}.

\paragraph{Step 2: bounding the sum for small $k$.} 
For any $1 \leq k \leq \log n$ and any $(j_1, \dots, j_{k+1})$ satisfying $j_1, \dots, j_{k+1} \geq 0$ and $j_1 + \cdots + j_{k+1} = k$, let $\ell$ be the smallest $i$ such that $j_i \neq 0$. 
We define the matrices
\begin{equation}
	\widetilde{\mathfrak{P}}^{-j} = \widetilde{\bm{U}}\widetilde{\bm{\Sigma}}^{-2j}\widetilde{\bm{U}}^\top \quad (j\geq 1)
	\qquad \text{and} \qquad \widetilde{\mathfrak{P}}^{0} = \widetilde{\bm{U}}_{\perp}\widetilde{\bm{U}}_{\perp}^\top ,
\end{equation}
where we remind the reader that $\widetilde{\bm{\Sigma}} = {\sf diag}(\widetilde{\sigma}_1, \dots, \widetilde{\sigma}_r)$ is the diagonal matrix containing the nonzero singular values of $\bm{U}^\star\bm{\Sigma}^\star + \bm{E}\bm{V}^\star$. Noting that $\|\widetilde{\mathfrak{P}}^{-j}\| = \|\widetilde{\bm{\Sigma}}^{-1}\|^{2j}$ and $\sum_{i=1}^{k+1}j_i = \sum_{\ell=1}^{k+1}j_i = k$ (using the definition of $\ell$), one has
\begin{align}\label{ineq77}
	\prod_{i = \ell}^{k+1}\big\|\widetilde{\mathfrak{P}}^{-j_{i}}\big\| = \big\|\widetilde{\bm{\Sigma}}^{-1}\big\|^{2k}.
\end{align}
It then follows from \eqref{ineq71}, \eqref{ineq77} and the definition of $\ell$  that
\begin{align}\label{ineq79}
	\left\|\widetilde{\mathfrak{P}}^{-j_{\ell}}\bm{Z}\cdots\bm{Z}\widetilde{\mathfrak{P}}^{-j_{k+1}}\right\| \leq \left\|\bm{Z}\right\|^{k-\ell+1}\prod_{i = \ell}^{k+1}\big\|\widetilde{\mathfrak{P}}^{-j_{i}}\big\| = \left\|\bm{Z}\right\|^{k-\ell+1}\big\|\widetilde{\bm{\Sigma}}^{-1}\big\|^{2k} \leq \left\|\bm{Z}\right\|^{k-\ell+1}\left(\frac{2}{\sigma_r^{\star2}}\right)^k
\end{align}
and for $1 \leq i \leq \ell-1$, 
\begin{align}\label{ineq80}
	\left\|\bm{Z}\widetilde{\mathfrak{P}}^{-j_{i+1}}\bm{Z}\cdots\bm{Z}\widetilde{\mathfrak{P}}^{-j_{k+1}}\right\| \leq \left\|\bm{Z}\right\|^{k-i+1}\big\|\widetilde{\bm{\Sigma}}^{-1}\big\|^{2k} \leq \left\|\bm{Z}\right\|^{k-i+1}\left(\frac{2}{\sigma_r^{\star2}}\right)^k.
\end{align}
We can see from the definition of $\ell$ and $\widetilde{\mathfrak{P}}^{0}$ that
\begin{align}\label{ineq81}
	\widetilde{\mathfrak{P}}^{-j_1}\bm{Z}\widetilde{\mathfrak{P}}^{-j_2}\bm{Z}\cdots\bm{Z}\widetilde{\mathfrak{P}}^{-j_{k+1}} = \bm{Z}^{\ell-1}\widetilde{\mathfrak{P}}^{-j_{\ell}}\bm{Z}\cdots\bm{Z}\widetilde{\mathfrak{P}}^{-j_{k+1}} - \sum_{i=1}^{\ell-1}\bm{Z}^{i-1}\bm{P}_{\widetilde{\bm{U}}}\bm{Z}\widetilde{\mathfrak{P}}^{-j_{i+1}}\bm{Z}\cdots\bm{Z}\widetilde{\mathfrak{P}}^{-j_{k+1}},
\end{align}
which allows us to derive
\begin{align}
	&\left\|\widetilde{\mathfrak{P}}^{-j_1}\bm{Z}\widetilde{\mathfrak{P}}^{-j_2}\bm{Z}\cdots\bm{Z}\widetilde{\mathfrak{P}}^{-j_{k+1}}\right\|_{2,\infty}\notag\\
	&\quad \leq \left\|\bm{Z}^{\ell-1}\widetilde{\mathfrak{P}}^{-j_{\ell}}\bm{Z}\cdots\bm{Z}\widetilde{\mathfrak{P}}^{-j_{k+1}}\right\|_{2,\infty} + \sum_{i=1}^{\ell-1}\left\|\bm{Z}^{i-1}\bm{P}_{\widetilde{\bm{U}}}\bm{Z}\widetilde{\mathfrak{P}}^{-j_{i+1}}\bm{Z}\cdots\bm{Z}\widetilde{\mathfrak{P}}^{-j_{k+1}}\right\|_{2,\infty}\notag\\
	&\quad \leq \big\|\bm{Z}^{\ell-1}\widetilde{\bm{U}}\big\|_{2,\infty}\left\|\widetilde{\mathfrak{P}}^{-j_{\ell}}\bm{Z}\cdots\bm{Z}\widetilde{\mathfrak{P}}^{-j_{k+1}}\right\| + \sum_{i=1}^{\ell-1}\big\|\bm{Z}^{i-1}\widetilde{\bm{U}}\big\|_{2,\infty}\left\|\bm{Z}\widetilde{\mathfrak{P}}^{-j_{i+1}}\bm{Z}\cdots\bm{Z}\widetilde{\mathfrak{P}}^{-j_{k+1}}\right\|\notag\\
	&\quad \leq \left[4C_3\sqrt{\frac{\mu r}{n_1}}\left(C_3\left(\sqrt{n_1n_2} + n_1\right)\omega_{\sf max}^2\log^2 n\right)^{\ell-1}\right]\cdot \frac{\left( 3C_5\left(\sqrt{n_1n_2} + n_1\right)\omega_{\sf max}^2\log^2 n\right)^{k-\ell+1}}{\left(\sigma_r^{\star2}/2\right)^k}\notag\\
	&\qquad + \sum_{i=1}^{\ell-1} \left[4C_3\sqrt{\frac{\mu r}{n_1}}\left(C_3\left(\sqrt{n_1n_2} + n_1\right)\omega_{\sf max}^2\log^2 n\right)^{i-1}\right]\cdot \frac{\left( 3C_5\left(\sqrt{n_1n_2} + n_1\right)\omega_{\sf max}^2\log^2 n\right)^{k-i+1}}{\left(\sigma_r^{\star2}/2\right)^k}\notag\\
	&\quad \leq 4C_3\sqrt{\frac{\mu r}{n_1}}\left(\frac{C_3\left(\sqrt{n_1n_2} + n_1\right)\omega_{\sf max}^2\log^2 n}{\sigma_r^{\star2}}\right)^k\cdot \sum_{i=1}^{\ell}\frac{1}{2^{k-i+1}}\notag\\
	&\quad \leq 8C_3\sqrt{\frac{\mu r}{n_1}}\left(\frac{C_3\left(\sqrt{n_1n_2} + n_1\right)\omega_{\sf max}^2\log^2 n}{\sigma_r^{\star2}}\right)^k\label{ineq75}.
\end{align}
Here, the first inequality comes from \eqref{ineq81} and the triangle inequality, the second inequality holds due to the definition of $\ell$, the basic inequality $\|\bm{A}\bm{B}\|_{2, \infty} \leq \|\bm{A}\|_{2, \infty}\|\bm{B}\|$ and the fact $\|\widetilde{\bm{U}}^\top\bm{C}\| \leq \|\bm{C}\|$, the third inequality is a consequence of \eqref{ineq23-1357}, \eqref{ineq28}, \eqref{ineq79} and \eqref{ineq80}, and the second last inequality is valid as long as $C_3 \geq 12C_5$.

\paragraph{Step 3: bounding the sum for large $k$.} For any $k \geq \lfloor\log n\rfloor + 1$, the signal-to-noise condition \eqref{ineq75a} implies that there exists a large constant $C > 0$ such that
\begin{align*}
	\left(\frac{3C_5\left(\sqrt{n_1n_2} + n_1\right)\omega_{\sf max}^2\log^2 n}{\sigma_r^{\star2}/2}\right)^{k-1} \leq \left(\frac{1}{C^2}\right)^{k-1} \leq \frac{1}{C^k}.
\end{align*}
It is also seen that
\begin{align*}
	\Big|\big\{(j_1, \dots, j_{k+1}): j_1, \dots, j_{k+1} \geq 0\text{ and } j_1 + \cdots + j_{k+1} = k\big\}\Big| = \binom{2k+1}{k} \leq 4^k,
\end{align*}
In view of \eqref{ineq23-1357}, \eqref{ineq71} and \eqref{ineq77}, we have
\begin{align}
	&\sum_{j_1, \dots, j_{k+1} \geq 0\atop j_1 + \cdots + j_{k+1} = k}\left\|\widetilde{\mathfrak{P}}^{-j_1}\bm{Z}\widetilde{\mathfrak{P}}^{-j_2}\bm{Z}\cdots\bm{Z}\widetilde{\mathfrak{P}}^{-j_{k+1}}\right\|_{2,\infty}\notag\\
	&\quad \leq \sum_{j_1, \dots, j_{k+1} \geq 0\atop j_1 + \cdots + j_{k+1} = k}\big\|\widetilde{\mathfrak{P}}^{-j_1}\big\|\left\|\bm{Z}\right\|\big\|\widetilde{\mathfrak{P}}^{-j_2}\big\|\left\|\bm{Z}\right\|\cdots\left\|\bm{Z}\right\|\big\|\widetilde{\mathfrak{P}}^{-j_{k+1}}\big\|\notag\\
	&\quad \leq \sum_{j_1, \dots, j_{k+1} \geq 0\atop j_1 + \cdots + j_{k+1} = k}\left(\frac{3C_5\left(\sqrt{n_1n_2} + n_1\right)\omega_{\sf max}^2\log^2 n}{\sigma_r^{\star2}/2}\right)^k\notag\\
	&\quad \leq \left(\frac{4}{C}\right)^k\frac{6C_5\left(\sqrt{n_1n_2} + n_1\right)\omega_{\sf max}^2\log^2 n}{\sigma_r^{\star2}}\label{ineq76}.
\end{align}

\paragraph{Step 4: bounding $\|\bm{U}^{\sf oracle}\bm{U}^{\sf oracle\top} - \widetilde{\bm{U}}\widetilde{\bm{U}}^\top\|_{2,\infty}$ and $\|\bm{U}^{\sf oracle}\bm{U}^{\sf oracle\top} - \bm{U}^\star\bm{U}^{\star\top}\|$.} By virtue of \eqref{ineq75}, \eqref{ineq76} and Lemma \ref{lm:representation}, we reach
\begin{align}\label{ineq:two_to_infty_representation}
	\big\|\bm{U}^{\sf oracle}\bm{U}^{\sf oracle\top} - \widetilde{\bm{U}}\widetilde{\bm{U}}^\top\big\|_{2,\infty}
	&\quad \leq \sum_{1 \leq k \leq \log n}8C_3\sqrt{\frac{\mu r}{n_1}}\left(\frac{C_3\left(\sqrt{n_1n_2} + n_1\right)\omega_{\sf max}^2\log^2 n}{\sigma_r^{\star2}}\right)^k\notag\\
	&\qquad + \sum_{k \geq \lfloor\log n\rfloor + 1}\left(\frac{4}{C}\right)^k\frac{6C_5\left(\sqrt{n_1n_2} + n_1\right)\omega_{\sf max}^2\log^2 n}{\sigma_r^{\star2}}\notag\\
	&\quad \lesssim \sqrt{\frac{\mu r}{n_1}}\frac{\left(\sqrt{n_1n_2} + n_1\right)\omega_{\sf max}^2\log^2 n}{\sigma_r^{\star2}}.
\end{align}
In addition,  the $\sin\Theta$ theorem \citep[Chapter 2]{chen2021spectral} shows that
\begin{align}
	\big\|\bm{U}^{\star\top}\widetilde{\bm{U}}_{\perp}\big\| & =\big\|\widetilde{\bm{U}}\widetilde{\bm{U}}^{\top}-\bm{U}^{\star}\bm{U}^{\star\top}\big\|\lesssim\frac{\left\Vert \bm{E}\bm{V}^{\star}\right\Vert }{\sigma_{r}^{\star}}\lesssim\frac{\sqrt{n_{1}}\omega_{\sf max}\log n+B\sqrt{\frac{\mu_{2}r}{n_{2}}}\log^{2}n}{\sigma_{r}^{\star}}\notag\\
 & \asymp\frac{\sqrt{n_{1}}\omega_{\sf max}\log n}{\sigma_{r}^{\star}}\label{ineq82}, 
\end{align}
where the first identity makes use of \citet[Lemma 2.5]{chen2021spectral}, the penultimate inequality results from Lemma~\ref{lm:noise}, 
and the last relation comes from Assumption \ref{assump:hetero2} and \eqref{ineq75b}. 
Moreover, applying \eqref{ineq34} and the previous inequality yields that
\begin{align*}
	\big\|\widetilde{\bm{U}}\widetilde{\bm{U}}^\top - \bm{U}^\star\bm{U}^{\star\top}\big\|_{2,\infty} 
	&\leq \big\|\big(\widetilde{\bm{U}} - \bm{U}^\star\bm{U}^{\star\top}\widetilde{\bm{U}}\big)\widetilde{\bm{U}}^\top\big\|_{2,\infty} + \big\|\bm{U}^\star\bm{U}^{\star\top}\widetilde{\bm{U}}\widetilde{\bm{U}}^\top -  \bm{U}^\star\bm{U}^{\star\top}\big\|_{2,\infty}\\ 
	&= \big\|\big(\widetilde{\bm{U}} - \bm{U}^\star\bm{U}^{\star\top}\widetilde{\bm{U}}\big)\widetilde{\bm{U}}^\top\big\|_{2,\infty} + \big\|\bm{U}^\star\bm{U}^{\star\top}\widetilde{\bm{U}}_{\perp}\widetilde{\bm{U}}_{\perp}^\top\big\|_{2,\infty}\\
	&\leq \big\|\widetilde{\bm{U}} - \bm{U}^\star\bm{U}^{\star\top}\widetilde{\bm{U}}\big\|_{2,\infty} + \left\|\bm{U}^\star\right\|_{2,\infty}\big\|\bm{U}^{\star\top}\widetilde{\bm{U}}_{\perp}\big\|\\
	&\lesssim \frac{\sqrt{\mu r}\omega_{\sf max}\log n}{\sigma_r^\star} + \sqrt{\frac{\mu r}{n_1}}\frac{\sqrt{n_1}\omega_{\sf max}\log n}{\sigma_{r}^\star}\\
	&\asymp \sqrt{\frac{\mu r}{n_1}}\frac{\sqrt{n_1}\omega_{\sf max}\log n}{\sigma_{r}^\star}.
\end{align*}
This taken collectively with \eqref{ineq:two_to_infty_representation} gives
\begin{align*}
	\left\|\bm{U}^{\sf oracle}\bm{U}^{\sf oracle\top} - \bm{U}^\star\bm{U}^{\star\top}\right\|_{2,\infty} &\lesssim \sqrt{\frac{\mu r}{n_1}}\left(\frac{\left(\sqrt{n_1n_2} + n_1\right)\omega_{\sf max}^2\log^2 n}{\sigma_r^{\star2}} + \frac{\sqrt{n_1}\omega_{\sf max}\log n}{\sigma_{r}^\star}\right)\\
	&\asymp \sqrt{\frac{\mu r}{n_1}}\left(\frac{\sqrt{n_1n_2}\omega_{\sf max}^2\log^2 n}{\sigma_r^{\star2}} + \frac{\sqrt{n_1}\omega_{\sf max}\log n}{\sigma_{r}^\star}\right),
\end{align*}
where the last relation results from the assumption \eqref{ineq75a}.

Finally, the Davis-Kahan Theorem, \eqref{ineq26} and \eqref{ineq23-1357} together show that
\begin{align*}
	\left\|\bm{U}^{\sf oracle}\bm{U}^{\sf oracle\top} - \bm{U}^\star\bm{U}^{\star\top}\right\| &\leq \big\|\bm{U}^{\sf oracle}\bm{U}^{\sf oracle\top} - \widetilde{\bm{U}}\widetilde{\bm{U}}^\top\big\| + \big\|\widetilde{\bm{U}}\widetilde{\bm{U}}^\top - \bm{U}^\star\bm{U}^{\star\top}\big\|\\
	&\lesssim \frac{\left(\sqrt{n_1n_2} + n_1\right)\omega_{\sf max}^2\log^2 n}{\sigma_r^{\star2}} + \frac{\sqrt{n_1}\omega_{\sf max}\log n}{\sigma_{r}^\star}\\
	&\lesssim \frac{\sqrt{n_1n_2}\omega_{\sf max}^2\log^2 n}{\sigma_r^{\star2}} + \frac{\sqrt{n_1}\omega_{\sf max}\log n}{\sigma_{r}^\star}.
\end{align*}
Here, we have used the triangle inequality in the first inequality, the second inequality comes from \eqref{ineq82}, the Davis-Kahan Theorem, \eqref{ineq78} and \eqref{ineq23-1357}, 
whereas the last inequality holds since
\begin{align*}
	 \frac{n_1\omega_{\sf max}^2\log^2 n}{\sigma_r^{\star2}} \lesssim \frac{\sqrt{n_1}\omega_{\sf max}\log n}{\sigma_{r}^\star}
\end{align*}
under our signal-to-noise condition \eqref{ineq75a}. This concludes the proof.

\subsection{Proof of Lemma \ref{lm:error_moment1}}\label{proof_lm:error_moment1}
To streamline the presentation, we divide the proof into several steps. We shall start by considering the case with bounded noise  (i.e., the case with $|E_{i,j}| \leq B$ deterministically) and develop upper bounds on both $\big\|\left[\mathcal{P}_{\sf off\text{-}diag}\left(\bm{E}\bm{E}^\top\right)\right]^{\ell}\bm{E}\bm{V}^\star\big\|_{2,\infty}$ and $\big\|\bm{E}^\top\left[\mathcal{P}_{\sf off\text{-}diag}\left(\bm{E}\bm{E}^\top\right)\right]^\ell\bm{E}\bm{V}^\star\big\|_{2,\infty}$ via induction. 
We will then move on to the general case and establish the final result by means of a truncation trick.

\subsubsection{The case with bounded noise}

Let us now focus on the case where
\begin{align}\label{ineq64}
	\left|E_{i,j}\right| \leq B \leq C_{\sf b}\omega_{\sf max}\frac{\min\big\{\left(n_1n_2\right)^{1/4}, \sqrt{n_2}\big\}}{\log n}, \quad \forall (i,j) \in [n_1] \times [n_2] 
\end{align}
holds deterministically. We would like to prove, by induction, the following slightly stronger claims: 
suppose that $\bm{E}$ satisfies Conditions~1 and 2 in Assumption~\ref{assump:hetero} and \eqref{ineq64}, 
then for any $0 \leq k \leq \log n$, with probability exceeding $1 - O\big((n+3)^{2k}n^{-C_2\log n}\big)$ one has 
\begin{align}\label{ineq2}
	&\left\|\left[\mathcal{P}_{\sf off\text{-}diag}\left(\bm{E}\bm{E}^\top\right)\right]^{\ell}\bm{E}\bm{V}^\star\right\|_{2,\infty}\leq C_3\sqrt{\frac{\mu r}{n_2}}\left(C_3\left(\sqrt{n_1n_2} + n_1\right)\omega_{\sf max}^2\log^2 n\right)^{\ell}\sqrt{n_2}\omega_{\sf max}\log n 
\end{align}
and
\begin{align}\label{ineq3}
	&\left\|\bm{E}^\top\left[\mathcal{P}_{\sf off\text{-}diag}\left(\bm{E}\bm{E}^\top\right)\right]^\ell\bm{E}\bm{V}^\star\right\|_{2,\infty}\notag\\ &\quad\leq C_4\sqrt{\frac{\mu r}{n_2}}\left(C_3\left(\sqrt{n_1n_2} + n_1\right)\omega_{\sf max}^2\log^2 n\right)^{\ell}\left(\sqrt{n_2}B\omega_{\sf max}\log n + \left(\sqrt{n_1n_2} + n_1\right)\omega_{\sf max}^2\right)\log^2 n 
\end{align}
for all $0 \leq \ell \leq k$. 
Here, $C_3,C_4>0$ are some large numerical constants to be specified shortly.

\paragraph{Step 1: base case.} Let us first look at the base case with $k = 0$. 
It follows from Lemma~\ref{lm:1} and the assumption \eqref{ineq64} that: for any fixed matrices $\bm{W}_1$ with $n_2$ rows and any $\bm{W}_2$ with $n_1$ rows, one has
\begin{subequations}
	\begin{align}
		\max_{i \in [n_1]}\sum_{j \in [n_2]}E_{i,j}^2 &\lesssim B^2\log^2 n + \omega_{\sf row}^2 \lesssim n_2\omega_{\sf max}^2 \label{ineq4a}\\
		\max_{i \in [n_1]}\left\|\bm{E}_{i,:}\bm{W}_1\right\|_2 &\lesssim B\left\|\bm{W}_1\right\|_{2,\infty}\log^2 n + \omega_{\sf max}\left\|\bm{W}_1\right\|_{\F}\log n \lesssim \sqrt{n_2}\omega_{\sf max}\left\|\bm{W}_1\right\|_{2,\infty}\log n \label{ineq4b}\\
		\max_{j \in [n_2]}\sum_{i \in [n_1]}E_{i,j}^2 &\lesssim B^2\log^2 n + \omega_{\sf col}^2 \label{ineq4c}\\
		\max_{j \in [n_2]}\big\|\left(\bm{E}_{:,j}\right)^\top\bm{W}_2\big\|_2 &\lesssim \left(B\log^2 n + \omega_{\sf col}\log n\right)\left\|\bm{W}_2\right\|_{2,\infty}\label{ineq4d}
	\end{align}
\end{subequations}
with probability exceeding $1 - O(n^{-C_4\log n})$ for some numerical constant $C_4>0$. 
Inequality \eqref{ineq4b} combined with Definition~\ref{assump:incoherence} tells us that with probability at least $1 - O(n^{-C_4\log n})$,
\begin{align}\label{ineq5}
	\left\|\bm{E}\bm{V}^\star\right\|_{2,\infty} \lesssim \sqrt{n_2}\omega_{\sf max}\left\|\bm{V}^\star\right\|_{2,\infty}\log n \leq \sqrt{\mu r}\omega_{\sf max}\log n.
\end{align}

In addition, for any $j \in [n_2]$, we can decompose $\bm{E}_{:,j}^\top\bm{E}\bm{V}^\top$ into two terms:
\begin{align}\label{eq:decomposition_EEV}
	\bm{E}_{:,j}^\top\bm{E}\bm{V}^\star = \bm{E}_{:,j}^\top \bm{E}^{(:, -j)}\bm{V}^\star + \bm{E}_{:,j}^\top \bm{E}^{(:, j)}\bm{V}^\star.
\end{align}
Here, $\bm{E}^{(:, -j)}$ and $\bm{E}^{(:, j)}$ are defined as 
\begin{align*}
	\bm{E}^{(:, -j)} = \mathcal{P}_{:, -j}\left(\bm{E}\right) \in \bbR^{n_1 \times n_2} \quad \text{and} \quad \bm{E}^{(:, j)} = \mathcal{P}_{:, j}\left(\bm{E}\right) \in \bbR^{n_1 \times n_2},
\end{align*} 
where $\mathcal{P}_{:, -j}(\cdot)$ (resp.~$\mathcal{P}_{:, j}(\cdot)$) is a projection operator that zeros out the $j$-th column (resp.~all entries except those in the $j$-th column) of a matrix, i.e., for any matrix $\bm{A}$, 
\begin{align}\label{def:projection_column}
\left[\mathcal{P}_{:, -j}(\bm{A})\right]_{i, k} = \begin{cases}
	A_{i, k}, \quad & \text{if } k \neq j,\\
	0, \quad & \text{otherwise}, 
\end{cases} \quad \forall (i, k) \in [n_1] \times [n_2], \quad \text{and} \quad \mathcal{P}_{:, j}(\bm{A}) = \bm{A} - \mathcal{P}_{:, -j}(\bm{A}).
\end{align}
In view of  \eqref{ineq4b} and \eqref{ineq4d}, with probability exceeding $1 - O(n^{-C_2\log n})$,
\begin{align*}
	\big\|\bm{E}_{:,j}^\top \bm{E}^{(:, -j)}\bm{V}^\star\big\|_2 &\lesssim \left(B\log^2 n + \omega_{\sf col}\log n\right)\big\| \bm{E}^{(:, -j)}\bm{V}^\star\big\|_{2,\infty}\\
	&\lesssim \left(B\log n + \omega_{\sf col}\right)\sqrt{\mu r}\omega_{\sf max}\log^2 n,
\end{align*}
where the last inequality can be derived in a way similar to \eqref{ineq5}. 
Recognizing that $(\bm{E}_{:,j}^\top \bm{E}^{(:, j)})^\top$ is a vector with only one nonzero entry $\|\bm{E}_{:,j}\|_2^2$, we know from \eqref{ineq4c} and Definition~\ref{assump:incoherence} that, with probability at least $1 - O(n^{-C_4\log n})$,
\begin{align*}
	\big\|\bm{E}_{:,j}^\top \bm{E}^{(:, j)}\bm{V}^\star\big\|_{2} \leq \left\|\bm{E}_{:,j}\right\|_2^2\left\|\bm{V}^\star\right\|_{2,\infty} \lesssim \left(B^2\log^2 n + \omega_{\sf col}^2\right)\sqrt{\frac{\mu r}{n_2}}.
\end{align*}
Taking the previous two inequalities and \eqref{eq:decomposition_EEV} together and applying  the union bound imply that, with probability at least $1 - O(n^{-C_2\log n})$,
\begin{align*}
	\left\|\bm{E}^\top\bm{E}\bm{V}^\star\right\|_{2,\infty} &\lesssim \left(\sqrt{n_2}B\omega_{\sf max}\log^3 n + \sqrt{n_2}\omega_{\sf col}\omega_{\sf max}\log^2 n + B^2\log^2 n + \omega_{\sf col}^2\right)\sqrt{\frac{\mu r}{n_2}}\\
	&\lesssim \sqrt{\frac{\mu r}{n_2}}\left(\sqrt{n_2}B\omega_{\sf max}\log n + \left(\sqrt{n_1n_2} + n_1\right)\omega_{\sf max}^2\right)\log^2 n, 
\end{align*}
where we have also made use of the assumption \eqref{ineq64}.

Therefore, we have established both \eqref{ineq2} and \eqref{ineq3} for the base case with $k = 0$.

\paragraph{Step 2: inductive step.} 
We now move on to the inductive step. 
Suppose that for any $\bm{E}$ satisfying Conditions~1 and 2 in Assumption \ref{assump:hetero} and \eqref{ineq64}, 
the induction hypotheses \eqref{ineq2} and \eqref{ineq3} hold for all $1 \leq \ell \leq K$ 
with probability exceeding $1 - O((n+2)^{2K}\cdot n^{-C_2\log n})$. 
We intend to justify that these induction hypotheses continue to be valid for $K+1$.

\paragraph{Step 2.1: bounding $\|[\mathcal{P}_{\sf off\text{-}diag}(\bm{E}\bm{E}^\top)]^{K+1}\bm{E}\bm{V}^\star\|_{2,\infty}$.} 
We first look at the quantity of interest in \eqref{ineq2}. 
For any $i \in [n_1]$, define
\begin{align*}
	\bm{E}^{(-i, :)} = \mathcal{P}_{-i, :}(\bm{E}) \in \bbR^{n_1 \times n_2} \quad \text{and} \quad \bm{E}^{(i, :)} = \mathcal{P}_{i, :}(\bm{E}) \in \bbR^{n_1 \times n_2}.
\end{align*}
Here, $\mathcal{P}_{-i, :}(\bm{A})$ (resp.~$\mathcal{P}_{i, :}(\bm{A})$) zeros out the $i$-th row (resp.~all entries except the ones in the $i$-th row) of $\bm{A}$, namely,
\begin{align}\label{def:projection_row}
    \left[\mathcal{P}_{-i, :}(\bm{A})\right]_{j, k} = \begin{cases}
    	A_{j, k}, \quad & \text{if } j \neq i,\\
    	0, \quad & \text{otherwise}, 
    \end{cases} \quad \forall (j, k) \in [n_1] \times [n_2], \quad \text{and} \quad \mathcal{P}_{i, :}(\bm{A}) = \bm{A} - \mathcal{P}_{-i, :}(\bm{A}).
\end{align}
When it comes to $k = K+1$, recognizing the identity
\begin{align*}
	\left[\mathcal{P}_{\sf off\text{-}diag}\left(\bm{E}\bm{E}^\top\right)\right]_{i,:} = \bm{E}_{i,:}\bm{E}^{(-i, :)\top},
\end{align*}
we can derive
\begin{align}\label{ineq12}
	\left[\left[\mathcal{P}_{\sf off\text{-}diag}\left(\bm{E}\bm{E}^\top\right)\right]^{K+1}\bm{E}\bm{V}^\star\right]_{i,:} = \bm{E}_{i,:}\bm{E}^{(-i, :)\top}\left[\mathcal{P}_{\sf off\text{-}diag}\left(\bm{E}\bm{E}^\top\right)\right]^{K}\bm{E}\bm{V}^\star.
\end{align}

We claim for the moment that
\begin{align}\label{ineq11}
	&\left\|\bm{E}_{i,:}\bm{E}^{(-i, :)\top}\left[\mathcal{P}_{\sf off\text{-}diag}\left(\bm{E}\bm{E}^\top\right)\right]^{K}\bm{E}\bm{V}^\star\right\|_{2}\notag\\
	&\quad\leq \underbrace{\left\|\bm{E}_{i,:}\bm{E}^{(-i, :)\top}\big[\mathcal{P}_{\sf off\text{-}diag}\big(\bm{E}^{(-i, :)}\bm{E}^{(-i, :)\top}\big)\big]^K\bm{E}^{(-i, :)}\bm{V}^\star\right\|_{2}}_{=: \tau_1}\notag\\
	&\qquad + \underbrace{\sum_{\ell=0}^{K-1}\big\|\bm{E}_{i,:}\bm{E}^{(-i, :)\top}\big\|_2^2\big\|\mathcal{P}_{\sf off\text{-}diag}\big(\bm{E}^{(-i, :)}\bm{E}^{(-i, :)\top}\big)\big\|^\ell\left\|\left[\mathcal{P}_{\sf off\text{-}diag}\left(\bm{E}\bm{E}^\top\right)\right]^{K-1-\ell}\bm{E}\bm{V}^\star\right\|_{2,\infty}}_{=: \tau_2},
\end{align}
which we shall prove towards the end of the proof for the bounded noise case. 
We define the following event
\begin{align*}
	\mathcal{E}_1 = \bigg\{&\forall 0 \leq \ell \leq K-1,\quad \left\|\left[\mathcal{P}_{\sf off\text{-}diag}\left(\bm{E}\bm{E}^\top\right)\right]^\ell\bm{E}\bm{V}^\star\right\|_{2,\infty}\notag\\ &\qquad \leq C_3\sqrt{\frac{\mu r}{n_2}}\left(C_3\left(\sqrt{n_1n_2} + n_1\right)\omega_{\sf max}^2\log^2 n\right)^{\ell}\sqrt{n_2}\omega_{\sf max}\log n,\\
	&\forall i \in [n_1],\quad \left\|\bm{E}^{(-i, :)\top}\big[\mathcal{P}_{\sf off\text{-}diag}\big(\bm{E}^{(-i, :)}\bm{E}^{(-i, :)\top}\big)\big]^K\bm{E}^{(-i, :)}\bm{V}^\star\right\|_{2,\infty}\\&\qquad \leq C_4\sqrt{\frac{\mu r}{n_2}}\left(C_3\left(\sqrt{n_1n_2} + n_1\right)\omega_{\sf max}^2\log^2 n\right)^{K}\left(\sqrt{n_2}B\omega_{\sf max}\log n + \left(\sqrt{n_1n_2} + n_1\right)\omega_{\sf max}^2\right)\log^2 n\bigg\}.
\end{align*}
Recognizing that $\bm{E}^{(-i, :)}$ satisfies Conditions~1 and 2 in Assumption \ref{assump:hetero} and \eqref{ineq64} as well, 
we learn from our induction hypotheses and the union bound that
\begin{align*}
	\bbP\left(\mathcal{E}_1\right) \geq 1 - \left(n_1 + 1\right)\cdot C_1(n+3)^{2K}n^{-C_2\log n}.
\end{align*}
Moreover, Lemma \ref{lm:off-diag} asserts that with probability exceeding $1 - O(n^{-C_2\log n})$,
\begin{align}\label{ineq4}
	\left\|\mathcal{P}_{\sf off\text{-}diag}\left(\bm{E}\bm{E}^\top\right)\right\| \leq C_5\left(\sqrt{n_1n_2} + n_1\right)\omega_{\sf max}^2\log^2 n.
\end{align}
Given that $\bm{E}_{i,:}\bm{E}^{(-i, :)\top}$ is the $i$-th row of $\mathcal{P}_{\sf off\text{-}diag}(\bm{E}\bm{E}^\top)$ and $\mathcal{P}_{\sf off\text{-}diag}(\bm{E}^{(-i, :)}\bm{E}^{(-i, :)\top})$ is a submatrix of $\mathcal{P}_{\sf off\text{-}diag}(\bm{E}\bm{E}^\top)$, the inequality \eqref{ineq4} implies that 
\begin{align}\label{ineq10}
	\max\left\{\big\|\bm{E}_{i,:}\bm{E}^{(-i, :)\top}\big\|_2, \big\|\mathcal{P}_{\sf off\text{-}diag}\big(\bm{E}^{(-i, :)}\bm{E}^{(-i, :)\top}\big)\big\|\right\} \leq C_5\left(\sqrt{n_1n_2} + n_1\right)\omega_{\sf max}^2\log^2 n.
\end{align}
Armed with these results, we proceed to bound $\tau_1$ and $\tau_2$ in \eqref{ineq11} separately in the sequel.

\begin{itemize}

\item {\em Bounding $\tau_1$.} Note that $\bm{E}^{(-i, :)\top}[\mathcal{P}_{\sf off\text{-}diag}(\bm{E}^{(-i, :)}\bm{E}^{(-i, :)\top})]^K\bm{E}^{(-i, :)}\bm{V}^\star$ is statistically independent of $\bm{E}_{i,:}$. In view of \eqref{ineq4b}, with probability exceeding $1 - O(n^{-C_2\log n})$, one has
\begin{align}\label{ineq9}
	\tau_1
	&\leq C_5B\left\|\bm{E}^{(-i, :)\top}\big[\mathcal{P}_{\sf off\text{-}diag}\big(\bm{E}^{(-i, :)}\bm{E}^{(-i, :)\top}\big)\big]^K\bm{E}^{(-i, :)}\bm{V}^\star\right\|_{2,\infty}\log^2 n\notag\\&\quad+ C_5\omega_{\sf max}\left\|\bm{E}^{(-i, :)\top}\big[\mathcal{P}_{\sf off\text{-}diag}\big(\bm{E}^{(-i, :)}\bm{E}^{(-i, :)\top}\big)\big]^K\bm{E}^{(-i, :)}\bm{V}^\star\right\|_{\F}\log n
\end{align}
for some suitable universal constants $C_2,C_5>0$. 
We have also learned from Lemma \ref{lm:noise} that
\begin{align}\label{ineq7}
	\big\|\bm{E}^{(-i, :)}\bm{V}^\star\big\| \leq \left\|\bm{E}\bm{V}^\star\right\| \leq C_5\left(B\sqrt{\frac{\mu r}{n_2}}\log^2 n + \sqrt{n_1}\omega_{\sf max}\log n\right)
\end{align}
and
\begin{align}\label{ineq8}
	\big\|\bm{E}^{(-i, :)}\big\| \leq \left\|\bm{E}\right\| \leq C_5\big( \sqrt{n_2}\omega_{\sf max} + \sqrt{n_1}\omega_{\sf max}\big)
\end{align}
 with probability exceeding $1 -  O(n^{-C_2\log n})$, provided that $C_5$ is large enough. 
Let $\mathcal{E}_2$ denote the event $\mathcal{E}_2 = \{\eqref{ineq4}, \eqref{ineq9}, \eqref{ineq7} \text{ and } \eqref{ineq8} \text{ hold}\}$. Then $\bbP(\mathcal{E}_2) \geq 1 - O(n^{-C_2\log n})$ and, consequently, 
\begin{align}\label{ineq:probability_1}
	\mathcal{P}\left(\mathcal{E}_1 \cap \mathcal{E}_2\right) \geq 1 - C_1(n_1+2)(n+3)^{2K}n^{-C_2\log n}.
\end{align}
On the event $\mathcal{E}_1$, one has
\begin{align*}
	&\left\|\bm{E}^{(-i, :)\top}\big[\mathcal{P}_{\sf off\text{-}diag}\big(\bm{E}^{(-i, :)}\bm{E}^{(-i, :)\top}\big)\big]^K\bm{E}^{(-i, :)}\bm{V}^\star\right\|_{2,\infty}\\
	&\quad \leq C_4\sqrt{\frac{\mu r}{n_2}}\left(C_3\left(\sqrt{n_1n_2} + n_1\right)\omega_{\sf max}^2\log^2 n\right)^{K}\left(\sqrt{n_2}B\omega_{\sf max}\log n + \left(\sqrt{n_1n_2} + n_1\right)\omega_{\sf max}^2\right)\log^2 n.
\end{align*}
In view of \eqref{ineq10},  \eqref{ineq9}, \eqref{ineq7}, \eqref{ineq8}, the previous inequality and the assumption~\eqref{ineq64}, on the event $\mathcal{E}_1 \cap \mathcal{E}_2$ we have
\begin{align*}
	\tau_1
	&\leq C_5B\cdot C_4\sqrt{\frac{\mu r}{n_2}}\left(C_3\left(\sqrt{n_1n_2} + n_1\right)\omega_{\sf max}^2\log^2 n\right)^{K}\left(\sqrt{n_2}B\omega_{\sf max}\log n + \left(\sqrt{n_1n_2} + n_1\right)\omega_{\sf max}^2\right)\log^4 n\\
	&\quad + \sqrt{r}\cdot C_5\omega_{\sf max}\cdot \big\|\bm{E}^{(-i, :)}\big\|\big\|\mathcal{P}_{\sf off\text{-}diag}\big(\bm{E}^{(-i, :)}\bm{E}^{(-i, :)\top}\big)\big\|^K\big\|\bm{E}^{(-i, :)}\bm{V}^\star\big\|\log n\\
	&\leq C_4\sqrt{\frac{\mu r}{n_2}}\left(C_3\left(\sqrt{n_1n_2} + n_1\right)\omega_{\sf max}^2\log^2 n\right)^{K}\left(C_5C_{\sf b}^2\sqrt{n_2}\sqrt{n_1n_2}\omega_{\sf max}^3 + C_5C_{\sf b}\sqrt{n_2}\left(\sqrt{n_1n_2} + n_1\right)\omega_{\sf max}^3\right)\log^3 n\\
	&\quad + \sqrt{r}\cdot C_5\omega_{\sf max}\cdot C_5\left(\sqrt{n_1} + \sqrt{n_2}\right) \omega_{\sf max} \cdot  \left(C_5\left(\sqrt{n_1n_2} + n_1\right)\omega_{\sf max}^2\log^2 n\right)^K\\&\hspace{1.2cm}\cdot C_5\left(B\sqrt{\frac{\mu r}{n_2}}\log^2 n + \sqrt{n_1}\omega_{\sf max}\log n\right)\log n\\
	&\leq \frac{C_3}{4}\sqrt{\frac{\mu r}{n_2}}\left(C_3\left(\sqrt{n_1n_2} + n_1\right)\omega_{\sf max}^2\log^2 n\right)^{K+1}\sqrt{n_2}\omega_{\sf max}\log n\\
	&\quad + C_5^2\sqrt{r}\cdot \left(\sqrt{n_1} + \sqrt{n_2}\right)\omega_{\sf max}^2\cdot \left(C_5\left(\sqrt{n_1n_2} + n_1\right)\omega_{\sf max}^2\log^2 n\right)^K\cdot C_5(C_{\sf b} + 1)\sqrt{\mu n_1}\omega_{\sf max}\log^2 n\\
	&\leq  \frac{C_3}{2}\sqrt{\frac{\mu r}{n_2}}\left(C_3\left(\sqrt{n_1n_2} + n_1\right)\omega_{\sf max}^2\log^2 n\right)^{K+1}\sqrt{n_2}\omega_{\sf max}\log n,
\end{align*}
provided that $C_3^2 \geq 4C_4(C_5C_{\sf b}^2 + C_5C_{\sf b} + (C_{\sf b} + 1)C_5^2)$. Here, the second and the third inequalities are due to the assumption~\eqref{ineq64}.

\item {\em Bounding $\tau_2$.} By virtue of \eqref{ineq10} and the induction hypotheses, on the same event $\mathcal{E}_1 \cap \mathcal{E}_2$ we have 
\begin{align*}
	\tau_2
	&\leq \sum_{\ell=0}^{K-1}\left[C_5\left(\sqrt{n_1n_2} + n_1\right)\omega_{\sf max}^2\log^2 n\right]^{\ell+2}\cdot C_3\sqrt{\frac{\mu r}{n_2}}\left(C_3\left(\sqrt{n_1n_2} + n_1\right)\omega_{\sf max}^2\log^2 n\right)^{K - 1 - \ell}\sqrt{n_2}\omega_{\sf max}\log n\\
	&\leq \sum_{\ell=0}^{K-1}\frac{1}{2^{\ell + 2}}C_3\sqrt{\frac{\mu r}{n_2}}\left(C_3\left(\sqrt{n_1n_2} + n_1\right)\omega_{\sf max}^2\log^2 n\right)^{K+1}\sqrt{n_2}\omega_{\sf max}\log n\\
	&\leq \frac{C_3}{2}\sqrt{\frac{\mu r}{n_2}}\left(C_3\left(\sqrt{n_1n_2} + n_1\right)\omega_{\sf max}^2\log^2 n\right)^{K+1}\sqrt{n_2}\omega_{\sf max}\log n,
\end{align*}
with the proviso that $C_3 \geq 2C_5$.

\end{itemize}

\noindent 
Putting the previous bounds on $\tau_1$ and $\tau_2$ together with \eqref{ineq12} and \eqref{ineq11}, we arrive at the following result that holds on the event $\mathcal{E}_1 \cap \mathcal{E}_2$: 
\begin{align}\label{ineq:induction_step_1}
	&\left\|\left[\mathcal{P}_{\sf off\text{-}diag}\left(\bm{E}\bm{E}^\top\right)\right]^{K+1}\bm{E}\bm{V}^\star\right\|_{2,\infty}\notag\\
	&\quad\leq C_3\sqrt{\frac{\mu r}{n_2}}\left(C_3\left(\sqrt{n_1n_2} + n_1\right)\omega_{\sf max}^2\log^2 n\right)^{K+1}\sqrt{n_2}\omega_{\sf max}\log n,
\end{align}
provided that $C_3^2 \geq 4C_4(C_5C_{\sf b}^2 + C_5C_{\sf b} + (C_{\sf b} + 1)C_5^2)$ and $C_3 \geq 2C_5$.

\paragraph{Step 2.2: bounding $\|\bm{E}^\top[\mathcal{P}_{\sf off\text{-}diag}(\bm{E}\bm{E}^\top)]^{K+1}\bm{E}\bm{V}^\star\|_{2,\infty}$.}

We then move on to the quantity of interest in \eqref{ineq3}.
For any $j \in [n_2]$, it can be easily verified that
\begin{align}\label{ineq13}
	\left(\bm{E}^\top\left[\mathcal{P}_{\sf off\text{-}diag}\left(\bm{E}\bm{E}^\top\right)\right]^{K+1}\bm{E}\bm{V}^\star\right)_{j, :} = \bm{E}_{:, j}^\top\left[\mathcal{P}_{\sf off\text{-}diag}\left(\bm{E}\bm{E}^\top\right)\right]^{K+1}\bm{E}\bm{V}^\star.
\end{align}
Recalling that $ \bm{E}^{(:, -j)} = \mathcal{P}_{:, -j}(\bm{E})$ and $ \bm{E}^{(:, j)} = \mathcal{P}_{:, j}(\bm{E})$, we have 
\begin{align*}
	\mathcal{P}_{\sf off\text{-}diag}\left(\bm{E}\bm{E}^\top\right) = \mathcal{P}_{\sf off\text{-}diag}\big( \bm{E}^{(:, j)} \bm{E}^{(:, j)\top}\big) + \mathcal{P}_{\sf off\text{-}diag}\big( \bm{E}^{(:, -j)}\bm{E}^{(:, -j)\top}\big).
\end{align*}
For any matrices $\bm{A}, \bm{B} \in \bbR^{n_1 \times n_1}$, it is straightforward to show that
\begin{align*}
	\left(\bm{A} + \bm{B}\right)^{K+1} = \bm{B}^{K+1} + \sum_{\ell = 0}^{K}\bm{B}^\ell\bm{A}\left(\bm{A} + \bm{B}\right)^{K-\ell},
\end{align*}
and consequently one has
\begin{align*}
	\left[\mathcal{P}_{\sf off\text{-}diag}\left(\bm{E}\bm{E}^\top\right)\right]^{K+1} &= \big[\mathcal{P}_{\sf off\text{-}diag}\big( \bm{E}^{(:, -j)} \bm{E}^{(:, -j)\top}\big)\big]^{K+1}\\ &\quad + \sum_{\ell = 0}^{K}\big[\mathcal{P}_{\sf off\text{-}diag}\big( \bm{E}^{(:, -j)} \bm{E}^{(:, -j)\top}\big)\big]^{\ell}\mathcal{P}_{\sf off\text{-}diag}\big( \bm{E}^{(:, j)}\bm{E}^{(:, j)\top}\big)\left[\mathcal{P}_{\sf off\text{-}diag}\left(\bm{E}\bm{E}^\top\right)\right]^{K-\ell}.
\end{align*}
As a result, we can express $\bm{E}_{:, j}^\top[\mathcal{P}_{\sf off\text{-}diag}(\bm{E}\bm{E}^\top)]^{K+1}\bm{E}\bm{V}^\star$ in terms of a sum of vectors as follows:
\begin{align}\label{ineq14}
	&\bm{E}_{:, j}^\top\left[\mathcal{P}_{\sf off\text{-}diag}\left(\bm{E}\bm{E}^\top\right)\right]^{K+1}\bm{E}\bm{V}^\star\notag\\
	&\quad=\bm{E}_{:, j}^\top\big[\mathcal{P}_{\sf off\text{-}diag}\big( \bm{E}^{(:, -j)}\bm{E}^{(:, -j)\top}\big)\big]^{K+1}\bm{E}\bm{V}^\star\notag\\
	&\qquad+\sum_{\ell = 0}^{K}\bm{E}_{:, j}^\top\big[\mathcal{P}_{\sf off\text{-}diag}\big( \bm{E}^{(:, -j)}\bm{E}^{(:, -j)\top}\big)\big]^{\ell}\mathcal{P}_{\sf off\text{-}diag}\big( \bm{E}^{(:, j)} \bm{E}^{(:, j)\top}\big)\left[\mathcal{P}_{\sf off\text{-}diag}\left(\bm{E}\bm{E}^\top\right)\right]^{K-\ell}\bm{E}\bm{V}^\star\notag\\
	&\quad=\underbrace{\bm{E}_{:, j}^\top\big[\mathcal{P}_{\sf off\text{-}diag}\big( \bm{E}^{(:, -j)}\bm{E}^{(:, -j)\top}\big)\big]^{K+1} \bm{E}^{(:, -j)}\bm{V}^\star}_{=:\bm{b}_1}\notag\\
	&\qquad + \underbrace{\bm{E}_{:, j}^\top\big[\mathcal{P}_{\sf off\text{-}diag}\big( \bm{E}^{(:, -j)}\bm{E}^{(:, -j)\top}\big)\big]^{K+1} \bm{E}^{(:, j)}\bm{V}^\star}_{=:\bm{b}_2}\notag\\
	&\qquad + \underbrace{\sum_{\ell = 0}^{K}\bm{E}_{:, j}^\top\big[\mathcal{P}_{\sf off\text{-}diag}\big( \bm{E}^{(:, -j)}\bm{E}^{(:, -j)\top}\big)\big]^{\ell} \bm{E}^{(:, j)} \bm{E}^{(:, j)\top}\left[\mathcal{P}_{\sf off\text{-}diag}\left(\bm{E}\bm{E}^\top\right)\right]^{K-\ell}\bm{E}\bm{V}^\star}_{=:\bm{b}_3}\notag\\
	&\qquad - \underbrace{\sum_{\ell = 0}^{K}\bm{E}_{:, j}^\top\big[\mathcal{P}_{\sf off\text{-}diag}\big( \bm{E}^{(:, -j)}\bm{E}^{(:, -j)\top}\big)\big]^{\ell}\mathcal{P}_{\sf diag}\big( \bm{E}^{(:, j)} \bm{E}^{(:, j)\top}\big)\left[\mathcal{P}_{\sf off\text{-}diag}\left(\bm{E}\bm{E}^\top\right)\right]^{K-\ell}\bm{E}\bm{V}^\star}_{=:\bm{b}_4}, 
\end{align}
thus motivating us to bound each of these terms $\|\bm{b}_1\|_2, \|\bm{b}_2\|_2, \|\bm{b}_3\|_2$ and $\|\bm{b}_4\|_2$ separately. Let $\mathcal{E}_3$ denote the following event:
\begin{align}\label{ineq15}
	\mathcal{E}_3 &= \bigg\{\forall 0 \leq \ell \leq K, \left\|\bm{E}^\top\left[\mathcal{P}_{\sf off\text{-}diag}\left(\bm{E}\bm{E}^\top\right)\right]^\ell\bm{E}\bm{V}^\star\right\|_{2,\infty}\notag\\ &\hspace{2cm}\leq C_4\sqrt{\frac{\mu r}{n_2}}\left(C_3\left(\sqrt{n_1n_2} + n_1\right)\omega_{\sf max}^2\log^2 n\right)^{\ell}\left(\sqrt{n_2}B\omega_{\sf max}\log n + \left(\sqrt{n_1n_2} + n_1\right)\omega_{\sf max}^2\right)\log^2 n,\notag\\
	&\hspace{1cm}\left\|\left[\mathcal{P}_{\sf off\text{-}diag}\left(\bm{E}\bm{E}^\top\right)\right]^{\ell}\bm{E}\bm{V}^\star\right\|_{2,\infty}\leq C_3\sqrt{\frac{\mu r}{n_2}}\left(C_3\left(\sqrt{n_1n_2} + n_1\right)\omega_{\sf max}^2\log^2 n\right)^{\ell}\sqrt{n_2}\omega_{\sf max}\log n,\notag\\
	&\qquad\forall j \in [n_2], \left\|\big[\mathcal{P}_{\sf off\text{-}diag}\big( \bm{E}^{(:, -j)}\bm{E}^{(:, -j)\top}\big)\big]^{K+1} \bm{E}^{(:, -j)}\bm{V}^\star\right\|_{2,\infty}\notag\\
	& \hspace{3cm}\leq C_3\sqrt{\frac{\mu r}{n_2}}\left(C_3\left(\sqrt{n_1n_2} + n_1\right)\omega_{\sf max}^2\log^2 n\right)^{K+1}\sqrt{n_2}\omega_{\sf max}\log n\bigg\}. 
\end{align}
The induction hypotheses and  \eqref{ineq:induction_step_1} taken together with the union bound indicate that 
\begin{align*}
	\bbP\left(\mathcal{E}_3\right) \geq 1 - C_1(n_2+1)(n_1+2)(n+3)^{2K}n^{-C_2\log n}.
\end{align*}
By virtue of \eqref{ineq4c}, \eqref{ineq4d} and the independence between $[\mathcal{P}_{\sf off\text{-}diag}( \bm{E}^{(:, -j)}\bm{E}^{(:, -j)\top})]^{K+1} \bm{E}^{(:, -j)}\bm{V}^\star$ and $\bm{E}_{:, j}$, one has, with probability exceeding $1 - O(n^{-C_2\log n})$,
\begin{align}\label{ineq16}
	\max_{j \in [n_2]}\left\|\bm{E}_{:,j}\right\|_2^2 \leq C_5\left(B^2\log^2 n + \omega_{\sf col}^2\right)
\end{align}
and
\begin{align}\label{ineq17}
	\left\|\bm{b}_1\right\|_2 \leq C_5\left(B\log^2 n + \omega_{\sf col}\log n\right)\left\|\big[\mathcal{P}_{\sf off\text{-}diag}\big( \bm{E}^{(:, -j)}\bm{E}^{(:, -j)\top}\big)\big]^{K+1} \bm{E}^{(:, -j)}\bm{V}^\star\right\|_{2,\infty}.
\end{align}
Applying Lemma \ref{lm:off-diag} and the union bound yields that with probability exceeding $1 - O(n^{-C_2\log n})$, 
\begin{align}\label{ineq18}
	\big\|\mathcal{P}_{\sf off\text{-}diag}\big( \bm{E}^{(:, -j)}\bm{E}^{(:, -j)\top}\big)\big\| \leq C_5\left(\sqrt{n_1n_2} + n_1\right)\omega_{\sf max}^2\log^2 n
\end{align}
for all $j \in [n_2]$. 
Let $\mathcal{E}_4 = \{\eqref{ineq16}, \eqref{ineq17} \text{ and } \eqref{ineq18} \text{ hold}\}$ and $\mathcal{E}_5 = \mathcal{E}_3 \cap \mathcal{E}_4$. Thus, $\bbP(\mathcal{E}_4) \geq 1 - O(n^{-C_2\log n})$, and as a result, 
\begin{align*}
	\bbP\left(\mathcal{E}_5\right) \geq 1 - C_1(n+2)^2(n+3)^{2K}n^{-C_2\log n}.
\end{align*}
Armed with these events, we shall bound $\bm{b}_1,\ldots,\bm{b}_5$ separately in what follows.

\begin{itemize}

\item {\em Bounding $\left\|\bm{b}_1\right\|_2$.} In view of \eqref{ineq17}, \eqref{ineq15} and Assumption \ref{assump:hetero2},  we know that on the event $\mathcal{E}_5$, 
\begin{align}\label{ineq:beta_1}
	\left\|\bm{b}_1\right\|_2 &\leq C_5\left(B\log^2 n + \omega_{\sf col}\log n\right)\cdot C_3\sqrt{\frac{\mu r}{n_2}}\left(C_3\left(\sqrt{n_1n_2} + n_1\right)\omega_{\sf max}^2\log^2 n\right)^{K+1}\sqrt{n_2}\omega_{\sf max}\log n\notag\\
	&\leq  C_3C_5\sqrt{\frac{\mu r}{n_2}}\left(C_3\left(\sqrt{n_1n_2} + n_1\right)\omega_{\sf max}^2\log^2 n\right)^{K+1}\left(\sqrt{n_2}B\omega_{\sf max}\log n + \sqrt{n_1n_2}\omega_{\sf max}^2\right)\log^2 n\notag\\
	&\leq \frac{C_4}{4}\sqrt{\frac{\mu r}{n_2}}\left(C_3\left(\sqrt{n_1n_2} + n_1\right)\omega_{\sf max}^2\log^2 n\right)^{K+1}\left(\sqrt{n_2}B\omega_{\sf max}\log n + \sqrt{n_1n_2}\omega_{\sf max}^2\right)\log^2 n,
\end{align}
as long as $C_4 \geq 4C_3C_5$.

\item {\em Bounding $\left\|\bm{b}_2\right\|_2$.} Turning to $\bm{b}_2$, we recognize that $\bm{E}_{:, j}^\top[\mathcal{P}_{\sf off\text{-}diag}( \bm{E}^{(:, -j)}\bm{E}^{(:, -j)\top})]^{K+1} \bm{E}^{(:, j)}$ is a vector with only one nonzero entry $\bm{E}_{:, j}^\top[\mathcal{P}_{\sf off\text{-}diag}( \bm{E}^{(:, -j)}\bm{E}^{(:, -j)\top})]^{K+1}\bm{E}_{:, j}$. 
	By virtue of \eqref{ineq16}, \eqref{ineq18} and the assumption \eqref{ineq64}, one sees that on the event $\mathcal{E}_5$, 
\begin{align}\label{ineq:beta_2}
	 &\left\|\bm{b}_2\right\|_2 \leq \left|\bm{E}_{:, j}^\top\big[\mathcal{P}_{\sf off\text{-}diag}\big( \bm{E}^{(:, -j)}\bm{E}^{(:, -j)\top}\big)\big]^{K+1}\bm{E}_{:, j}\right|\left\|\bm{V}^\star\right\|_{2,\infty}\notag\\
	&\leq \left\|\bm{E}_{:, j}\right\|_2^2\big\|\mathcal{P}_{\sf off\text{-}diag}\big( \bm{E}^{(:, -j)}\bm{E}^{(:, -j)\top}\big)\big\|^{K+1}\sqrt{\frac{\mu r}{n_2}}\notag\\
	&\leq C_5\left(B^2\log^2 n + \omega_{\sf col}^2\right)\cdot \left(C_5\left(\sqrt{n_1n_2} + n_1\right)\omega_{\sf max}^2\log^2 n\right)^{K+1}\sqrt{\frac{\mu r}{n_2}}\notag\\
	&\leq C_5(C_{\sf b} + 1)\sqrt{\frac{\mu r}{n_2}}\left(C_3\left(\sqrt{n_1n_2} + n_1\right)\omega_{\sf max}^2\log^2 n\right)^{K+1}\left(\sqrt{n_2}B\omega_{\sf max}\log n + n_1\omega_{\sf max}^2\right)\log^2 n\notag\\
	&\leq \frac{C_4}{4}\sqrt{\frac{\mu r}{n_2}}\left(C_3\left(\sqrt{n_1n_2} + n_1\right)\omega_{\sf max}^2\log^2 n\right)^{K+1}\left(\sqrt{n_2}B\omega_{\sf max}\log n + \left(\sqrt{n_1n_2} + n_1\right)\omega_{\sf max}^2\right)\log^2 n,
\end{align}
provided that $C_4 \geq 4C_5(C_{\sf b} + 1)$.

\item {\em Bounding $\left\|\bm{b}_3\right\|_2$.} With regards to $\bm{b}_3$, repeating a similar argument as for \eqref{ineq:beta_2} shows that on the same event, it holds that
\begin{align}\label{ineq:beta_3}
	 &\left\|\bm{b}_3\right\|_2 \leq \sum_{\ell = 0}^{K}\left|\bm{E}_{:, j}^\top\big[\mathcal{P}_{\sf off\text{-}diag}\big( \bm{E}^{(:, -j)}\bm{E}^{(:, -j)\top}\big)\big]^{\ell}\bm{E}_{:, j}\right|\left\|\bm{E}^\top\left[\mathcal{P}_{\sf off\text{-}diag}\left(\bm{E}\bm{E}^\top\right)\right]^{K-\ell}\bm{E}\bm{V}^\star\right\|_{2,\infty}\notag\\
	&\leq \sum_{\ell = 0}^{K}\left\|\bm{E}_{:, j}\right\|_2^2\big\|\mathcal{P}_{\sf off\text{-}diag}\big( \bm{E}^{(:, -j)}\bm{E}^{(:, -j)\top}\big)\big\|^{\ell}\cdot C_4\sqrt{\frac{\mu r}{n_2}}\left(C_3\left(\sqrt{n_1n_2} + n_1\right)\omega_{\sf max}^2\log^2 n\right)^{K-\ell}\notag\\&\hspace{1.2cm}\cdot\left(\sqrt{n_2}B\omega_{\sf max}\log n + \left(\sqrt{n_1n_2} + n_1\right)\omega_{\sf max}^2\right)\log^2 n\notag\\
	&\leq \sum_{\ell = 0}^{K}C_5\left(B^2\log^2 n + \omega_{\sf col}^2\right)\cdot\left(C_5\left(\sqrt{n_1n_2} + n_1\right)\omega_{\sf max}^2\log^2 n\right)^\ell\cdot C_4\sqrt{\frac{\mu r}{n_2}}\left(C_3\left(\sqrt{n_1n_2} + n_1\right)\omega_{\sf max}^2\log^2 n\right)^{K-\ell}\notag\\&\hspace{1.2cm}\cdot\left(\sqrt{n_2}B\omega_{\sf max}\log n + \left(\sqrt{n_1n_2} + n_1\right)\omega_{\sf max}^2\right)\log^2 n\notag\\
	&\leq \sum_{\ell = 0}^{K}\frac{1}{2^\ell}C_4C_5\left(C_{\sf b}^2\sqrt{n_1n_2}\omega_{\sf max}^2 + n_1\omega_{\sf max}^2\right)\sqrt{\frac{\mu r}{n_2}}\left(C_3\left(\sqrt{n_1n_2} + n_1\right)\omega_{\sf max}^2\log^2 n\right)^{K}\notag\\&\hspace{1.2cm}\cdot\left(\sqrt{n_2}B\omega_{\sf max}\log n + \left(\sqrt{n_1n_2} + n_1\right)\omega_{\sf max}^2\right)\log^2 n\notag\\
	&\leq 2C_4C_5\left(C_{\sf b}^2+1\right)C_3^{-1}\sqrt{\frac{\mu r}{n_2}}\left(C_3\left(\sqrt{n_1n_2} + n_1\right)\omega_{\sf max}^2\log^2 n\right)^{K+1}\notag\\&\hspace{1cm}\cdot\left(\sqrt{n_2}B\omega_{\sf max}\log n + \left(\sqrt{n_1n_2} + n_1\right)\omega_{\sf max}^2\right)\log^2 n\notag\\
	&\leq \frac{C_4}{4}\sqrt{\frac{\mu r}{n_2}}\left(C_3\left(\sqrt{n_1n_2} + n_1\right)\omega_{\sf max}^2\log^2 n\right)^{K+1}\left(\sqrt{n_2}B\omega_{\sf max}\log n + \left(\sqrt{n_1n_2} + n_1\right)\omega_{\sf max}^2\right)\log^2 n,
\end{align}
with the proviso that $C_3 \geq 8C_5(C_{\sf b}^2+1)$.

\item {\em Bounding $\left\|\bm{b}_4\right\|_2$.} Regarding $\bm{b}_4$, using the elementary bound $\|\bm{a}^\top\bm{B}\|_2 \leq \|\bm{a}\|_1\|\bm{B}\|_{2,\infty}$ for any vector $\bm{a}$ and matrix $\bm{B}$ and applying \eqref{ineq16}, \eqref{ineq18} and \eqref{ineq15}, we can demonstrate that on the event $\mathcal{E}_5$,
\begin{align}\label{ineq:beta_4}
	\left\|\bm{b}_4\right\|_2 &\leq \sum_{\ell = 0}^{K}\left\|\bm{E}_{:, j}^\top\big[\mathcal{P}_{\sf off\text{-}diag}\big( \bm{E}^{(:, -j)}\bm{E}^{(:, -j)\top}\big)\big]^{\ell}\mathcal{P}_{\sf diag}\big( \bm{E}^{(:, j)} \bm{E}^{(:, j)\top}\big)\right\|_1\left\|\left[\mathcal{P}_{\sf off\text{-}diag}\left(\bm{E}\bm{E}^\top\right)\right]^{K-\ell}\bm{E}\bm{V}^\star\right\|_{2,\infty}\notag\\
	&\leq \sum_{\ell = 0}^{K}\left\|\bm{E}_{:, j}^\top\big[\mathcal{P}_{\sf off\text{-}diag}\big( \bm{E}^{(:, -j)}\bm{E}^{(:, -j)\top}\big)\big]^{\ell}\right\|_2\left\|\bm{E}_{:, j}\right\|_2^2\left\|\left[\mathcal{P}_{\sf off\text{-}diag}\left(\bm{E}\bm{E}^\top\right)\right]^{K-\ell}\bm{E}\bm{V}^\star\right\|_{2,\infty}\notag\\&\leq \sum_{\ell = 0}^{K}\sqrt{C_5}\left(B\log n + \omega_{\sf col}\right)\cdot \left(C_5\left(\sqrt{n_1n_2} + n_1\right)\omega_{\sf max}^2\log^2 n\right)^\ell\cdot C_5\left(B^2\log^2 n + \omega_{\sf col}^2\right)\notag\\
	&\hspace{1cm}\cdot C_3\sqrt{\frac{\mu r}{n_2}}\left(C_3\left(\sqrt{n_1n_2} + n_1\right)\omega_{\sf max}^2\log^2 n\right)^{K-\ell}\sqrt{n_2}\omega_{\sf max}\log n\notag\\
	&\leq \sum_{\ell = 0}^{K}\frac{1}{2^{\ell+1}}C_3\sqrt{C_5}\left(\sqrt{n_2}B\omega_{\sf max}\log n + \sqrt{n_1n_2}\omega_{\sf max}^2\right)\sqrt{\frac{\mu r}{n_2}}\left(C_3\left(\sqrt{n_1n_2} + n_1\right)\omega_{\sf max}^2\log^2 n\right)^{K+1}\log n\notag\\
	&\leq \frac{C_4}{4}\sqrt{\frac{\mu r}{n_2}}\left(C_3\left(\sqrt{n_1n_2} + n_1\right)\omega_{\sf max}^2\log^2 n\right)^{K+1}\left(\sqrt{n_2}B\omega_{\sf max}\log n + \left(\sqrt{n_1n_2} + n_1\right)\omega_{\sf max}^2\right)\log^2 n,
\end{align}
provided that $C_3 \geq 2C_5$ and $C_4 \geq 4C_3\sqrt{C_5}$.
\end{itemize}

Combine \eqref{ineq14}, \eqref{ineq:beta_1}, \eqref{ineq:beta_2}, \eqref{ineq:beta_3} and \eqref{ineq:beta_4} to reach that: on the $\mathcal{E}_5$ one has 
\begin{align}\label{ineq:induction_step_2}
	&\left\|\bm{E}^\top\left[\mathcal{P}_{\sf off\text{-}diag}\left(\bm{E}\bm{E}^\top\right)\right]^{K+1}\bm{E}\bm{V}^\star\right\|_{2,\infty}\notag\\ &\quad= \max_{j \in [n_2]}\left\|\bm{E}_{:, j}^\top\left[\mathcal{P}_{\sf off\text{-}diag}\left(\bm{E}\bm{E}^\top\right)\right]^{K+1}\bm{E}\bm{V}^\star\right\|_2\notag\\
	&\quad \leq C_4\sqrt{\frac{\mu r}{n_2}}\left(C_3\left(\sqrt{n_1n_2} + n_1\right)\omega_{\sf max}^2\log^2 n\right)^{K+1}\left(\sqrt{n_2}B\omega_{\sf max}\log n + \left(\sqrt{n_1n_2} + n_1\right)\omega_{\sf max}^2\right)\log^2 n,
\end{align}
with the proviso that $C_3 \geq 8C_5(C_{\sf b}^2+1)$ and $C_4 \geq 4C_3C_5$.

In summary, if the claim \eqref{ineq11} is valid, then with probability exceeding $1 - C_1(n+2)(n+3)^{2k}n^{-C_2\log n}$, \eqref{ineq:induction_step_1} and \eqref{ineq:induction_step_2} hold simultaneously  as long as $C_4 = 4C_3C_5$ and $C_3 \geq 32C_5^2(C_{\sf b}^2+1)$. We have thus finished the proof of the induction hypotheses \eqref{ineq2} and \eqref{ineq3}, 
as long as  the claim \eqref{ineq11} can be justified; see below.

\paragraph{Proof of the claim~\eqref{ineq11}.} We first make the observation that
\begin{subequations}
	\begin{align}
		\bm{E}^{(i, :)\top}\bm{E}^{(-i, :)} = \bm{E}^{(-i, :)\top}\bm{E}^{(i, :)} = \bm{0},\label{eq1}\\
		\mathcal{P}_{\sf diag}\big(\bm{E}^{(i, :)}\bm{E}^{(-i, :)\top}\big) = \mathcal{P}_{\sf diag}\big(\bm{E}^{(-i, :)}\bm{E}^{(i, :)\top}\big) = \bm{0},\label{eq2}\\
		\bm{E}^{(i, :)\top}\mathcal{P}_{\sf diag}\big(\bm{E}^{(-i, :)}\bm{E}^{(-i, :)\top}\big) = \bm{E}^{(-i, :)\top}\mathcal{P}_{\sf diag}\big(\bm{E}^{(i, :)}\bm{E}^{(i, :)\top}\big) = \bm{0},\label{eq3}\\
		\mathcal{P}_{\sf diag}\big(\bm{E}^{(-i, :)}\bm{E}^{(-i, :)\top}\big)\mathcal{P}_{\sf diag}\big(\bm{E}^{(i, :)}\bm{E}^{(i, :)\top}\big) = \bm{0}\label{eq4}.
	\end{align}
\end{subequations}
The identities \eqref{eq1}, \eqref{eq2} and \eqref{eq3} taken collectively give
\begin{align}\label{eq5}
	\mathcal{P}_{\sf off\text{-}diag}\left(\bm{E}\bm{E}^\top\right) = \mathcal{P}_{\sf off\text{-}diag}\big(\bm{E}^{(-i, :)}\bm{E}^{(-i, :)\top}\big)
	+ \mathcal{P}_{\sf off\text{-}diag}\big(\bm{E}^{(i, :)}\bm{E}^{(i, :)\top}\big)
	+ \bm{E}^{(i, :)}\bm{E}^{(-i, :)\top} + \bm{E}^{(-i, :)}\bm{E}^{(i, :)\top}
\end{align}
and
\begin{align}
	&\bm{E}_{i,:}\bm{E}^{(-i, :)\top}\left[\mathcal{P}_{\sf off\text{-}diag}\left(\bm{E}\bm{E}^\top\right)\right]^{K}\bm{E}\bm{V}^\star \notag\\
	&= \bm{E}_{i,:}\bm{E}^{(-i, :)\top}\big[\mathcal{P}_{\sf off\text{-}diag}\big(\bm{E}^{(-i, :)}\bm{E}^{(-i, :)\top}\big)\big]\left[\mathcal{P}_{\sf off\text{-}diag}\left(\bm{E}\bm{E}^\top\right)\right]^{K-1}\bm{E}\bm{V}^\star \notag\\
	&\quad + \bm{E}_{i,:}\bm{E}^{(-i, :)\top}\bm{E}^{(-i, :)}\bm{E}^{(i, :)\top}\left[\mathcal{P}_{\sf off\text{-}diag}\left(\bm{E}\bm{E}^\top\right)\right]^{K-1}\bm{E}\bm{V}^\star.
	\label{eq:E-complex-identity}
\end{align}
Combining \eqref{eq1}-\eqref{eq4} and \eqref{eq5} then yields
\begin{align*}
	&\big[\mathcal{P}_{\sf off\text{-}diag}\big(\bm{E}^{(-i, :)}\bm{E}^{(-i, :)\top}\big)\big]\left[\mathcal{P}_{\sf off\text{-}diag}\left(\bm{E}\bm{E}^\top\right)\right]\\
	&= \big[\mathcal{P}_{\sf off\text{-}diag}\big(\bm{E}^{(-i, :)}\bm{E}^{(-i, :)\top}\big)\big]^2 + \big[\mathcal{P}_{\sf off\text{-}diag}\big(\bm{E}^{(-i, :)}\bm{E}^{(-i, :)\top}\big)\big]\bm{E}^{(-i, :)}\bm{E}^{(i, :)\top}\\
	&\quad + \mathcal{P}_{\sf off\text{-}diag}\big(\bm{E}^{(-i, :)}\bm{E}^{(-i, :)\top}\big)\mathcal{P}_{\sf off\text{-}diag}\big(\bm{E}^{(i, :)}\bm{E}^{(i, :)\top}\big)\\
	&\quad + \big[\bm{E}^{(-i, :)}\bm{E}^{(-i, :)\top} - \mathcal{P}_{\sf diag}\big(\bm{E}^{(-i, :)}\bm{E}^{(-i, :)\top}\big)\big]\bm{E}^{(i, :)}\bm{E}^{(-i, :)\top}\\
	&= \big[\mathcal{P}_{\sf off\text{-}diag}\big(\bm{E}^{(-i, :)}\bm{E}^{(-i, :)\top}\big)\big]^2 +\big[\mathcal{P}_{\sf off\text{-}diag}\big(\bm{E}^{(-i, :)}\bm{E}^{(-i, :)\top}\big)\big]\bm{E}^{(-i, :)}\bm{E}^{(i, :)\top}.
\end{align*}
As a consequence, we can deduce that
\begin{align*}
	&\bm{E}_{i,:}\bm{E}^{(-i, :)\top}\left[\mathcal{P}_{\sf off\text{-}diag}\left(\bm{E}\bm{E}^\top\right)\right]^{K}\bm{E}\bm{V}^\star\\
	&= \bm{E}_{i,:}\bm{E}^{(-i, :)\top}\big[\mathcal{P}_{\sf off\text{-}diag}\big(\bm{E}^{(-i, :)}\bm{E}^{(-i, :)\top}\big)\big]^2\left[\mathcal{P}_{\sf off\text{-}diag}\left(\bm{E}\bm{E}^\top\right)\right]^{K-2}\bm{E}\bm{V}^\star\\
	&\quad + \bm{E}_{i,:}\bm{E}^{(-i, :)\top}\bm{E}^{(-i, :)}\bm{E}^{(i, :)\top}\left[\mathcal{P}_{\sf off\text{-}diag}\left(\bm{E}\bm{E}^\top\right)\right]^{K-1}\bm{E}\bm{V}^\star\\
	&\quad +  \bm{E}_{i,:}\bm{E}^{(-i, :)\top}\big[\mathcal{P}_{\sf off\text{-}diag}\big(\bm{E}^{(-i, :)}\bm{E}^{(-i, :)\top}\big)\big]\bm{E}^{(-i, :)}\bm{E}^{(i, :)\top}\left[\mathcal{P}_{\sf off\text{-}diag}\left(\bm{E}\bm{E}^\top\right)\right]^{K-2}\bm{E}\bm{V}^\star.
\end{align*}
Repeating the same argument yields
\begin{align}\label{eq6}
	&\bm{E}_{i,:}\bm{E}^{(-i, :)\top}\left[\mathcal{P}_{\sf off\text{-}diag}\left(\bm{E}\bm{E}^\top\right)\right]^{K}\bm{E}\bm{V}^\star\notag\\
	&=\bm{E}_{i,:}\bm{E}^{(-i, :)\top}\big[\mathcal{P}_{\sf off\text{-}diag}\big(\bm{E}^{(-i, :)}\bm{E}^{(-i, :)\top}\big)\big]^K\bm{E}\bm{V}^\star\notag\\
	&\quad + \sum_{\ell = 0}^{K-1}\bm{E}_{i,:}\bm{E}^{(-i, :)\top}\big[\mathcal{P}_{\sf off\text{-}diag}\big(\bm{E}^{(-i, :)}\bm{E}^{(-i, :)\top}\big)\big]^{\ell}\bm{E}^{(-i, :)}\bm{E}^{(i, :)\top}\left[\mathcal{P}_{\sf off\text{-}diag}\left(\bm{E}\bm{E}^\top\right)\right]^{K-1-\ell}\bm{E}\bm{V}^\star\notag\\
	&=\bm{E}_{i,:}\bm{E}^{(-i, :)\top}\big[\mathcal{P}_{\sf off\text{-}diag}\big(\bm{E}^{(-i, :)}\bm{E}^{(-i, :)\top}\big)\big]^K\bm{E}^{(-i, :)}\bm{V}^\star\notag\\
	&\quad + \sum_{\ell = 0}^{K-1}\bm{E}_{i,:}\bm{E}^{(-i, :)\top}\big[\mathcal{P}_{\sf off\text{-}diag}\big(\bm{E}^{(-i, :)}\bm{E}^{(-i, :)\top}\big)\big]^{\ell}\bm{E}^{(-i, :)}\bm{E}^{(i, :)\top}\left[\mathcal{P}_{\sf off\text{-}diag}\left(\bm{E}\bm{E}^\top\right)\right]^{K-1-\ell}\bm{E}\bm{V}^\star.
\end{align}
Since $\bm{E}_{i,:}\bm{E}^{(-i, :)\top}[\mathcal{P}_{\sf off\text{-}diag}(\bm{E}^{(-i, :)}\bm{E}^{(-i, :)\top})]^{\ell}\bm{E}^{(-i, :)}\bm{E}^{(i, :)\top}$ is a vector with only one nonzero entry $$\bm{E}_{i,:}\bm{E}^{(-i, :)\top}[\mathcal{P}_{\sf off\text{-}diag}(\bm{E}^{(-i, :)}\bm{E}^{(-i, :)\top})]^{\ell}\bm{E}^{(-i, :)}\bm{E}_{i,:}^\top,$$ for any $0 \leq \ell \leq K-1$, one can immediately derive
\begin{align*}
	&\left\|\bm{E}_{i,:}\bm{E}^{(-i, :)\top}\big[\mathcal{P}_{\sf off\text{-}diag}\big(\bm{E}^{(-i, :)}\bm{E}^{(-i, :)\top}\big)\big]^{\ell}\bm{E}^{(-i, :)}\bm{E}^{(i, :)\top}\left[\mathcal{P}_{\sf off\text{-}diag}\left(\bm{E}\bm{E}^\top\right)\right]^{K-1-\ell}\bm{E}\bm{V}^\star\right\|_{2}\\
	&\quad \leq \left|\bm{E}_{i,:}\bm{E}^{(-i, :)\top}\big[\mathcal{P}_{\sf off\text{-}diag}\big(\bm{E}^{(-i, :)}\bm{E}^{(-i, :)\top}\big)\big]^{\ell}\bm{E}^{(-i, :)}\bm{E}_{i,:}^\top\right|\left\|\left[\mathcal{P}_{\sf off\text{-}diag}\left(\bm{E}\bm{E}^\top\right)\right]^{K-1-\ell}\bm{E}\bm{V}^\star\right\|_{2,\infty}\\
	&\quad \leq\big\|\bm{E}_{i,:}\bm{E}^{(-i, :)\top}\big\|_2^2\big\|\mathcal{P}_{\sf off\text{-}diag}\big(\bm{E}^{(-i, :)}\bm{E}^{(-i, :)\top}\big)\big\|^\ell\left\|\left[\mathcal{P}_{\sf off\text{-}diag}\left(\bm{E}\bm{E}^\top\right)\right]^{K-1-\ell}\bm{E}\bm{V}^\star\right\|_{2,\infty}.
\end{align*}
Taking this together with \eqref{eq6} and the triangle inequality establishes the advertised result \eqref{ineq11}.

\subsubsection{The general case}\label{sec:general_error_moment1}

Having established the claim for the bounded noise case, 
we can readily turn attention to the more  general case with the noise matrix $\bm{E}$ satisfying Assumption \ref{assump:hetero2}. 
To tackle this scenario, we introduce a properly truncated version $\widetilde{\bm{E}}=[\widetilde{E}_{i,j}]_{(i,j)\in[n_1]\times [n_2]}$, 
which is a zero-mean matrix with entries given by
\begin{align}\label{ineq65}
	\widetilde{E}_{i,j} =  E_{i,j}\mathbbm{1}_{\left\{\left|E_{i,j}\right| \leq B\right\}} - \bbE\left[E_{i,j}\mathbbm{1}_{\left\{\left|E_{i,j}\right| \leq B\right\}}\right].
\end{align}
It is clearly seen that
\begin{align*}
	\mathsf{Var}\big[\widetilde{E}_{i,j}\big] \leq \bbE\left[ \left(E_{i,j}\mathbbm{1}_{\left\{\left|E_{i,j}\right| \leq B\right\}}\right)^2\right] \leq \bbE\left[E_{i,j}^2\right] \leq \omega_{\sf max}^2
\end{align*}
and 
\begin{align*}
	\big|\widetilde{E}_{i,j}\big| \leq 2B.
\end{align*}
Then \eqref{ineq2} and \eqref{ineq3} tell us that with probability $1 - O(n^{-c_1\log n})$, 
\begin{align}\label{ineq66}
	&\left\|\big[\mathcal{P}_{\sf off\text{-}diag}\big(\widetilde{\bm{E}}\widetilde{\bm{E}}^\top\big)\big]^{k}\widetilde{\bm{E}}\bm{V}^\star\right\|_{2,\infty}\leq C_3\sqrt{\frac{\mu r}{n_2}}\left(C_3\left(\sqrt{n_1n_2} + n_1\right)\omega_{\sf max}^2\log^2 n\right)^{k}\sqrt{n_2}\omega_{\sf max}\log n
\end{align}
holds for all $0 \leq k \leq \log n$.

Let $\overline{\bm{E}}$ be another matrix whose entries are given by
\begin{align*}
	\overline{E}_{i,j} = E_{i,j}\mathbbm{1}_{\left\{\left|E_{i,j}\right| \leq B\right\}}.
\end{align*}
In view of the Cauchy-Schwarz inequality, one can derive
\begin{align}\label{ineq67}
	\big\|\widetilde{\bm{E}} - \overline{\bm{E}}\big\| \leq \big\|\widetilde{\bm{E}} - \overline{\bm{E}}\big\|_{\F} \leq \sqrt{n_1n_2}\max_{i,j}\left|\bbE\left[E_{i,j}\mathbbm{1}_{\left\{\left|E_{i,j}\right| \leq B\right\}}\right]\right| \leq \sqrt{n_1n_2}\left(\bbE\left[E_{i,j}^2\right]\bbP\left(\left|E_{i,j}\right| \leq B\right)\right)^{1/2} \leq \frac{\omega_{\sf max}}{n^5}.
\end{align}
By virtue of Lemmas \ref{lm:noise} and \ref{lm:off-diag}, we can see that, with probability exceeding $1 - O(n^{-10})$,
\begin{align}\label{ineq68}
	\big\|\widetilde{\bm{E}}\big\| \lesssim B\sqrt{\log n} + \omega_{\sf col} + \omega_{\sf row} \lesssim \sqrt{n}\omega_{\sf max}
\end{align}
and
\begin{align}\label{ineq69}
	\big\|\mathcal{P}_{\sf off\text{-}diag}\big(\widetilde{\bm{E}}\widetilde{\bm{E}}^\top\big)\big\| \lesssim B^2\log^2n + \omega_{\sf col}\left(\omega_{\sf row} + \omega_{\sf col}\right)\log n.
\end{align}
Combining the above results reveals that, with probability exceeding $1 - O(n^{-10})$, 
\begin{align*}
	\left\|\overline{\bm{E}}\right\| \leq \big\|\widetilde{\bm{E}}\big\| + \big\|\widetilde{\bm{E}} - \overline{\bm{E}}\big\| \leq \sqrt{n}\omega_{\sf max},
\end{align*}
\begin{align*}
	\big\|\mathcal{P}_{\sf off\text{-}diag}\big(\overline{\bm{E}}\,\overline{\bm{E}}^\top\big) - \mathcal{P}_{\sf off\text{-}diag}\big(\widetilde{\bm{E}}\widetilde{\bm{E}}^\top\big)\big\| \leq 2\big\|\overline{\bm{E}}\,\overline{\bm{E}}^\top - \widetilde{\bm{E}}\widetilde{\bm{E}}^\top\big\| \leq 4\big\|\widetilde{\bm{E}} - \overline{\bm{E}}\big\|\big\|\widetilde{\bm{E}}\big\| + 2\big\|\widetilde{\bm{E}} - \overline{\bm{E}}\big\|^2 \lesssim \frac{\omega_{\sf max}^2}{n^{4.5}},
\end{align*}
and for all $0 \leq k \leq \log n$,
\begin{align*}
	&\left\|\big[\mathcal{P}_{\sf off\text{-}diag}\big(\overline{\bm{E}}\,\overline{\bm{E}}^\top\big)\big]^{k}\overline{\bm{E}}\bm{V}^\star - \big[\mathcal{P}_{\sf off\text{-}diag}\big(\widetilde{\bm{E}}\widetilde{\bm{E}}^\top\big)\big]^{k}\widetilde{\bm{E}}\bm{V}^\star\right\|_{2,\infty}\\
	&\quad \leq \sum_{\ell=0}^{k-1}\left\|\left[\mathcal{P}_{\sf off\text{-}diag}\big(\widetilde{\bm{E}}\widetilde{\bm{E}}^\top\big)^\ell\big(\mathcal{P}_{\sf off\text{-}diag}\big(\overline{\bm{E}}\,\overline{\bm{E}}^\top\big) - \mathcal{P}_{\sf off\text{-}diag}\big(\widetilde{\bm{E}}\widetilde{\bm{E}}^\top\big)\big)\right]\left[\mathcal{P}_{\sf off\text{-}diag}\big(\overline{\bm{E}}\,\overline{\bm{E}}^\top\big)\right]^{k-1 -\ell}\overline{\bm{E}}\bm{V}^\star\right\|\\
	&\qquad + \left\|\big[\mathcal{P}_{\sf off\text{-}diag}\big(\widetilde{\bm{E}}\widetilde{\bm{E}}^\top\big)\big]^{k}\big(\overline{\bm{E}} - \widetilde{\bm{E}}\big)\bm{V}^\star\right\|\\
	&\quad \leq \sum_{\ell = 0}^{k-1}\left[C_3\left(\sqrt{n_1n_2} + n_1\right)\omega_{\sf max}^2\log n\right]^\ell\cdot C_3\frac{\omega_{\sf max}^2}{n^{4.5}}\cdot\left[C_3\left(\sqrt{n_1n_2} + n_1\right)\omega_{\sf max}^2\log n\right]^{k-1-\ell}\cdot C_3\sqrt{n}\omega_{\sf max}\\
	&\qquad + \left[C_3\left(\sqrt{n_1n_2} + n_1\right)\omega_{\sf max}^2\log n\right]^k\cdot \frac{\omega_{\sf max}}{n^5}\\
	&\quad \leq (k+1)\left[C_3\left(\sqrt{n_1n_2} + n_1\right)\omega_{\sf max}^2\log n\right]^k\cdot \frac{\omega_{\sf max}}{n^4}\\
	&\quad \ll \sqrt{\frac{\mu r}{n_2}}\left(C_3\left(\sqrt{n_1n_2} + n_1\right)\omega_{\sf max}^2\log^2 n\right)^{k}\sqrt{n_2}\omega_{\sf max}\log n.
\end{align*}
Taking this collectively with \eqref{ineq66} implies that, with probability exceeding $1 - O(n^{-10})$, 
\begin{align}\label{ineq70}
	\left\|\big[\mathcal{P}_{\sf off\text{-}diag}\big(\overline{\bm{E}}\,\overline{\bm{E}}^\top\big)\big]^{k}\overline{\bm{E}}\bm{V}^\star\right\|_{2,\infty} \lesssim \sqrt{\frac{\mu r}{n_2}}\left(C_3\left(\sqrt{n_1n_2} + n_1\right)\omega_{\sf max}^2\log^2 n\right)^{k}\sqrt{n_2}\omega_{\sf max}\log n
\end{align} 
holds for all $0 \leq k \leq \log n$.

To finish up, note that the union bound tell us that with probability exceeding $1 - O(n^{-10})$, 
\begin{align*}
	\overline{\bm{E}} = \bm{E}.
\end{align*}
This combined with inequality \eqref{ineq70} establishes the desired result for the general case.

\subsection{Proof of Lemma \ref{lm:error_moment2}}\label{proof:lm_error_moment2}

We first study the case with bounded noise (i.e., the case that \eqref{ineq64} always holds). Akin to the proof of Lemma \ref{lm:error_moment1}, we first intend to show that the following statement holds: for any $0 \leq k \leq \log n$ and any noise matrix $\bm{E}$ satisfying Condition 1 in Assumption \ref{assump:hetero} and \eqref{ineq64}, with probability exceeding $1 - O((n+3)^{2k}n^{-C_2\log n})$ one has 
\begin{align}\label{ineq19}
	&\left\|\left[\mathcal{P}_{\sf off\text{-}diag}\left(\bm{E}\bm{E}^\top\right)\right]^{\ell}\bm{U}^\star\right\|_{2,\infty}\leq C_3\sqrt{\frac{\mu r}{n_1}}\left(C_3\left(\sqrt{n_1n_2} + n_1\right)\omega_{\sf max}^2\log^2 n\right)^{\ell}
\end{align}
and
\begin{align}\label{ineq20}
	&\left\|\bm{E}^\top\left[\mathcal{P}_{\sf off\text{-}diag}\left(\bm{E}\bm{E}^\top\right)\right]^\ell\bm{U}^\star\right\|_{2,\infty}\notag\\ &\quad\leq C_4\sqrt{\frac{\mu r}{n_1}}\left(C_3\left(\sqrt{n_1n_2} + n_1\right)\omega_{\sf max}^2\log^2 n\right)^{\ell}\left(B\log^2 n + \sqrt{n_1}\omega_{\sf max}\log n\right) 
\end{align}
simultaneously for all $l$ obeying $0 \leq \ell \leq k$.

Regarding the base case with $k = 0$, it is self-evident that \eqref{ineq19} and \eqref{ineq20} hold with probability exceeding $1 - O(n^{-C_2\log n})$ due to Assumption \ref{assump:incoherence} and \eqref{ineq4d}. Suppose now that with probability exceeding $1 - O((n+3)^{2K}n^{-C_2\log n})$, \eqref{ineq19} and \eqref{ineq20} hold for all $0 \leq \ell \leq K$, 
and we would like to extend the results to $k = K+1$. Similar to \eqref{ineq11} and \eqref{ineq14}, one has
\begin{align}\label{ineq21}
	&\left\|\bm{E}_{i,:}\bm{E}^{(-i, :)\top}\left[\mathcal{P}_{\sf off\text{-}diag}\left(\bm{E}\bm{E}^\top\right)\right]^{K}\bm{U}^\star\right\|_{2}\notag\\
	&\leq \left\|\bm{E}_{i,:}\bm{E}^{(-i, :)\top}\big[\mathcal{P}_{\sf off\text{-}diag}\big(\bm{E}^{(-i, :)}\bm{E}^{(-i, :)\top}\big)\big]^K\bm{U}^\star\right\|_{2}\notag\\
	&\quad + \sum_{\ell=0}^{K-1}\big\|\bm{E}_{i,:}\bm{E}^{(-i, :)\top}\big\|_2^2\big\|\mathcal{P}_{\sf off\text{-}diag}\big(\bm{E}^{(-i, :)}\bm{E}^{(-i, :)\top}\big)\big\|^\ell\left\|\left[\mathcal{P}_{\sf off\text{-}diag}\left(\bm{E}\bm{E}^\top\right)\right]^{K-1-\ell}\bm{U}^\star\right\|_{2,\infty}
\end{align}
and 
\begin{align}\label{ineq22}
	&\left\|\bm{E}_{:, j}^\top\left[\mathcal{P}_{\sf off\text{-}diag}\left(\bm{E}\bm{E}^\top\right)\right]^{K+1}\bm{U}^\star\right\|_2\notag\\
	&\leq \left\|\bm{E}_{:, j}^\top\big[\mathcal{P}_{\sf off\text{-}diag}\big( \bm{E}^{(:, -j)}\bm{E}^{(:, -j)\top}\big)\big]^{K+1}\bm{U}^\star\right\|_2\notag\\
	&\quad +\sum_{\ell = 0}^{K}\left|\bm{E}_{:, j}^\top\big[\mathcal{P}_{\sf off\text{-}diag}\big( \bm{E}^{(:, -j)}\bm{E}^{(:, -j)\top}\big)\big]^{\ell}\bm{E}_{:, j}\right|\left\|\bm{E}^\top\left[\mathcal{P}_{\sf off\text{-}diag}\left(\bm{E}\bm{E}^\top\right)\right]^{K-\ell}\bm{U}^\star\right\|_{2,\infty}\notag\\
	&\quad+ \sum_{\ell = 0}^{K}\left\|\bm{E}_{:, j}^\top\big[\mathcal{P}_{\sf off\text{-}diag}\big( \bm{E}^{(:, -j)}\bm{E}^{(:, -j)\top}\big)\big]^{\ell}\mathcal{P}_{\sf diag}\big( \bm{E}^{(:, j)} \bm{E}^{(:, j)\top}\big)\right\|_1\notag\\&\hspace{1.5cm}\cdot\left\|\left[\mathcal{P}_{\sf off\text{-}diag}\left(\bm{E}\bm{E}^\top\right)\right]^{K-\ell}\bm{U}^\star\right\|_{2,\infty}.
\end{align}
In view of \eqref{ineq6}, \eqref{ineq4d} and Lemma \ref{lm:noise}, for any $\bm{E}$ satisfying Condition 1 in Assumption \ref{assump:hetero} and \eqref{ineq64}, with probability $1 - O(n^{-C_1\log n})$, for all $i \in [n_1]$, one has
\begin{align*}
	\big\|\bm{E}_{i,:}\bm{E}^{(-i, )\top}\bm{U}^\star\big\|_2 &\leq C_5\left(B\big\|\bm{E}^{(-i, )\top}\bm{U}^\star\big\|_{2, \infty}\log^2 n + \omega_{\sf max}\big\|\bm{E}^{(-i, )\top}\bm{U}^\star\big\|_{\F}\log n\right)\\
	&\leq C_5\left(B^2\log^4 n + B\omega_{\sf col}\log^3 n\right)\sqrt{\frac{\mu r}{n_1}} + C_5\sqrt{r}\omega_{\sf max}\left(B\sqrt{\frac{\mu r}{n_1}}\log n + \sqrt{n_2}\omega_{\sf max}\right)\log^2 n\\
	&\leq \left[C_5\left(C_{\sf b}^2\sqrt{n_1n_2} + C_{\sf b}\sqrt{n_1n_2}\right)\omega_{\sf max}^2\sqrt{\frac{\mu r}{n_1}} + C_5\omega_{\sf max}^2\sqrt{\frac{\mu r}{n_1}}\left(C_{\sf b}\sqrt{rn_2} + \sqrt{n_1n_2}\right)\right]\log^2 n\\
	&\leq C_5\left(C_{\sf b} + 1\right)^2\sqrt{n_1n_2}\omega_{\sf max}^2\sqrt{\frac{\mu r}{n_1}}\log^2 n.
\end{align*}
As a result, with the same probability, we have
\begin{align*}
	\left\|\left[\mathcal{P}_{\sf off\text{-}diag}\left(\bm{E}\bm{E}^\top\right)\right]\bm{U}^\star\right\|_{2,\infty} &= \max_{1 \leq i \leq n_1}\big\|\bm{E}_{i,:}\bm{E}^{(-i, )\top}\bm{U}^\star\big\|_2 \leq C_5\left(C_{\sf b} + 1\right)^2\sqrt{n_1n_2}\omega_{\sf max}^2\sqrt{\frac{\mu r}{n_1}}\log^2 n
\end{align*}
and
\begin{align*}
	\left\|\left[\mathcal{P}_{\sf off\text{-}diag}\left(\bm{E}\bm{E}^\top\right)\right]\bm{U}^\star\right\|_{\F} &\leq \sqrt{n_1}\left\|\left[\mathcal{P}_{\sf off\text{-}diag}\left(\bm{E}\bm{E}^\top\right)\right]\bm{U}^\star\right\|_{2,\infty} \leq C_5\left(C_{\sf b} + 1\right)^2\sqrt{n_1n_2}\omega_{\sf max}^2\sqrt{\mu r}\log^2 n.
\end{align*}
Equipped with the previous two inequalities, we can carry out the induction step using a similar argument of Lemma \ref{lm:error_moment1}. 

For the general case where the noise matrix $\bm{E}$ satisfies Assumption \ref{assump:hetero2}, one can get the desired result by using the same truncation trick as in Section \ref{sec:general_error_moment1}.

\subsection{Proof of Lemma \ref{lm:collection_event}}\label{proof: lm_collection_event}
\paragraph{Bounding the spectrum of $\widetilde{\bm{\Sigma}}$.} 
Let us first develop an upper bound (resp.~lower bound) on the singular value perturbation $|\widetilde{\sigma}_{i} - \sigma_{i}^\star|$ (resp.~the spectral gap $\widetilde{\sigma}_{r'}^2 - \widetilde{\sigma}_{r'+1}^2$ for any $r'\in \mathcal{R}'$ defined in \eqref{eq:defn-Rprime-collection-event}). Weyl's inequality tell us that, for all $1 \leq i \leq r$,
\begin{align*}
	\left|\widetilde{\sigma}_{i} - \sigma_{i}^\star\right| \leq \left\|\bm{E}\bm{V}^{\star}\right\| 
	&\lesssim B\sqrt{\frac{\mu r}{n_2}}\log^2 n + \left(r\omega_{{\sf max}}^2 + n_1\omega_{{\sf max}}^2\right)^{1/2}\log n\notag\\ 
	&\lesssim \sqrt{\frac{\mu r}{n_2}}\omega_{{\sf max}}\frac{\sqrt{n_2}}{\log n}\log^2 n + \sqrt{n_1}\omega_{{\sf max}}\log n\notag\\
	&\leq \sqrt{C_5}\sqrt{n_1}\omega_{\sf max}\log n \leq \frac{\sigma_r^\star}{40r}
\end{align*}
holds with probability at least $1 - O(n^{-10})$ for some constant $C_5>0$. 
Here, the first line invokes Lemma~\ref{lm:noise}, the second line relies on Assumption \ref{assump:hetero2}, 
and the last line makes use of the assumption~\eqref{assump:snr_two_to_infty_234}.  
Consequently, 
\begin{align*}
	\widetilde{\sigma}_{r'} - \widetilde{\sigma}_{r'+1} \geq \sigma_{r'}^\star - \sigma_{r'+1}^\star - \frac{\sigma_r^\star}{20r} 
	\geq \frac{4\left(\sigma_{r'}^\star-\sigma_{r'+1}^\star\right)}{5},
\end{align*}
where we have made use of the definition of $\mathcal{R}'$ in \eqref{eq:defn-Rprime-collection-event} and the fact that $\sigma_{r'}^\star - \sigma_{r'+1}^\star = \frac{\sigma_{r'}^{\star2}-\sigma_{r'+1}^{\star2}}{\sigma_{r'}^\star + \sigma_{r'+1}^\star} \geq \frac{\sigma_{r'}^{\star2}-\sigma_{r'+1}^{\star2}}{2\sigma_{r'}^\star}$.
This further gives
\begin{align*}
	\widetilde{\sigma}_{r'}^2 - \widetilde{\sigma}_{r'+1}^2 = \left(\widetilde{\sigma}_{r'} - \widetilde{\sigma}_{r'+1}\right)\left(\widetilde{\sigma}_{r'} + \widetilde{\sigma}_{r'+1}\right) \geq \frac{4\left(\sigma_{r'}^\star-\sigma_{r'+1}^\star\right)}{5}\left(\sigma_{r'}^\star + \sigma_{r'+1}^\star - \frac{\sigma_r^\star}{5r}\right) \geq \frac{1}{2}\left(\sigma_{r'}^{\star2}-\sigma_{r'+1}^{\star2}\right).
\end{align*}

\paragraph{Bounding the noise size $\|\mathcal{P}_{\sf off\text{-}diag}(\bm{E}\bm{E}^\top - \bm{E}\bm{V}^\star\bm{V}^{\star\top}\bm{E}^\top)\|$.} 
We now move on to control $\|\mathcal{P}_{\sf off\text{-}diag}(\bm{E}\bm{E}^\top - \bm{E}\bm{V}^\star\bm{V}^{\star\top}\bm{E}^\top)\|$. 
Towards this end, Lemma~\ref{lm:off-diag} tells us that, with probability at least $1 - O(n^{-10})$, 
\begin{align}\label{ineq74}
	\left\|\mathcal{P}_{\sf off\text{-}diag}\left(\bm{E}\bm{E}^\top\right)\right\| &\lesssim B^2\log^4 n + \sqrt{n_1}\omega_{\sf max}\left(\sqrt{n_1} + \sqrt{n_2}\right)\omega_{\sf max}\log^2 n\notag\\
	&\lesssim \left(\frac{(n_1n_2)^{1/4}}{\log n}\omega_{\sf max}\right)^2\log^4 n + \left(\sqrt{n_1n_2} + n_1\right)\omega_{\sf max}^2\log^2 n\notag\\
	&\asymp \left(\sqrt{n_1n_2} + n_1\right)\omega_{\sf max}^2\log^2 n, 
\end{align}
where the second line results from Assumption~\ref{assump:hetero2}. 
In view of \eqref{ineq26} and \eqref{ineq74}, with probability exceeding $1 - O(n^{-10})$, we have
\begin{align*}
	\left\|\mathcal{P}_{\sf off\text{-}diag}\left(\bm{E}\bm{E}^\top - \bm{E}\bm{V}^\star\bm{V}^{\star\top}\bm{E}^\top\right)\right\|
	&\leq \left\|\mathcal{P}_{\sf off\text{-}diag}\left(\bm{E}\bm{E}^\top\right)\right\| + \left\|\mathcal{P}_{\sf off\text{-}diag}\left(\bm{E}\bm{V}^\star\bm{V}^{\star\top}\bm{E}^\top\right)\right\|\notag\\
	&\leq C_5\left(\sqrt{n_1n_2} + n_1\right)\omega_{\sf max}^2\log^2 n + 2\left\|\bm{E}\bm{V}^\star\right\|^2\notag\\
	&\leq C_5\left(\sqrt{n_1n_2} + n_1\right)\omega_{\sf max}^2\log^2 n + 2C_5n_1\omega_{\sf max}^2\log^2 n\notag\\
	&\leq 3C_5\left(\sqrt{n_1n_2} + n_1\right)\omega_{\sf max}^2\log^2 n
\end{align*} 
for some large enough constant $C_5>0$.

\paragraph{Bounding the incoherence concerning $\big\|\widetilde{\bm{U}}\big\|_{2, \infty}$.}

We now turn to the incoherence property w.r.t.~$\widetilde{\bm{U}}$. 
Lemma \ref{lm:noise} together with Assumption~\ref{assump:hetero2} reveals that with probability exceeding $1 - O(n^{-10})$, 
\begin{align}
	\left\|\bm{U}^{\star}\bm{U}^{\star\top}\bm{E}\bm{V}^\star\right\|_{2, \infty} 
	&\leq \left\|\bm{U}^{\star} \right\|_{2,\infty} \left\|\bm{U}^{\star\top}\bm{E}\bm{V}^\star\right\| \leq \sqrt{\frac{\mu r}{n_1}}\left\|\bm{U}^{\star\top}\bm{E}\bm{V}^\star\right\| \notag\\
	& 
	\lesssim \sqrt{\frac{\mu r}{n_1}}\left(B\frac{\mu r}{\sqrt{n_1n_2}}\log^2 n + \sqrt{r}\omega_{\sf max}\log n\right)\notag\\ 
	&\lesssim \sqrt{\frac{\mu r}{n_1}}\left(\frac{\sqrt{n_2}}{\log n}\frac{\mu r}{\sqrt{n_1n_2}}\omega_{{\sf max}}\log^2 n + \sqrt{r}\omega_{\sf max}\log n\right)\notag\\ &\lesssim \sqrt{\frac{\mu r}{n_1}}\sqrt{\mu r}\omega_{\sf max}\log n, 
	\label{ineq73}
\end{align}
where the first line follows from Definition~\ref{assump:incoherence}, 
the third line makes use of Assumption \ref{assump:hetero2}, and 
the last line holds due to the assumption $\mu r^3 \lesssim n_1$. 
Putting \eqref{ineq25}, \eqref{ineq26} and \eqref{ineq73} together, we can demonstrate that with probability exceeding $1 - O(n^{-10})$, 
\begin{align*}
	\big\|\bm{U}^{\star}\bm{U}^{\star\top}\widetilde{\bm{U}} - \widetilde{\bm{U}}\big\|_{2, \infty} &\leq C_5\left(\sqrt{\mu r}\omega_{\sf max}\log n + \sqrt{\frac{\mu r}{n_1}}\sqrt{\mu r}\omega_{\sf max}\log n\right)\frac{2}{\sigma_r^\star}\notag\\
	&\leq \frac{4C_5\sqrt{\mu r}\omega_{\sf max}\log n}{\sigma_r^\star}\leq \sqrt{\frac{\mu r}{n_1}} ,
\end{align*}
where the last inequality follows from the assumptions~\eqref{assump:snr_two_to_infty-all}. 
This in turn indicates that
\begin{align*}
	\big\|\widetilde{\bm{U}}\big\|_{2, \infty} \leq \big\|\bm{U}^{\star}\bm{U}^{\star\top}\widetilde{\bm{U}}\big\|_{2, \infty} + \big\|\bm{U}^{\star}\bm{U}^{\star\top}\widetilde{\bm{U}} - \widetilde{\bm{U}}\big\|_{2, \infty} \leq 2\sqrt{\frac{\mu r}{n_1}}.
\end{align*}

        \section{Proofs for corollaries}\label{sec:proof_corollary}
\subsection{Proof of Corollary \ref{cor:pca}}\label{proof:corollary_pca}
First, by virtue of the standard tail bound of sub-Gaussian random variables (cf.~\citet[Lemma 5.5]{vershynin2010introduction}), we can easily verify that Assumption~\ref{assump:hetero} holds with the following parameters: 
$$\omega_{\sf max} = \omega \qquad \text{and} \qquad B = C_{\mathsf{B}}\omega\log(n+d) \lesssim \omega\frac{\min\left\{(nd)^{1/4}, n^{1/2}\right\}}{\log (n+d)}$$
for some constant $C_{\mathsf{B}}>0$.

Next, let us look at several properties of the matrix $\bm{X} = [\bm{x}_1 \ \dots \ \bm{x}_n] \in \bbR^{d \times n}$. It is seen that 
$$\bm{X} = \bm{U}^\star\bm{\Lambda}^{\star1/2}\bm{F}^\star, \qquad \text{with }\bm{F}^\star = [\bm{f}_1,\ldots,\bm{f}_n] \in \bbR^{r \times n},$$
where $F_{i,j}^\star \overset{\rm i.i.d.}{\sim} \mathcal{N}(0, 1)$ for all $(i,j)\in[r]\times [n]$. 
In view of \citet[Corollary 5.35]{vershynin2010introduction}, we know that with probability exceeding $1 - O\big((n+d)^{-10}\big)$,
\begin{align}
	\sqrt{n}/2 \leq \sqrt{n} - \sqrt{r} - \sqrt{20\log(n+d)} \leq \sigma_r\left(\bm{F}^\star\right) \leq \sigma_1\left(\bm{F}^\star\right) \leq \sqrt{n} + \sqrt{r} + \sqrt{20\log(n+d)} \leq 2\sqrt{n}.
	\label{eq:spectrum-F-UB-LB}
\end{align}
By the min-max principle for singular values, for all $1 \leq i \leq r$, one has
\begin{align}
	\lambda_i^{\star1/2}\sigma_r\left(\bm{F}^\star\right) &= \min_{\bm{S}: {\sf dim}(\bm{S}) = r-i+1}\max_{\bm{x} \in \bm{S}, \|\bm{x}\|_2 = 1}\big\|\bm{x}^\top\bm{\Lambda}^{\star1/2}\big\|\sigma_r\left(\bm{F}^\star\right) \notag\\
	&\leq \sigma_i\left(\bm{X}^\star\right) = \sigma_i\big(\bm{\Lambda}^{\star1/2}\bm{F}^\star\big) \notag\\
	&= \min_{\bm{S}: {\sf dim}(\bm{S}) = r-i+1}\max_{\bm{x} \in \bm{S}, \|\bm{x}\|_2 = 1}\big\|\bm{x}^\top\bm{\Lambda}^{\star1/2}\bm{F}^\star\big\| \notag\\
	&\leq \min_{\bm{S}: {\sf dim}(\bm{S}) = r-i+1}\max_{\bm{x} \in \bm{S}, \|\bm{x}\|_2 = 1}\big\|\bm{x}^\top\bm{\Lambda}^{\star1/2}\big\|\left\|\bm{F}^\star\right\| \notag\\
	&= \lambda_i^{\star1/2}\sigma_1\left(\bm{F}^\star\right).
	\label{eq:sandwich}
\end{align}
Therefore, with probability exceeding $1 - O\big((n+d)^{-10}\big)$, we obtain
\begin{align}
	\label{ineq59}
	\sigma_i\left(\bm{X}^\star\right) \asymp \sqrt{n\lambda_i^\star} \qquad \text{ for all } 1 \leq i \leq r .
\end{align}
In fact, the relation \eqref{eq:sandwich}
 taken together with \eqref{eq:spectrum-F-UB-LB} and \eqref{eq:assumption-PCA-SNR} yields a more concrete lower bound
\begin{align*}
	\sigma_i\left(\bm{X}^\star\right) \geq \sqrt{n\lambda_i^\star}/2 \geq C_0r\left[\left(dn\right)^{1/4} + d^{1/2}\right]\log (n+d)
	 \qquad \text{ for all } 1 \leq i \leq r.
\end{align*}
Hence, the signal-to-noise ratio condition in Theorem \ref{thm:two_to_infty} is satisfied (where we take $n_1 = d$ and $n_2 = n$). 
Additionally, letting $\bm{V}^\star \in \mathcal{O}^{n, r}$ denote the right singular space of $\bm{X}^\star$, we see from the proof of \cite[Corollary 2]{cai2021subspace} that with probability exceeding $1 - O((n+d)^{-10})$,
\begin{align*}
	\left\|\bm{V}^\star\right\|_{2,\infty} \leq \sqrt{\frac{C_2r\log(n+d)}{n}}
\end{align*}
for some constant $C_2>0$. 
Consequently, we have
\begin{align*}
	\mu \leq \mu_{\sf pc} \vee C_2\log(n+d) \lesssim \frac{d}{r^3},
\end{align*}
where $\mu_{\sf pc}$ is defined in \eqref{eq:incoherence-Ustar-pc} and the last inequality arises from the assumption \eqref{eq:assumption-PCA-inc}.

Now, we see that with probability at least $1 - O\big((n+d)^{-10}\big)$, all conditions in Theorem \ref{thm:two_to_infty} are satisfied. 
Thus, apply Theorem \ref{thm:two_to_infty} and \eqref{ineq59} to yield that: with probability exceeding $1 - O\big((n+d)^{-10}\big)$, 
\begin{align*}
	\left\|\bm{U}\bm{R}_{\bm{U}} - \bm{U}^\star\right\| \lesssim \frac{\sqrt{dn}\,\omega^2\log^2 (n+d)}{n\lambda_r^\star} + \frac{\sqrt{d}\,\omega\log (n+d)}{\sqrt{n\lambda_r^\star}} \asymp  \frac{\sqrt{d/n}\,\omega^2\log^2 (n+d)}{\lambda_r^{\star}} + \frac{\sqrt{d/n}\,\omega\log(n+d)}{\sqrt{\lambda_{r}^\star}}
\end{align*}
and
\begin{align*}
	\left\|\bm{U}\bm{R}_{\bm{U}} - \bm{U}^\star\right\|_{2,\infty} \lesssim \sqrt{\frac{\mu_{\sf pc} + \log(n+d)}{d}}\left(\frac{\sqrt{d/n}\,\omega^2\log^2 (n+d)}{\lambda_r^{\star}} + \frac{\sqrt{d/n}\,\omega\log(n+d)}{\sqrt{\lambda_{r}^\star}}\right),
\end{align*}
provided that the number of iterations satisfy \eqref{ineq:iter_pca_1}-\eqref{ineq:iter_pca_2}.

\subsection{Proof of Corollary \ref{cor:tensor_svd}}\label{proof:corollary_tensor}

For notational convenience, we let $\bm{Y}_i \in \bbR^{n_i \times (n_1n_2n_3/n_i)}$ (resp.~$\bm{X}_i^\star$ and $\bm{E}_i$) denote the $i$-th matricization of $\bm{\mathcal{Y}}$ (resp.~$\bm{\mathcal{X}}^\star$ and $\bm{\mathcal{E}}$). 
We need to check that all assumptions in Theorem \ref{thm:two_to_infty} are satisfied for the $i$-th matricization.

Firstly, it  can be easily verifid that Assumption \ref{assump:hetero2} holds for $\bm{E}_i$ with $\omega_{\sf max} = \omega \text{ and } B \asymp \omega\log n.$ 
In addition, taking the assumption $n_1 \asymp n_2 \asymp n_3$ and \eqref{eq:condition:tensor-snr-123} together imply that 
\begin{align*}
	\frac{\sigma_{i, r_i}^\star}{\omega} \geq C_0r\left[(n_1n_2n_3)^{1/4} + n_i^{1/2}\right]\log\big(n_i \vee \left(n_1n_2n_3/n_i\right)\big)
\end{align*}
for some large enough constant $C_0>0$, thus justifying  the SNR condition \eqref{assump:snr_two_to_infty} in Theorem \ref{thm:two_to_infty}.
Next, let $\bm{V}_i^\star \in \mathcal{O}^{n_1n_2n_3/n_i, r_i}$ denote the right singular space of $\bm{X}_i^\star$ and define 
%
\begin{align*}
	\mu\left(\bm{X}_i^\star\right) = \max\left\{\frac{n_i}{r_i}\left\|\bm{U}_i^\star\right\|_{2,\infty}^2, \frac{n_1n_2n_3/n_i}{r_i}\left\|\bm{V}_i^\star\right\|_{2,\infty}^2\right\}
\end{align*}
Given that $\bm{X}_1^\star = \bm{U}_1^\star\mathcal{M}_1(\mathcal{S}^\star)(\bm{U}_3^\star \otimes \bm{U}_2^\star)^\top$, we can invoke \eqref{eq:incoherence-tensor-PCA} and \eqref{eq:condition:tensor-inc-123} to obtain
\begin{align*}
	\left\|\bm{V}_1^\star\right\|_{2,\infty} \leq \left\|\bm{U}_3^\star \otimes \bm{U}_2^\star\right\|_{2,\infty} \leq \left\|\bm{U}_2^\star\right\|_{2,\infty}\left\|\bm{U}_3^\star\right\|_{2,\infty} \leq \sqrt{\frac{\mu^2 r_2r_3}{n_2n_3}}
\end{align*}
and
\begin{align*}
	\mu\left(\bm{X}_1^\star\right) \leq \max\left\{\mu, \mu^2\frac{r_2r_3}{r_1}\right\} \lesssim \frac{n_1}{r_1^3}.
\end{align*}    
Therefore, all conditions and assumptions in Theorem \ref{thm:two_to_infty} are satisfied.  Consequently, invoke Theorem \ref{thm:two_to_infty} to show that, with probability exceeding $1 - O(n^{-10})$,
\begin{align*}
	\left\|\widehat{\bm{U}}_1^{0}\bm{R}_{\widehat{\bm{U}}_1^{0}} - \bm{U}_1^\star\right\|_{2,\infty} &\lesssim \sqrt{\frac{\mu\left(\bm{X}_1^\star\right)r_1}{n_1}}\left(\frac{\sqrt{n_1n_2n_3}\omega^2\log^2 n}{\sigma_{1, r_1}^{\star2}} + \frac{\sqrt{n_1}\omega\log n}{\sigma_{1, r_1}^{\star}}\right)\\
	&\leq \frac{\mu r}{\sqrt{n_1}}\left(\frac{\sqrt{n_1n_2n_3}\omega^2\log^2 n}{\sigma_{\sf min}^{\star2}} + \frac{\sqrt{n_1}\omega\log n}{\sigma_{\sf min}^{\star}}\right)
\end{align*}
and
\begin{align*}
	\Big\|\widehat{\bm{U}}_1^{0}\bm{R}_{\widehat{\bm{U}}_1^{0}} - \bm{U}^\star\Big\| \lesssim \frac{n^{3/2}\omega^2\log^2 n}{\sigma_{\sf min}^{\star2}} + \frac{\sqrt{n}\omega\log n}{\sigma_{\sf min}^{\star}}.
\end{align*}
Similarly, one can show that with probability at least $1 - O(n^{-10})$, \eqref{ineq:tensor_initial_1} and \eqref{ineq:tensor_initial_2} holds for $i = 2$ and $3$,
thereby establishing the first part of Corollary \ref{cor:tensor_svd}.

When it comes to the second part, we can directly use the same argument in the proof of \citet[Theorem 1]{zhang2018tensor} if the following two claims are valid with probability exceeding $1 - O(n^{-10})$:
\begin{align}\label{ineq60}
	\max_{\bm{V}_i \in \bbR^{n_i \times r_i}, \left\|\bm{V}_i\right\| \leq 1}\max\big\{\left\|\bm{E}_1\left(\bm{V}_3 \otimes \bm{V}_2\right)\right\|, \left\|\bm{E}_2\left(\bm{V}_1 \otimes \bm{V}_3\right)\right\|, \left\|\bm{E}_3\left(\bm{V}_2 \otimes \bm{V}_1\right)\right\|\big \} \lesssim \sqrt{nr},
\end{align}
and
\begin{align}\label{ineq61}
	\max\big\{\left\|\bm{E}_1\left(\bm{U}_3^\star \otimes \bm{U}_2^\star\right)\right\|, \left\|\bm{E}_2\left(\bm{U}_1^\star \otimes \bm{U}_3^\star\right)\right\|, \left\|\bm{E}_3\left(\bm{U}_2^\star \otimes \bm{U}_1^\star\right)\right\|\big\} \lesssim \sqrt{n}.
\end{align}
In fact, \eqref{ineq61} is a direct consequence of \citet[Lemma A.2]{zhou2022optimal} (or Lemma 8.2 in its arxiv version) with 
\begin{align*}
	\bm{A} = \bm{I}_{n_1} \text{ (resp.~}\bm{I}_{n_2} \text{ and } \bm{I}_{n_3}) \quad \text{and} \quad \bm{B} = \bm{U}_3^\star \otimes \bm{U}_2^\star \text{ (resp.~}\bm{U}_1^\star \otimes \bm{U}_3^\star \text{ and } \bm{U}_2^\star \otimes \bm{U}_1^\star), 
\end{align*}
whereas \eqref{ineq60} can be proved by combining \citet[Lemma A.2]{zhou2022optimal} and the standard epsilon-net argument in the proof of \citet[Lemma 5]{zhang2018tensor}. 
We omit the details here for the sake of brevity. 

    \section{Technical lemmas}\label{sec:technical_lemmas}

In this section, we collect a couple of useful technical lemmas and provide proofs. Before continuing, we note that Assumption~\ref{assump:hetero} and \ref{assump:hetero2} are subsumed as special cases of the following assumption: 
\begin{assump}\label{assump:hetero3}
	Suppose that the noise components $\{E_{i,j}\}$ satisfy the following conditions:
	\begin{itemize}
		\item[1.] The $E_{i,j}$'s  are statistically independent and zero-mean;
		\item[2.] $\mathsf{Var}[E_{i,j}] = \omega_{i,j}^2 \leq \omega_{\sf max}^2$ for all $(i,j)\in [n_1]\times [n_2]$; 
		\item[3.] For any $(i, j) \in [n_1] \times [n_2]$, one has $\bbP\left(\left|E_{i,j}\right| > B\right) \leq \varepsilon$ for some quantity $B$, where $\varepsilon$ is some quantity within $[0, C_{\sf b}n^{-10}]$ for some universal constant $C_{\sf b} > 0$. 
	\end{itemize}
\end{assump}

Let us begin with several tail bounds regarding the spectral norm of linear functions of $\bm{E}=[E_{i,j}]_{(i,j)\in[n_1]\times [n_2]}$. 
\begin{lemma}\label{lm:noise}
	Suppose that Assumption \ref{assump:hetero3} holds. Then there exists some large (resp.~small) enough constant $C_1 > 0$ ($c_1 > 0$) such that for any $x \geq C_1\sqrt{\log n}$, with probability exceeding $1 - O(e^{-c_1x^2}) - n_1n_2\varepsilon$ one has 
	\begin{subequations}
		\begin{align}
			\left\|\bm{E}\bm{V}^{\star}\right\| &\lesssim B\sqrt{\frac{\mu_2 r}{n_2}}x^2 + \left(\left(\frac{\mu r}{n_2}\omega_{\sf row}^2 \wedge r\omega_{\sf max}^2\right) + \omega_{\sf col}^2\right)^{1/2}x,\label{ineq:noise_a}\\
			\left\|\bm{U}^{\star\top}\bm{E}\bm{V}^{\star}\right\| &\lesssim B\frac{\mu r}{\sqrt{n_1n_2}}x^2 + \left[\left(\sqrt{\frac{\mu r}{n_2}}\omega_{\sf row} + \sqrt{ \frac{\mu r}{n_1}}\omega_{\sf col}\right) \wedge \sqrt{r}\omega_{\sf max}\right]x,\label{ineq:noise_b}\\
			\left\|\bm{E}\bm{V}^{\star}\right\|_{2, \infty} &\lesssim \left(Bx^2 + \omega_{\sf row}x\right)\sqrt{\frac{\mu_2 r}{n_2}},\label{ineq:noise_c}\\
			\left\|\bm{E}\right\| &\lesssim Bx + \left(\omega_{\sf row} + \omega_{\sf col}\right).\label{ineq:noise_d}
		\end{align}
	\end{subequations}
\end{lemma}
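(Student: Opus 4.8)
\textbf{Proof plan for Lemma \ref{lm:noise}.}

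The plan is to treat the four bounds \eqref{ineq:noise_a}--\eqref{ineq:noise_d} by reducing each quantity to a sum of independent, centered, and suitably bounded/sub-exponential terms, and then applying a truncated Bernstein-type (or matrix Bernstein-type) inequality. The key preliminary step is a truncation argument: under Assumption~\ref{assump:hetero3}, replace $\bm{E}$ by $\overline{\bm{E}}$ with entries $\overline{E}_{i,j} = E_{i,j}\mathbbm{1}\{|E_{i,j}| \le B\} - \bbE[E_{i,j}\mathbbm{1}\{|E_{i,j}| \le B\}]$. A union bound over the $n_1 n_2$ entries shows $\bm{E} = \overline{\bm{E}}$ on an event of probability at least $1 - n_1 n_2 \varepsilon$, while $\|\overline{E}_{i,j}\|_\infty \lesssim B$ and $\mathsf{Var}[\overline{E}_{i,j}] \le \omega_{i,j}^2 \le \omega_{\sf max}^2$, so the centering bias is negligible. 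This lets me assume $|E_{i,j}| \lesssim B$ deterministically for the rest of the argument. First I would set up the generic scalar Bernstein bound: for a sum $\sum_\ell a_\ell$ of independent zero-mean terms with $|a_\ell| \le M$ and $\sum_\ell \mathsf{Var}[a_\ell] \le v$, one has $|\sum_\ell a_\ell| \lesssim \sqrt{v}\,x + Mx^2$ with probability $1 - O(e^{-c x^2})$; all four bounds will follow by choosing the right decomposition and absorbing $\sqrt{\log n}$ factors from the net/union bounds into $x \ge C_1\sqrt{\log n}$.

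For \eqref{ineq:noise_a}, I would bound $\|\bm{E}\bm{V}^\star\|$ via a covering argument over a $\frac14$-net $\mathcal{N}$ of the unit sphere in $\bbR^{r}$ (of size $\le 9^r$), reducing to control of $\|\bm{E}\bm{V}^\star \bm{z}\|_2$ for fixed unit $\bm{z}$; then bound $\|\bm{E}\bm{w}\|_2$ for the fixed unit vector $\bm{w} = \bm{V}^\star\bm{z}$ (with $\|\bm{w}\|_{2,\infty} \le \|\bm{V}^\star\|_{2,\infty} \le \sqrt{\mu_2 r/n_2}$) by yet another net over $\bbR^{n_1}$. Each coordinate $\bm{u}^\top \bm{E}\bm{w} = \sum_{i,j}u_i E_{i,j}w_j$ is a scalar sum with $M \lesssim B\|\bm{w}\|_\infty \lesssim B\sqrt{\mu_2 r/n_2}$ and variance $\sum_{i,j}u_i^2 w_j^2 \omega_{i,j}^2 \le \min\{\|\bm{w}\|_\infty^2 \omega_{\sf row}^2, \omega_{\sf col}^2 \cdot \|\bm{w}\|_2^2/(\ldots)\}$; the two competing variance estimates $\|\bm{w}\|_\infty^2\omega_{\sf row}^2 \le (\mu r/n_2)\omega_{\sf row}^2$ and the crude $r\omega_{\sf max}^2$ (from $\sum_j w_j^2 \le 1$ being spread over at most the support) together with $\omega_{\sf col}^2$ give the stated form after taking the min. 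The exponent $r\log 9$ from the two nets is dominated by $c_1 x^2$ once $x \gtrsim \sqrt{r+\log n}$, which is implied by $x \ge C_1\sqrt{\log n}$ together with the implicit $r \lesssim \log n$ regime (or handled by carrying $r$ in the constant). Bound \eqref{ineq:noise_b} is analogous but with a fixed deterministic $\bm{U}^\star$ on the left, so only the $\bbR^r$-net is needed and the bilinear form $\bm{a}^\top\bm{U}^{\star\top}\bm{E}\bm{V}^\star\bm{b}$ has $M \lesssim B\|\bm{U}^\star\|_{2,\infty}\|\bm{V}^\star\|_{2,\infty} \asymp B\mu r/\sqrt{n_1 n_2}$ and variance controlled by $(\sqrt{\mu r/n_2}\,\omega_{\sf row} + \sqrt{\mu r/n_1}\,\omega_{\sf col}) \wedge \sqrt{r}\omega_{\sf max}$ squared — the min again coming from two ways of summing $\omega_{i,j}^2$ against the row/column incoherence weights.

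For \eqref{ineq:noise_c}, $\|\bm{E}\bm{V}^\star\|_{2,\infty} = \max_i \|\bm{E}_{i,:}\bm{V}^\star\|_2$, so fix a row $i$, net over $\bbR^r$, and bound $\sum_j E_{i,j}w_j$ for fixed unit $\bm{w} = \bm{V}^\star\bm{z}$: here $M \lesssim B\|\bm{w}\|_\infty \le B\sqrt{\mu_2 r/n_2}$ and variance $\sum_j w_j^2\omega_{i,j}^2 \le \|\bm{w}\|_\infty^2\omega_{\sf row}^2/(\ldots) \le (\mu_2 r/n_2)\omega_{\sf row}^2$, yielding $\lesssim (Bx^2 + \omega_{\sf row}x)\sqrt{\mu_2 r/n_2}$ after a union bound over $i \in [n_1]$ (absorbed into $x^2$). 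Finally \eqref{ineq:noise_d} is the standard bounded-entry matrix Bernstein / Latała-type bound: write $\bm{E} = \sum_{i,j}E_{i,j}\bm{e}_i\bm{e}_j^\top$, note the matrix variance parameter is $\max\{\max_i\sum_j\omega_{i,j}^2, \max_j\sum_i\omega_{i,j}^2\} \le \omega_{\sf row}^2 \vee \omega_{\sf col}^2$ and the uniform bound is $\lesssim B$, so matrix Bernstein gives $\|\bm{E}\| \lesssim (\omega_{\sf row}\vee\omega_{\sf col})\sqrt{\log n} + B\log n$, which after folding $\sqrt{\log n}$ into $x$ reads $\lesssim Bx + (\omega_{\sf row} + \omega_{\sf col})$.

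The main obstacle I anticipate is bookkeeping the variance proxies carefully enough to get the \emph{minimum} forms (e.g.\ $\frac{\mu r}{n_2}\omega_{\sf row}^2 \wedge r\omega_{\sf max}^2$ in \eqref{ineq:noise_a}, and the analogous min in \eqref{ineq:noise_b}): this requires bounding $\sum_{i,j}u_i^2 w_j^2\omega_{i,j}^2$ two different ways — once pulling out $\|\bm{w}\|_\infty^2$ and using the row-variance budget $\omega_{\sf row}^2$, once pulling out $\|\bm{u}\|_\infty$-type factors or just $\omega_{\sf max}^2\sum_{i,j}u_i^2 w_j^2 = \omega_{\sf max}^2$ against the $r$ active coordinates — and then taking whichever is smaller. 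A secondary technical point is ensuring the net cardinalities ($9^r$ for each net) do not overwhelm the Gaussian tail; this is why the hypothesis demands $x \ge C_1\sqrt{\log n}$ with $C_1$ large, and one either works in the regime $r \lesssim \log n$ or keeps the $9^r$ factor inside the probability statement via $e^{-c_1 x^2 + Cr}$ and notes it is $O(e^{-c_1 x^2/2})$ for $x$ large. Everything else is a routine application of the truncated Bernstein inequality, which I would cite or prove as a standalone sub-lemma (Lemma~\ref{lm:1} appears to be exactly such a tool and can be invoked directly).
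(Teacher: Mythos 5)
Your truncation step (replacing $\bm{E}$ by its centered, $B$-bounded version and paying $n_1n_2\varepsilon$ plus a negligible centering bias) is exactly what the paper does, and your variance bookkeeping for the minima is on the right track. The genuine gap is in your treatment of \eqref{ineq:noise_a}: to control $\|\bm{E}\bm{w}\|_2$ you propose a net over the unit sphere of $\bbR^{n_1}$, whose cardinality is $9^{n_1}$, not the "$r\log 9$" exponent you quote; a union bound over that net cannot be absorbed into $e^{-c_1x^2}$ at the stated level $x\geq C_1\sqrt{\log n}$. Concretely, the scalar-Bernstein deviation for each fixed $(\bm{u},\bm{z})$ is $\sqrt{v}\,t + B\|\bm{w}\|_\infty t^2$, and after the union bound you must take $t^2\gtrsim n_1+x^2$, so the $B$-term becomes of order $B\sqrt{\mu_2 r/n_2}\,n_1$ rather than the claimed $B\sqrt{\mu_2 r/n_2}\,x^2$ — in the regime where $B\asymp \omega_{\sf max}\min\{(n_1n_2)^{1/4},\sqrt{n_2}\}/\log n$ this is strictly weaker than \eqref{ineq:noise_a} and would not suffice downstream. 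The paper avoids this entirely by writing $\bm{E}\bm{V}^\star=\sum_{i,j}E_{i,j}\bm{e}_i\bm{V}^\star_{j,:}$ and applying the matrix Bernstein inequality, whose matrix variance statistic produces the $\big(\frac{\mu_2 r}{n_2}\omega_{\sf row}^2\wedge r\omega_{\sf max}^2\big)+\omega_{\sf col}^2$ term directly and whose "range" term is multiplied only by $x^2$; the same device handles \eqref{ineq:noise_b} without the $9^{2r}$ cost, so no restriction of the form $x^2\gtrsim r$ (which the lemma does not assume) is needed.

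A secondary gap is \eqref{ineq:noise_d}: vanilla matrix Bernstein yields $\|\bm{E}\|\lesssim (\omega_{\sf row}\vee\omega_{\sf col})\,x + Bx^2$ at probability $1-O(e^{-c_1x^2})$, and "folding $\sqrt{\log n}$ into $x$" cannot convert this into the stated $Bx+(\omega_{\sf row}+\omega_{\sf col})$, which has no factor $x$ on the variance term and only a linear factor on $B$. The paper obtains \eqref{ineq:noise_d} (and \eqref{ineq:noise_c}) by citing sharper off-the-shelf results (\citet[Theorem 3.4]{chen2021spectral} and \citet[Lemma 12]{cai2021subspace}); you would need to invoke such a refined spectral-norm bound rather than Bernstein to get the inequality as written.
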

\begin{proof}[Proof of Lemma \ref{lm:noise}]
	We start with the case $\varepsilon = 0$, i.e., $|E_{i,j}| \leq B$ holds deterministically (see Assumption~\ref{assump:hetero3}). 
	\begin{itemize}

	\item First,  express $\bm{E}\bm{V}^{\star}$ as a sum of zero-mean independent random matrices as follows
	\begin{align*}
		\bm{E}\bm{V}^{\star} = \sum_{i=1}^{n_1}\sum_{j=1}^{n_2}E_{i,j}\bm{e}_i\bm{V}_{j,:}^{\star}.
	\end{align*}
	From the definition \eqref{eq:defn-omega-set} and the incoherence condition in Definition~\ref{assump:incoherence}, one can verify that
	$$L_1: = \max_{1 \leq i \leq n_1, 1\leq j \leq n_2}\left\|E_{i,j}\bm{e}_i\bm{V}_{j,:}^{\star}\right\| \leq B\sqrt{\frac{\mu_2 r}{n_2}}$$
	and
	\begin{align*}
		V_1&:= \max\left\{\Bigg\|\sum_{i=1}^{n_1}\sum_{j=1}^{n_2}\bbE\left[E_{i,j}^2\right]\left\|\bm{V}_{j,:}^{\star}\right\|_2^2\bm{e}_i\bm{e}_i^\top\Bigg\|, \Bigg\|\sum_{j=1}^{n_2}\sum_{i=1}^{n_1}\bbE\left[E_{i,j}^2\right]\bm{V}_{j,:}^{\star\top}\bm{V}_{j,:}^{\star}\Bigg\|\right\}\\
		&\leq \left(\frac{\mu_2 r}{n_2}\omega_{\sf row}^2 \wedge r\omega_{\sf max}^2\right) + \omega_{\sf col}^2,
	\end{align*}
	where the last line also uses the facts that $\sum_j \|\bm{V}_{j,:}^{\star}\|_2^2 =r $ 
	and $\sum_j \bm{V}_{j,:}^{\star\top}\bm{V}_{j,:}^{\star} = \bm{V}^{\star\top} \bm{V}^{\star} = \bm{I}_r $. 
	Applying the matrix Bernstein inequality \citep{tropp2015introduction} leads to, with probability exceeding $1 - O(e^{-c_1x^2})$,
	\begin{align*}
		\left\|\bm{E}\bm{V}^{\star}\right\| \lesssim L_1x^2 + \sqrt{V_1}x \lesssim B\sqrt{\frac{\mu_2 r}{n_2}}x^2 + \sqrt{\left(\left(\frac{\mu_2 r}{n_2}\omega_{\sf row}^2 \wedge r\omega_{\sf max}^2\right) + \omega_{\sf col}^2\right)}x
	\end{align*}
	for any $x\geq C_1\sqrt{\log n}$, 
	where $c_1, C_1>0$ are some suitable numerical constants.

	\item When it comes to $\bm{U}^{\star\top}\bm{E}\bm{V}^{\star}$, we decompose it into the following zero-mean and independent terms:
	\begin{align*}
		\bm{U}^{\star\top}\bm{E}\bm{V}^{\star} = \sum_{i=1}^{n_1}\sum_{j=1}^{n_2}E_{i,j}\bm{U}_{i,:}^\top\bm{V}_{j,:}^{\star}.
	\end{align*}
	Similar to the above arguments, it follows from \eqref{eq:defn-omega-set} and Definition~\ref{assump:incoherence} that
	\begin{align*}
		L_2 := \max_{1 \leq i \leq n_1, 1\leq j \leq n_2}\left\|E_{i,j}\bm{U}_{i,:}^\top\bm{V}_{j,:}^{\star}\right\| \leq B\sqrt{\frac{\mu r}{n_1}}\sqrt{\frac{\mu r}{n_2}} = B\frac{\mu r}{\sqrt{n_1n_2}}
	\end{align*}
	and
	\begin{align*}
		V_2&:= \max\left\{\Bigg\|\sum_{i=1}^{n_1}\sum_{j=1}^{n_2}\bbE\left[E_{i,j}^2\right]\left\|\bm{V}_{j,:}^{\star}\right\|_2^2\bm{U}_{i,:}^\top\bm{U}_{i,:}\Bigg\|, \Bigg\|\sum_{i=1}^{n_1}\sum_{j=1}^{n_2}\bbE\left[E_{i,j}^2\right]\left\|\bm{U}_{i,:}\right\|_2^2\bm{V}_{j,:}^{\star\top}\bm{V}_{j,:}^{\star}\Bigg\|\right\}\\
		&\leq \left[\frac{\mu r}{n_2}\omega_{\sf row}^2 + \frac{\mu r}{n_1}\omega_{\sf col}^2\right] \wedge r\omega_{\sf max}^2.
	\end{align*}
	The matrix Bernstein inequality reveals that with probability exceeding $1 - O(e^{-c_1x^2})$, 
	\begin{align*}
		\left\|\bm{U}^{\star\top}\bm{E}\bm{V}^{\star}\right\| \lesssim L_2x^2 + \sqrt{V_2}x \lesssim B\frac{\mu r}{\sqrt{n_1n_2}}x^2 + \left[\left(\sqrt{\frac{\mu r}{n_2}}\omega_{\sf row} + \sqrt{ \frac{\mu r}{n_1}}\omega_{\sf col}\right) \wedge \sqrt{r}\omega_{\sf max}\right]x.
	\end{align*}

	\item
	Additionally, \eqref{ineq:noise_c} and \eqref{ineq:noise_d} are direct consequences of \citet[Lemma 12]{cai2021subspace} and \citet[Theorem 3.4]{chen2021spectral}, respectively.
	
	\end{itemize}

	We now move on to the more general case with $\varepsilon > 0$ (see Assumption~\ref{assump:hetero3}).  Denoting by $\widetilde{E}_{i,j}$ the centered truncated noise as follows: 
	\begin{align}\label{def:tilde_E}
		\widetilde{E}_{i,j} = E_{i,j}\mathbbm{1}_{\left\{\left|E_{i,j}\right| \leq B\right\}} - \bbE\left[E_{i,j}\mathbbm{1}_{\left\{\left|E_{i,j}\right| \leq B\right\}}\right].
	\end{align} 
	we see that
	\begin{align*}
		\mathsf{Var}\big(\widetilde{E}_{i,j}\big) \leq \bbE\left[E_{i,j}^2\mathbbm{1}_{\left\{\left|E_{i,j}\right| \leq B\right\}}\right] \leq \bbE\left[E_{i,j}^2\right] = \omega_{i,j}^2
	\end{align*}
	and
	\begin{align*}
		\big|\widetilde{E}_{i,j}\big| \leq B + B = 2B.
	\end{align*}
	The previous argument shows that with probability exceeding $1 - O(e^{-c_1x^2})$, 
	\begin{align}\label{ineq54}
		\text{inequalities}~\eqref{ineq:noise_a}-\eqref{ineq:noise_d} \text{ hold if we replace } \bm{E} \text{ with } \widetilde{\bm{E}}. 
	\end{align}
	Next, let $\overline{\bm{E}}$ denote the matrix with the $(i,j)$-th entry $\overline{E}_{i,j} = E_{i,j}\mathbbm{1}_{\{|E_{i,j}| \leq B\}}$ for all $(i,j)\in [n_1]\times [n_2]$. In view of the Cauchy-Schwarz inequality and the assumption $\mathbb{E}[E_{i,j}]=0$, one has
	\begin{align*}
		\left|\bbE\left[\overline{E}_{i,j}\right]\right| = 
		\left|\bbE\left[E_{i,j}\right] - \bbE\left[E_{i,j}\mathbbm{1}_{\left\{\left|E_{i,j}\right| > B\right\}}\right]\right|
		= \left|\bbE\left[E_{i,j}\mathbbm{1}_{\left\{\left|E_{i,j}\right| > B\right\}}\right]\right| \leq \left(\bbE\left[E_{i,j}^2\right]\bbE\left[\mathbbm{1}_{\left\{\left|E_{i,j}\right| > B\right\}}\right]\right)^{1/2} \leq \omega_{i,j}\sqrt{\varepsilon} ,
	\end{align*}
	and as a result,
	\begin{align}\label{ineq53}
		\left\|\bbE\left[\overline{\bm{E}}\right]\right\| \leq \left\|\bbE\left[\overline{\bm{E}}\right]\right\|_{\rm F} \leq \sqrt{n_1n_2}\max_{i,j}\left|\bbE\left[\overline{E}_{i,j}\right]\right| \leq \omega_{\sf max}\sqrt{n_1n_2\varepsilon} \lesssim \frac{\omega_{\sf max}}{n^{4}} .
	\end{align}
	Assumption \ref{assump:hetero3} and the union bound tell us that with probability at least $1 - n_1n_2\varepsilon$, for all $i, j \in [n_1] \times [n_2]$,
	\begin{align*}
		E_{i,j} = E_{i,j}\mathbbm{1}_{\left\{\left|E_{i,j}\right| \leq B\right\}},
	\end{align*}
	which means
	\begin{align*}
		\bm{E} = \overline{\bm{E}}.
	\end{align*}
	This combined with \eqref{ineq53} yields that with probability exceeding $1 - n_1n_2\varepsilon$,
	\begin{align}\label{ineq55}
		\big\|\bm{E} - \widetilde{\bm{E}}\big\| = \left\|\bbE\left[\overline{\bm{E}}\right]\right\| \lesssim \frac{\omega_{\sf max}}{n^{4}} .
	\end{align}
	On the event $\mathcal{E}_1 =$ \{\eqref{ineq54} and \eqref{ineq55} hold\}, we can apply the triangle inequality to show that 
	\begin{align*}
		\left\|\bm{E}\bm{V}^\star\right\| &\leq \big\|\widetilde{\bm{E}}\bm{V}^\star\big\| + \big\|\big(\bm{E} - \widetilde{\bm{E}}\big)\bm{V}^\star\big\|\\ &\lesssim B\sqrt{\frac{\mu_2 r}{n_2}}x^2 + \left(\left(\frac{\mu_2 r}{n_2}\omega_{\sf row}^2 \wedge r\omega_{\sf max}^2\right) + \omega_{\sf col}^2\right)^{1/2}x + \big\|\bm{E} - \widetilde{\bm{E}}\big\|\\
		&\leq B\sqrt{\frac{\mu_2 r}{n_2}}x^2 + \left(\left(\frac{\mu_2 r}{n_2}\omega_{\sf row}^2 \wedge r\omega_{\sf max}^2\right) + \omega_{\sf col}^2\right)^{1/2}x
	\end{align*}
	for any $x\geq C_1\sqrt{\log n}$. 
	Similarly, one can show that on the same event, \eqref{ineq:noise_b}-\eqref{ineq:noise_d} hold.
\end{proof}
Next, we provide a few more tail bounds concerning the $\ell_{2,\infty}$ norm and sum of squares concerning $\bm{E}$. 
\begin{lemma}\label{lm:1}
	Suppose that Assumption \ref{assump:hetero3} holds. There exists some sufficiently large constant $C_2 > 0$ such that for any fixed matrix $\bm{W}_1$ and $\bm{W}_2$, with probability exceeding $1 - O(n^{-C_2\log n}) - n_1n_2\varepsilon$ one has
	\begin{subequations}
		\begin{align}
			\left\|\bm{E}\bm{W}_1\right\|_{2,\infty} &\lesssim B\left\|\bm{W}_1\right\|_{2,\infty}\log^2 n + \omega_{\sf max}\left\|\bm{W}_1\right\|_{\F}\log n,\label{ineq57a}\\
			\max_{i \in [n_1]}\sum_{j \in [n_2]}E_{i,j}^2 &\lesssim B^2\log^2 n + \omega_{\sf row}^2,\label{ineq57b}\\
			\max_{j \in [n_2]}\sum_{i \in [n_1]}E_{i,j}^2 &\lesssim B^2\log^2 n + \omega_{\sf col}^2,\label{ineq57c}\\
			\max_{j \in [n_2]}\big\|\left(\bm{E}_{:,j}\right)^\top\bm{W}_2\big\|_2 &\lesssim \left(B\log^2 n + \omega_{\sf col}\log n\right)\left\|\bm{W}_2\right\|_{2,\infty}.\label{ineq57d}
		\end{align}
	\end{subequations}
\end{lemma}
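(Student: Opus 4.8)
\textbf{Proof plan for Lemma \ref{lm:1}.}
The plan is to reduce each of the four bounds to a standard one-dimensional concentration inequality for sums of independent, truncated random variables, combined with a union bound over at most $n$ (or $n^2$) rows/columns. As in the proof of Lemma~\ref{lm:noise}, I would first treat the bounded case $\varepsilon = 0$ (i.e.\ $|E_{i,j}| \le B$ deterministically), and then pass to the general case by the same truncation device: write $\widetilde E_{i,j} = E_{i,j}\mathbbm{1}\{|E_{i,j}| \le B\} - \bbE[E_{i,j}\mathbbm{1}\{|E_{i,j}| \le B\}]$, note $\mathsf{Var}(\widetilde E_{i,j}) \le \omega_{i,j}^2 \le \omega_{\sf max}^2$ and $|\widetilde E_{i,j}| \le 2B$, and use the bound $\|\bm E - \widetilde{\bm E}\| \lesssim \omega_{\sf max}/n^{4}$ (which holds with probability $\ge 1 - n_1 n_2 \varepsilon$, exactly as in \eqref{ineq53}--\eqref{ineq55}) to absorb the perturbation into the stated bounds. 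The $n_1 n_2 \varepsilon$ loss in probability comes precisely from this last step.

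For the bounded case, the four estimates are handled as follows. For \eqref{ineq57a}: fix a row index $i$ and write $(\bm E \bm W_1)_{i,:} = \sum_{j} E_{i,j} (\bm W_1)_{j,:}$, a sum of independent zero-mean vectors with $\|E_{i,j}(\bm W_1)_{j,:}\|_2 \le B \|\bm W_1\|_{2,\infty}$ and matrix-variance proxy bounded by $\omega_{\sf max}^2 \|\bm W_1\|_{\F}^2$; applying the (vector/matrix) Bernstein inequality with deviation parameter $x \asymp \log n$ gives $\|(\bm E\bm W_1)_{i,:}\|_2 \lesssim B\|\bm W_1\|_{2,\infty}\log^2 n + \omega_{\sf max}\|\bm W_1\|_{\F}\log n$ with probability $1 - O(n^{-C_2 \log n})$, and a union bound over $i \in [n_1]$ preserves this since $n_1 \le n$ and $n^{-C_2\log n}$ kills the polynomial factor. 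For \eqref{ineq57b}: fix $i$, and note $\sum_j E_{i,j}^2$ is a sum of independent nonnegative variables bounded by $B^2$ with mean $\sum_j \omega_{i,j}^2 \le \omega_{\sf row}^2$; Bernstein's inequality for bounded variables yields $\sum_j E_{i,j}^2 \lesssim \omega_{\sf row}^2 + B^2 \log^2 n$ with the stated probability, then union bound over $i$. The bound \eqref{ineq57c} is the transpose of \eqref{ineq57b} with $\omega_{\sf col}$ in place of $\omega_{\sf row}$. For \eqref{ineq57d}: fix a column $j$, write $(\bm E_{:,j})^\top \bm W_2 = \sum_{i} E_{i,j} (\bm W_2)_{i,:}$, a sum of independent zero-mean vectors with summand norm $\le B\|\bm W_2\|_{2,\infty}$ and variance proxy $\le (\sum_i \omega_{i,j}^2) \|\bm W_2\|_{2,\infty}^2 \le \omega_{\sf col}^2 \|\bm W_2\|_{2,\infty}^2$; Bernstein again gives $\|(\bm E_{:,j})^\top\bm W_2\|_2 \lesssim (B\log^2 n + \omega_{\sf col}\log n)\|\bm W_2\|_{2,\infty}$, followed by a union bound over $j \in [n_2]$.

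I do not expect a genuine obstacle here: every bound follows from Bernstein's inequality applied to an appropriately structured sum of independent (truncated) random variables, and the union bound is harmless because the failure probability $n^{-C_2\log n}$ decays faster than any polynomial in $n$. The only point requiring a little care is bookkeeping in the general case --- making sure that the additive error $\|\bm E - \widetilde{\bm E}\| \lesssim \omega_{\sf max}/n^4$ is negligible against each right-hand side (which holds since each right-hand side is at least of order $\omega_{\sf max}\log n$ under the implicit scaling), and that the probability bound degrades by the additive term $n_1 n_2 \varepsilon$ coming from the event $\{\bm E = \overline{\bm E}\}$. I would also note, as a matter of consistency with the statement of Lemma~\ref{lm:error_moment1}, that the crude bounds in \eqref{ineq4a}--\eqref{ineq4d} used there are immediate corollaries of \eqref{ineq57a}--\eqref{ineq57d} specialized to $\bm W_1 = \bm V^\star$, etc.
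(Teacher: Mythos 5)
Your proposal is correct and follows essentially the same route as the paper: in the bounded case, (matrix/vector) Bernstein inequalities with deviation parameter of order $\log n$ plus union bounds over rows/columns, and then the same truncation-and-recentering trick (with the $n_1n_2\varepsilon$ probability loss and the $\|\bm{E}-\widetilde{\bm{E}}\|\lesssim \omega_{\sf max}/n^4$ perturbation bound) for general noise. The only cosmetic difference is that the paper dispatches \eqref{ineq57b}--\eqref{ineq57d} by invoking \citet[Lemma 12]{cai2021subspace} (adjusted to the higher probability level $1-O(n^{-C_2\log n})$, which costs the extra $\log n$ factors) and only proves \eqref{ineq57a} directly, whereas you rederive all four bounds from Bernstein, which is the same underlying argument.
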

\begin{proof}[Proof of Lemma \ref{lm:1}]
	We again consider the case $\varepsilon = 0$ first  (see Assumption~\ref{assump:hetero3}). In this case, \eqref{ineq57b}-\eqref{ineq57d} are basically direct consequences of \cite[Lemma 12]{cai2021subspace}. The only difference is we require a higher probability here ($1-O(n^{-C_2\log n})$ instead of $1-O(n^{-20})$), which leads to an extra $\log n$ factor in our bounds.
	Turning to \eqref{ineq57a}, we note that for any $i \in [n_1]$,  $\bm{E}_{i,:}\bm{W} = \sum_{j \in [n_2]}E_{i,j}\bm{W}_{j,:}$ is a sum of $n_2$ independent zero-mean vectors. 
	In light of the following key quantities:
	\begin{align*}
		L:= \max_{j \in [n_2]}\left\|E_{i,j}\bm{W}_{j,:}\right\|_2 \leq B\left\|\bm{W}\right\|_{2,\infty}
	\end{align*}
	and
	\begin{align*}
		V:= \sum_{j \in [n_2]}\bbE\left[E_{i,j}^2\right]\left\|\bm{W}_{j,:}\right\|_2^2 \leq \omega_{\sf max}^2\sum_{j \in [n_2]}\left\|\bm{W}_{j,:}\right\|_2^2 = \omega_{\sf max}^2\left\|\bm{W}\right\|_{\F}^2,
	\end{align*}
	we can apply the matrix Bernstein inequality to show that: with probability exceeding $1 - n^{-C_3\log n}$, 
	\begin{align}\label{ineq6}
		\left\|\bm{E}_{i,:}\bm{W}\right\|_2 \lesssim L\log^2 n + \sqrt{V}\log n \lesssim B\left\|\bm{W}\right\|_{2,\infty}\log^2 n + \omega_{\sf max}\left\|\bm{W}\right\|_{\F}\log n
	\end{align}
	holds for some numerical constant $C_3>0$. 
	The union bound then shows that with probability exceeding $1 - n\cdot n^{-C_3\log n} \geq 1 - n^{-C_2\log n}$ (for some numerical constant $C_2>0$),
	\begin{align*}
		\left\|\bm{E}\bm{W}\right\|_{2,\infty} = \max_{i \in [n_1]}\left\|\bm{E}_{i,:}\bm{W}\right\|_2 \lesssim B\left\|\bm{W}\right\|_{2,\infty}\log^2 n + \omega_{\sf max}\left\|\bm{W}\right\|_{\F}\log n.
	\end{align*}

	When it comes to the more general case with $\varepsilon > 0$, repeating a similar argument as in the proof of Lemma \ref{lm:noise} immediately helps us finish the proof of Lemma \ref{lm:noise}. 
\end{proof}

The next lemma gathers a spectral norm upper bound on the Gram matrix $\bm{E}\bm{E}^{\top}$ after diagonal deletion. 
\begin{lemma}\label{lm:off-diag}
	Assume that Assumption \ref{assump:hetero3} holds. Then there exists some large (resp.~small) constant $C_1 > 0$ ($c_1 > 0$) such that: for any $x \geq C_1\sqrt{\log n}$, with probability exceeding $1 - O(e^{-c_1x^2}) - n_1n_2\varepsilon$ one has
	\begin{align*}
		\left\|\mathcal{P}_{\sf off\text{-}diag}\left(\bm{E}\bm{E}^\top\right)\right\| \lesssim B^2x^4 + \omega_{\sf col}\left(\omega_{\sf row} + \omega_{\sf col}\right)x^2.
	\end{align*}
\end{lemma}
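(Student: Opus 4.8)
\textbf{Proof proposal for Lemma~\ref{lm:off-diag}.}
The plan is to decouple the off-diagonal Gram matrix into a (random) bilinear form and control it by conditioning plus a matrix Bernstein argument, exactly in the spirit of how $\|\mathcal{P}_{\sf off\text{-}diag}(\bm{E}\bm{E}^\top)\|$ is bounded in related heteroskedastic PCA analyses. First I would reduce to the bounded-noise case: as in the proofs of Lemmas~\ref{lm:noise} and \ref{lm:1}, introduce the truncated-and-centered matrix $\widetilde{\bm{E}}$ with $|\widetilde{E}_{i,j}|\leq 2B$ and $\mathsf{Var}[\widetilde{E}_{i,j}]\leq \omega_{i,j}^2\leq\omega_{\sf max}^2$; since $\bm{E}=\overline{\bm{E}}$ on an event of probability at least $1-n_1n_2\varepsilon$ and $\|\bm{E}-\widetilde{\bm{E}}\|\lesssim \omega_{\sf max}/n^4$, the perturbation $\|\mathcal{P}_{\sf off\text{-}diag}(\bm{E}\bm{E}^\top)-\mathcal{P}_{\sf off\text{-}diag}(\widetilde{\bm{E}}\widetilde{\bm{E}}^\top)\|$ is negligibly small (bounded by $2\|\bm{E}-\widetilde{\bm{E}}\|(\|\bm{E}\|+\|\widetilde{\bm{E}}\|)+\ldots$, using $\|\widetilde{\bm{E}}\|\lesssim Bx+\omega_{\sf row}+\omega_{\sf col}$ from Lemma~\ref{lm:noise}). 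So it suffices to treat the case $|E_{i,j}|\leq B$ deterministically.

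In the bounded case, I would use a symmetrization/decoupling trick: write $\mathcal{P}_{\sf off\text{-}diag}(\bm{E}\bm{E}^\top) = \sum_{j=1}^{n_2}\mathcal{P}_{\sf off\text{-}diag}(\bm{E}_{:,j}\bm{E}_{:,j}^\top)$ and note each summand is a centered (off-diagonal) rank-one-type term depending on the $j$-th column of $\bm{E}$. One clean route is the standard decoupling inequality for quadratic forms (e.g.\ via an independent copy $\bm{E}'$, so that $\mathbb{E}\|\mathcal{P}_{\sf off\text{-}diag}(\bm{E}\bm{E}^\top)\|$ is comparable up to constants to $\mathbb{E}\|\mathcal{P}_{\sf off\text{-}diag}(\bm{E}(\bm{E}')^\top)\|$ or to a sum over columns of independent-in-$i$ terms). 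Conditional on $\bm{E}'$ (or on the ``second copy'' of rows), the matrix $\mathcal{P}_{\sf off\text{-}diag}(\bm{E}(\bm{E}')^\top)$ is a sum of independent zero-mean matrices in the remaining randomness; I would then apply the matrix Bernstein inequality, computing the two relevant parameters. The variance proxy is $V\asymp \max\{\omega_{\sf row}^2\omega_{\sf col}^2, \omega_{\sf col}^4, \ldots\}$-type quantity: more precisely, $\|\sum \mathbb{E}[\cdot]\|$ is controlled by $\omega_{\sf col}^2(\omega_{\sf row}^2+\omega_{\sf col}^2)$ after using $\sum_j\omega_{i,j}^2\leq\omega_{\sf row}^2$ and $\sum_i\omega_{i,j}^2\leq\omega_{\sf col}^2$; and the uniform bound on a single summand (conditioned) is of order $B^2 x^2$ (from $\max_j\|\bm{E}_{:,j}\|_2^2\lesssim B^2x^2+\omega_{\sf col}^2$ via Lemma~\ref{lm:1}), which after accounting for the log-heavy tail level $x$ contributes the $B^2x^4$ term. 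Putting these into Bernstein's bound $\|\cdot\|\lesssim Lx^2+\sqrt{V}x$ yields $\|\mathcal{P}_{\sf off\text{-}diag}(\bm{E}\bm{E}^\top)\|\lesssim B^2x^4 + \omega_{\sf col}(\omega_{\sf row}+\omega_{\sf col})x^2$ with probability at least $1-O(e^{-c_1x^2})$ for $x\geq C_1\sqrt{\log n}$.

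Alternatively, and perhaps more transparently, I would bound $\|\mathcal{P}_{\sf off\text{-}diag}(\bm{E}\bm{E}^\top)\| \leq \|\bm{E}\bm{E}^\top\| + \|\mathcal{P}_{\sf diag}(\bm{E}\bm{E}^\top)\|$ is \emph{not} good enough (the diagonal is the dominant bias), so the decoupling is genuinely needed; instead one can split $\bm{E}\bm{E}^\top = \sum_{j} \bm{E}_{:,j}\bm{E}_{:,j}^\top$ and for each $j$ write $\mathcal{P}_{\sf off\text{-}diag}(\bm{E}_{:,j}\bm{E}_{:,j}^\top) = \bm{E}_{:,j}\bm{E}_{:,j}^\top - \mathcal{P}_{\sf diag}(\bm{E}_{:,j}\bm{E}_{:,j}^\top)$, and then invoke a Rudelson-type / decoupling bound for $\sum_j (\bm{a}_j\bm{a}_j^\top - \mathbb{E}[\cdot])$ together with a separate small perturbation for the $\sum_j(\mathbb{E}[\bm{E}_{:,j}\bm{E}_{:,j}^\top] - \mathcal{P}_{\sf diag}(\ldots))$ piece, which is a deterministic diagonal matrix of size $\lesssim\omega_{\sf col}^2$ and hence fine.

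\textbf{Main obstacle.} The crux is getting the off-diagonal quadratic form $\sum_{i\neq i'}(\bm{E}\bm{E}^\top)_{i,i'}$ under control: the naive matrix Bernstein applied to $\sum_j \bm{E}_{:,j}\bm{E}_{:,j}^\top$ picks up the bias $\sum_j\mathbb{E}[\bm{E}_{:,j}\bm{E}_{:,j}^\top]$, which is exactly the large diagonal we are trying to delete, so one must first remove the diagonal and then handle the resulting dependence across entries. The honest difficulty is therefore a decoupling/symmetrization step to turn the zero-diagonal quadratic form into a form amenable to matrix Bernstein (or to directly invoke an existing off-diagonal Gram concentration result, e.g.\ the analogue of \citet[Lemma~12 or its off-diagonal companion]{cai2021subspace} or a Hanson--Wright-type tail bound for matrices), and to verify that the variance and uniform-bound parameters come out as $\omega_{\sf col}(\omega_{\sf row}+\omega_{\sf col})$ and $B^2x^2$ respectively, not something weaker like $\omega_{\sf max}^2(n_1\vee n_2)$. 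Everything else — the truncation reduction, the union bound, and the elementary norm inequalities — is routine given Lemmas~\ref{lm:noise} and \ref{lm:1}.
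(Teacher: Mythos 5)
Your proposal is correct and follows essentially the same route as the paper: reduce to the truncated matrix $\widetilde{\bm{E}}$ (with the perturbation controlled via $\|\bm{E}-\widetilde{\bm{E}}\|\lesssim\omega_{\sf max}/n^4$ and $\|\widetilde{\bm{E}}\|\lesssim Bx+\omega_{\sf row}+\omega_{\sf col}$), and then invoke the off-diagonal Gram concentration for bounded noise. The only difference is that the paper does not re-derive that core bound via decoupling plus matrix Bernstein as you sketch, but simply cites the proof in \citet[Section~B.2.1]{cai2021subspace} (with a minor change to the probability level) — an option you yourself flag as admissible.
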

\begin{proof}[Proof of Lemma \ref{lm:off-diag}]
	In view of \citet[Section~B.2.1]{cai2021subspace} (or more precisely, we use the proof therein but change the probability slightly), we know that with probability $1 - O(e^{-c_1x^2})$,
	\begin{align}\label{ineq56}
		\big\|\mathcal{P}_{\sf off\text{-}diag}\big(\widetilde{\bm{E}}\widetilde{\bm{E}}^\top\big)\big\| \lesssim B^2x^4 + \omega_{\sf col}\left(\omega_{\sf row} + \omega_{\sf col}\right)x^2,
	\end{align}
	where $\widetilde{\bm{E}}$ is defined in \eqref{def:tilde_E}. Let $\mathcal{E}_2$ denote the following event:
	\begin{align*}
		\mathcal{E}_2 := \left\{\eqref{ineq55} \text{ and }\eqref{ineq56} \text{ hold, and }\big\|\widetilde{\bm{E}}\big\| \lesssim Bx + \left(\omega_{\sf row} + \omega_{\sf col}\right)\right\}.
	\end{align*}
	By virtue of \eqref{ineq55}, \eqref{ineq56} and Lemma \ref{lm:noise}, we have 
	\begin{align*}
		\bbP\left(\mathcal{E}_2\right) \geq 1 - O\big(e^{-c_1x^2}\big) - n_1n_2\varepsilon.
	\end{align*}
	On the event $\mathcal{E}_2$, one can obtain 
	\begin{align*}
		\left\|\mathcal{P}_{\sf off\text{-}diag}\left(\bm{E}\bm{E}^\top\right)\right\| &\leq \big\|\mathcal{P}_{\sf off\text{-}diag}\big(\widetilde{\bm{E}}\widetilde{\bm{E}}^\top\big)\big\| + \big\|\mathcal{P}_{\sf off\text{-}diag}\big(\bm{E}\bm{E}^\top - \widetilde{\bm{E}}\widetilde{\bm{E}}^\top\big)\big\|\\
		&\lesssim B^2x^4 + \omega_{\sf col}\left(\omega_{\sf row} + \omega_{\sf col}\right)x^2 + \big\|\bm{E}\bm{E}^\top - \widetilde{\bm{E}}\widetilde{\bm{E}}^\top\big\|\\
		&\leq B^2x^4 + \omega_{\sf col}\left(\omega_{\sf row} + \omega_{\sf col}\right)x^2 + \big\|\big(\bm{E} - \widetilde{\bm{E}}\big)\widetilde{\bm{E}}^\top\big\| + \big\|\widetilde{\bm{E}}\big(\bm{E} - \widetilde{\bm{E}}\big)^\top\big\|\\&\quad + \big\|\big(\bm{E} - \widetilde{\bm{E}}\big)\big(\bm{E} - \widetilde{\bm{E}}\big)^\top\big\|\\
		&\leq B^2x^4 + \omega_{\sf col}\left(\omega_{\sf row} + \omega_{\sf col}\right)x^2 + 2\big\|\bm{E} - \widetilde{\bm{E}}\big\|\big\|\widetilde{\bm{E}}\big\| + \big\|\bm{E} - \widetilde{\bm{E}}\big\|^2\\
		&\lesssim B^2x^4 + \omega_{\sf col}\left(\omega_{\sf row} + \omega_{\sf col}\right)x^2 + \frac{\omega_{\sf max}}{n^{4}}\big(Bx + (\omega_{\sf row} + \omega_{\sf col})\big) + \frac{\omega_{\sf max}^2}{n^{8}}\\
		&\lesssim B^2x^4 + \omega_{\sf col}\left(\omega_{\sf row} + \omega_{\sf col}\right)x^2 + \frac{1}{2}\left(B^2x^2 + \frac{\omega_{\sf max}^2}{n^{8}}\right) + \frac{\omega_{\sf max}}{n^{4}}\left(\omega_{\sf row} + \omega_{\sf col}\right) + \frac{\omega_{\sf max}^2}{n^{8}}\\
		&\lesssim B^2x^4 + \omega_{\sf col}\left(\omega_{\sf row} + \omega_{\sf col}\right)x^2
	\end{align*}
	for any $x\geq C_1\sqrt{\log n}$,  
	where the penultimate line is due to the AM-GM inequality.
\end{proof}

Finally, we make note of a result that controls the projection of $\bm{X}$ onto the subspace spanned by $\widehat{\bm{U}}_{\perp}$ (the orthogonal complement of the leading rank-$r$ left singular subspace of $\bm{Y}$). 
\begin{lemma}[\cite{zhang2018tensor}, Lemma 6]\label{lm:projection}
	Suppose that $\bm{Y} = \bm{X} + \bm{E}$, where $\bm{X}$ is a rank-$r$ matrix and $\bm{E}$ is the noise matrix. Let $\widehat{\bm{U}}$ denote the rank-$r$ leading left singular subspace of $\bm{Y}$, and let $\widehat{\bm{U}}_{\perp}$ represent the orthogonal complement of $\widehat{\bm{U}}$. Then it holds that
	\begin{align*}
		\big\|\mathcal{P}_{\widehat{\bm{U}}_{\perp}}\bm{X}\big\| \leq 2\left\|\bm{E}\right\|.
	\end{align*}
\end{lemma}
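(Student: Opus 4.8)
\textbf{Proof proposal for Lemma~\ref{lm:projection}.}

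The plan is to prove the statement as a clean consequence of Weyl's inequality applied to the singular values of $\bm{Y}$, combined with the variational (min-max) characterization of singular values. Write the SVD of the rank-$r$ matrix $\bm{X}$ as $\bm{X} = \bm{U}\bm{\Sigma}\bm{V}^\top$, and recall that $\widehat{\bm{U}} \in \mathcal{O}^{n_1, r}$ is the leading rank-$r$ left singular subspace of $\bm{Y} = \bm{X} + \bm{E}$. The quantity we want to bound is $\|\mathcal{P}_{\widehat{\bm{U}}_\perp}\bm{X}\|$, i.e.\ the size of the component of $\bm{X}$ that escapes the estimated subspace.

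First I would observe that, since $\bm{X}$ has rank $r$ and $\widehat{\bm{U}}$ has exactly $r$ columns, the matrix $\mathcal{P}_{\widehat{\bm{U}}_\perp}\bm{X}$ has rank at most $r$; moreover its column space lies in the range of $\widehat{\bm{U}}_\perp$, which is orthogonal to the range of the top-$r$ left singular vectors of $\bm{Y}$. The key step is then to relate $\|\mathcal{P}_{\widehat{\bm{U}}_\perp}\bm{X}\|$ to $\sigma_{r+1}(\bm{Y})$. Concretely, for any unit vector $\bm{a}$ in the range of $\widehat{\bm{U}}_\perp$ one has $\|\bm{a}^\top \bm{Y}\| \le \sigma_{r+1}(\bm{Y})$, because $\widehat{\bm{U}}_\perp$ is spanned by left singular vectors of $\bm{Y}$ associated with singular values no larger than $\sigma_{r+1}(\bm{Y})$ (here the largest singular value achievable along directions orthogonal to the top-$r$ left singular subspace is exactly $\sigma_{r+1}(\bm{Y})$, by the Courant--Fischer theorem). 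Hence $\|\mathcal{P}_{\widehat{\bm{U}}_\perp}\bm{Y}\| \le \sigma_{r+1}(\bm{Y})$. Then, by the triangle inequality,
\begin{align*}
	\big\|\mathcal{P}_{\widehat{\bm{U}}_\perp}\bm{X}\big\|
	&\le \big\|\mathcal{P}_{\widehat{\bm{U}}_\perp}\bm{Y}\big\| + \big\|\mathcal{P}_{\widehat{\bm{U}}_\perp}\bm{E}\big\|
	\le \sigma_{r+1}(\bm{Y}) + \|\bm{E}\|.
\end{align*}

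Next I would invoke Weyl's inequality on the singular values of $\bm{Y} = \bm{X} + \bm{E}$: since $\bm{X}$ has rank $r$, we have $\sigma_{r+1}(\bm{X}) = 0$, and therefore $\sigma_{r+1}(\bm{Y}) \le \sigma_{r+1}(\bm{X}) + \|\bm{E}\| = \|\bm{E}\|$. Substituting this bound into the previous display yields
\begin{align*}
	\big\|\mathcal{P}_{\widehat{\bm{U}}_\perp}\bm{X}\big\| \le \|\bm{E}\| + \|\bm{E}\| = 2\|\bm{E}\|,
\end{align*}
which is precisely the claimed inequality. I do not anticipate a serious obstacle here; the only point requiring mild care is the justification that $\|\mathcal{P}_{\widehat{\bm{U}}_\perp}\bm{Y}\| \le \sigma_{r+1}(\bm{Y})$, which should be spelled out via the min-max principle (the operator norm of $\bm{Y}$ restricted to the orthogonal complement of its top-$r$ left singular subspace equals $\sigma_{r+1}(\bm{Y})$). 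Everything else is a one-line application of Weyl plus the triangle inequality. Since this is exactly Lemma~6 of \citet{zhang2018tensor}, one may alternatively just cite that reference, but the self-contained argument above is short enough to include.
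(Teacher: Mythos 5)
Your proof is correct and is essentially the standard argument behind Lemma~6 of \citet{zhang2018tensor}: bound $\|\mathcal{P}_{\widehat{\bm{U}}_{\perp}}\bm{Y}\|$ by $\sigma_{r+1}(\bm{Y})$, kill $\sigma_{r+1}(\bm{Y})$ via Weyl using $\mathrm{rank}(\bm{X})=r$, and absorb $\|\mathcal{P}_{\widehat{\bm{U}}_{\perp}}\bm{E}\|\le\|\bm{E}\|$. The paper itself does not reprove this lemma but simply cites the reference, and your self-contained argument matches that reference's proof.
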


\bibliographystyle{apalike}
\bibliography{reference}

\begin{thebibliography}{}

\bibitem[Abbe et~al., 2022]{abbe2022lp}
Abbe, E., Fan, J., and Wang, K. (2022).
\newblock An $\ell_p$ theory of {PCA} and spectral clustering.
\newblock {\em The Annals of Statistics}, 50(4):2359--2385.

\bibitem[Abbe et~al., 2020]{abbe2020entrywise}
Abbe, E., Fan, J., Wang, K., and Zhong, Y. (2020).
\newblock Entrywise eigenvector analysis of random matrices with low expected
  rank.
\newblock {\em Annals of statistics}, 48(3):1452.

\bibitem[Agterberg et~al., 2022]{agterberg2022entrywise}
Agterberg, J., Lubberts, Z., and Priebe, C.~E. (2022).
\newblock Entrywise estimation of singular vectors of low-rank matrices with
  heteroskedasticity and dependence.
\newblock {\em IEEE Transactions on Information Theory}, 68(7):4618--4650.

\bibitem[Anandkumar et~al., 2017]{anandkumar2017homotopy}
Anandkumar, A., Deng, Y., Ge, R., and Mobahi, H. (2017).
\newblock Homotopy analysis for tensor pca.
\newblock In {\em Conference on Learning Theory}, pages 79--104. PMLR.

\bibitem[Anandkumar et~al., 2014]{anandkumar2014tensor}
Anandkumar, A., Ge, R., Hsu, D., Kakade, S.~M., and Telgarsky, M. (2014).
\newblock Tensor decompositions for learning latent variable models.
\newblock {\em Journal of Machine Learning Research}, 15:2773--2832.

\bibitem[Arous et~al., 2019]{arous2019landscape}
Arous, G.~B., Mei, S., Montanari, A., and Nica, M. (2019).
\newblock The landscape of the spiked tensor model.
\newblock {\em Communications on Pure and Applied Mathematics},
  72(11):2282--2330.

\bibitem[Bai and Li, 2012]{bai2012statistical}
Bai, J. and Li, K. (2012).
\newblock Statistical analysis of factor models of high dimension.
\newblock {\em The Annals of Statistics}, 40(1):436--465.

\bibitem[Bai and Wang, 2016]{bai2016econometric}
Bai, J. and Wang, P. (2016).
\newblock Econometric analysis of large factor models.
\newblock {\em Annual Review of Economics}, 8:53--80.

\bibitem[Bai and Ding, 2012]{bai2012estimation}
Bai, Z. and Ding, X. (2012).
\newblock Estimation of spiked eigenvalues in spiked models.
\newblock {\em Random Matrices: Theory and Applications}, 1(02):1150011.

\bibitem[Balzano et~al., 2018]{balzano2018streaming}
Balzano, L., Chi, Y., and Lu, Y.~M. (2018).
\newblock Streaming {PCA} and subspace tracking: The missing data case.
\newblock {\em Proceedings of the IEEE}, 106(8):1293--1310.

\bibitem[Bao et~al., 2022]{bao2022statistical}
Bao, Z., Ding, X., Wang, J., and Wang, K. (2022).
\newblock Statistical inference for principal components of spiked covariance
  matrices.
\newblock {\em The Annals of Statistics}, 50(2):1144--1169.

\bibitem[Bao et~al., 2021]{bao2021singular}
Bao, Z., Ding, X., and Wang, K. (2021).
\newblock Singular vector and singular subspace distribution for the matrix
  denoising model.
\newblock {\em The Annals of Statistics}, 49(1):370--392.

\bibitem[Cai et~al., 2021]{cai2021subspace}
Cai, C., Li, G., Chi, Y., Poor, H.~V., and Chen, Y. (2021).
\newblock Subspace estimation from unbalanced and incomplete data matrices:
  $\ell_{2,\infty}$ statistical guarantees.
\newblock {\em The Annals of Statistics}, 49(2):944--967.

\bibitem[Cai et~al., 2022a]{cai2022nonconvex}
Cai, C., Li, G., Poor, H.~V., and Chen, Y. (2022a).
\newblock Nonconvex low-rank tensor completion from noisy data.
\newblock {\em Operations Research}, 70(2):1219--1237.

\bibitem[Cai et~al., 2023]{cai2020uncertainty}
Cai, C., Poor, H.~V., and Chen, Y. (2023).
\newblock Uncertainty quantification for nonconvex tensor completion:
  Confidence intervals, heteroscedasticity and optimality.
\newblock {\em IEEE Transactions on Information Theory}, 69(1):407--452.

\bibitem[Cai et~al., 2010]{cai2010singular}
Cai, J.-F., Cand{\`e}s, E.~J., and Shen, Z. (2010).
\newblock A singular value thresholding algorithm for matrix completion.
\newblock {\em SIAM Journal on optimization}, 20(4):1956--1982.

\bibitem[Cai et~al., 2022b]{cai2022provable}
Cai, J.-F., Li, J., and Xia, D. (2022b).
\newblock Provable tensor-train format tensor completion by {Riemannian}
  optimization.
\newblock {\em Journal of Machine Learning Research}, 23(123):1--77.

\bibitem[Cai and Zhang, 2018]{cai2018rate}
Cai, T.~T. and Zhang, A. (2018).
\newblock Rate-optimal perturbation bounds for singular subspaces with
  applications to high-dimensional statistics.
\newblock {\em The Annals of Statistics}, 46(1):60--89.

\bibitem[Cand{\`e}s and Recht, 2009]{candes2009exact}
Cand{\`e}s, E.~J. and Recht, B. (2009).
\newblock Exact matrix completion via convex optimization.
\newblock {\em Foundations of Computational mathematics}, 9(6):717.

\bibitem[Cape et~al., 2019]{cape2019two}
Cape, J., Tang, M., and Priebe, C.~E. (2019).
\newblock The two-to-infinity norm and singular subspace geometry with
  applications to high-dimensional statistics.
\newblock {\em The Annals of Statistics}, 47(5):2405--2439.

\bibitem[Chatterjee, 2015]{chatterjee2015matrix}
Chatterjee, S. (2015).
\newblock Matrix estimation by universal singular value thresholding.
\newblock {\em The Annals of Statistics}, 43(1):177--214.

\bibitem[Chen et~al., 2015]{chen2015reduced}
Chen, P.~H., Chen, J., Yeshurun, Y., Hasson, U., Haxby, J.~V., and Ramadge,
  P.~J. (2015).
\newblock A reduced-dimension {fMRI} shared response model.
\newblock {\em Advances in Neural Information Processing Systems},
  2015:460--468.

\bibitem[Chen et~al., 2022]{chen2022global}
Chen, S., Liu, S., and Ma, Z. (2022).
\newblock Global and individualized community detection in inhomogeneous
  multilayer networks.
\newblock {\em The Annals of Statistics}, 50(5):2664--2693.

\bibitem[Chen et~al., 2021a]{chen2021asymmetry}
Chen, Y., Cheng, C., and Fan, J. (2021a).
\newblock Asymmetry helps: Eigenvalue and eigenvector analyses of
  asymmetrically perturbed low-rank matrices.
\newblock {\em Annals of statistics}, 49(1):435.

\bibitem[Chen et~al., 2021b]{chen2021spectral}
Chen, Y., Chi, Y., Fan, J., and Ma, C. (2021b).
\newblock Spectral methods for data science: A statistical perspective.
\newblock {\em Foundations and Trends{\textregistered} in Machine Learning},
  14(5):566--806.

\bibitem[Chen et~al., 2020]{chen2020noisy}
Chen, Y., Chi, Y., Fan, J., Ma, C., and Yan, Y. (2020).
\newblock Noisy matrix completion: Understanding statistical guarantees for
  convex relaxation via nonconvex optimization.
\newblock {\em SIAM journal on optimization}, 30(4):3098--3121.

\bibitem[Chen et~al., 2019a]{chen2019spectral}
Chen, Y., Fan, J., Ma, C., and Wang, K. (2019a).
\newblock Spectral method and regularized {MLE} are both optimal for top-{K}
  ranking.
\newblock {\em Annals of statistics}, 47(4):2204.

\bibitem[Chen et~al., 2019b]{chen2019inference}
Chen, Y., Fan, J., Ma, C., and Yan, Y. (2019b).
\newblock Inference and uncertainty quantification for noisy matrix completion.
\newblock {\em Proceedings of the National Academy of Sciences},
  116(46):22931--22937.

\bibitem[Chen et~al., 2021c]{chen2021bridging}
Chen, Y., Fan, J., Ma, C., and Yan, Y. (2021c).
\newblock Bridging convex and nonconvex optimization in robust {PCA}: Noise,
  outliers and missing data.
\newblock {\em The Annals of Statistics}, 49(5):2948--2971.

\bibitem[Chen et~al., 2021d]{chen2021convex}
Chen, Y., Fan, J., Wang, B., and Yan, Y. (2021d).
\newblock Convex and nonconvex optimization are both minimax-optimal for noisy
  blind deconvolution under random designs.
\newblock {\em Journal of the American Statistical Association}, pages 1--11.

\bibitem[Cheng et~al., 2021]{cheng2021tackling}
Cheng, C., Wei, Y., and Chen, Y. (2021).
\newblock Tackling small eigen-gaps: Fine-grained eigenvector estimation and
  inference under heteroscedastic noise.
\newblock {\em IEEE Transactions on Information Theory}, 67(11):7380--7419.

\bibitem[De~Lathauwer et~al., 2000a]{de2000multilinear}
De~Lathauwer, L., De~Moor, B., and Vandewalle, J. (2000a).
\newblock A multilinear singular value decomposition.
\newblock {\em SIAM journal on Matrix Analysis and Applications},
  21(4):1253--1278.

\bibitem[De~Lathauwer et~al., 2000b]{de2000best}
De~Lathauwer, L., De~Moor, B., and Vandewalle, J. (2000b).
\newblock On the best rank-$1$ and rank-$(r_1, r_2,..., r_n)$ approximation of
  higher-order tensors.
\newblock {\em SIAM journal on Matrix Analysis and Applications},
  21(4):1324--1342.

\bibitem[Deshpande et~al., 2017]{deshpande2017asymptotic}
Deshpande, Y., Abbe, E., and Montanari, A. (2017).
\newblock Asymptotic mutual information for the balanced binary stochastic
  block model.
\newblock {\em Information and Inference: A Journal of the IMA}, 6(2):125--170.

\bibitem[Dobriban and Owen, 2019]{dobriban2019deterministic}
Dobriban, E. and Owen, A.~B. (2019).
\newblock Deterministic parallel analysis: an improved method for selecting
  factors and principal components.
\newblock {\em Journal of the Royal Statistical Society Series B},
  81(1):163--183.

\bibitem[Donoho and Gavish, 2014]{donoho2014minimax}
Donoho, D. and Gavish, M. (2014).
\newblock Minimax risk of matrix denoising by singular value thresholding.
\newblock {\em The Annals of Statistics}, 42(6):2413--2440.

\bibitem[Donoho et~al., 2018]{donoho2018optimal}
Donoho, D.~L., Gavish, M., and Johnstone, I.~M. (2018).
\newblock Optimal shrinkage of eigenvalues in the spiked covariance model.
\newblock {\em Annals of statistics}, 46(4):1742.

\bibitem[Eldridge et~al., 2018]{eldridge2018unperturbed}
Eldridge, J., Belkin, M., and Wang, Y. (2018).
\newblock Unperturbed: spectral analysis beyond davis-kahan.
\newblock In {\em Algorithmic Learning Theory}, pages 321--358. PMLR.

\bibitem[Elsener and van~de Geer, 2019]{elsener2019sparse}
Elsener, A. and van~de Geer, S. (2019).
\newblock Sparse spectral estimation with missing and corrupted measurements.
\newblock {\em Stat}, 8(1):e229.

\bibitem[Fan et~al., 2020]{fan2020statistical}
Fan, J., Li, R., Zhang, C.-H., and Zou, H. (2020).
\newblock {\em Statistical foundations of data science}.
\newblock Chapman and Hall/CRC.

\bibitem[Fan et~al., 2016]{fan2016projected}
Fan, J., Liao, Y., and Wang, W. (2016).
\newblock Projected principal component analysis in factor models.
\newblock {\em Annals of statistics}, 44(1):219.

\bibitem[Fan et~al., 2021]{fan2021robust}
Fan, J., Wang, K., Zhong, Y., and Zhu, Z. (2021).
\newblock Robust high dimensional factor models with applications to
  statistical machine learning.
\newblock {\em Statistical science: a review journal of the Institute of
  Mathematical Statistics}, 36(2):303.

\bibitem[Fan et~al., 2018]{fan2018eigenvector}
Fan, J., Wang, W., and Zhong, Y. (2018).
\newblock An $\ell_{\infty}$ eigenvector perturbation bound and its application
  to robust covariance estimation.
\newblock {\em Journal of Machine Learning Research}, 18(207):1--42.

\bibitem[Feng et~al., 2022]{feng2022unifying}
Feng, O.~Y., Venkataramanan, R., Rush, C., Samworth, R.~J., et~al. (2022).
\newblock A unifying tutorial on approximate message passing.
\newblock {\em Foundations and Trends{\textregistered} in Machine Learning},
  15(4):335--536.

\bibitem[Florescu and Perkins, 2016]{florescu2016spectral}
Florescu, L. and Perkins, W. (2016).
\newblock Spectral thresholds in the bipartite stochastic block model.
\newblock In {\em Conference on Learning Theory}, pages 943--959. PMLR.

\bibitem[Gavish and Donoho, 2014]{gavish2014optimal}
Gavish, M. and Donoho, D.~L. (2014).
\newblock The optimal hard threshold for singular values is $4/\sqrt{3}$.
\newblock {\em IEEE Transactions on Information Theory}, 60(8):5040--5053.

\bibitem[Gavish and Donoho, 2017]{gavish2017optimal}
Gavish, M. and Donoho, D.~L. (2017).
\newblock Optimal shrinkage of singular values.
\newblock {\em IEEE Transactions on Information Theory}, 63(4):2137--2152.

\bibitem[Han et~al., 2022a]{han2022exact}
Han, R., Luo, Y., Wang, M., Zhang, A.~R., et~al. (2022a).
\newblock Exact clustering in tensor block model: Statistical optimality and
  computational limit.
\newblock {\em Journal of the Royal Statistical Society Series B},
  84(5):1666--1698.

\bibitem[Han et~al., 2022b]{han2022optimal}
Han, R., Willett, R., and Zhang, A.~R. (2022b).
\newblock An optimal statistical and computational framework for generalized
  tensor estimation.
\newblock {\em The Annals of Statistics}, 50(1):1--29.

\bibitem[Han and Zhang, 2022]{han2022tensor}
Han, Y. and Zhang, C.-H. (2022).
\newblock Tensor principal component analysis in high dimensional {CP} models.
\newblock {\em IEEE Transactions on Information Theory}.

\bibitem[Hong et~al., 2016]{hong2016towards}
Hong, D., Balzano, L., and Fessler, J.~A. (2016).
\newblock Towards a theoretical analysis of {PCA} for heteroscedastic data.
\newblock In {\em 2016 54th Annual Allerton Conference on Communication,
  Control, and Computing (Allerton)}, pages 496--503. IEEE.

\bibitem[Hong et~al., 2018a]{hong2018asymptotic}
Hong, D., Balzano, L., and Fessler, J.~A. (2018a).
\newblock Asymptotic performance of {PCA} for high-dimensional heteroscedastic
  data.
\newblock {\em Journal of multivariate analysis}, 167:435--452.

\bibitem[Hong et~al., 2018b]{hong2018optimally}
Hong, D., Fessler, J.~A., and Balzano, L. (2018b).
\newblock Optimally weighted {PCA} for high-dimensional heteroscedastic data.
\newblock {\em arXiv preprint arXiv:1810.12862}.

\bibitem[Hopkins et~al., 2015]{hopkins2015tensor}
Hopkins, S.~B., Shi, J., and Steurer, D. (2015).
\newblock Tensor principal component analysis via sum-of-square proofs.
\newblock In {\em Conference on Learning Theory}, pages 956--1006. PMLR.

\bibitem[Johnstone, 2001]{johnstone2001distribution}
Johnstone, I.~M. (2001).
\newblock On the distribution of the largest eigenvalue in principal components
  analysis.
\newblock {\em The Annals of statistics}, 29(2):295--327.

\bibitem[Johnstone and Paul, 2018]{johnstone2018pca}
Johnstone, I.~M. and Paul, D. (2018).
\newblock {PCA} in high dimensions: An orientation.
\newblock {\em Proceedings of the IEEE}, 106(8):1277--1292.

\bibitem[Keshavan et~al., 2010]{keshavan2010matrix}
Keshavan, R.~H., Montanari, A., and Oh, S. (2010).
\newblock Matrix completion from noisy entries.
\newblock {\em The Journal of Machine Learning Research}, 11:2057--2078.

\bibitem[Kolda and Bader, 2009]{kolda2009tensor}
Kolda, T.~G. and Bader, B.~W. (2009).
\newblock Tensor decompositions and applications.
\newblock {\em SIAM review}, 51(3):455--500.

\bibitem[Koltchinskii and Gin{\'e}, 2000]{koltchinskii2000random}
Koltchinskii, V. and Gin{\'e}, E. (2000).
\newblock Random matrix approximation of spectra of integral operators.
\newblock {\em Bernoulli}, 6(1):113--167.

\bibitem[Koltchinskii et~al., 2011]{koltchinskii2011nuclear}
Koltchinskii, V., Lounici, K., and Tsybakov, A.~B. (2011).
\newblock Nuclear-norm penalization and optimal rates for noisy low-rank matrix
  completion.
\newblock {\em The Annals of Statistics}, 39(5):2302--2329.

\bibitem[Koltchinskii and Xia, 2016]{koltchinskii2016perturbation}
Koltchinskii, V. and Xia, D. (2016).
\newblock Perturbation of linear forms of singular vectors under {Gaussian}
  noise.
\newblock In {\em High Dimensional Probability VII}, pages 397--423. Springer.

\bibitem[Kritchman and Nadler, 2008]{kritchman2008determining}
Kritchman, S. and Nadler, B. (2008).
\newblock Determining the number of components in a factor model from limited
  noisy data.
\newblock {\em Chemometrics and Intelligent Laboratory Systems}, 94(1):19--32.

\bibitem[Kritchman and Nadler, 2009]{kritchman2009non}
Kritchman, S. and Nadler, B. (2009).
\newblock Non-parametric detection of the number of signals: Hypothesis testing
  and random matrix theory.
\newblock {\em IEEE Transactions on Signal Processing}, 57(10):3930--3941.

\bibitem[Lawley and Maxwell, 1962]{lawley1962factor}
Lawley, D.~N. and Maxwell, A.~E. (1962).
\newblock Factor analysis as a statistical method.
\newblock {\em Journal of the Royal Statistical Society. Series D (The
  Statistician)}, 12(3):209--229.

\bibitem[Li et~al., 2023]{li2023approximate}
Li, G., Fan, W., and Wei, Y. (2023).
\newblock Approximate message passing from random initialization with
  applications to $\mathbb{Z}_2$ synchronization.
\newblock {\em arXiv preprint arXiv:2302.03682}.

\bibitem[Li and Wei, 2022]{li2022non}
Li, G. and Wei, Y. (2022).
\newblock A non-asymptotic framework for approximate message passing in spiked
  models.
\newblock {\em arXiv preprint arXiv:2208.03313}.

\bibitem[Ling, 2022]{ling2022near}
Ling, S. (2022).
\newblock Near-optimal performance bounds for orthogonal and permutation group
  synchronization via spectral methods.
\newblock {\em Applied and Computational Harmonic Analysis}, 60:20--52.

\bibitem[Liu et~al., 2018]{liu2018epca}
Liu, L.~T., Dobriban, E., and Singer, A. (2018).
\newblock e{PCA}: High dimensional exponential family {PCA}.
\newblock {\em Annals of Applied Statistics}, 12(4):2121--2150.

\bibitem[L{\"o}ffler et~al., 2021]{loffler2021optimality}
L{\"o}ffler, M., Zhang, A.~Y., and Zhou, H.~H. (2021).
\newblock Optimality of spectral clustering in the {Gaussian} mixture model.
\newblock {\em The Annals of Statistics}, 49(5):2506--2530.

\bibitem[Loh and Wainwright, 2012]{loh2012high}
Loh, P.-L. and Wainwright, M.~J. (2012).
\newblock High-dimensional regression with noisy and missing data: Provable
  guarantees with nonconvexity.
\newblock {\em The Annals of Statistics}, 40(3):1637--1664.

\bibitem[Lounici, 2014]{lounici2014high}
Lounici, K. (2014).
\newblock High-dimensional covariance matrix estimation with missing
  observations.
\newblock {\em Bernoulli}, 20(3):1029--1058.

\bibitem[Ma et~al., 2020]{ma2020implicit}
Ma, C., Wang, K., Chi, Y., and Chen, Y. (2020).
\newblock Implicit regularization in nonconvex statistical estimation: Gradient
  descent converges linearly for phase retrieval, matrix completion, and blind
  deconvolution.
\newblock {\em Foundations of Computational Mathematics}, 20(3):451--632.

\bibitem[Montanari and Sun, 2018]{montanari2018spectral}
Montanari, A. and Sun, N. (2018).
\newblock Spectral algorithms for tensor completion.
\newblock {\em Communications on Pure and Applied Mathematics},
  71(11):2381--2425.

\bibitem[Montanari and Venkataramanan, 2021]{montanari2021estimation}
Montanari, A. and Venkataramanan, R. (2021).
\newblock Estimation of low-rank matrices via approximate message passing.
\newblock {\em The Annals of Statistics}, 49(1):321--345.

\bibitem[Montanari and Wu, 2022]{montanari2022fundamental}
Montanari, A. and Wu, Y. (2022).
\newblock Fundamental limits of low-rank matrix estimation with diverging
  aspect ratios.
\newblock {\em arXiv preprint arXiv:2211.00488}.

\bibitem[Nadakuditi, 2014]{nadakuditi2014optshrink}
Nadakuditi, R.~R. (2014).
\newblock Optshrink: An algorithm for improved low-rank signal matrix denoising
  by optimal, data-driven singular value shrinkage.
\newblock {\em IEEE Transactions on Information Theory}, 60(5):3002--3018.

\bibitem[Ndaoud, 2022]{ndaoud2022sharp}
Ndaoud, M. (2022).
\newblock Sharp optimal recovery in the two component {Gaussian} mixture model.
\newblock {\em The Annals of Statistics}, 50(4):2096--2126.

\bibitem[Ndaoud et~al., 2021]{ndaoud2021improved}
Ndaoud, M., Sigalla, S., and Tsybakov, A.~B. (2021).
\newblock Improved clustering algorithms for the bipartite stochastic block
  model.
\newblock {\em IEEE Transactions on Information Theory}, 68(3):1960--1975.

\bibitem[Paul, 2007]{paul2007asymptotics}
Paul, D. (2007).
\newblock Asymptotics of sample eigenstructure for a large dimensional spiked
  covariance model.
\newblock {\em Statistica Sinica}, pages 1617--1642.

\bibitem[Pavez and Ortega, 2020]{pavez2020covariance}
Pavez, E. and Ortega, A. (2020).
\newblock Covariance matrix estimation with non uniform and data dependent
  missing observations.
\newblock {\em IEEE Transactions on Information Theory}, 67(2):1201--1215.

\bibitem[Perry et~al., 2020]{perry2020statistical}
Perry, A., Wein, A.~S., and Bandeira, A.~S. (2020).
\newblock Statistical limits of spiked tensor models.
\newblock {\em Annales de l’Institut Henri Poincar{\'e}-Probabilit{\'e}s et
  Statistiques}, 56(1):230--264.

\bibitem[Perry et~al., 2018]{perry2018optimality}
Perry, A., Wein, A.~S., Bandeira, A.~S., and Moitra, A. (2018).
\newblock Optimality and sub-optimality of {PCA I}: Spiked random matrix
  models.
\newblock {\em The Annals of Statistics}, 46(5):2416--2451.

\bibitem[Richard and Montanari, 2014]{richard2014statistical}
Richard, E. and Montanari, A. (2014).
\newblock A statistical model for tensor {PCA}.
\newblock In {\em Advances in Neural Information Processing Systems}, pages
  2897--2905.

\bibitem[Rohe et~al., 2011]{rohe2011spectral}
Rohe, K., Chatterjee, S., and Yu, B. (2011).
\newblock Spectral clustering and the high-dimensional stochastic blockmodel.
\newblock {\em The Annals of Statistics}, 39(4):1878--1915.

\bibitem[Srivastava et~al., 2022]{srivastava2022robust}
Srivastava, P.~R., Sarkar, P., and Hanasusanto, G.~A. (2022).
\newblock A robust spectral clustering algorithm for sub-{Gaussian} mixture
  models with outliers.
\newblock {\em Operations Research}.

\bibitem[Tong et~al., 2021]{tong2021accelerating}
Tong, T., Ma, C., and Chi, Y. (2021).
\newblock Accelerating ill-conditioned low-rank matrix estimation via scaled
  gradient descent.
\newblock {\em The Journal of Machine Learning Research}, 22(1):6639--6701.

\bibitem[Tong et~al., 2022]{tong2022scaling}
Tong, T., Ma, C., Prater-Bennette, A., Tripp, E., and Chi, Y. (2022).
\newblock Scaling and scalability: Provable nonconvex low-rank tensor
  estimation from incomplete measurements.
\newblock {\em Journal of Machine Learning Research}, 23(163):1--77.

\bibitem[Tropp et~al., 2015]{tropp2015introduction}
Tropp, J.~A. et~al. (2015).
\newblock An introduction to matrix concentration inequalities.
\newblock {\em Foundations and Trends{\textregistered} in Machine Learning},
  8(1-2):1--230.

\bibitem[Vannieuwenhoven et~al., 2012]{vannieuwenhoven2012new}
Vannieuwenhoven, N., Vandebril, R., and Meerbergen, K. (2012).
\newblock A new truncation strategy for the higher-order singular value
  decomposition.
\newblock {\em SIAM Journal on Scientific Computing}, 34(2):A1027--A1052.

\bibitem[Vershynin, 2010]{vershynin2010introduction}
Vershynin, R. (2010).
\newblock Introduction to the non-asymptotic analysis of random matrices.
\newblock {\em arXiv preprint arXiv:1011.3027}.

\bibitem[Wang and Fan, 2017]{wang2017asymptotics}
Wang, W. and Fan, J. (2017).
\newblock Asymptotics of empirical eigenstructure for high dimensional spiked
  covariance.
\newblock {\em Annals of statistics}, 45(3):1342.

\bibitem[Xia, 2021]{xia2021normal}
Xia, D. (2021).
\newblock Normal approximation and confidence region of singular subspaces.
\newblock {\em Electronic Journal of Statistics}, 15(2):3798--3851.

\bibitem[Xia et~al., 2022]{xia2022inference}
Xia, D., Zhang, A.~R., and Zhou, Y. (2022).
\newblock Inference for low-rank tensors—no need to debias.
\newblock {\em The Annals of Statistics}, 50(2):1220--1245.

\bibitem[Xu et~al., 2023]{xu2023power}
Xu, X., Shen, Y., Chi, Y., and Ma, C. (2023).
\newblock The power of preconditioning in overparameterized low-rank matrix
  sensing.
\newblock {\em arXiv preprint arXiv:2302.01186}.

\bibitem[Yan et~al., 2024]{yan2024inference}
Yan, Y., Chen, Y., and Fan, J. (2024).
\newblock Inference for heteroskedastic pca with missing data.
\newblock {\em The Annals of Statistics}, 52(2):729--756.

\bibitem[Yang and Ma, 2022]{yang2022optimal}
Yang, Y. and Ma, C. (2022).
\newblock Optimal tuning-free convex relaxation for noisy matrix completion.
\newblock {\em arXiv preprint arXiv:2207.05802}.

\bibitem[Zhang and Xia, 2018]{zhang2018tensor}
Zhang, A. and Xia, D. (2018).
\newblock Tensor {SVD}: Statistical and computational limits.
\newblock {\em IEEE Transactions on Information Theory}, 64(11):7311--7338.

\bibitem[Zhang et~al., 2022]{zhang2022heteroskedastic}
Zhang, A.~R., Cai, T.~T., and Wu, Y. (2022).
\newblock Heteroskedastic {PCA}: Algorithm, optimality, and applications.
\newblock {\em The Annals of Statistics}, 50(1):53--80.

\bibitem[Zhang and Zhou, 2022]{zhang2022leave}
Zhang, A.~Y. and Zhou, H.~H. (2022).
\newblock Leave-one-out singular subspace perturbation analysis for spectral
  clustering.
\newblock {\em arXiv preprint arXiv:2205.14855}.

\bibitem[Zhao et~al., 1986]{zhao1986detection}
Zhao, L., Krishnaiah, P.~R., and Bai, Z. (1986).
\newblock On detection of the number of signals in presence of white noise.
\newblock {\em Journal of multivariate analysis}, 20(1):1--25.

\bibitem[Zhong and Boumal, 2018]{zhong2018near}
Zhong, Y. and Boumal, N. (2018).
\newblock Near-optimal bounds for phase synchronization.
\newblock {\em SIAM Journal on Optimization}, 28(2):989--1016.

\bibitem[Zhou and Chen, 2023]{zhou2023heteroskedastic}
Zhou, Y. and Chen, Y. (2023).
\newblock Heteroskedastic tensor clustering.
\newblock {\em arXiv preprint arXiv:2311.02306}.

\bibitem[Zhou et~al., 2022]{zhou2022optimal}
Zhou, Y., Zhang, A.~R., Zheng, L., and Wang, Y. (2022).
\newblock Optimal high-order tensor {SVD} via tensor-train orthogonal
  iteration.
\newblock {\em IEEE Transactions on Information Theory}, 68(6):3991--4019.

\bibitem[Zhu et~al., 2019]{zhu2019high}
Zhu, Z., Wang, T., and Samworth, R.~J. (2019).
\newblock High-dimensional principal component analysis with heterogeneous
  missingness.
\newblock {\em arXiv preprint arXiv:1906.12125}.

\end{thebibliography}
\end{document}